\title{Prismatic cohomology relative to $\delta$-rings}
\author{Benjamin Antieau, Achim Krause, and Thomas Nikolaus}
\date{\today}
\setlist{noitemsep}
\DeclareSymbolFontAlphabet{\mathbb}{AMSb} 
\DeclareSymbolFontAlphabet{\mathbbl}{bbold}
\newcommand{\Prism}{{\mathbbl{\Delta}}}
\newcommand{\Prismbar}{{\overline{\mathbbl{\Delta}}}}
\newcommand{\Prismpackage}{\underline{\,\Prism\,}}
\newcommand{\WCart}{\mathrm{WCart}}
\newcommand{\FWCart}{\mathrm{FWCart}}
\newcommand{\HT}{\mathrm{HT}}
\newcommand{\prism}{{ \mathbbl{\Delta}}}
\newcommand{\wc}{\mathrm{wc}}
\newcommand{\rel}{\mathrm{rel}}
\newcommand{\perf}{\mathrm{perf}}
\newcommand{\DiffractedHodge}{\Omega^{\slashed{D}}}
\definecolor{todo}{rgb}{1,0,0}
\definecolor{conditional}{rgb}{0,1,0}
\definecolor{e-mail}{rgb}{0,.40,.80}
\definecolor{reference}{rgb}{.20,.60,.22}
\definecolor{mrnumber}{rgb}{.80,.40,0}
\definecolor{citation}{rgb}{0,.40,.80}
\renewcommand{\bf}{\bfseries}
\let\oldmarginpar\marginpar
\renewcommand\marginpar[1]{\-\oldmarginpar[\raggedleft\footnotesize #1]%
{\raggedright\footnotesize #1}}
\newcommand{\Ascr}{\mathcal{A}}
\newcommand{\Cscr}{\mathcal{C}}
\newcommand{\Dscr}{\mathcal{D}}
\newcommand{\Escr}{\mathcal{E}}
\newcommand{\Fscr}{\mathcal{F}}
\newcommand{\Gscr}{\mathcal{G}}
\newcommand{\Hscr}{\mathcal{H}}
\newcommand{\Iscr}{\mathcal{I}}
\newcommand{\Mscr}{\mathcal{M}}
\newcommand{\Oscr}{\mathcal{O}}
\newcommand{\Qscr}{\mathcal{Q}}
\newcommand{\Rscr}{\mathcal{R}}
\newcommand{\Tscr}{\mathcal{T}}
\newcommand{\Xscr}{\mathcal{X}}
\newcommand{\Yscr}{\mathcal{Y}}
\newcommand{\A}{\mathrm{A}}
\newcommand{\B}{\mathrm{B}}
\renewcommand{\d}{\mathrm{d}}
\newcommand{\D}{\mathrm{D}}
\newcommand{\E}{\mathrm{E}}
\newcommand{\F}{\mathrm{F}}
\renewcommand{\H}{\mathrm{H}}
\newcommand{\K}{\mathrm{K}}
\renewcommand{\L}{\mathrm{L}}
\newcommand{\N}{\mathrm{N}}
\newcommand{\R}{\mathrm{R}}
\newcommand{\s}{\mathrm{site}}
\newcommand{\T}{\mathrm{T}}
\newcommand{\Z}{\mathrm{Z}}
\newcommand{\bA}{\mathbf{A}}
\newcommand{\bD}{\mathbf{D}}
\newcommand{\bE}{\mathbf{E}}
\newcommand{\bF}{\mathbf{F}}
\newcommand{\bG}{\mathbf{G}}
\newcommand{\bN}{\mathbf{N}}
\newcommand{\bQ}{\mathbf{Q}}
\newcommand{\bS}{\mathbf{S}}
\newcommand{\bZ}{\mathbf{Z}}
\renewcommand{\mathds}{\mathbbl}
\renewcommand{\SS}{\mathds{S}}
\newcommand{\op}{\mathrm{op}}
\newcommand{\cofib}{\mathrm{cofib}}
\newcommand{\fib}{\mathrm{fib}}
\newcommand{\Mod}{\mathrm{Mod}}
\newcommand{\Perf}{\mathrm{Perf}}
\newcommand{\CAlg}{\mathrm{CAlg}}
\newcommand{\DAlg}{\mathrm{DAlg}}
\newcommand{\Shv}{\mathrm{Shv}}
\newcommand{\Gr}{\mathrm{Gr}}
\newcommand{\gr}{\mathrm{gr}}
\newcommand{\st}{\mathrm{st}}
\newcommand{\an}{\mathrm{an}}
\newcommand{\doubleslash}{\mathord{\sslash}}
\newcommand{\Rees}{\mathrm{Rees}}
\newcommand{\Kos}{\mathrm{Kos}}
\newcommand{\Pairs}{\mathrm{Pairs}}
\newcommand{\tensorhat}{\widehat{\otimes}}
\newcommand{\tensor}{\otimes}
\newcommand{\site}{\mathrm{site}}
\newcommand{\can}{\mathrm{can}}
\newcommand{\xto}{\xrightarrow}
\newcommand{\id}{\mathrm{id}}
\newcommand{\im}{\mathrm{im}}
\renewcommand{\geq}{\geqslant}
\renewcommand{\leq}{\leqslant}
\newcommand{\THH}{\mathrm{THH}}
\newcommand{\TC}{\mathrm{TC}}
\newcommand{\RQRSPerfdscr}{\Rscr\Qscr\mathrm{RSPerfd}}
\newcommand{\QSynscr}{\Qscr\mathrm{Syn}}
\newcommand{\QSyn}{\mathrm{QSyn}}
\newcommand{\QRSPerfdscr}{\Qscr\mathrm{RSPerfd}}
\newcommand{\dR}{\mathrm{dR}}
\newcommand{\conj}{\mathrm{conj}}
\newcommand{\Prismhat}{\widehat{\Prism}}
\newcommand{\Map}{\mathrm{Map}}
\newcommand{\Fun}{\mathrm{Fun}}
\newcommand{\End}{\mathrm{End}}
\newcommand{\bPic}{\mathbf{Pic}}
\newcommand{\Gmhat}{\widehat{\bG}_m}
\newcommand{\Ahat}{\widehat{\bA}^1}
\newcommand{\Gm}{\bG_{m}}
\DeclareMathOperator*{\colim}{colim}
\DeclareMathOperator*{\Tot}{Tot}
\DeclareMathOperator{\Spec}{Spec}
\DeclareMathOperator{\FSpec}{FSpec}
\DeclareMathOperator{\FSpf}{FSpf}
\DeclareMathOperator{\FSSpf}{F\mathds{S}pf}
\DeclareMathOperator{\GrSpec}{GrSpec}
\DeclareMathOperator{\GrSpf}{GrSpf}
\DeclareMathOperator{\Spf}{Spf}
\DeclareMathOperator{\SSpf}{\SS pf}
\newcommand{\we}{\simeq}
\newcommand{\iso}{\cong}
\theoremstyle{plain}
\newtheorem{theorem}{Theorem}[section]
\newtheorem*{theorem*}{Theorem}
\newtheorem{lemma}[theorem]{Lemma}
\newtheorem{proposition}[theorem]{Proposition}
\newtheorem{corollary}[theorem]{Corollary}
\newtheorem*{corollary*}{Corollary}
\theoremstyle{plain}
\theoremstyle{definition}
\newtheoremstyle{named}{}{}{\itshape}{}{\bfseries}{.}{.5em}{#1 \thmnote{#3}}
\theoremstyle{named}
\theoremstyle{definition}
\newtheorem{definition}[theorem]{Definition}
\newtheorem{warning}[theorem]{Warning}
\newtheorem{variant}[theorem]{Variant}
\newtheorem{notation}[theorem]{Notation}
\newtheorem{example}[theorem]{Example}
\newtheorem*{example*}{Example}
\newtheorem*{question*}{Question}
\newtheorem{construction}[theorem]{Construction}
\newtheorem{remark}[theorem]{Remark}
\newtheorem{convention}[theorem]{Convention}
   \def\MR#1{}
\begin{document}

\maketitle

\begin{abstract}
    \noindent
    We develop prismatic and syntomic cohomology relative to a $\delta$-ring.
    This simultaneously generalizes Bhatt and Scholze's absolute and relative prismatic
    cohomology and shows that the latter, which was defined relative to a prism, is in fact
    independent of the prism structure and only depends on the underlying $\delta$-ring.
    We give several possible definitions of our new version of prismatic cohomology:
    a site-theoretic definition, one using prismatic
    crystals, and a stack-theoretic definition. These are equivalent under mild syntomicity hypotheses.
    As an application, we note how the theory of prismatic cohomology of filtered rings arises
    naturally in this context.


    \medskip\noindent
    {\bf Cohomologie prismatique relative aux anneaux delta.}
    Nous développons la cohomologie prismatique et syntomique relative à un anneau-$\delta$.
    Cela généralise simultanément la cohomologie prismatique absolue et relative de Bhatt et Scholze,
    et montre que cette dernière, qui était définie relativement à un prisme, est en fait
    indépendante de la structure de prisme et ne dépend que de l'anneau-$\delta$ sous-jacent.
    Nous proposons plusieurs définitions possibles de notre nouvelle version de la cohomologie
    prismatique~:
    une définition basée sur la théorie des sites, une autre utilisant les cristaux prismatiques,
    et une définition fondée sur la théorie des champs. Ces définitions sont équivalentes sous de
    légères hypothèses de syntomicité. 
    Comme application, nous montrons comment la théorie de la cohomologie prismatique des anneaux filtrés
    émerge naturellement dans ce contexte.

    \medskip\noindent
    {\bf Keywords:} prismatic cohomology (cohomologie prismatique), $\delta$-rings (anneaux delta).

    \medskip\noindent
    {\bf MSC:} 19D55, 14F30.
\end{abstract}

\setcounter{tocdepth}{1}
\tableofcontents

\section{Introduction}\label{sec:intro}

The $p$-adic syntomic complexes $\bZ_p(i)(R)$ for $i\in\bZ_{\geq 0}$ and a quasisyntomic ring $R$ are objects
of the $p$-complete derived $\infty$-category $\D(\bZ_p)_p^\wedge$ introduced by Bhatt, Morrow, and Scholze~\cite{bms2} in order to study topological cyclic homology and
algebraic $\K$-theory.
They are a form of \'etale-sheafified motivic cohomology at the `bad' prime~\cite{geisser-hesselholt}
and were studied in a different guise by Fontaine and Messing~\cite{fontaine-messing}; see~\cite{ammn} for a comparison.
Our goal in this paper is to introduce and study a relative form of the $p$-adic
syntomic complexes $\bZ_p(i)(R/A)$ when $A$ is a $\delta$-ring and $R$ is a commutative
$A$-algebra.

Syntomic cohomology is built from the theory of prismatic cohomology due to Bhatt,
Morrow, and Scholze~\cite{bms2, prisms} and we will similarly generalize prismatic cohomology to a relative setting.
We assume that the reader either knows what a prism is or
is willing to take this notion as a black box. For now we only need that (among other things) a prism
is a commutative ring $A$ with an endomorphism $\varphi_A$ and an invertible ideal $I \subseteq A$.
For a prism $(A,I)$ and a commutative $\overline{A}$-algebra $R$, where $\overline{A}=A/I$,
Bhatt and Scholze naturally associate the following objects:
\begin{enumerate}
    \setlength\itemsep{.75em}
    \item[(a)] an $\bE_\infty$-$A$-algebra $\Prism_{R/A}$, the (derived) prismatic cohomology of $R$
        relative to $A$;
    \item[(b)] $\Prism_{R/A}$-modules $\Prism_{R/A}\{i\}$ for every integer $i\in\bZ$ called the
        Breuil--Kisin twists (with $\Prism_{R/A}\{0\} = \Prism_{R/A}$) which are graded multiplicative with respect to $i$;
    \item[(c)] complete Hodge--Tate towers $\Prism_{R/A}^{[\star]}\{i\}$
        $$\cdots\rightarrow\Prism_{R/A}^{[n+1]}\{i\}\rightarrow\Prism_{R/A}^{[n]}\{i\}\rightarrow\Prism_{R/A}^{[n-1]}\{i\}\rightarrow\cdots$$
        for $i,n\in\bZ$, multiplicative in $i$ and $n$, with weight $n=0$ part
        $\Prism_{R/A}\{i\}$, underlying object $\Prism_{R/A}\{i\}[1/I]$, and associated graded
        pieces
        $\gr^n\Prism_{R/A}^{[\star]}\{i\}\we\Prismbar_{R/A}\{i+n\}$, the $(i+n)$-Breuil--Kisin twisted
        Hodge--Tate cohomology of $R$ over $A$;
    \item[(d)] complete $\bZ$-filtered $A$-modules 
    \[
    \cdots\rightarrow \N^{\geq n+1}\Prismhat_{R/A}^{(1)}\{i\} \to \N^{\geq
        n}\Prismhat_{R/A}^{(1)}\{i\} \to \N^{\geq n-1}\Prismhat_{R/A}^{(1)}\{i\} \rightarrow\cdots
    \]
    for $i,n\in\bZ$, multiplicative in $i$ and $n$, called the 
    Nygaard filtration on the Nygaard-completed, Frobenius-twisted, Breuil--Kisin twisted prismatic
        cohomology $\Prismhat_{R/A}^{(1)}\{i\}$, which have the property that
        $\N^{\geq\star}\Prismhat_{R/A}^{(1)}\{i\}$ is constant for $\star \leq 0$;
    \item[(e)]  a $\varphi_A$-semilinear morphism $c\colon \Prism_{R/A}\{i\} \to \Prismhat_{R/A}^{(1)}\{i\}$ for every $i \in \bZ$;
    \item[(f)] a filtered relative Frobenius map
        $\N^{\geq\star}\Prismhat_{R/A}^{(1)}\{i\}\xrightarrow{\varphi_{/A}}
        \Prism_{R/A}^{[\star-i]}\{i\}$ for every  $i \in \bZ$ (the variable $\star$ is the filtration).
\end{enumerate}
We will not give a detailed review of the definition of these objects here and just note that the functor $\Prism_{R/A}$ can be constructed by taking the cohomology of the prismatic site of $R$
relative to $A$ when $R$ is $p$-adically formally smooth over $\overline{A}$, and then left Kan
extending in the $\infty$-category of $(p,I)$-complete $\bE_\infty$-$A$-algebras from formally smooth $\overline{A}$-algebras. 

    Let $\Prismhat_{R/A}^{(1)}=\Prismhat_{R/A}^{(1)}\{0\}$. In this case, the morphisms $c$ and
$\varphi_{/A}$ are maps of $\bE_\infty$-rings and the composition 
\[
\Prism_{R/A} \xto{c} \Prismhat_{R/A}^{(1)} \xto{\varphi_{/A}} \Prism_{R/A}
\]
induces a $\varphi_A$-semilinear endomorphism $\varphi$ that we call the (absolute)
Frobenius map.

We want to explain in which sense $\Prismhat_{R/A}^{(1)}$ is a completion to explain the
terminology: using the map $c$ one can `pull-back' the Nygaard filtration to get a non-complete
filtration on the Frobenius twisted prismatic cohomology
$\Prism_{R/A}^{(1)}\{i\}:=\Prism_{R/A}\{i\}\tensor_{A,\varphi_A}A$. 
Concretely this new filtration is given by the formula
\[
\N^{\geq\star}\Prism_{R/A}^{(1)}\{i\} := \N^{\geq\star}\Prismhat_{R/A}^{(1)}\{i\}
\times_{\Prismhat_{R/A}^{(1)}\{i\},\tilde c} \Prism_{R/A}^{(1)}\{i\},
\]
where $\tilde c$ is the $A$-linear map $\Prism_{R/A}^{(1)}\{i\} \to \Prismhat_{R/A}^{(1)}\{i\}$ induced from $c$. 
By construction the completion of $\Prism_{R/A}^{(1)}\{i\}$ with respect to
$\N^{\geq\star}\Prism_{R/A}^{(1)}\{i\}$ is Nygaard-completed prismatic
cohomology $\Prismhat_{R/A}^{(1)}\{i\}$.

\begin{definition} The objects $(a) - (f)$ are naturally objects of the $\infty$-category $\mathcal{C}_A$ consisting of quadruples $(H,N,c,\varphi)$ where 
    \begin{itemize}
    \item
    $H$ is a complete filtered, graded $\bE_\infty$-algebra over $A$, i.e., a lax symmetric monoidal functor 
    \[
    H\colon (\bZ, \geq ) \times \bZ \to \D(A),
    \]
    where $(\bZ,\geq)$ denotes the poset with the additive symmetric monoidal structure and $\bZ$ denotes the set with the additive symmetric monoidal
            structure. Completeness means that for fixed grading the inverse limit in the poset direction vanishes;
    \item
    $N$ is another complete filtered, graded  $\bE_\infty$-algebra over $A$ which is constant in
            non-positive filtration weights;

    \item $c$ is a  $\varphi_A$-semilinear map of graded $A$ algebras $H^{\geq 0} \to N^{\geq 0}$
        with certain extra structure which we will suppress for simplicity here;
    %
    \item $\varphi$ is a map of graded filtered algebras $N \to \mathrm{sh}(H)$ over $A$ where $\mathrm{sh}$ is the shearing of $H$ defined as  
    changing the filtration degree by subtracting the grading degree. 
    \end{itemize}
\end{definition}

There is an absolute form of the theory also introduced in \cite{prisms} and studied
in \cite{bhatt-lurie-apc}, yielding for commutative rings $R$ the following
objects:
\begin{enumerate}
    \setlength\itemsep{.75em}
    \item[(i)] an $\bE_\infty$-algebra $\Prism_{R}$ over $\bZ_p$;
    \item[(ii)] Breuil--Kisin twists $\Prism_{R}\{i\}$ for every integer $i\in\bZ$;
    \item[(iii)] complete Hodge--Tate towers $\Prism_{R}^{[\star]}\{i\}$ with weight $0$ part
        $\Prism^{[0]}_{R}\{i\}\we\Prism_R\{i\}$ and associated graded pieces
        $\gr^n\Prism_R^{[\star]}\{i\}\we\Prismbar_R\{i+n\}$, the $(i+n)$-Breuil--Kisin twisted
        absolute Hodge--Tate cohomology of $R$;
    \item[(iv)] Nygaard filtrations $\N^{\geq\star}\Prismhat_{R}\{i\}$;
    \item[(v)] maps $c\colon \Prism_{R}\{i\} \to \Prismhat_{R}\{i\}$;
    \item[(vi)] filtered Frobenius maps
        $\N^{\geq\star}\Prismhat_{R}\{i\}\xrightarrow{\varphi}
        \Prism_{R}^{[\star-i]}\{i\}$.
\end{enumerate}
These objects are naturally objects of the $\infty$-category $\mathcal{C}_{\bZ_p}$ of the previous
definition, where $\varphi_{\bZ_p} = \id$.  
The $p$-adic syntomic complexes are defined for $i\in\bZ$ as
\begin{enumerate}
    \item[(vii)] $\bZ_p(i)(R)=\fib\left(\N^{\geq
        i}\Prismhat_{R}\{i\}\xrightarrow{\can-c\varphi}\Prismhat_{R}\{i\}\right)$.
\end{enumerate}
For example, $\bZ_p(0)\we\lim\bZ/p^m$ as a flat sheaf and $\bZ_p(1)(R)\we\T_p\Gm$, the $p$-adic Tate
module of $\Gm$, while $\bZ_p(i)\we 0$ for $i<0$; see~\cite[Props.~6.16 and~6.17]{bms2}.

\subsection{Results}

To generalize syntomic cohomology to the case of a commutative algebra $R$ with bounded $p$-primary
torsion over a $\delta$-ring
$A$ with bounded $p$-primary torsion (called a bounded $\delta$-pair $(A,R)$), we extend prismatic cohomology and the structures
(a)-(f) above to the setting of commutative $R$-algebras over general $\delta$-rings $A$. 
Say that a $\delta$-pair $(A,R)$ is finitely presented free if $A$ is a finitely generated free
$\delta$-ring and $R$ is a finitely generated polynomial ring over $A$.
The main result of this paper is the following.

\begin{theorem}\label{thm:intro}
    There is a extension of prismatic cohomology to bounded $\delta$-pairs. More precisely we construct objects 
    $$
    \underline{\,\Prism\,}_{R/A} = 
    \left(\Prism_{R/A}^{[\star]}\{\star\},\N^{\geq\star}\Prismhat_{R/A}^{(1)}\{\star\},c,\varphi\right) \in \mathcal{C}_A
    $$
   depending functorially on $(R,A)$ which possess the following properties.
    \begin{enumerate}
        \setlength\itemsep{.75em}
        \item[{\em (1)}]\label{item:eins} {\em Insensitivity to localization and completion
            (Definition~\ref{def:crystal},
            Corollary~\ref{cor:inv_loc_comp}, and Corollary~\ref{cor:nygaardproperties}):} 
         $\underline{\,\Prism\,}_{R/A}$
            depends only on the $S$-localization of $A$ where $S$ is any set of elements of $A$
            that become invertible in $R$; similarly, $\underline{\,\Prism\,}_{R/A}$
            depends only on the derived $(p,K)$-adic completion of $A$ and the derived
            $p$-completion of $R$ where $K$ is any finitely generated ideal in the kernel of $A \to R$.
        \item[{\em (2)}]\label{item:zwei} {\em Relative prismatic comparison (Propositions~\ref{prop:comparison},
           \ref{prop:push}, \ref{prop:prismaticcomparison} and~\ref{prop:nygaardcomparison}, ~\cite[Thm.~7.17]{bhatt-lurie-apc}):} if $A$ is a prism and $R$ is an
            $\overline{A}$-algebra, then
            $\underline{\,\Prism\,}_{R/A}$
            agrees naturally with the derived prismatic cohomology of Bhatt and Scholze~\cite{prisms}.
        \item[{\em (3)}] {\em Absolute prismatic comparison (Example~\ref{exs:wc} and Remark \ref{rem:absolute_nygaard_comparison}):} if $A=\bZ_p$ or if $A=W(k)$ for a perfect
            $\bF_p$-algebra $k$ and if $R$ is an $A$-algebra, then
            $\underline{\,\Prism\,}_{R/A}$
            agrees naturally with the absolute prismatic cohomology of Bhatt and Lurie~\cite{bhatt-lurie-apc};
            note in this case that the Frobenius twist is equivalent to $\Prism_{R/A}$.
        \item[{\em (4)}] {\em Preservation of sifted colimits (Corollary~\ref{cor:kanextended} and \ref{cor:nygaardproperties}):} for varying $A$ and $R$,
            $\underline{\,\Prism\,}_{R/A}$
            is left Kan extended from finitely presented free $\delta$-pairs (as functors to
            $\mathcal{C}_\bZ$, i.e., we get colimits in complete filtrations).
        \item[{\em (5)}] {\em Quasisyntomic descent (Proposition~\ref{prop:cwdescent},
            Definition~\ref{def:relqsyn}, and
            Corollary~\ref{cor:nygaardproperties}):} for fixed $A$,
            $\underline{\,\Prism\,}_{R/A}$
            satisfies $p$-completely quasisyntomic descent in relatively quasisyntomic
            $\delta$-pairs $(A,R)$, see Definition \ref{def:relqsyn} .
        \item[{\em (6)}] {\em Quasismooth descent in $A$ (Proposition~\ref{prop:cwdescent}
            and Corollary~\ref{cor:nygaardproperties}):} for fixed $R$,
            $\underline{\,\Prism\,}_{R/A}$ satisfies $p$-completely quasismooth descent in
            relatively quasisyntomic $\delta$-pairs $(A,R)$.
        \item[{\em (7)}]
            {\em Base change (Corollary~\ref{cor:kanextended} and Corollary \ref{cor:nygaardproperties}):}
            $\underline{\,\Prism\,}_{R/A}$
            satisfies base change in $A$: for any map $A \to A'$ of $\delta$-rings the natural map $\Prism_{R/A}\tensor_A
            A'\rightarrow\Prism_{R\otimes_AA'/A'}$ is an equivalence after completing the left-hand
            side with respect to the Hodge--Tate filtration $\Prism_{R/A}^{[\star]}\tensor_AA'$ and
            similarly for the Nygaard-completed term.
        \item[{\em (8)}] {\em Restriction along relatively perfect maps
          (Proposition~\ref{prop:quasietalecrystal} and Corollary~\ref{cor:nygaardproperties}):} if $A\rightarrow A'$ is a $p$-adically relatively perfect map of
            bounded $\delta$-rings, then the map
            $\underline{\,\Prism\,}_{R/A} \to \underline{\,\Prism\,}_{R/A'}$
            is an equivalence for any bounded commutative $A'$-algebra $R$.
    \end{enumerate}
   The functor $(A,R) \mapsto \underline{\,\Prism\,}_{R/A}$ is uniquely determined by these properties (Proposition \ref{prop_unique}). 
\end{theorem}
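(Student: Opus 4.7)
The strategy is to construct the functor $\underline{\,\Prism\,}_{R/A}$ first on the subcategory of finitely presented free $\delta$-pairs, then left Kan extend to all bounded $\delta$-pairs inside $\mathcal{C}_A$, and finally use quasisyntomic descent to re-sheafify. On a finitely presented free $\delta$-pair $(A,R)$, $R$ is $p$-adically formally smooth over $A$ and one has a well-behaved prismatic site of $R$ relative to $A$, without needing $A$ to be a prism. Out of this site (or equivalently the relatively perfect prismatic stack of $R/A$) one reads off the full tuple: the Hodge--Tate tower $\Prism_{R/A}^{[\star]}\{\star\}$, the Nygaard-completed Frobenius twist $\N^{\geq\star}\Prismhat_{R/A}^{(1)}\{\star\}$, the semilinear map $c$, and the Frobenius $\varphi$. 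Left Kan extension in the complete-filtered $\infty$-category $\mathcal{C}_A$ automatically yields property (4) and, by naturality of the construction, functoriality in $(A,R)$.

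Properties (2), (3), (7), and (8) are verified on the formally smooth base and then propagated by Kan extension. For (2), if $A$ is already a prism and $R$ is an $\overline{A}$-algebra, one compares the sites: the site used to define $\underline{\,\Prism\,}_{R/A}$ contains as a cofinal subcategory the Bhatt--Scholze site of $R$ relative to $(A,I)$, giving an identification of site-theoretic prismatic cohomologies that extends to the full package, and the comparison then descends to all bounded $\delta$-pairs by the universal property of Kan extension. Property (3) is analogous, using the absolute prismatic stack of \cite{bhatt-lurie-apc} when $A = \bZ_p$ or $A = W(k)$. Base change (7) is clear on free $\delta$-pairs and is preserved by Kan extension because both sides compute colimits of complete-filtered objects in the same way; the Hodge--Tate and Nygaard completions on the target are forced by (5). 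Invariance under relatively perfect extensions (8) is most naturally proved using the crystal description: a $p$-adically relatively perfect map $A \to A'$ induces an equivalence of prismatic crystal categories on any $A'$-algebra $R$, compatible with Breuil--Kisin twists and Nygaard filtration, and the same equivalence is then inherited by the Kan-extended theory.

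The main obstacle is the interplay between the localization-invariance (1), the descent properties (5) and (6), and the uniqueness statement, because the site-theoretic definition on free $\delta$-pairs is manifestly sensitive to the $\delta$-structure on $A$ before one takes completions. The plan is to prove (1) in two stages: insensitivity to $S$-localization of $A$ at elements invertible in $R$ is checked by exhibiting a natural equivalence of prismatic sites (or prismatic crystal categories), using the $\delta$-structure to lift $S^{-1}$; insensitivity to derived $(p,K)$-adic completion then follows from the completeness of the Hodge--Tate and Nygaard filtrations, which makes each filtration-weight a derived $p$-complete $A/K^n$-module. With (1) in hand, quasisyntomic descent (5) is proved by reduction to relatively quasiregular semiperfectoid covers in the style of \cite{bms2}, adapted to the $\delta$-ring base, while quasismooth descent in $A$ (6) is deduced from (1), (4), and (7).

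For uniqueness, suppose $\mathcal{F}$ satisfies (1)--(8). By (5), (6) and the Kan extension statement in (4), $\mathcal{F}$ is determined by its values on finitely presented free $\delta$-pairs $(A,R)$. For such a pair, (1) lets us replace $A$ by its $p$-completion, (7) and (8) reduce $A$ to $\bZ_p$ after base-change through a relatively perfect cover, and finally (3) identifies $\mathcal{F}(\bZ_p, R\tensor_A \bZ_p)$ with absolute prismatic cohomology. Reassembling these identifications via (7) pins down $\mathcal{F}(A,R) \simeq \underline{\,\Prism\,}_{R/A}$ as objects of $\mathcal{C}_A$.
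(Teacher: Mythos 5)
Your construction strategy inverts the paper's: you propose to define the full package site-theoretically on finitely presented free $\delta$-pairs and then Kan extend, whereas the paper defines the prismatic crystal $\Hscr_\Prism(R/A)$ directly on all bounded $\delta$-pairs (by sending a transversal prism $(B,I)$ to $\Prism^\rel_{R\tensorhat\overline{B}/A\tensorhat B}$) and only afterwards deduces Kan-extension as a property via the $\delta$-conjugate filtration. This difference is not cosmetic: it creates a genuine gap around the Nygaard filtration. The prismatic site $(R/A)_\Prism$ yields the Hodge--Tate tower $\Prism^{[\star]}_{R/A}\{\star\}$ and the structure sheaf's Frobenius, but it does not produce the Nygaard-filtered Frobenius twist $\N^{\geq\star}\Prismhat^{(1)}_{R/A}\{\star\}$. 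That object has no site-theoretic description when $A$ is merely a $\delta$-ring: the paper has to build it from the Frobenius-twisted crystal $\Hscr^{(1)}_\Prism(R/A)$, the de Rham comparison on the Hodge--Tate locus (Lemma~\ref{lem:derham}), and a filtered pullback against the Hodge filtration on $\widehat{\dR}_{R/A}$ (Definition~\ref{Nygaard}). Your sentence ``out of this site \ldots one reads off the full tuple'' asserts this without justification, and that is precisely where the real technical work lives.

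The uniqueness argument also fails as stated. You write that ``(7) and (8) reduce $A$ to $\bZ_p$ after base-change through a relatively perfect cover,'' but there is no relatively perfect map from a finitely presented free $\delta$-ring $A$ to $\bZ_p$: relatively perfect maps go the wrong way, and base change to $\bZ_p$ along some $A\to\bZ_p$ would only determine the completed base change $\Prism_{R\tensor_A\bZ_p/\bZ_p}$, not $\Prism_{R/A}$ itself. Since $\Spf A$ cannot be covered by $\delta$-rings admitting maps to $\bZ_p$, the information is lost. The paper's uniqueness proof (Proposition~\ref{prop_unique}) is architecturally different: it first reduces to relatively quasiregular semiperfectoid $\delta$-pairs via unfolding in $A$ or $R$, then exploits the fact that Bhatt--Scholze's prismatic cohomology of qrsp pairs is discrete with injective Frobenius and admits an explicit prismatic-envelope description, and finally establishes rigidity by a localization argument and Lemma~\ref{lifting_1}. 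Your invocation of (3) at the end cannot be made to carry the argument without something like this qrsp reduction.

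Finally, a smaller issue: ``Kan extend, then use quasisyntomic descent to re-sheafify'' risks changing the values on the very free $\delta$-pairs you started from, since Kan extension of a presheaf is not a sheaf and sheafification is not the identity. The paper sidesteps this by proving descent (Proposition~\ref{prop:cwdescent}) and Kan-extension (Corollary~\ref{cor:kanextended}) as independent properties of a single globally defined functor, rather than using descent as a construction step.
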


Note that the previous theorem also implies that Bhatt--Scholze's relative prismatic cohomology only depends on the $\delta$-ring $A$ and not the prism structure, which is slightly surprising. 
As a consequence of Theorem~\ref{thm:intro}, there is an extension of the relative $p$-adic syntomic
complexes $\bZ_p(i)(R/A)$ to $\delta$-pairs. If $A\rightarrow R$ is a $\delta$-pair, we
let 
\[
\bZ_p(i)(R/A)=\fib\left(\N^{\geq
i}\Prismhat_{R/A}^{(1)}\{i\}\xrightarrow{\can-c\varphi}\Prismhat_{R/A}^{(1)}\{i\}\right)
\]
for $i\in\bZ$, the $i$th relative syntomic complex of $R$ over $A$.

It turns out that Nygaard-completed Frobenius twisted cohomology $\N^{\geq\star}\Prismhat_{R/A}$
satisfies a stronger form of descent for morphisms of $\delta$-pairs $(A,R)\rightarrow(B,S)$, requiring only
that $R\rightarrow S$ be of universal descent, meaning that for every map $R\rightarrow T$ of
animated commutative rings the limit of the 
\v{C}ech complex of $T\rightarrow T\otimes_RS$ computes $T$. However, note that this generality
requires prismatic cohomology relative to animated $\delta$-rings, which we will return to in
future work.
To avoid this, in Corollary~\ref{cor:intro}, one can assume that $(A,R)\rightarrow (B,S)$ be a map
of bounded $\delta$-pairs where $A\rightarrow B$ is flat and $R\rightarrow S$ is faithfully flat.

\begin{corollary}[Descent for syntomic cohomology (Corollary~\ref{cor:syntomicdescent})]\label{cor:intro}
    For each $i\in\bZ$,
    the relative $p$-adic syntomic complexes $\bZ_p(i)(R/A)$ satisfy descent for maps of pairs
    $(A,R)\rightarrow (B,S)$ such that $R\rightarrow S$ is
    a universal descent morphism (with no condition on $A \to B$).
\end{corollary}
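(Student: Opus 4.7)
The plan is to reduce the corollary to a descent statement for the complete filtered Nygaard-completed prismatic object that is highlighted in the paragraph preceding the corollary. By definition,
\[
\bZ_p(i)(R/A) = \fib\bigl(\N^{\geq i}\Prismhat_{R/A}^{(1)}\{i\} \xrightarrow{\can - c\varphi} \Prismhat_{R/A}^{(1)}\{i\}\bigr),
\]
and both the source and target are values of the complete filtered $A$-module $\N^{\geq \star}\Prismhat_{R/A}^{(1)}\{i\}$ at specific filtration weights (the source at weight $i$, the target at any weight $\leq 0$, by the constancy condition in the definition of $\mathcal{C}_A$). Since fibers commute with limits, it is enough to show that this complete filtered object satisfies descent along a universal descent morphism $R \to S$: writing $S^\bullet$ for the \v{C}ech nerve, the natural map
\[
\N^{\geq \star}\Prismhat_{R/A}^{(1)}\{i\} \to \Tot\bigl(\N^{\geq \star}\Prismhat_{S^\bullet/A}^{(1)}\{i\}\bigr)
\]
should be an equivalence of complete filtered $A$-modules.

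To establish this descent, I would first upgrade the construction $\Prismpackage_{R/A}$ to animated $\delta$-pairs, using the companion paper \cite{akn-derived-prism}, so that for any $R$-algebra $T$ the augmented \v{C}ech nerve $T \to T \otimes_R S^{\bullet+1}$ still gives a cosimplicial object of (animated) $\delta$-pairs over $A$ whose limit is $T$ by the universal descent hypothesis. By completeness of the Nygaard filtration, it then suffices to prove descent on each associated graded piece $\gr^n \Prismhat_{R/A}^{(1)}\{i\}$. These graded pieces are described by the conjugate filtration on the Hodge--Tate cohomology, whose pieces are in turn built from derived symmetric powers of shifts and twists of the cotangent complex.

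The key input is then universal descent for the cotangent complex itself: if $R \to S$ is a universal descent morphism, then for every $T$ over $R$ the natural map $\mathrm{L}_{T/A} \to \Tot(\mathrm{L}_{T \otimes_R S^\bullet / A})$ is an equivalence, and similarly after applying derived symmetric powers. This propagates through the conjugate filtration and through the Frobenius twist to give descent on graded pieces, and hence by completeness on $\N^{\geq \star}\Prismhat_{R/A}^{(1)}\{i\}$ itself. The main obstacle I expect is setting up the animated $\delta$-pair extension cleanly enough that the Nygaard graded pieces can be identified in a form that manifestly inherits universal descent from the cotangent complex; once that is in place, the deduction for $\bZ_p(i)(R/A)$ is formal from the fiber sequence above.
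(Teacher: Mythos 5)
Your proposal matches the paper's argument: Corollary~\ref{cor:syntomicdescent} is deduced from Corollary~\ref{cor:nygaardproperties}, which establishes descent for the Nygaard associated graded $\gr^i_\N\Prismhat_{R/A}^{(1)}\{j\}$ (identified in Lemma~\ref{lem:fibersequences} via the conjugate filtration as built from derived differential forms $\widehat{\L\Omega}^u_{R/A}$), and Lemma~\ref{lem:descentdifferentialforms} supplies descent for those derived differential forms along effective descent morphisms, with completeness of the Nygaard filtration then doing the rest. The one place you wave your hands (``similarly after applying derived symmetric powers'') is a genuine subtlety, since derived polynomial functors do not commute with totalizations in general; the paper's Lemma~\ref{lem:descentdifferentialforms} circumvents this by showing directly that $\widehat{\L\Omega}^*_{R/A}\to\widehat{\L\Omega}^*_{R^0/A^0}$ is itself an effective descent morphism of graded rings, using that its degree-zero part is one by hypothesis and the complementary fiber is positively graded.
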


Adapting an argument from~\cite[Lem.~7.22]{bms2} (see also~\cite[Thm.~5.1(2)]{ammn}), we also find that the relative $p$-adic syntomic complexes
$\bZ_p(i)(R/A)$ preserve sifted colimits and hence are left Kan extended from their values on
finitely presented free $\delta$-pairs.

\begin{corollary}[Left Kan extension of syntomic cohomology
    (Corollary~\ref{cor:syntomickanextension})]\label{cor:intro_kan}
    For each $i\in\bZ$,
    the relative $p$-adic syntomic complexes $\bZ_p(i)(R/A)$ are left Kan extended from finitely
    presented free $\delta$-pairs (as a functor to the $p$-complete derived category
    $\D(\bZ)_p^\wedge$).
\end{corollary}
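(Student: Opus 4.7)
The plan is to reduce this to sifted colimit preservation of the functor $(A,R) \mapsto \bZ_p(i)(R/A)$. Since finitely presented free $\delta$-pairs generate all bounded $\delta$-pairs under sifted colimits, sifted colimit preservation is equivalent to the claim that $\bZ_p(i)(R/A)$ coincides with its own left Kan extension from finitely presented free $\delta$-pairs.

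Writing
\[
\bZ_p(i)(R/A) = \fib\bigl( \N^{\geq i}\Prismhat_{R/A}^{(1)}\{i\} \xrightarrow{\can - c\varphi} \Prismhat_{R/A}^{(1)}\{i\} \bigr),
\]
and recalling that $\fib$ commutes with sifted colimits, it suffices to show that the source and target preserve sifted colimits in $(A,R)$. Theorem~\ref{thm:intro}(4) provides left Kan extension of $\underline{\,\Prism\,}_{R/A}$ from finitely presented free $\delta$-pairs as a functor into $\mathcal{C}_\bZ$, the category of complete filtered graded objects. The main subtlety is that evaluating at a specific filtration level, such as $\N^{\geq i}$ or the underlying object $\N^{\geq 0}$, does not a priori commute with sifted colimits, because colimits in complete filtered objects are obtained from the levelwise colimit by completion, and completion is a limit operation.

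Following the strategy of \cite[Lem.~7.22]{bms2} (see also \cite[Thm.~5.1(2)]{ammn}), I would proceed in two steps. First, using Proposition~\ref{prop_uncompleted}, I would replace the Nygaard-completed filtration by the uncompleted Nygaard filtration, rewriting $\bZ_p(i)(R/A)$ as the fiber of $\can - c\varphi\colon \N^{\geq i}\Prism_{R/A}^{(1)}\{i\} \to \Prism_{R/A}^{(1)}\{i\}$. The uncompleted filtration is defined as a pullback involving the Frobenius-twisted prismatic cohomology $\Prism_{R/A}^{(1)}\{i\}$, which isolates the non-completion part of the problem and allows one to handle one side directly via the complete filtered left Kan extension of Theorem~\ref{thm:intro}(4). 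Second, I would exploit the filtered Frobenius $\varphi\colon \N^{\geq\star}\Prismhat_{R/A}^{(1)}\{i\} \to \Prism_{R/A}^{[\star-i]}\{i\}$: on associated gradeds it is controlled by the Hodge--Tate cohomology $\overline{\Prism}_{R/A}\{j\}$, which preserves sifted colimits since gradation is unaffected by completion. An induction on the filtration combined with the completeness of the Nygaard tower and the relevant connectivity bounds after $p$-completion then propagates sifted colimit preservation from the gradeds back to the filtration levels appearing in the syntomic fiber.

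The main obstacle is precisely this interplay among the fiber construction, the extraction of specific filtration levels, and the Nygaard completion. The technical heart of the argument is showing that, despite completion failing to commute with sifted colimits on general complete filtered objects, the particular combination appearing in the syntomic fiber interacts well with the Hodge--Tate graded structure of the filtered Frobenius, so that after $p$-completion one recovers sifted colimit preservation and hence the desired left Kan extension property.
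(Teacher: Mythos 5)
Your proposal correctly identifies the references and the central difficulty, but the argument as sketched has a gap. The first step — passing to the uncompleted Nygaard filtration via Proposition~\ref{prop_uncompleted} — does not actually isolate the problem, because the uncompleted objects $\Prism^{(1)}_{R/A}\{i\}$ and $\N^{\geq i}\Prism^{(1)}_{R/A}\{i\}$ are themselves \emph{not} left Kan extended from finitely presented free $\delta$-pairs: see Warning~3.12, which shows already that $\Prism_{-/-}$ fails to preserve sifted colimits because the colimit of an $(p,I)$-complete family need not be $(p,I)$-complete. Only the \emph{complete filtered} objects $\Prism^{[\star]}_{R/A}\{\star\}$ and $\N^{\geq\star}\Prismhat^{(1)}_{R/A}\{\star\}$, as functors to complete filtrations, are left Kan extended; extracting a single filtration level is still a limit operation.

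The step that actually breaks the impasse is not supplied by ``connectivity bounds after $p$-completion,'' nor by induction on the Hodge--Tate conjugate filtration; it is the specific $p$-divisibility of $c$ encoded in the structure of $\Cscr_A$: modulo $p$, the map $c$ raises Nygaard filtration degree by a factor of $p$, $c\colon H^{\geq \star}/p \to N^{\geq p\star}/p$. This is Lemma~7.7, and it is what lets Proposition~\ref{prop:syntomic_bound} conclude that for $(p-1)j \geq i$ one may replace $\N^{\geq\star}$ by the finite truncation $\N^{\geq\star}/\N^{\geq j}$ without changing $\bF_p(i)$. Since the truncated objects only see finitely many Nygaard graded pieces, the functor $\bF_p(i)\colon\Gscr_\bZ\to\D(\bZ)^\wedge_p$ (and then $\bZ_p(i)$, landing in $p$-complete complexes) preserves all colimits (Corollary~\ref{cor_sifted}). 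Combined with Theorem~\ref{thm:kan_extended_package}, which says the prismatic package $\Prismpackage_{R/A}\in\Cscr_\bZ$ is left Kan extended, and the factorization of $\bZ_p(i)(R/A)$ through $\Prismpackage_{R/A}$, the corollary follows. Your intuition to look at graded pieces and invoke the Frobenius structure is sound, but you need to name the factor-of-$p$ filtration raising of $c\varphi$ mod $p$ explicitly; without it there is no mechanism for truncating the complete Nygaard tower at a finite level, and the induction you describe has nothing to propagate.
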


\begin{example}
    If $A=\bZ$, the initial $\delta$-ring, then by Theorem~\ref{thm:intro}(3), $\Prism_{R/\bZ}$ recovers absolute prismatic
    cohomology $\Prism_R$ and $\N^{\geq\star}\Prismhat^{(1)}_{R/\bZ}$ agrees with
    $\N^{\geq\star}\Prismhat_R$. It follows that $\bZ_p(i)(R/\bZ)$
    agrees with the $p$-adic syntomic complexes defined in~\cite{bms2}. The Frobenius endomorphism of $\bZ$ is the identity,
    which explains why no Frobenius twists appear for either the Nygaard filtration or the $p$-adic
    syntomic complexes in the absolute case.
\end{example}


\subsection{Construction}

One can use the descent properties of Theorem~\ref{thm:intro} to reduce to the derived prismatic
cohomology of~\cite{prisms}.
However, this makes it very hard to verify all the  properties stated in Theorem \ref{thm:intro} and also makes it hard to compare it to different definitions. Therefore we use the theory of prismatic crystals to give a direct definition. Prismatic crystals are equivalent to quasi-coherent sheaves on  the
Cartier--Witt stack $\WCart$ of Drinfeld~\cite{drinfeld-prismatization} and Bhatt--Lurie~\cite{bhatt-lurie-apc}. For every bounded $\delta$-pair $(R,A)$ we introduce prismatic crystals $\Hscr^{[n]}_\Prism(R/A)\{i\}$ and set
\begin{align*}
    \Prism_{R/A}^{[n]}\{i\}&=\R\Gamma(\WCart,\Hscr_\Prism^{[n]}(R/A)\{i\}).
\end{align*}
Concretely the value of $\Hscr^{[n]}_\Prism(R/A)\{i\}$ on a transversal prism $(B,J)$ is given by
derived relative prismatic cohomology
$\Prism^{\rel,[n]}_{R\tensorhat\overline{B}/A\tensorhat B}\{i\}$ as constructed in~\cite{prisms}
(with notation from~\cite{bhatt-lurie-apc}).
Here $A \tensorhat B$ denotes the prism obtained by $(p,J)$-adically completing the $\delta$-ring
$A \otimes B$ and giving it the prism structure induced from $J$, while $R\tensorhat\overline{B}$ is
the $p$-completion of $R \otimes \overline{B} = R \otimes B/J$.

Using this description many of the properties of relative prismatic cohomology, such as the
following Hodge--Tate comparison, follow immediately from
properties of prismatic cohomology relative to prisms. For fixed $i$ we let $\Prismbar_{R/A}\{i\}$ be the zero'th graded of the Hodge-Tate tower $\Prism_{R/A}^{[\star]}\{i\}$, i.e. $\Prism_{R/A}^{[0]}\{i\} / \Prism_{R/A}^{[1]}\{i\}$.

\begin{proposition} {\em Hodge--Tate comparison (Proposition~\ref{prop:conjugate}):}
    There is a natural exhaustive increasing
    filtration $\F_{\leq\star}^{\delta\conj}\Prismbar_{R/A}\{i\}$ on $\Prismbar_{R/A}\{i\}$, called the $\delta$-conjugate
    filtration, whose graded pieces are given as
    \[
    \gr_u^{\delta\conj}\Prismbar_{R/A}\{i\}\we\L\Omega^u_{R/A}\tensor\fib(\bZ_p\xrightarrow{i-u}\bZ_p)[-u]
    \]
    for
    all $u\in\bZ$.
 \end{proposition}      

Our conjugate filtration does not agree with the conjugate filtration
constructed in~\cite{prisms} in the prismatic case; see Warning~\ref{warn:conjugate}.
Rather it is an analog of the conjugate filtration on absolute prismatic cohomology
studied in~\cite{bhatt-lurie-apc}.

The construction of the Nygaard-completed prismatic cohomology and the Nygaard filtration will also
be given using the language of prismatic crystals as is done in~\cite[Sec.~5.5]{bhatt-lurie-apc}.

Finally we note that one can also give other descriptions of relative prismatic cohomology that are
often more accessible or easier to describe.

\begin{theorem}
    \begin{enumerate}
        \item[{\em (1)}] {\em Site-theoretic comparison (Theorem~\ref{thm:equivalence}):} when $\L_{R/A}$ has $p$-complete
            Tor-amplitude in $[0,1]$, $\Prism_{R/A}$ has a site-theoretic description, that is it agrees with the cohomology of the relative prismatic site
            $(R/A)_\Prism$ as defined in Section \ref{sec:site}.
        \item[{\em (2)}] {\em Comparison to prismatization (Proposition~\ref{prop:push}):} for each bounded $\delta$-pair
            $(R/A)$, there is a formal stack $\WCart_{R/A}$ over $\WCart$. The pushforward of the
            structure sheaf $\Oscr_{\WCart_{R/A}}$ along $\WCart_{R/A} \to \WCart$ agrees with $\Hscr_\Prism(R/A)$ when $\L_{R/A}$
            has $p$-complete Tor-amplitude in $[0,1]$. In particular in this situation the global
            sections of the structure sheaf of $\WCart_{R/A}$ is equivalent to $\Prism_{R/A}$.
    \end{enumerate}
\end{theorem}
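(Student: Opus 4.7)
The plan is to establish (2) first by constructing $\WCart_{R/A}$ as a moduli stack and verifying the pushforward formula fiberwise over transversal prisms, and then to deduce (1) by identifying the relative prismatic site with the category of transversal-prism points of $\WCart_{R/A}$.

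For (2), I would define $\WCart_{R/A}$ as the formal stack whose $T$-points consist of a Cartier--Witt divisor $\alpha\colon I \hookrightarrow W(T)$, a $\delta$-map $A \to W(T)$, and an $A$-algebra map $R \to W(T)/\alpha(I)$. This is manifestly a formal stack over $\WCart$ depending functorially on the $\delta$-pair. To identify $\pi_* \Oscr_{\WCart_{R/A}}$ with $\Hscr_\Prism(R/A)$ it suffices to pull back along the classifying map $\Spf B \to \WCart$ of an arbitrary transversal prism $(B,J)$: the resulting fiber classifies prism maps $A \tensorhat B \to B'$ lifting a given $A$-algebra map $R \tensorhat \overline{B} \to \overline{B'}$, and by universality this is the formal spectrum of the derived prismatic envelope. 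Under the hypothesis that $\L_{R/A}$ has $p$-complete Tor-amplitude in $[0,1]$, this envelope is concentrated in degree $0$ and agrees with the relative prismatic cohomology $\Prism^{\rel}_{R\tensorhat\overline{B}/A\tensorhat B}$, which is by definition the value of $\Hscr_\Prism(R/A)$ on $(B,J)$. The Breuil--Kisin twists and the Hodge--Tate filtration follow by the same computation after tensoring with the corresponding line bundles and ideals on $\WCart$.

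For (1), I would identify the relative prismatic site $(R/A)_\Prism$ of Section~\ref{sec:site} with the opposite of the category of affine transversal-prism points of $\WCart_{R/A}$, equipped with the $(p,J)$-completely flat topology. Under the Tor-amplitude hypothesis, the prismatic envelope $D$ of a surjection $P \twoheadrightarrow R$ from a finitely presented free $\delta$-$A$-algebra $P$ (after choosing a lift of $P$ to a prism over $A$) gives a weakly initial object of this site. The \v{C}ech nerve of $D$ then computes both $\R\Gamma((R/A)_\Prism, \Oscr_\Prism)$ by weak initiality, and $\R\Gamma(\WCart_{R/A}, \Oscr_{\WCart_{R/A}})$ by flat descent along the cover $\Spf D \to \WCart_{R/A}$. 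Combined with part (2), this shows that both cohomologies coincide with $\Prism_{R/A}$.

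The main technical obstacle is to show that the relevant derived prismatic envelope over a general $\delta$-base exists, commutes with base change along transversal prisms, and is concentrated in degree $0$ under the Tor-amplitude assumption. This is the analog of the Rees-flatness argument of~\cite{prisms}, adapted to the $\delta$-algebra setting over an arbitrary $\delta$-ring. A cleaner alternative is to bootstrap from finitely presented free $\delta$-pairs (where the envelope is explicit and classical) by invoking Theorem~\ref{thm:intro}(4) on sifted colimit preservation together with Theorem~\ref{thm:intro}(5)--(6) on quasisyntomic and quasismooth descent, and by verifying that both the site-theoretic and stack-theoretic descriptions satisfy the same descent, so that the comparison on free $\delta$-pairs propagates to all bounded $\delta$-pairs satisfying the Tor-amplitude hypothesis.
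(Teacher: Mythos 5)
Your proposal for (2) has a genuine gap at its core. You claim that the fiber of $\WCart_{R/A}\to\WCart$ over $\SSpf B$ is ``the formal spectrum of the derived prismatic envelope,'' and that under the hypothesis $\L_{R/A}$ has $p$-complete Tor-amplitude in $[0,1]$ this envelope ``is concentrated in degree $0$.'' Neither of these is true at that level of generality. The fiber is $\WCart_{R\tensorhat\overline{B}/A\tensorhat B}$, and it is affine --- i.e.\ of the form $\SSpf$ of a discrete ring --- only when $\L$ has $p$-complete Tor-amplitude in $[1,1]$ (relatively quasiregular semiperfectoid case, \cite[Cor.~7.18]{bhatt-lurie-prism}); for Tor-amplitude $[0,1]$, relative prismatic cohomology is typically spread over many cohomological degrees (smooth $R$ over a prism already gives nontrivial Hodge--Tate cohomology in positive degrees). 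The paper instead pulls back along $\rho_B$ using a Tor-independence argument (Lemma~\ref{lem:square}), insensitivity to $I$-adic completion (Lemma~\ref{lem:Iinsensitive}), and most importantly a nontrivial base change theorem for bounded above quasi-coherent cohomology (Corollary~\ref{cor:1}, proved in the appendix), then quotes \cite[Thm.~6.4, Rem.~7.23]{bhatt-lurie-prism}. Your proof shortcuts exactly the part where the technical work is.

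For (1) there is a second gap: you propose a weakly initial object obtained as ``the prismatic envelope of a surjection $P\twoheadrightarrow R$ from a finitely presented free $\delta$-$A$-algebra $P$ (after choosing a lift of $P$ to a prism over $A$).'' But $A$ is only a $\delta$-ring, not a prism, so there is no prism lying over $A$ to lift $P$ to, and no prismatic envelope can be formed directly over $A$. This is precisely the problem the paper's argument is designed to solve. The paper's route is to use relative quasisyntomic descent for both the site-theoretic and stack-theoretic definitions along the specific cover $R\to R[z^{1/p^\infty}]^\wedge/(z-p)$ (Lemma~\ref{lem:specialdescent} for the site, Proposition~\ref{prop:cwdescent} plus Corollary~\ref{cor:pcwc} for the stack); the covers factor through $A'=A[z^{1/p^\infty}]_{(p,z-p)}^\wedge$, which \emph{is} a prism; and Rezk's formal-\'etale insensitivity lemma (Lemma~\ref{lem:rezk}, applied in Lemmas~\ref{lem:fetale1} and~\ref{lem:fetale2}) lets one replace the base $A$ by $A'$ on both sides without changing the answer. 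Your ``cleaner alternative'' (descent plus left Kan extension from free $\delta$-pairs) runs into a different issue: the paper establishes sifted-colimit preservation and descent for the crystal/stack definition $\Prism_{R/A}$, but the site-theoretic definition $\Prism_{R/A}^\s$ is not shown (and would not obviously be) left Kan extended from free pairs, so one cannot appeal symmetrically to Theorem~\ref{thm:intro}(4). One really needs a direct descent argument on the site side, which is what Lemma~\ref{lem:specialdescent} provides.
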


\subsection{Applications}

The idea that there should be relative $p$-adic syntomic complexes in the generality of this paper is motivated
by topological considerations such as applying $\TC$ to $\THH(R/\bS[z])$ where $z$ maps to some
element of $R$. In this case, the complexes could have been defined in~\cite[Sec.~11]{bms2}. The
idea that
the relative syntomic complexes should satisfy descent results as in Corollary~\ref{cor:intro} is
heavily influenced by the work of Liu and Wang~\cite{liu-wang} on $\TC_*(\Oscr_K;\bF_p)$, in which the
authors recover Hesselholt and Madsen's verification~\cite{hesselholt-madsen} of the Quillen--Lichtenbaum conjecture for
local fields, and
by~\cite{krause-nikolaus} on $\THH$ of (quotients of) discrete valuation rings. These papers both
take a topological approach to calculations using descent, so it was natural to look for a purely
prismatic approach. The explicit connection between the two approaches will be
studied in future work in the context of the modern approach to cyclotomic spectra
developed in~\cite{nikolaus-scholze}.

In~\cite{akn-kzpn} (see~\cite{akn-announcement} for a survey), we use Corollary~\ref{cor:intro}(d) for $R=\Oscr_K/\varpi^n$ when $K$ is
a finite extension of $\bQ_p$ with uniformizer $\varpi$ and residue field $k$ to show that the absolute $p$-adic syntomic
complexes $\bZ_p(i)(\Oscr_K/\varpi^n)$
can be computed from descent along $W(k)\rightarrow W(k)\llbracket z\rrbracket$ using the relative
$p$-adic syntomic syntomic complexes. The point is that $\bZ_p(i)(\Oscr_K/\varpi^n/W(k)\llbracket
z\rrbracket)$ admits a purely algebraic description in terms of the prismatic envelopes
introduced in~\cite{prisms}.

In order to make this approach amenable to computer
calculations, we use the $\varpi$-adic filtration and argue that one has to compute
$\bZ_p(i)(\Oscr_K/\varpi^n)$ only up to finite filtration level. To make this precise, we introduce
filtered prismatic cohomology below, following a suggestion of Bhatt. It turns out that this is most
naturally viewed as a specific form of prismatic cohomology relative to a $\delta$-stack,
$\Ahat/\Gmhat$, so it fits naturally into the context of the present paper.
We use filtered prismatic cohomology to prove the following result.

\begin{theorem}[Crystalline degeneration]
    If $\F^{\geq\star} R$ is a filtered $p$-complete commutative ring with $\F^0 R=R$ and which is
    constant for $\star\leq 0$, then the syntomic complexes of $R$ and $\gr^\star R$
    admit natural filtrations $\F^{\geq\star}\bZ_p(i)(R)$ and $\F^{\geq\star}\bZ_p(i)(\gr^\star R)$
    and there are natural identifications
    $\gr^\star\bZ_p(i)(R)\we\gr^\star\bZ_p(i)(\gr^\star R)$.
\end{theorem}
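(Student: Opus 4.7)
The plan is to realize this as an instance of prismatic cohomology relative to the $\delta$-stack $\Ahat/\Gmhat$, as foreshadowed in the introduction. Via the Rees construction, a complete $p$-complete filtered ring $\F^{\geq\star}R$ with $\F^0R=R$ and constant in non-positive weights is equivalent to a $p$-complete $\Gmhat$-equivariant commutative algebra $\widetilde{R}$ on $\Ahat = \Spf\bZ[\![t]\!]$ (with $t$ of weight $1$), i.e., a quasi-coherent algebra on the stack $\Ahat/\Gmhat$. Equipping $\Ahat$ with its natural $\delta$-structure ($\delta(t)=0$), this promotes $\widetilde{R}$ to a $\delta$-algebra over $\Ahat/\Gmhat$. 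Filtered prismatic cohomology is then defined as $\Prismpackage_{\widetilde{R}/(\Ahat/\Gmhat)}$, extending Theorem~\ref{thm:intro} to the stacky setting, and the filtered absolute syntomic complex $\F^{\geq\star}\bZ_p(i)(R)$ is obtained by applying the fiber $\fib(\can-c\varphi)$ in filtered objects.

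The core step is then to identify the underlying object and the associated graded via base change to the two distinguished substacks of $\Ahat/\Gmhat$: the open point $\mathrm{pt}\hookrightarrow \Ahat/\Gmhat$ corresponding to $t=1$, and the closed substack $B\Gmhat\hookrightarrow \Ahat/\Gmhat$ corresponding to $t=0$. Pulling back $\widetilde{R}$ along these recovers $R$ and $\gr^\star R$ respectively (the former since the filtration is exhaustive, the latter by construction of the Rees algebra). Applying the stacky extension of the base change property (Theorem~\ref{thm:intro}(7)) to these inclusions, the underlying object of $\Prismpackage_{\widetilde R/(\Ahat/\Gmhat)}$ identifies with the absolute prismatic package $\Prismpackage_R$, while its associated graded (the restriction to $B\Gmhat$) identifies with the absolute prismatic package $\Prismpackage_{\gr^\star R}$ endowed with the grading induced from $\gr^\star R$. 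Restriction to the open point is an instance of localization insensitivity (Theorem~\ref{thm:intro}(1)), while restriction to $B\Gmhat$ uses the stacky base change for the closed immersion $\bZ_p[\![t]\!] \to \bZ_p[\![t]\!]/(t)$ together with Hodge--Tate and Nygaard completeness.

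Since the syntomic fiber $\fib(\can-c\varphi)$ is built functorially out of the package $\Prismpackage$ and commutes with base change in filtered/graded objects, passing to the $\star$-th associated graded piece yields
\[
\gr^\star \bZ_p(i)(R) \;\simeq\; \gr^\star \bZ_p(i)(\gr^\star R),
\]
where the right-hand side denotes the weight-$\star$ component of the grading on $\bZ_p(i)$ of the graded ring $\gr^\star R$ (which agrees with the analogous filtered construction applied to $\gr^\star R$ viewed as a filtered ring via its grading, by naturality of Rees).

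The principal technical difficulty is setting up prismatic cohomology over a $\delta$-stack such as $\Ahat/\Gmhat$ and verifying the two base change statements above. Of these, the base change along the closed immersion $B\Gmhat\hookrightarrow\Ahat/\Gmhat$ is the delicate one: it is a stacky analog of the Hodge--Tate-completed base change of Theorem~\ref{thm:intro}(7), and controlling it requires that the relevant completions (both the Hodge--Tate and the Nygaard filtrations) are compatible with the $\Gmhat$-equivariant structure coming from Rees. Once this framework is established, the remaining identifications follow formally from the descent, base change, and functoriality properties listed in Theorem~\ref{thm:intro}.
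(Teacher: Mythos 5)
Your proposal follows the paper's approach (Section~\ref{sec:filtered}, in particular Construction~\ref{const:filtered_syntomic} and Corollaries~\ref{cor:underlying} and~\ref{cor:underlying_syntomic}): pass via the Rees construction to the $\delta$-stack $\Ahat/\Gmhat$, pull back the filtered prismatic package to the open point $\Gmhat/\Gmhat\hookrightarrow\Ahat/\Gmhat$ to recover $\Prismpackage_R$ and to the closed substack $\B\Gmhat$ to recover $\Prismpackage_{\gr^\star R}$, and then carry both identifications through $\fib(\can-c\varphi)$.

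The one step that does not go through as written is the open-point identification. You appeal to localization insensitivity (Theorem~\ref{thm:intro}(1)); but that statement requires that the elements of the base $\delta$-ring being inverted also become invertible in the source ring, and the Rees coordinate $t^{-1}\in\Rees(\bZ_p)$ does \emph{not} become invertible in $\Rees(\F^\star R)$ — it acts as the transition map $\F^{m+1}R\to\F^m R$, which fails to be an isomorphism for $m\geq 0$ unless the filtration is trivial. Conversely, Theorem~\ref{thm:intro}(7) alone would only provide base change after Hodge--Tate completion, whereas the content of the claim is that $\F^{-\infty}\Prism_{\F^\star R/\bZ_p}$ recovers $\Prism_R$ on the nose, with no extra completion. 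The paper resolves this via Proposition~\ref{prop:adjointable} together with the warning and remark following Corollary~\ref{cor:underlying}: base change for the $t=1$ square holds without completion because the filtered prismatic crystal is \emph{eventually constant} in the non-positive range — this is precisely the hypothesis that $\F^{\geq\star}R$ is constant for $\star\leq 0$. You correctly identify exhaustiveness of the filtration as the crucial hypothesis, but the mechanism you need is this eventual-stabilization base change rather than the localization statement. (A smaller matter: per Warning~\ref{warn:formalaffineline}, $\Ahat$ means $\Spf\bZ[t]$ with the $p$-adic topology, not $\Spf\bZ[\![t]\!]$; the Rees algebra of a general filtered ring is \emph{not} $t$-adically complete, and the whole formalism is designed to sidestep that completion.)
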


We call this phenomenon crystalline degeneration
as in the cases of interest to us in~\cite{akn-kzpn}, which is the ring $\bZ/p^n$ with the $p$-adic filtration, we have
$p\in\F^{\geq 1} R$ so that the associated graded is an $\bF_p$-algebra.

\paragraph{Outline.}
We give three different constructions of prismatic cohomology relative to a $\delta$-ring $A$ in
Sections~\ref{sec:site},~\ref{sec:crystal}, and~\ref{sec:prismatization}: one
is site-theoretic and extends the prismatic site
of~\cite{prisms}; one is by constructing prismatic crystals $\Hscr_\Prism(R/A)$ on the stack
$\WCart$ of Bhatt--Lurie~\cite{bhatt-lurie-apc} and Drinfeld~\cite{drinfeld-prismatization}. The
final method is via a prismatization and gives
relative Cartier--Witt stacks $\WCart_{R/A}$, extending the definition of the relative
prismatization from~\cite{bhatt-lurie-prism}. We compare these approaches under mild hypotheses in
Section~\ref{sec:comparisons} and we introduce the Frobenius twisted variants and the Nygaard
filtration in Section~\ref{sec:nygaard}. We discuss syntomic
cohomology in Section~\ref{sec:syntomic} and we wrap it all up and discuss the prismatic package in
Section~\ref{sec:package}. We discuss the use of relative quasisyntomic descent to compute prismatic
cohomology relative to $\delta$-rings in Section~\ref{sec:descent} and prove the uniqueness statement
of Theorem~\ref{thm:intro} there.
In Section~\ref{sec:filtered}, we explain how to work over the $p$-completion of $\bA^1/\Gm$ to construct filtered variants.
Finally, we explain how the theory of prismatic cohomology relative
to $\delta$-rings gives a natural way to extend the theory of prismatic cohomology to
filtered and graded rings by working over the stack $\WCart_{\bA^1/\Gm}$.
In Appendix~\ref{app}, we give background on the theory
of quasi-coherent sheaves on formal stacks and prove some results on base change for quasi-coherent
cohomology needed for the computation of prismatic crystals.

\paragraph{Background.} We will freely use the theory of prismatic cohomology as developed by
Bhatt, Morrow, and Scholze~\cite{bms2}, Bhatt and Scholze~\cite{prisms}, and Bhatt and
Lurie~\cite{bhatt-lurie-apc,bhatt-lurie-prism}. In particular, we use the notions of quasisyntomic
rings from~\cite{bms2}, $\delta$-rings from~\cite{prisms} (and going back
to~\cite{joyal-delta,joyal-witt}), animated $\delta$-rings and animated prisms
from~\cite{bhatt-lurie-prism}, generalized Cartier--Witt divisors
from~\cite{bhatt-lurie-apc,bhatt-lurie-prism}, and the prismatic site from~\cite{prisms} as well as
its animated analogue from~\cite{bhatt-lurie-prism}. We also use animated commutative and derived
commutative rings (as defined by Mathew), see \cite{raksit} for the latter.


\paragraph{Notation.} Throughout this paper, we fix a prime number $p$. All $\delta$-ring theoretic
notions are taken with respect to $p$ and quasisyntomic will be shorthand for quasisyntomic with
respect to the prime $p$. Thus, for example, a map $A\rightarrow A'$ of commutative rings is
quasisyntomic if $A'$ is $p$-completely flat over $A$ and $\L_{A'/A}$ has $p$-complete Tor-amplitude
in $[0,1]$ (where we index throughout using the homological convention). Note that our
definition of quasisyntomic is not the same as that found in~\cite{bms2}, which requires the rings
themselves to be derived $p$-complete, but agrees with the notion of $p$-quasisyntomic morphisms
found in~\cite[Def.~C.9]{bhatt-lurie-apc}.
A commutative ring is bounded if it has bounded $p$-power torsion.
A map $A\rightarrow A'$ of commutative rings is $p$-completely quasismooth if $\L_{A'/A}$ has
$p$-complete Tor-amplitude in $[0,0]$.
A map $A\rightarrow A'$ of commutative rings is $p$-completely quasi-\'etale if $\L_{A'/A}$ vanishes $p$-adically.

\paragraph{Acknowledgments.} We thank Bhargav Bhatt, Adam Holeman, Deven Manam, and Akhil Mathew for helpful
conversations about this material and Johannes Ansch\"utz and Noah Riggenbach for extensive comments on a
draft. We also enjoyed two productive visits to Oberwolfach when many of
the details in this paper were worked out. We thank MFO and its staff for providing such an ideal
setting for research. We also thank the referees for extensive comments which have helped
us substantially improve the paper.

This paper also benefited from the opportunity of two of its authors to give a masterclass on the
topic in Copenhagen in early 2023. We thank the organizers (Shachar Carmeli, Lars Hesselholt, Ryomei
Iwasa, and Mikala Jansen) and the
Copenhagen Centre for Geometry and Topology for the opportunity.

The first author was supported by NSF grants DMS-2120005, DMS-2102010, and DMS-2152235 and by Simons Fellowship 666565;
he would like to thank Universit\"at M\"unster for its hospitality during a visit in 2020. 
The second and third author were funded by the Deutsche Forschungsgemeinschaft
(DFG, German Research Foundation) – Project-ID 427320536 – SFB 1442, as well as
under Germany’s Excellence Strategy EXC 2044 390685587, Mathematics Münster:
Dynamics–Geometry–Structure. They would also like to thank the Mittag--Leffler
Institute for its hospitality while working on this project.

\section{The prismatic site}\label{sec:site}

In this first section we define a relative prismatic site of a $\delta$-pair $(A,R)$. The
cohomology of this site with respect to
the structure sheaf $\Oscr_\Prism$ is a version of relative prismatic cohomology which we call the
\emph{site-theoretic (relative) prismatic cohomology} $\Prism_{R/A}^{\s}$. We will see later that
the site-theoretic prismatic cohomology agrees on a large class of rings with the `correct'
prismatic cohomology. We begin this paper with the site-theoretic variant since it is the easiest
one to understand conceptually.

\begin{definition}[$\delta$-pairs]
    Let $\Pairs^\delta$ be the category consisting of $\delta$-pairs $(A,R)$ (or $A\rightarrow R$), meaning a $\delta$-ring $A$
    and a commutative $A$-algebra $R$. The morphisms $(A,R)\rightarrow(A',R')$ consist of
    commutative diagrams
    $$\xymatrix{
        A\ar[r]\ar[d]&A'\ar[d]\\
        R\ar[r]&R',
    }$$ where $A\rightarrow A'$ is a $\delta$-ring map.
    In particular, we emphasize that
    $R$ is not equipped with a $\delta$-ring structure, in fact it typically does not even admit a (natural) $\delta$-ring structure.
    In general, we say a $\delta$-pair is bounded, (derived) $p$-complete, or quasisyntomic if $A$ and $R$ are
    bounded, (derived) $p$-complete, or quasisyntomic as commutative rings.
\end{definition}

\begin{definition}[Pre-prismatic $\delta$-pairs]\label{prismatic}
    Say that a $\delta$-pair $(A,R)$ is
    \emph{pre-prismatic} if the kernel of $A\rightarrow R$ contains an invertible ideal $I$ such that
    Zariski locally on $\Spec A$ any generator $d$ of $I$ has the property that $\delta(d)$ maps to a
    unit in $R^\wedge_p$. A pre-prismatic $\delta$-pair is prismatic if there is such an ideal $I$ such that $(A,I)$ is a prism. 
\end{definition}

\begin{remark}
    Note that if $(A,R)$ is a pre-prismatic bounded $\delta$-pair, as exhibited by a Cartier divisor $I$, then
    $A[\delta(I)^{-1}]_{(I,p)}^\wedge$ inherits a $\delta$-ring
    structure by~\cite[Rem.~2.16, Lem.~2.18]{prisms} and becomes a prism with respect to the completion of $I$.
    By property (1) of Theorem~\ref{thm:intro}, the prismatic cohomology will not
    see the difference between $(A,R)$ and $(A[\delta(I)^{-1}]^\wedge_{(I,p)}, R^\wedge_p)$.
\end{remark}


\begin{definition}[The prismatic site]
    Let $A\rightarrow R$ be a $\delta$-pair.
    The prismatic site $(R/A)_\Prism$ of $R$ relative to the $\delta$-ring $A$ is the opposite of the
    category of commutative squares
    \begin{equation}\label{eq:site}\begin{gathered}\xymatrix{
        A\ar[r]\ar[d]&B\ar[d]\\
        R\ar[r]&\overline{B}
    }\end{gathered}\end{equation}
    where $(B,I)$ is a bounded prism, $\overline{B}=B/I$, and $A\rightarrow B$ is a map of
    $\delta$-rings; we equip this category with the $(p,I)$-completely faithfully flat topology in $B$.
    The proof that $(R/A)_\Prism$ is indeed a site is the same as the first paragraph of the proof
    of~\cite[Cor.~3.12]{prisms}.
\end{definition}

\begin{remark}[Comparison to other prismatic sites]\label{rem:sitecomparison}
    \begin{enumerate}
        \item[(a)]
            If $(A,J)$ is itself a bounded prism and the map $A\rightarrow R$ factors through
            $\overline{A}=A/J$, then
            $(R/A)_\Prism$ agrees by definition with the relative prismatic site of $R$ over $A$ defined
            in~\cite[Def.~4.1]{prisms}. This follows since $A\rightarrow B$
            has to take $J$ to the ideal $I = \ker(B \to \overline{B})$ by
            commutativity of~\eqref{eq:site}.
        \item[(b)] If $A=\bZ_p$, then $(R/\bZ_p)_\Prism$ agrees with the absolute prismatic site $(R)_\Prism$ of $R$ defined
            in~\cite[Def.~4.4.27]{bhatt-lurie-apc} since $\bZ_p$ is initial as a $p$-complete $\delta$-ring. In particular, in the special case of
            $(\bZ_p/\bZ_p)_\Prism$, we recover the site of all bounded prisms.
    \end{enumerate}
\end{remark}

\begin{definition}[Breuil--Kisin twists]
    Each prism $B$ admits a natural invertible module $B\{1\}$ constructed
    in~\cite[Sec.~2]{bhatt-lurie-apc} called the first Breuil--Kisin twist of $B$. It is the home
    for the prismatic logarithm map and is the prismatic analogue of the Tate twist
    $\bZ_p(1)$ in the \'etale setting. It is also compatible with base change: if $B\rightarrow B'$
    is a map of prisms, then there is a natural isomorphism $B'\otimes_B B\{1\}\iso
    B'\{1\}$. In the language of~\cite{bhatt-lurie-apc}, the assignment $B\mapsto B\{1\}$
    defines a prismatic crystal, or a quasi-coherent sheaf on the Cartier--Witt stack $\WCart$.
    The $i$th Breuil--Kisin twist is the tensor power $B\{i\}=B\{1\}^{\otimes_B i}$, which is
    defined for $i \in \mathbb{Z}$ and not just for $i \in \mathbb{N}$ since $B\{1\}$ is invertible. 
    These  define presheaves $\Oscr_\Prism\{i\}$ on $(R/A)_\Prism$ by sending a
    square~\eqref{eq:site} to $B\{i\}$. The presheaf $\Oscr_\Prism=\Oscr_\Prism\{0\}$ is a sheaf of
    $\delta$-rings and is called the prismatic structure sheaf, terminology which will be
    justified below. Given any sheaf of $\Oscr_\Prism$-modules $\Mscr$ on $(R/A)_\Prism$, we denote by $\Mscr\{i\}$
    the tensor product $\Mscr\otimes_{\Oscr_\Prism}\Oscr_\Prism\{i\}$ in sheaves of abelian groups
    on $(R/A)_\Prism$.
\end{definition}

\begin{definition}[The Hodge--Tate tower]
    The assignment to~\eqref{eq:site} of the prismatic ideal $I$ defines an invertible sheaf of ideals
    $\Iscr_\Prism$ in $\Oscr_\Prism$ called the Hodge--Tate ideal. We let $\Oscr_\Prism^{[n]}\{i\}=\Iscr_\Prism^{\otimes
    n}\otimes_{\Oscr_\Prism}\Oscr_\Prism\{i\}$ for $n\in\bZ$. These assemble into a tower
    $$\cdots\rightarrow\Oscr_\Prism^{[2]}\{i\}\rightarrow\Oscr_\Prism^{[1]}\{i\}\rightarrow\Oscr_\Prism^{[0]}\{i\}\rightarrow\Oscr_\Prism^{[-1]}\{i\}\rightarrow\cdots$$
    with weight $0$ term the $i$th Breuil--Kisin twist. We also have
    $\Oscr_{\Prismbar}=\Oscr_\Prism/\Oscr_\Prism^{[1]}$,
    which takes $(B,I)\in(R/A)_\Prism$ to $\overline{B}=B/I$ and defines a sheaf of commutative
    $R$-algebras.
\end{definition}

Note that the Hodge--Tate tower for $\Oscr_\Prism\{i\}$ is tensored up from the Hodge--Tate tower
for $\Oscr_\Prism$: we have
$\Oscr_\Prism^{[\star]}\{i\}\iso\Oscr_\Prism\{i\}\otimes_{\Oscr_\Prism}\Oscr_\Prism^{[\star]}$.

\begin{lemma}\label{lem:sheaf}
    For each $n,i\in\bZ$, $\Oscr_\Prism^{[n]}\{i\}$ and $\Oscr_{\Prismbar}\{i\}$ are sheaves on $(R/A)_\Prism$ with vanishing
    higher cohomology for any object $(B,I)\in(R/A)_\Prism$.
\end{lemma}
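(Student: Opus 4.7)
The plan is to reduce to faithfully flat descent for prisms. A cover of $(B,I)\in(R/A)_\Prism$ is by definition a $(p,I)$-completely faithfully flat map $B\to B'$ of prisms fitting into a square over $A\to R$; compatibility with the map from $(A,R)$ is automatic (just compose), and the $\delta$-structure on $B'$ restricts to that on $B$. Thus, the Cech complex computing cohomology on $(R/A)_\Prism$ coincides with the one formed from the underlying map of prisms, and each term of the Cech nerve $B'^{\otimes_B\bullet}$, $(p,I)$-completed, inherits a canonical structure of an object of $(R/A)_\Prism$ by composition.

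For $\Oscr_\Prism$ itself, the sheaf property together with vanishing of higher Cech cohomology on any such cover is precisely~\cite[Cor.~3.12]{prisms}: the $(p,I)$-completely augmented Cech nerve $B\to B'^{\otimes\bullet}$ is a coaugmented cosimplicial equivalence. For the Breuil--Kisin twist $\Oscr_\Prism\{i\}$, we use that $B\mapsto B\{i\}$ is a line bundle on the absolute prismatic site compatible with base change, that is $B'\otimes_B B\{i\}\simeq B'\{i\}$ for every map of prisms $B\to B'$~\cite[Sec.~2]{bhatt-lurie-apc}. This compatibility, together with descent for $\Oscr_\Prism$, formally implies the sheaf property and vanishing of higher cohomology for $\Oscr_\Prism\{i\}$ (flat descent for line bundles). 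The identical argument applies to $\Oscr_\Prism^{[n]}\{i\}=\Iscr_\Prism^{\otimes n}\otimes_{\Oscr_\Prism}\Oscr_\Prism\{i\}$, since the Hodge--Tate ideal is invertible and compatible with base change along maps of prisms (by the very definition of a map of prisms).

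Finally, for $\Oscr_{\Prismbar}\{i\}$, we use the short exact sequence
\[
0\to\Oscr_\Prism^{[1]}\{i\}\to\Oscr_\Prism\{i\}\to\Oscr_{\Prismbar}\{i\}\to 0
\]
of presheaves and the long exact sequence in Cech cohomology, combined with the vanishing just established for the first two terms, to deduce the sheaf property and vanishing for $\Oscr_{\Prismbar}\{i\}$. There is no real obstacle: the entire argument is an application of the observation that all of the constructions at hand commute with base change along maps of prisms, so that the faithfully flat descent theorem for prisms from~\cite{prisms} transfers verbatim to the relative site $(R/A)_\Prism$.
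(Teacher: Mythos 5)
Your proof is correct and follows essentially the same route as the paper's: reduce to the absolute case by noting that the \v{C}ech complex of a cover in $(R/A)_\Prism$ agrees with the one computed in $(\bZ_p/\bZ_p)_\Prism$, invoke~\cite[Cor.~3.12]{prisms}, and then propagate to the twists and Hodge--Tate tower using the base-change (crystal) compatibility of $B\mapsto B\{i\}$ and of the Hodge--Tate ideal together with faithfully flat descent for invertible modules. The only difference is cosmetic: the paper gets $\Oscr_{\Prismbar}$ directly from~\cite[Cor.~3.12]{prisms} and then deduces $\Iscr_\Prism$ as a kernel, whereas you establish $\Oscr_\Prism^{[n]}\{i\}$ first and recover $\Oscr_{\Prismbar}\{i\}$ at the end via the long exact sequence; both orderings are valid.
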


The condition that the higher cohomology groups vanish means that these objects are not only
sheaves of abelian groups but are in fact sheaves when viewed as having values in $\D(\bZ)$, the
derived $\infty$-category of $\bZ$.

\begin{proof}[Proof of Lemma~\ref{lem:sheaf}]
    For $\Oscr_\Prism$ and $\Oscr_\Prismbar$, this follows from~\cite[Cor.~3.12]{prisms}, which says
    that these define sheaves with vanishing higher cohomology in the case of the site of all bounded prisms, i.e.,
    $(\bZ_p/\bZ_p)_\Prism$. We use that the value of
    $\Oscr_\Prism$ and $\Oscr_\Prismbar$ on $(B,I)\in (R/A)_\Prism$ with respect to $(R/A)_\Prism$ can
    be computed by considering $(B,I)$ as an object in $(\bZ_p/\bZ_p)_\Prism$, which follows by
    observing that the \v{C}ech complexes associated to $(B,I)\rightarrow(C,IC)$ in $(R/A)_\Prism$
    and $(\bZ_p/\bZ_p)_\Prism$ agree.
    
    The result for
    $\Iscr_\Prism=\Oscr_\Prism^{[1]}=\ker(\Oscr_\Prism\rightarrow\Oscr_\Prismbar)$ follows because
    sheaves are closed under kernels and because $\Oscr_\Prism\rightarrow\Oscr_\Prismbar$ is surjective
    pointwise. The fact that the Breuil--Kisin twists
    $\Oscr_\Prism\{i\}$ are sheaves with vanishing higher cohomology on $(B,I)\in(R/A)_\Prism$
    follows from the sheaf property for $\Oscr_\Prism$ and the crystal property for the
    Breuil--Kisin twists. Given a $(p,I)$-completely faithfully flat map of prisms
    $(B,I)\rightarrow (C,IC)$, let $(C^\bullet,IC^\bullet)$ be the \v{C}ech nerve. Then,
    $C^\bullet\{i\}$ is equivalent as a cosimplicial object to $(C^\bullet)\otimes_BB\{i\}$ by the
    base change property of the Breuil--Kisin twists. So,
    the sheaf property for $\Oscr_\Prism\{i\}$ reduces to $(p,I)$-completely faithfully flat
    descent for the invertible $B$-module $B\{i\}$. The result for the Hodge--Tate towers is proven in the same way.
\end{proof}

\begin{remark}\label{rem:htbk}
    There is a natural equivalence
    $\Iscr_\Prism/\Iscr_\Prism^2\iso\Oscr_\Prism^{[1]}/\Oscr_\Prism^{[2]}\iso\Oscr_\Prismbar\{1\}$.
    It follows in general that
    $$\Oscr_\Prism^{[n]}\{i\}/\Oscr_\Prism^{[n+1]}\{i\}\iso\Oscr_\Prismbar\{i+n\}.$$
    See~\cite[Rem.~2.5.7]{bhatt-lurie-apc}.
\end{remark}

\begin{definition}[Site-theoretic prismatic cohomology]
    We define $$\Prism_{R/A}^{\s,[n]}\{i\}=\R\Gamma((R/A)_\Prism,\Oscr_\Prism^{[n]}\{i\}).$$
    The superscript $(-)^\s$ will be used to distinguish this form of prismatic cohomology from
    other forms discussed below, before we have established our comparison theorems under
    quasisyntomicity assumptions in Section~\ref{sec:comparisons}.
    
    In the case of $n=i=0$, the resulting object of the $p$-complete derived $\infty$-category
    $\D(\bZ_p)_p^\wedge$ is denoted by
    $\Prism^\s_{R/A}$, the site-theoretic prismatic cohomology of $R$ relative to $A$.
    It naturally has the structure of an $\bE_\infty$-algebra over $A$
    with an endomorphism $\varphi$ called Frobenius induced by the Frobenius endomorphism of the
    sheaf $\Oscr_\Prism$. The objects $\Prism_{R/A}^{\s,[n]}\{i\}$ assemble into
    Hodge--Tate towers
    $$\Prism_{R/A}^{\s,[\star]}\{i\}\colon\cdots\rightarrow\Prism_{R/A}^{\s,[n+1]}\{i\}\rightarrow
    \Prism_{R/A}^{\s,[n]}\{i\}\rightarrow\Prism_{R/A}^{\s,[n-1]}\{i\}\rightarrow\cdots$$
    of $\Prism^\s_{R/A}$-modules; these towers are complete filtrations on the colimit
    $\Prism_{R/A}^{\s}\{i\}[1/I]$ with weight $0$ part $\Prism_{R/A}^{\s}\{i\}$.
\end{definition}

\begin{warning}[No Breuil--Kisin twists for $\delta$-rings]\label{warn:bktwists}
    When the $\delta$-pair $(A,R)$ is prismatic,
    $\Prism_{R/A}^{\s}\{i\}=\R\Gamma((R/A)_\Prism,\Oscr_\Prism\{i\})$ is
    equivalent to $\Prism_{R/A}^\s\otimes_A A\{i\}$, but when $A$ is a general $\delta$-ring, this
    need not be the case. In fact, the construction $A \mapsto A\{i\}$ does not extend from
    prisms to $\delta$-rings since it depends on the ideal $I$ defining the prism structure.
    
    Moreover, even in the special case  $A = R = \bZ_p$  the
    Breuil--Kisin twisted absolute prismatic cohomologies $\Prism_{\bZ_p/\bZ_p}\{i\}$ are, for $i
    \neq 0$, not invertible over $\Prism_{\bZ_p/\bZ_p}$ and are not generally tensor powers of
    $\Prism_{\bZ_p/\bZ_p}\{1\}$. Indeed, since the pairing between $\Prism_{\bZ_p/\bZ_p}\{i\}$ and $\Prism_{\bZ_p/\bZ_p}\{-i\}$ is compatible with the Hodge-Tate filtrations, it suffices to check that e.g. $\Prismbar_{\bZ_p/\bZ_p}\{1\}$ is not invertible. By \cite[Section 4.7]{bhatt-lurie-apc}, the diffracted Hodge complex $\DiffractedHodge_{\bZ_p}$ is just $\bZ_p$, with Sen operator $\Theta=0$, and $\Prismbar_{\bZ_p/\Z_p}\{1\}$ vanishes (since it is identified as the fiber of $-1$ on $\bZ_p$).
\end{warning}

\begin{definition}[Site-theoretic Hodge--Tate cohomology]
    The Breuil--Kisin twisted Hodge--Tate cohomology of $R$ relative to
    $A$ is $\Prismbar^\s_{R/A}\{i\}=\R\Gamma((R/A)_\Prism,\Oscr_{\Prismbar}\{i\})$. If $i=0$, we
    call $\Prismbar^\s_{R/A}$  the Hodge--Tate cohomology of $R$ over $A$, which
    is naturally a $p$-complete $\bE_\infty$-$R$-algebra. For any $i$, $\Prismbar^\s_{R/A}\{i\}$ is naturally a
    $\Prismbar^\s_{R/A}$-module. For each $i,n\in\bZ$, there are fiber sequences
    $$\Prism^{\s,[n+1]}_{R/A}\{i\}\rightarrow\Prism^{\s,[n]}_{R/A}\{i\}\rightarrow\Prismbar^\s_{R/A}\{i+n\}$$
    by Remark~\ref{rem:htbk}.
\end{definition}

\begin{remark}[Insensitivity to $p$-completion]
    Suppose that $A$ and $R$ are bounded so that their derived $p$-completions are discrete and agree with the classical $p$-completions.
    Since all prisms $B$ and their Hodge--Tate quotients $\overline{B}=B/I$ are derived $p$-complete, the
    prismatic site $(R/A)_\Prism$ is naturally equivalent to $(\widehat{R}/\widehat{A})_\Prism$, the
    relative prismatic site of the derived $p$-completion $\widehat{R}$ over the derived
    $p$-completion $\widehat{A}$, where the $\delta$-ring structure on $A$ extends to $\widehat{A}$
    by~\cite[Lem.~2.18]{prisms}. It follows that $\Prism^\s_{R/A}$ depends only on the derived
    $p$-completion, and similarly for the Breuil--Kisin twists and Hodge--Tate towers. The
    boundedness hypothesis can also be replaced at the cost of using the animated prismatic site of
    Variant~\ref{var:animatedsite}. 
\end{remark}

\begin{remark}[Relative prismatic cohomology comparison]
    If $(A,R)$ is a bounded prismatic $\delta$-pair (so that $A$ is a bounded prism and $R$ is a
    bounded commutative $\overline{A}$-algebra),
    then by Remark~\ref{rem:sitecomparison} our site-theoretic prismatic cohomology agrees with the
    cohomology of the prismatic site  of~\cite[Def.~4.1]{prisms} and
    \cite[Def.~4.1.1]{bhatt-lurie-apc}. (Note that the definition in~\cite{bhatt-lurie-apc}
    should include the hypothesis that the prisms $(B,IB)\in(R/A)_\Prism$ be bounded.)
\end{remark}

\begin{remark}[Absolute prismatic cohomology comparison]
    It follows by definition from Remark~\ref{rem:sitecomparison} that $\Prism^{\s,[n]}_{R/\bZ_p}\{i\}$ agrees with the
    cohomology of $\Oscr^{[n]}_\Prism\{i\}$ on the absolute prismatic site of $\Spf R$ studied
    in~\cite[Def.~4.4.27]{bhatt-lurie-apc} and denoted there by
    $\R\Gamma^{\mathrm{site},[n]}_\Prism(\Spf R)\{i\}$.
\end{remark}

\begin{variant}[The animated prismatic site]\label{var:animatedsite}
    Suppose more generally that $A$ is an animated $\delta$-ring in the sense
    of~\cite[App.~A]{bhatt-lurie-prism} and that $R$ is an animated commutative $A$-algebra.
    We let $\Pairs^{\an\delta}$ be the $\infty$-category of such animated $\delta$-pairs.
    Recall from~\cite{bhatt-lurie-prism} that an animated prism is an animated $\delta$-ring
    $B$ equipped with a generalized Cartier divisor $\alpha\colon I\rightarrow B$ such that $B$ is
    $(p,I)$-complete and for any perfect $\bF_p$-algebra $k$ and any animated $\delta$-ring map
    $B\rightarrow W(k)$ the extension of scalars $I\otimes_BW(k)\rightarrow W(k)$ is equivalent to
    the inclusion of the ideal $(p)\subseteq W(k)$. Every ordinary prism is an animated prism.

    The $\infty$-category
    $(R/A)^\an_\Prism$ is defined to be the opposite of the $\infty$-category of
    commutative squares~\eqref{eq:site} where $B\rightarrow\overline{B}=B/I=\cofib(\alpha)$ is an animated prism, $A\rightarrow
    B$ is a map of animated $\delta$-rings and $R\rightarrow\overline{B}$ is a map of animated
    commutative rings. We equip $(R/A)_\Prism^\an$ with the structure of a site by declaring the
    covers to be $(p,I)$-completely
    faithfully flat maps (of animated $\delta$-rings) in $B$. The objects $\Oscr_\Prism^{[n]}\{i\}$
    naturally extend to sheaves on $(R/A)_\Prism^\an$ with values in $\D(\bZ_p)_p^\wedge$.
    We obtain upon taking global sections
    $$\Prism_{R/A}^{\an\s,[n]}\{i\}=\R\Gamma((R/A)^\an_\Prism,\Oscr_\Prism^{\an,[n]}\{i\}),$$
    which are all modules over the animated site-theoretic prismatic cohomology
    $\Prism_{R/A}^{\an\s}$. If $(A,R)$ is a $\delta$-Pair, there is an inclusion of the (underived)
    prismatic site $(R/A)_\Prism$ into the animated prismatic site $(R/A)_\Prism^\an$. Upon taking
    global sections, this inclusion induces a natural map
    $$\Prism_{R/A}^{\an\s}\we\lim_{(B,\overline{B})\in(R/A)^\an_\Prism}B\rightarrow\lim_{(B,\overline{B})\in(R/A)_\Prism}B\we\Prism_{R/A}^\s.$$
    In the prismatic case,
    if $A$ is a bounded prism, $R$ is bounded, and $\L_{R/\overline{A}}$ has $p$-complete Tor-amplitude in $[0,1]$, then the
    natural map $$\Prism_{R/A}^{\an\s}\rightarrow\Prism_{R/A}^\s$$ is an equivalence;
    see~\cite[Rem.~7.14]{bhatt-lurie-prism}. Similar results hold for the Breuil--Kisin twists,
    Hodge--Tate cohomology, and so forth.
\end{variant}

\section{The prismatic crystal}\label{sec:crystal}

In this section we give the general definition of relative prismatic cohomology using the theory of prismatic crystals. 

\begin{notation}
    If $A$ is a commutative ring and $M,N\in\D(A)$, we let $M\tensorhat_A N$ be the derived
    $p$-completion of the derived tensor product over $A$; this makes the $p$-complete derived
    $\infty$-category $\D(A)_p^\wedge$ into a symmetric monoidal stable $\infty$-category.
    In particular, without further decoration, $M\tensorhat N$ denotes the tensor product
    in $\D(\bZ)_p^\wedge\we\D(\bZ_p)_p^\wedge$.
    If $(B,I)$ is a prism and $M,N\in\D(B)$, we write $(M\otimes_BN)_{(p,I)}^\wedge$, or
    $M\tensorhat_BN$ if the prism structure is clear from context, for the derived
    $(p,I)$-completed derived tensor product; again, this endows the $(p,I)$-complete derived
    $\infty$-category $\D(B)_{(p,I)}^\wedge$ with a symmetric monoidal structure. 
\end{notation}

\begin{definition}[Prismatic crystals]\label{def_crystal}
    A prismatic crystal is a functorial
    assignment to each bounded prism $(B,I)$ of a derived $(p,I)$-complete object
    $\Fscr(B)\in\D(B)_{(p,I)}^\wedge$ where
    this assignment satisfies base change in the sense that if $(B,I)\rightarrow (C,IC)$ is a map of
    bounded prisms, then the natural map $\Fscr(B)\tensorhat_BC\rightarrow\Fscr(C)$ is an
    equivalence. More rigorously, a prismatic crystal is a cocartesian section of the
    cocartesian fibration whose classifying functor associates to a prism $(B,I)$ the stable
    $\infty$-category $\D(B)_{(p,I)}^\wedge$ and to a morphism of prisms the derived completed base
    change.
    The stable $\infty$-category of prismatic crystals is equivalent to $\D(\WCart)$, the stable
    $\infty$-category of quasi-coherent sheaves on the formal stack $\WCart$ by~\cite[Prop.~3.3.5]{bhatt-lurie-apc}.
    Moreover, to define a prismatic crystal, it is enough to make such a functorial assignment on
    transversal prisms, by~\cite[Lem.~3.3.10]{bhatt-lurie-apc}.
\end{definition}

We introduce a prismatic crystal $\Hscr_\Prism(R/A)$
associated to any $\delta$-pair where $A$ is bounded.
The prismatic crystal definition of relative prismatic cohomology will be based upon
$\R\Gamma(\WCart,\Hscr_\Prism(R/A))$.

Our construction is based on derived relative prismatic cohomology, which is introduced
in~\cite[Sec.~7.2]{prisms} (see also~\cite[Sec.~4.1]{bhatt-lurie-apc}). If $k$ is a commutative ring,
let $\widehat{\CAlg_k^\an}$ denote the $\infty$-category of derived $p$-complete animated
$k$-algebras. If $(B,I)$ is a bounded prism,
then $\Prism_{-/B}\colon\widehat{\CAlg_{\overline{B}}^\an}\rightarrow\D(B)_{(p,I)}^\wedge$ is defined
to be the unique functor which preserves sifted colimits
and agrees with the site-theoretic relative prismatic cohomology of~\cite{prisms} on $p$-complete
finitely presented polynomial $\overline{B}$-algebras.

\begin{definition}[Relative prismatic cohomology crystals]\label{def:crystal}
    Let $A\rightarrow R$ be a $\delta$-pair and assume that $A$ is bounded; we can in fact allow $R$
    to be a $p$-complete animated commutative $A$-algebra.
    The prismatic crystal $\Hscr_\Prism(R/A)$ associated to the pair $(R/A)$ is the
    assignment, for each transversal prism $(B,I)$, of the derived prismatic
    cohomology $\Hscr_\Prism(R/A)(B)=\Prism_{R\tensorhat\overline{B}/A\tensorhat B}^\rel$,
    where $A\tensorhat B=(A\tensor B)_{(p,I)}^\wedge$.
    As $\Prism_{R\tensorhat\overline{B}/A\tensorhat
    B}\tensorhat_BC\we\Prism_{R\tensorhat\overline{C}/A\tensorhat C}$ for a map of bounded prisms
    $B\rightarrow C$
    by~\cite[Rem.~4.1.5]{bhatt-lurie-apc}, this functor defines a prismatic crystal, i.e., an object of $\D(\WCart)$.
\end{definition}

\begin{remark}\label{rem:unbounded}
    The boundedness hypothesis on $A$ guarantees that if $B$ is a transversal prism, then
    $A\tensorhat B$ is a bounded prism. However, as we do not impose any assumptions on $R$, the ring
    $R\tensorhat\overline{B}$ is in general an animated commutative ring, which is why we require the generality of derived
    relative prismatic cohomology.
    We could drop the boundedness condition on $A$ and describe the value $\Hscr_\Prism(R/A)(B)$ for a non-transversal prism $B$ at the cost of
    using animated prisms as in~\cite{bhatt-lurie-prism}.
    Alternatively, note that every prism $B$ admits a map of prisms
    $C\rightarrow B$ where $C$ is transversal by~\cite[Prop.~2.4.1]{bhatt-lurie-apc}. Thus, the value of $\Hscr_\Prism(R/A)$ at $B$ is
    computed as $\Prism_{R\tensorhat\overline C/A\tensorhat C}\tensorhat_CB$.
\end{remark}

\begin{definition}[Hodge--Tate crystals]
    If $A\rightarrow R$ is a $\delta$-pair where $A$ is bounded, let $\Hscr_\Prismbar(R/A)$
    be the crystal which assigns to a transversal bounded prism $(B,I)$ the derived Hodge--Tate complex
    $$\Hscr_\Prismbar(R/A)(B)=\Prismbar_{R\tensorhat\overline{B}/A\tensorhat B}^\rel.$$
    The Hodge--Tate crystal $\Hscr_\Prismbar(R/A)$ is equivalent to
    $\iota_*\iota^*\Hscr_\Prism(R/A)$ where $\iota\colon\WCart^\HT\hookrightarrow\WCart$ is the
    inclusion of the Hodge--Tate locus.
\end{definition}

\begin{variant}[Breuil--Kisin twisted prismatic crystals]
    Besides $\Hscr_\Prism(R/A)$, there are the
    Breuil--Kisin twists and these admit associated Hodge--Tate towers. In general, let $\Hscr^{[n]}_\Prism(R/A)\{i\}$ be
    the prismatic crystal
    $\Iscr^n\otimes_{\Oscr_{\WCart}}\Hscr_{\Prism}(R/A)\otimes_{\Oscr_{\WCart}}\Oscr_{\WCart}\{i\}$,
    where $\Iscr$ is the ideal of the Hodge--Tate divisor
    $\iota\colon\WCart^\HT\hookrightarrow\WCart$ and $\Oscr_{\WCart}\{i\}$ is the
    $i$th tensor power of the Breuil--Kisin line
    bundle $\Oscr_{\WCart}\{1\}$ of~\cite[Ex.~3.3.8]{bhatt-lurie-apc}. In other words,
    $$\Hscr_\Prism^{[n]}(R/A)\{i\}(B)=\Prism^{\rel,[n]}_{R\tensorhat\overline{B}/A\tensorhat B}\{i\}$$
    for transverse prisms $B$
    in the notation of~\cite[Const.~4.4.10]{bhatt-lurie-apc}.
    There are fiber sequences
    $$\Hscr_\Prism^{[n+1]}(R/A)\{i\}\rightarrow\Hscr_\Prism^{[n]}(R/A)\{i\}\rightarrow\Hscr_\Prismbar(R/A)\{i+n\}$$
    in $\D(\WCart)$
    as in~\cite[Rem.~4.5.7]{bhatt-lurie-apc} thanks to the equivalence
    $\Iscr/\Iscr^2\iso\iota_*\Oscr_{\WCart^{\HT}}\{1\}$. These assemble into Hodge--Tate towers
    $\Hscr_\Prism^{[\star]}(R/A)\{i\}$ of prismatic crystals.
\end{variant}

\begin{definition}[Cohomology of prismatic crystals]
    Suppose that $A\rightarrow R$ is a $\delta$-pair where $A$ is bounded.
    Let  the relative prismatic cohomology be  $\Prism_{R/A}=\R\Gamma(\WCart,\Hscr_\Prism(R/A))$, the cohomology of the prismatic
    crystal $\Hscr_\Prism(R/A)$. Similarly,
    let
    $\Prismbar_{R/A}=\R\Gamma(\WCart^\HT,\iota^*\Hscr_\Prism(R/A))\we\R\Gamma(\WCart,\Hscr_\Prismbar(R/A))$,
    the cohomology of the Hodge--Tate crystal.
    More generally, we have
    $$\Prism^{[n]}_{R/A}\{i\}=\R\Gamma(\WCart,\Hscr^{[n]}_\Prism(R/A)\{i\}),$$
    which assemble into towers $\Prism^{[\star]}_{R/A}\{i\}$ with associated graded pieces
    computed by fiber sequences
    $$\Prism^{[n+1]}_{R/A}\{i\}\rightarrow\Prism^{[n]}_{R/A}\{i\}\rightarrow\Prismbar_{R/A}\{i+n\}.$$
\end{definition}

    Note that a prism $(B,I)$ can be considered as a formal scheme when equipped with the $(p,I)$-adic topology. We shall denote this formal scheme by 
    $\SSpf(B)$.

    \begin{remark}[Cohomology as a limit over prismatic points]\label{rem:cohomology_as_limit}
    Given a transversal prism $(B,I)$ there is a canonical map $\rho_B\colon\SSpf
    B\rightarrow\WCart$. It is shown in~\cite{bhatt-lurie-apc} that $\WCart$ is the colimit of these
    $\rho$ maps over the opposite of the category of transversal prisms. It follows that the
    cohomology of a prismatic crystal such as $\Hscr_\Prism(R/A)$ is given as a limit
    $$\Prism_{R/A}=\R\Gamma(\WCart,\Hscr_\Prism(R/A))=\lim_{\text{transversal
    $B$}}\Hscr_\Prism(R/A)(B)=\lim_{\text{transversal
        $B$}}\Prism_{R\tensorhat\overline{B}/A\tensorhat B}^\rel.$$
    A similar remark applies to the Breuil--Kisin twists and the Hodge--Tate cohomology.
\end{remark}

There is a conjugate filtration of Hodge--Tate cohomology.

\begin{proposition}[The conjugate filtration on Hodge--Tate crystals]\label{prop:conjugate}
    If $A\rightarrow R$ is a $\delta$-pair where $A$ is bounded, then
    the Hodge--Tate crystal $\Hscr_{\Prismbar}(R/A)$, viewed as a quasi-coherent sheaf on
    $\WCart^\HT$, admits an increasing
    exhaustive multiplicative conjugate filtration
    $\F_{\leq\star}^{\delta\conj}\Hscr_{\Prismbar}(R/A)$ with graded pieces given by
    $$\gr^{\delta\conj}_\star\Hscr_{\Prismbar}(R/A)\we\widehat{\L\Omega}^\star_{R/A}\tensor_{\Oscr_{\WCart^\HT}}\Oscr_{\WCart^\HT}\{-\star\}[-\star],$$
    where $\widehat{\L\Omega}^\star_{R/A}$ denotes the $p$-complete derived differential forms of
    $R$ over $A$.
\end{proposition}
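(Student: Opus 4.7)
The strategy is to construct the $\delta$-conjugate filtration pointwise on transversal prisms by invoking the classical Hodge--Tate conjugate filtration of Bhatt--Scholze on derived relative prismatic cohomology, and then verify that the construction is functorial in the transversal prism so that it descends to a filtration of $\Hscr_{\Prismbar}(R/A)$ as a quasi-coherent sheaf on $\WCart^\HT$. Identification of the graded pieces will follow by base change for the cotangent complex combined with compatibility of the Breuil--Kisin twist.

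Fix a transversal prism $(B,I)$. Because $A$ is bounded, $A\tensorhat B$ is a bounded prism with Hodge--Tate quotient $\overline{A\tensorhat B}\we A\tensorhat\overline B$, and $R\tensorhat\overline B$ is a derived $p$-complete animated commutative $A\tensorhat\overline B$-algebra. The derived Hodge--Tate comparison of~\cite{prisms}, left Kan extended to animated algebras, endows $\Prismbar^\rel_{R\tensorhat\overline B/A\tensorhat B}$ with a natural, exhaustive, increasing, multiplicative conjugate filtration whose graded pieces are
$$\gr^\conj_u\Prismbar^\rel_{R\tensorhat\overline B/A\tensorhat B}\we\widehat{\L\Omega}^u_{R\tensorhat\overline B/A\tensorhat\overline B}\{-u\}[-u].$$
Base change for the cotangent complex gives $\L_{R\tensorhat\overline B/A\tensorhat\overline B}\we\L_{R/A}\tensorhat\overline B$, and since derived exterior powers commute with derived completed base change (as they are left Kan extended from polynomial algebras), one deduces $\widehat{\L\Omega}^u_{R\tensorhat\overline B/A\tensorhat\overline B}\we\widehat{\L\Omega}^u_{R/A}\tensorhat\overline B$.

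To see that these pointwise data assemble into a filtered sheaf on $\WCart^\HT$, I would verify compatibility under base change $B\to C$ between transversal prisms. Functoriality of the conjugate filtration with respect to maps of prisms is built into the Bhatt--Scholze construction, and the identification of the graded pieces is compatible with base change since both sides are left Kan extended from polynomial $A$-algebras in $R$. Combined with the equivalence $(A\tensorhat B)\{1\}\we(A\tensorhat B)\tensor_B B\{1\}$, which identifies the pointwise Breuil--Kisin twist with the restriction of the canonical Breuil--Kisin line bundle on $\WCart^\HT$, this yields the claimed formula for the graded pieces as quasi-coherent sheaves on $\WCart^\HT$. Exhaustiveness, the increasing property, and multiplicativity are inherited from the pointwise statements.

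The main obstacle is the bookkeeping in the last step: one must track both the cotangent complex base change and the Breuil--Kisin twist compatibility simultaneously, while remembering that $A$ carries no prism structure of its own and hence all $\{-u\}$ twists genuinely come from $\WCart^\HT$ rather than from $A$. Once this is disentangled, the result is essentially formal from the classical Hodge--Tate comparison theorem applied pointwise over all transversal prisms, using Remark~\ref{rem:cohomology_as_limit} to pass from the crystal description to the filtered structure.
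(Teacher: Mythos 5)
Your proposal is correct and follows essentially the same approach as the paper: construct the filtration pointwise on prisms from the Bhatt--Scholze relative conjugate filtration, check that it descends to the crystal via functoriality/base change, identify the graded pieces using $\L_{R\tensorhat\overline{B}/A\tensorhat\overline{B}}\we\L_{R/A}\tensorhat\overline{B}$, and deduce exhaustiveness pointwise using $\D(\WCart^\HT)\we\lim_B\D(\overline{B})_p^\wedge$.
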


\begin{proof}
    Recall that, as for the Cartier--Witt stack, the $\infty$-category $\D(\WCart^\HT)$ of quasi-coherent sheaves on the
    Hodge--Tate locus is given by the limit over $\D(\overline{B})_p^\wedge$ as
    $(B,I)$ ranges over all bounded prisms.
    For a fixed bounded prism $(B,I)$ and a $p$-complete animated $\overline{B}$-algebra $S$, there
    is a natural increasing multiplicative conjugate filtration
    $\F_{\leq\star}^\conj\Prismbar_{S/B}^\rel$ with
    graded pieces given by
    $$\gr^\conj_\star\Prismbar_{S/B}^\rel\we\widehat{\L\Omega}_{S/\overline{B}}^\star\{-\star\}[-\star],$$
    where $\widehat{\L\Omega}_{S/\overline{B}}^\star$ denotes
    the $p$-complete derived differential forms on $S$ relative to $\overline{B}$;
    see~\cite[Thm.~6.3]{prisms} or~\cite[Rem.~4.1.7]{bhatt-lurie-apc}.
    The conjugate filtration is functorial and satisfies base change: if $(B,I)\rightarrow (C,IC)$ is
    a map of bounded prisms, then the natural map
    $\Prismbar^\rel_{S/B}\rightarrow\Prismbar^\rel_{S\tensorhat_{\overline{B}}\overline{C}/C}$
    preserves the conjugate filtration and the induced map
    $$(\F_{\leq\star}^\conj\Prismbar_{S/B})\tensorhat_{\overline{B}}\overline{C}\rightarrow\F_{\leq\star}^\conj\Prismbar_{S\tensorhat_{\overline{B}}\overline{C}/C}$$
    is a filtered equivalence. From this, it follows that the conjugate filtration descends to give
    a filtration $\F^{\delta\conj}_{\leq\star}\Hscr_\Prismbar(R/A)$ on the Hodge--Tate crystal $\Hscr_\Prismbar(R/A)$.
    As $\L_{R\tensorhat\overline{B}/A\tensorhat\overline{B}}\we\L_{R/A}\tensorhat\overline{B}$,
    the graded pieces of the conjugate filtration are the crystals
    $$(B,I)\mapsto\gr_{u}^{\delta\conj}\Hscr_\Prism(R/A)(B)\we\gr^\conj_u\Prismbar_{R\tensorhat\overline{B}/A\tensorhat
    B}\we\widehat{\L\Omega}^u_{R/A}\tensorhat\overline{B}\tensorhat_{\overline{B}}\overline{B}\{-u\}[-u],$$
    where $\overline{B}\{-u\}\we (I/I^2)^{\otimes -u}$. In other words,
    $$\gr_u^{\delta\conj}\Hscr_\Prismbar(R/A)\we\widehat{\L\Omega}^u_{R/A}\tensor\Oscr_{\WCart^\HT}\{-u\}[-u].$$
    The exhaustiveness follows from the
    fact that $\D(\WCart^\HT)\we\lim_B\D(\overline{B})_p^\wedge$, where the limit is over all
    bounded prisms $(B,I)$. In particular,
    $\colim_u\F_{\leq u}^{\delta\conj}\Hscr_{\Prismbar}(R/A)\rightarrow\Hscr_{\Prismbar}(R/A)$
    is an equivalence since it evaluates to an equivalence in each
    $\D(\overline{B})_p^\wedge$.
\end{proof}

\begin{variant}[Conjugate filtration on Breuil--Kisin twists]\label{var:bkconj}
    There is a conjugate filtration on the $i$th Breuil--Kisin twist of the Hodge--Tate crystal
    obtained by tensoring $\F_{\leq\star}^{\delta\conj}\Hscr_\Prismbar(R)$ over $\Oscr_{\WCart^\HT}$ with
    $\Oscr_{\WCart^\HT}\{i\}$. The associated graded pieces are
    $$\gr_{u}^{\delta\conj}\Hscr_\Prismbar(R/A)\{i\}\we\widehat{\L\Omega}^{\star}_{R/A}\tensor\Oscr_{\WCart^\HT}\{i-u\}[-u].$$
\end{variant}

\begin{construction}
    Taking global sections yields conjugate filtrations
    $$\F_{\leq\star}^{\delta\conj}\Prismbar_{R/A}\{i\}$$ for any $i\in\bZ$.
    The associated graded pieces are computed using the Sen operator:
    $$\gr_u^{\delta\conj}\Prismbar_{R/A}\{i\}
    \we\widehat{\L\Omega}^u_{R/A}\tensorhat\R\Gamma(\WCart^\HT,\Oscr_{\WCart^\HT}\{i-u\}[-u])
    \we\widehat{\L\Omega}^u_{R/A}\tensorhat\fib(\bZ_p\xrightarrow{i-u}\bZ_p)[-u]$$
    by~\cite[Cor.~3.5.14]{bhatt-lurie-apc}.
    These filtrations are exhaustive because $\R\Gamma(\WCart^\HT,-)$ commutes with colimits
    by~\cite[Cor.~3.5.13]{bhatt-lurie-apc}.
\end{construction}

\begin{proposition}[Invariance under quasi-étale extensions for prismatic crystals]
  \label{prop:quasietalecrystal}
    If $(A,R)\to (A',R)$ is a map of bounded $\delta$-pairs where $\L_{A'/A}$ vanishes $p$-adically (we call such a map
    $A\rightarrow A'$ $p$-adically quasi-\'etale), then the map
  \[
    \Hscr_{\Prism}^{[n]}(R/A)\{i\}\to \Hscr_{\Prism}^{[n]}(R/A')\{i\}
  \]
  is an equivalence for each $n$ and $i$.
\end{proposition}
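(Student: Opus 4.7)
The plan is to reduce, step by step, to a statement about cotangent complexes that is then immediate from the hypothesis.

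First, by the description of prismatic crystals in Definition \ref{def_crystal}, it suffices to check the map is an equivalence after evaluating on each transversal bounded prism $(B,I)$, i.e., to show that
\[
\Prism^{\rel,[n]}_{R\tensorhat\overline{B}/A\tensorhat B}\{i\} \to \Prism^{\rel,[n]}_{R\tensorhat\overline{B}/A'\tensorhat B}\{i\}
\]
is an equivalence in $\D(A\tensorhat B)^\wedge_{(p,I)}$. Both sides are $(p,I)$-complete and the source is filtered by the powers of $I$ with graded pieces the Breuil--Kisin twisted Hodge--Tate cohomologies $\Prismbar^{\rel}_{R\tensorhat\overline{B}/A\tensorhat B}\{j\}$, and similarly for the target. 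Since passage to graded pieces is conservative on complete filtered objects, it therefore suffices to prove that the induced map on the Hodge--Tate crystals $\Hscr_{\Prismbar}(R/A)\{j\} \to \Hscr_{\Prismbar}(R/A')\{j\}$ is an equivalence for every $j\in\bZ$.

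For the Hodge--Tate case I use the $\delta$-conjugate filtration constructed in Proposition \ref{prop:conjugate} and its Breuil--Kisin twisted Variant \ref{var:bkconj}. At a transversal prism $(B,I)$, the $u$-th conjugate graded piece of $\Prismbar^{\rel}_{R\tensorhat\overline{B}/A\tensorhat B}\{j\}$ is
\[
\widehat{\L\Omega}^u_{R\tensorhat\overline{B}/A\tensorhat\overline{B}}\tensorhat_{A\tensorhat\overline{B}}(A\tensorhat\overline{B})\{j-u\}[-u],
\]
and analogously with $A'$ in place of $A$. The conjugate filtration is exhaustive, so it is enough to show these conjugate graded pieces agree.

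The Breuil--Kisin twists behave well under base change along $A\tensorhat\overline{B}\to A'\tensorhat\overline{B}$ by construction: pulling back $\Oscr_{\WCart}\{1\}$ along the corresponding map of stacks gives $(A\tensorhat\overline{B})\{1\}\tensorhat(A'\tensorhat\overline{B})\we (A'\tensorhat\overline{B})\{1\}$. For the forms, consider the cotangent triangle associated to $A\tensorhat\overline{B}\to A'\tensorhat\overline{B}\to R\tensorhat\overline{B}$:
\[
\L_{A'\tensorhat\overline{B}/A\tensorhat\overline{B}}\tensorhat_{A'\tensorhat\overline{B}} R\tensorhat\overline{B} \longrightarrow \L_{R\tensorhat\overline{B}/A\tensorhat\overline{B}} \longrightarrow \L_{R\tensorhat\overline{B}/A'\tensorhat\overline{B}}.
\]
The hypothesis that $\L_{A'/A}$ vanishes $p$-adically, combined with the base change identification $\L_{A'\tensorhat\overline{B}/A\tensorhat\overline{B}}\we \L_{A'/A}\tensorhat\overline{B}$, forces the leftmost term to vanish $p$-adically. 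Hence $\L_{R\tensorhat\overline{B}/A\tensorhat\overline{B}}\we \L_{R\tensorhat\overline{B}/A'\tensorhat\overline{B}}$ after $p$-completion, and so do all derived exterior powers. Combined with the Breuil--Kisin comparison, this finishes the proof.

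The main potential obstacle is bookkeeping: because $A\to A'$ is only assumed $p$-adically quasi-\'etale (not $p$-completely flat), the ring $A'\tensorhat B$ need not be a classical bounded prism, and some care is required to ensure that $\Prism^{\rel}_{R\tensorhat\overline{B}/A'\tensorhat B}$, its Hodge--Tate quotient, and its conjugate filtration are legitimately defined. This is handled by working with derived relative prismatic cohomology (equivalently the animated prism formalism of Remark \ref{rem:unbounded}), since the definitions and properties of both the Hodge--Tate filtration and the conjugate filtration that we invoke are valid in that generality and are stable under base change along arbitrary maps of animated prisms.
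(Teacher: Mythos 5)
Your proof is correct and follows the same strategy as the paper's one-line argument: reduce to the Hodge--Tate crystals via completeness of the $\Iscr$-adic filtration, then invoke the exhaustive $\delta$-conjugate filtration (Proposition \ref{prop:conjugate} and Variant \ref{var:bkconj}) to reduce to the comparison of $p$-completed derived differential forms, where the transitivity triangle and the $p$-adic vanishing of $\L_{A'/A}$ close the argument. One remark: your final paragraph worries about a non-issue --- since $(A',R)$ is assumed to be a \emph{bounded} $\delta$-pair, the ring $A'\tensorhat B$ is automatically a genuine bounded prism for any transversal $(B,I)$ (cf.\ Remark \ref{rem:unbounded}, where flatness of $A\to A'$ plays no role; only boundedness of $A'$ matters), so no detour through animated prisms is required, and in any case the paper's formulation works directly at the level of crystals on $\WCart^\HT$ rather than pointwise on transversal prisms, sidestepping the question entirely.
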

\begin{proof}
  This follows directly from the properties of the conjugate filtration, since $\L\Omega^i_{R/A} \we
    \L\Omega^i_{R/A'}$ if $\L_{A'/A}=0$.
\end{proof}

    \begin{corollary}[Invariance under localizations and completions]\leavevmode\label{cor:inv_loc_comp}
    \begin{enumerate}
        \item[{\em (i)}] For any bounded $\delta$-pair $(A,R)$, if $K\subseteq A$ is a finitely
            generated ideal
            contained in the kernel of
            $A\rightarrow R$ and if the derived $(p,K)$-completion of $A$ is discrete, then the natural map
            $\Hscr_\Prism^{[n]}(R/A)\{i\}\rightarrow\Hscr_\Prism^{[n]}(R_p^\wedge/A_{(p,K)}^\wedge)\{i\}$
            is an equivalence for all $i,n\in\bZ$.
        \item[{\em (ii)}] For any bounded $\delta$-pair $(A,R)$, if $S\subseteq A$ is a set of
            elements which map to units in $R$, then the natural map
            $\Hscr_{\Prism}^{[n]}(R/A)\{i\}\to\Hscr_\Prism^{[n]}(R_p^\wedge,A[S^{-1}]_p^\wedge)$ is
            an equivalence for all $i,n\in\bZ$.
    \end{enumerate}
\end{corollary}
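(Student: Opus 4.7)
The plan is to deduce both statements from Proposition~\ref{prop:quasietalecrystal} by factoring the given maps appropriately through intermediate $\delta$-pairs where the source $R$ has been replaced by $R_p^\wedge$.

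The first observation is that for any bounded $\delta$-pair $(A,R)$, the canonical map $\Hscr^{[n]}_\Prism(R/A)\{i\} \to \Hscr^{[n]}_\Prism(R_p^\wedge/A)\{i\}$ is an equivalence without further hypothesis. Indeed, by Definition~\ref{def:crystal}, both crystals assign to a transversal prism $(B,I)$ the complex $\Prism^{\rel,[n]}_{R\tensorhat\overline{B}/A\tensorhat B}\{i\}$, and since $\overline{B}$ is already $p$-complete we have $R\tensorhat\overline{B}\simeq R_p^\wedge\tensorhat\overline{B}$ as $A\tensorhat B$-algebras. Thus we may freely replace $R$ by $R_p^\wedge$ on the left-hand side.

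For part (i), I would factor the map of pairs as $(A,R)\to (A, R_p^\wedge)\to (A_{(p,K)}^\wedge, R_p^\wedge)$. The second map is a map of bounded $\delta$-pairs: the derived $(p,K)$-completion $A_{(p,K)}^\wedge$ is discrete by hypothesis, bounded since $A$ is bounded and $K$ is finitely generated, and inherits a canonical $\delta$-structure from $A$ by the extension principle for derived completions along finitely generated ideals (following~\cite[Lem.~2.18]{prisms}). The ring $R_p^\wedge$ becomes an $A_{(p,K)}^\wedge$-algebra because $K\subseteq\ker(A\to R)$ acts trivially on $R$ and $R_p^\wedge$ is $p$-complete. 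The crucial input is that the relative cotangent complex $\L_{A_{(p,K)}^\wedge/A}$ vanishes $p$-adically, which is the standard fact that derived completion along a finitely generated ideal containing (a power of) $p$ is formally \'etale after $p$-completion. Proposition~\ref{prop:quasietalecrystal} then applies to give the desired equivalence.

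For part (ii), the factorization is $(A,R)\to (A, R_p^\wedge) \to (A[S^{-1}]_p^\wedge, R_p^\wedge)$. The ordinary localization $A[S^{-1}]$ has vanishing relative cotangent complex over $A$, and subsequent $p$-completion contributes nothing $p$-adically, so $\L_{A[S^{-1}]_p^\wedge/A}$ vanishes $p$-adically. The $\delta$-structure extends to $A[S^{-1}]_p^\wedge$: for each $s\in S$, the Frobenius $\varphi_A(s)=s^p+p\delta(s)$ is congruent to $s^p$ modulo $p$ and is therefore a unit in $A[S^{-1}]_p^\wedge$, so $\delta$ can be defined on $s^{-1}$ via the identity $\delta(s^{-1}) = -\delta(s)/(s^{p}\varphi_A(s))$, obtained by applying the Leibniz rule for $\delta$ to $s\cdot s^{-1}=1$. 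Proposition~\ref{prop:quasietalecrystal} again concludes. The main subtlety throughout is the verification of the technical ingredients concerning $p$-adic vanishing of the relative cotangent complex and the compatibility of the $\delta$-structure with completion and localization, but both inputs are by now standard in the theory.
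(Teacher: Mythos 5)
Your argument for part~(ii) is essentially the paper's: one reduces to the case where $A$ is derived $p$-complete (using part~(i) or the preliminary observation that the crystal only sees $R_p^\wedge$), checks that $\L_{A[S^{-1}]_p^\wedge/A}$ vanishes $p$-adically, verifies that the $\delta$-structure extends (via \cite[Rem.~2.16]{prisms}), and applies Proposition~\ref{prop:quasietalecrystal}.

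For part~(i), however, there is a real gap. You invoke Proposition~\ref{prop:quasietalecrystal} and to do so you assert that $\L_{A_{(p,K)}^\wedge/A}$ vanishes \emph{$p$-adically}, calling this a ``standard fact.'' What \emph{is} standard is that derived completion along the finitely generated ideal $(p,K)$ is formally \'etale $(p,K)$-adically, i.e., $\L_{A'/A}$ vanishes \emph{$(p,K)$-completely} (where $A'=A_{(p,K)}^\wedge$). That is a strictly weaker statement, and one cannot upgrade it to $p$-adic vanishing in general: the cotangent complex $\L_{A'/A}$ is a colimit of $A'$-modules and need not itself be derived $K$-complete, so there is no formal reason its $p$-completion should agree with its $(p,K)$-completion. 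Also note that the corollary imposes no hypothesis that $K$ contain a power of $p$, contrary to your parenthetical.

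The paper handles this by \emph{not} using Proposition~\ref{prop:quasietalecrystal} for part~(i). Instead it reruns the conjugate-filtration argument directly: what is actually needed is that $\L_{R/A}\to\L_{R/A'}$ be a $p$-adic equivalence, equivalently that $\L_{A'/A}\otimes_{A'}R$ vanish $p$-adically. Since $K$ lies in the kernel of $A\to R$, the ring $R$ is derived $K$-complete (indeed $K$ acts by zero), so $R/p^n$ is $(p,K)$-nilpotent; therefore tensoring the $(p,K)$-completely-zero object $\L_{A'/A}$ against $R/p^n$ gives zero for each $n$. This uses the $K$-completeness of $R$ in an essential way, which your argument misses. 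You should either reproduce this direct argument, or isolate and prove the intermediate fact that the hypotheses force $\L_{A'/A}\otimes_{A'}R$ to be $p$-adically zero before trying to apply the quasi-\'etale-invariance proposition.
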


\begin{proof}
        For (i), let $A'$ be the derived $(p,K)$-adic completion of $A$.
        The $\delta$-ring structure on $A$ extends uniquely to $A'$ by ~\cite[Lem.~2.18]{prisms}.
        It follows from the $\delta$-conjugate filtration that it is enough to show that
    the natural map
    $\L_{R/A}\rightarrow\L_{R^\wedge_p/A'}$ is a $p$-adic equivalence. This occurs if and only if
    $R\otimes_{A'}\L_{A'/A}$ vanishes $p$-adically. As $R$ is $K$-complete, this happens if and only
    if $\L_{A'/A}$ vanishes $(p,K)$-completely, which holds because $A\rightarrow A'$ is a
    $(p,K)$-adic equivalence, by definition. This proves (i). From (i), we can assume that $A$ and $R$ and derived
    $p$-complete. In particular, it follows that the $\delta$-ring structure on $A$ extends uniquely
    across $A[S^{-1}]$ by~\cite[Rem.~2.16]{prisms}. Now, part (ii) follows from
    Proposition~\ref{prop:quasietalecrystal}.
\end{proof}

Let $R\rightarrow R^0$ be a morphism of $p$-complete animated commutative rings with $p$-completed \v{C}ech complex $R^\bullet$.
We say that $f\colon R\rightarrow R^0$ has $p$-complete descent if $R\rightarrow
R^\bullet$ is a limit diagram.
Say that $f$ has universal $p$-complete descent if for every $R\rightarrow T$ of $p$-complete
animated commutative rings the
induced map $T\rightarrow T\tensorhat R^0$ has $p$-complete descent.
Note that because $\bF_p\otimes_\bZ(-)$ preserves all limits, $R\rightarrow R^0$ has (universal)
$p$-complete descent if and only if $\bF_p\otimes_\bZ R\rightarrow\bF_p\otimes_\bZ R^0$ has
(universal) descent. We can make analogous definitions in the case of $p$-complete animated {\em
graded} commutative rings which are concentrated in nonnegative weights.

The following is a slightly stronger version of \cite[Thm.~3.1]{bms2}, namely descent for differential forms where we also vary the base ring.

\begin{lemma}[Descent for differential forms]\label{lem:descentdifferentialforms}
  Let $(R/A)\to (R^0/A^0)$ be a map of pairs of animated commutative rings where $R\to R^0$ has universal
    $p$-complete descent and let $(R^\bullet,A^\bullet)$ denote the $p$-completed \v{C}ech nerve. Then,
    the natural map
  \[
      \widehat{\L\Omega}^k_{R/A} \to \Tot \widehat{\L\Omega}^k_{R^\bullet/A^\bullet}
  \]
  is an equivalence.
\end{lemma}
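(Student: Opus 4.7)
The plan is to adapt the proof of \cite[Thm.~3.1]{bms2}, which handles the case of constant base $A^\bullet=A$, and bootstrap to the varying-base case using the Koszul filtration coming from the transitivity fiber sequence
\[
R^\bullet\tensorhat_{A^\bullet}\L_{A^\bullet/A}\to \L_{R^\bullet/A}\to \L_{R^\bullet/A^\bullet}.
\]
I would induct on $k$. The base case $k=0$ reads $R\simeq \Tot R^\bullet$, which is exactly the effective descent hypothesis on $R\to R^0$.

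For the inductive step, assume the conclusion holds for $k'<k$ for every map of pairs. Applying the derived exterior algebra to the transitivity fiber sequence produces a finite multiplicative filtration on $\widehat{\L\Omega}^k_{R^\bullet/A}$ whose associated graded pieces are
\[
\widehat{\L\Omega}^j_{A^\bullet/A}\tensorhat_{A^\bullet}\widehat{\L\Omega}^{k-j}_{R^\bullet/A^\bullet},\qquad j=0,\ldots,k,
\]
with top piece ($j=0$) equal to $\widehat{\L\Omega}^k_{R^\bullet/A^\bullet}$. Invoking \cite[Thm.~3.1]{bms2} with the fixed base $A$ and the Čech nerve $R\to R^\bullet$ gives $\widehat{\L\Omega}^k_{R/A}\simeq \Tot \widehat{\L\Omega}^k_{R^\bullet/A}$. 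Combined with the filtration, it therefore suffices to show that the $\Tot$ of each lower piece ($j\geq 1$) vanishes.

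For this last step, I would rewrite the $j$-th graded piece as
\[
\bigl(\widehat{\L\Omega}^j_{A^\bullet/A}\tensorhat_{A^\bullet}R^\bullet\bigr)\tensorhat_{R^\bullet}\widehat{\L\Omega}^{k-j}_{R^\bullet/A^\bullet},
\]
an $R^\bullet$-module cosimplicial object. By the inductive hypothesis the right factor totalizes to $\widehat{\L\Omega}^{k-j}_{R/A}$. For the left factor, the augmentation at level $-1$ is $\widehat{\L\Omega}^j_{A/A}\tensorhat_A R^{-1}=0$ since $j\geq 1$; the key point is that, after base change to $R^\bullet$, the effective descent of $R\to R^\bullet$ together with the Čech-type structure of $A\to A^\bullet$ (which can be verified directly in the case $R=R^\bullet$ constant and $A\to A^0$ an arbitrary map) forces the totalization to equal this vanishing augmentation. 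The main obstacle, and the technically most delicate step, is establishing this vanishing without assuming descent along $A\to A^\bullet$; the argument proceeds by comparing the two filtrations on $\L_{R^\bullet/A}$ arising from the transitivity sequences for $A\to A^\bullet\to R^\bullet$ and for $A\to R\to R^\bullet$ and exploiting the cancellation between them.
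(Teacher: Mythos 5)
Your approach diverges substantially from the paper's, and it contains a real gap at the key step.

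The paper treats the whole graded commutative ring $\widehat{\L\Omega}^*_{R/A}$ at once and uses a structural ``2-out-of-3'' property of effective descent morphisms (cf.\ Liu--Zheng). Because the fiber of the augmentation $\widehat{\L\Omega}^*_{R/A}\to R$ is positively graded, that map is an effective descent morphism; the bottom map $R\to R^0$ is effective descent by hypothesis; hence so is the top map $\widehat{\L\Omega}^*_{R/A}\to\widehat{\L\Omega}^*_{R^0/A^0}$. Descent along the top map then gives the full graded statement at once, and the lemma follows by reading off a single graded degree. No induction, no filtration on the exterior power, and---crucially---no need to analyze individual graded pieces of the Koszul filtration of $\L\Omega^k_{R^\bullet/A}$ over the varying base.

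Your plan, by contrast, reduces to the fixed-base case of~\cite[Thm.~3.1]{bms2}, then tries to show that the lower pieces $\widehat{\L\Omega}^j_{A^\bullet/A}\tensorhat_{A^\bullet}\widehat{\L\Omega}^{k-j}_{R^\bullet/A^\bullet}$ ($j\geq 1$) of the transitivity filtration totalize to zero. This is where the gap lies. The argument you sketch in the final paragraph---that the augmentation is zero, that effective descent of $R\to R^0$ ``forces'' the totalization to equal the augmentation, and that the two transitivity filtrations ``cancel''---is not a proof. The cosimplicial object $\widehat{\L\Omega}^j_{A^\bullet/A}\tensorhat_{A^\bullet}R^\bullet$ is \emph{not} of the form $R^\bullet\tensorhat_R(-)$ for a fixed $R$-module, because its formation involves $A^\bullet$ and there is no descent hypothesis on $A\to A^0$. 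Effective descent of $R\to R^0$ tells you the totalization is some $R$-module, but it does not by itself identify that $R$-module with the (vanishing) augmentation. And the tensor factorization you write down does not let you ``totalize the factors separately,'' since $\Tot$ of a levelwise tensor product is not the tensor product of the $\Tot$s. The vanishing you need is plausibly true (one can check it in small examples, and it is implied by the lemma itself), but you have not established it, and the route you gesture at---comparing the filtrations for $A\to A^\bullet\to R^\bullet$ and $A\to R\to R^\bullet$---would need to be spelled out carefully; it is not clear that it closes the gap rather than producing another instance of the same problem. If you want to make your approach work, you would need an independent argument that the augmented cosimplicial $R^\bullet$-module $\widehat{\L\Omega}^j_{A^\bullet/A}\tensorhat_{A^\bullet}\widehat{\L\Omega}^{k-j}_{R^\bullet/A^\bullet}$ is split (admits an extra codegeneracy), or some analogue.

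Given the difficulty, I'd recommend you compare with the paper's ``graded-ring'' trick: by packaging all exterior powers into a single augmented graded ring with positively graded augmentation ideal, the lemma becomes an application of a composition/cancellation lemma for descent morphisms, sidestepping entirely the need to analyze individual pieces of the Koszul filtration.
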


\begin{proof}
    We observe that in the diagram of animated graded-commutative rings
  \[
    \begin{tikzcd}
        \widehat{\L\Omega}^*_{R/A} \rar\dar & \widehat{\L\Omega}^*_{R^0/A^0}\dar\\
      R\rar &  R^0
    \end{tikzcd}
  \]
  the left vertical map is a universal $p$-complete graded descent morphism since the fiber is positively graded.
  While the bottom arrow is assumed to be a universal $p$-complete descent morphism, it also follows
that it has descent for $p$-complete modules in the weak sense that if $M$ is a $p$-complete
connective module, then $M\we\Tot M\tensorhat_R R^\bullet$. Indeed, one can apply universality
to the morphism $R\rightarrow R\oplus M$. (This works more generally for bounded below
$p$-complete modules by suspending.) From this, it follows that $R\rightarrow R^0$ is also a
    universal $p$-complete {\em graded} descent morphism. By composition, it follows that the
    composition $\widehat{\L\Omega}^*_{R/A}\rightarrow R^0$ is a universal $p$-complete graded
    descent morphism; indeed, this follows from the variant of~\cite[Lem.~3.3.2(4)]{liuzheng} in
    which both composable morphisms are assumed to have universal descent.
    By~\cite[Lem.~3.3.2(3)]{liuzheng}, it follows that
    $\widehat{\L\Omega}^*_{R/A}\rightarrow\widehat{\L\Omega}^*_{R^0/A^0}$ has universal
    $p$-complete graded descent, from which the lemma follows by taking the $p$-completed graded \v{C}ech
    complex, which is equivalent to $\widehat{\L\Omega}^*_{R^\bullet}$ by the K\"unneth formula for
    derived de Rham cohomology~\cite[Prop.~2.7]{bhatt-padic}, and then taking weight $k$ pieces.
\end{proof}

\begin{proposition}[Descent for prismatic crystals]
  \label{prop:cwdescent}
  Let $(A,R)\to (A^0,R^0)$ be a map of bounded $\delta$-pairs with \v{C}ech nerve $(A^\bullet,R^\bullet)$,
    and assume that $R\to R^0$ is faithfully flat, $A\to A^0$ is flat, and all
    $\L_{R^\bullet/A^\bullet}$ have $p$-complete Tor-amplitude in $[0,1]$. Then,
  \[
    \Hscr^{[n]}_\Prism(R/A)\{i\}\to \Tot \Hscr^{[n]}_\Prism(R^\bullet/A^\bullet)\{i\}
  \]
  is an equivalence for each $n$ and $i$.
\end{proposition}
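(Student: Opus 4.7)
The strategy is to reduce, via the Hodge--Tate tower, to descent for the Hodge--Tate crystals $\Hscr_\Prismbar(R/A)\{j\}$, and then, via the $\delta$-conjugate filtration (Proposition~\ref{prop:conjugate}), to descent for the $p$-complete differential forms $\widehat{\L\Omega}^u_{R/A}$, where Lemma~\ref{lem:descentdifferentialforms} applies.

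First, using the fiber sequences
\[
\Hscr_\Prism^{[n+1]}(R/A)\{i\} \to \Hscr_\Prism^{[n]}(R/A)\{i\} \to \Hscr_\Prismbar(R/A)\{i+n\}
\]
and the completeness of the Hodge--Tate tower (since $\Iscr^m \to 0$ in the $(p,\Iscr)$-complete setting as $m \to \infty$), I will write $\Hscr_\Prism^{[n]}(R/A)\{i\}$ as the inverse limit of the finite Hodge--Tate quotients $\Hscr_\Prism^{[n]}(R/A)\{i\}/\Hscr_\Prism^{[m]}(R/A)\{i\}$. Each such quotient has a finite filtration whose graded pieces are shifts of Breuil--Kisin twisted Hodge--Tate crystals, and since $\Tot$ commutes with inverse limits, descent for all $\Hscr_\Prismbar(R/A)\{j\}$ would suffice.

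Next, I apply the $\delta$-conjugate filtration to $\Hscr_\Prismbar(R/A)\{j\}$ (Proposition~\ref{prop:conjugate}, Variant~\ref{var:bkconj}): this is a functorial exhaustive increasing filtration with graded pieces
\[
\gr_u^{\delta\conj} \Hscr_\Prismbar(R/A)\{j\} \we \widehat{\L\Omega}^u_{R/A} \otimes \Oscr_{\WCart^\HT}\{j-u\}[-u].
\]
Since tensoring with the line bundle $\Oscr_{\WCart^\HT}\{j-u\}[-u]$ commutes with all limits, descent on $\gr_u^{\delta\conj}$ reduces to the equivalence $\widehat{\L\Omega}^u_{R/A} \to \Tot \widehat{\L\Omega}^u_{R^\bullet/A^\bullet}$, which is precisely Lemma~\ref{lem:descentdifferentialforms} applied to $(R,A) \to (R^0,A^0)$: the faithful flatness of $R \to R^0$ makes it a $p$-completely effective descent morphism. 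Induction along the fiber sequences $\F_{\leq u-1}^{\delta\conj} \to \F_{\leq u}^{\delta\conj} \to \gr_u^{\delta\conj}$ then gives descent for each finite filtration piece $\F_{\leq u}^{\delta\conj} \Hscr_\Prismbar(R/A)\{j\}$.

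The main obstacle will be the final step of passing to the colimit $u \to \infty$, since filtered colimits do not commute with $\Tot$ in general. This is where the Tor-amplitude hypothesis on $\L_{R^\bullet/A^\bullet}$ will be used: since each $\L_{R^\bullet/A^\bullet}$ has $p$-complete Tor-amplitude in $[0,1]$, the derived exterior powers $\widehat{\L\Omega}^u_{R^\bullet/A^\bullet}$ have $p$-complete Tor-amplitude in $[0,u]$, giving uniform amplitude bounds on each graded piece of the $\delta$-conjugate filtration. Consequently, in every fixed cohomological degree only finitely many filtration steps contribute levelwise in the cosimplicial diagram, so the colimit exhausting the conjugate filtration can be interchanged with $\Tot$, concluding descent for $\Hscr_\Prismbar(R/A)\{j\}$ and hence, by the first step, for $\Hscr_\Prism^{[n]}(R/A)\{i\}$.
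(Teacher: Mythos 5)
Your proposal follows essentially the same route as the paper's own proof: reduce to the Hodge--Tate crystal via completeness of the Hodge--Tate tower, apply the $\delta$-conjugate filtration to reduce to Lemma~\ref{lem:descentdifferentialforms}, and then use the Tor-amplitude hypothesis to pass the exhaustive colimit over the conjugate filtration through $\Tot$. The structure, the key lemma, and the use of the hypotheses all match.

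One small caveat about the final interchange step. You justify it by saying that ``in every fixed cohomological degree only finitely many filtration steps contribute levelwise in the cosimplicial diagram.'' This is not literally true: after the shift by $[-u]$, the graded piece $\gr_u^{\delta\conj}$ sits in homological degrees $[-u,0]$, so for a fixed degree $d\leq 0$ all $\gr_u$ with $u\geq -d$ can contribute, which is infinitely many. What the Tor-amplitude hypothesis actually gives you is that every $\F_{\leq k}^{\delta\conj}\Hscr_\Prismbar(R^n/A^n)\{j\}$ is coconnective, uniformly in $k$ and in the cosimplicial level $n$. Uniform coconnectivity is precisely what makes $\Tot$ commute with the filtered colimit: each homotopy group of $\Tot$ of a coconnective cosimplicial diagram is computed by a finite stage $\Tot^m$ (with $m$ depending only on the degree, not on the filtration index), and $\Tot^m$, being a finite limit, commutes with filtered colimits, while the colimit $\colim_k\F^{\delta\conj}_{\leq k}$ is again levelwise coconnective. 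This is exactly how the paper phrases it; your intermediate ``finitely many steps'' claim can be dropped without loss.
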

\begin{proof}
    Since the $\Hscr^{[n]}(R/A)\{i\}$ for varying $n\geq 0$ form a complete filtration on
    $\Hscr_\Prism(R/A)$, it suffices to check the claim for the associated graded $\Hscr_{\Prismbar}(R/A)\{i\}$. We have that
  \[
      \widehat{\L\Omega}^k_{R/A} \to \Tot \widehat{\L\Omega}^k_{R^\bullet/A^\bullet}
  \]
  is an equivalence, by Lemma \ref{lem:descentdifferentialforms}.
  This implies that
  \[
    \F^{\delta\conj}_{\leq k} \Hscr_\Prismbar(R/A)\{i\}\to \Tot \F^{\delta\conj}_{\leq k} \Hscr_\Prismbar(R^\bullet/A^\bullet)\{i\}
  \]
  is an equivalence for each $k$. Since the Tor-amplitude condition ensures that all of the terms on
    the right are coconnective, $\Tot$ commutes with the colimit over $k$, leading to the desired
    statement.
\end{proof}

\begin{lemma}\label{lem:kancotangent}
    For each $u$, the functor $\L\Omega^u_{-/-}\colon\mathrm{Pairs}^\delta\rightarrow\D(\bZ)$
    commutes with sifted colimits.
\end{lemma}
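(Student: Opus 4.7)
The plan is to exhibit $\L\Omega^u_{-/-}\colon\mathrm{Pairs}^\delta\to\D(\bZ)$ as a composition of two sifted-colimit-preserving functors: the forgetful functor $U\colon\mathrm{Pairs}^\delta\to\mathrm{Pairs}$ to the category of pairs of commutative rings, followed by the usual derived differential forms functor $\L\Omega^u_{-/-}\colon\mathrm{Pairs}\to\D(\bZ)$. Once this factorization is in place, the claim is immediate.

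The second factor preserves sifted colimits by construction: the functor $(A,R)\mapsto\L\Omega^u_{R/A}$ on commutative ring pairs is, by definition, the left Kan extension from the full subcategory of finitely generated polynomial pairs of the ordinary wedge-power-of-differentials functor $(A,R)\mapsto\Omega^u_{R/A}$. Since $\mathrm{Pairs}$ is the sifted cocompletion of finitely generated polynomial pairs and since $\D(\bZ)$ admits sifted colimits, this left Kan extension automatically preserves sifted colimits.

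For the first factor, I would argue as follows. A sifted colimit $(A,R)=\colim_i(A_i,R_i)$ in $\mathrm{Pairs}^\delta$ is computed by taking $A=\colim_iA_i$ in $\delta$-rings and then $R=\colim_iR_i$ in commutative $A$-algebras (with $R_i$ viewed as an $A$-algebra along $A_i\to A$). Now $\delta$-rings are algebras over a finitary monad on commutative rings, namely the free $\delta$-ring monad, whose operations come from the single unary operation $\delta$ subject to the standard relations expressing that $p\delta+\mathrm{Frobenius}$-type axiom. The forgetful functor from algebras over any finitary monad preserves sifted colimits, so $\delta$-Ring $\to\mathrm{CRing}$ preserves sifted colimits. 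Combining this with the fact that colimits in the undercategory $\CAlg_A$ agree with the underlying colimits in $\CAlg$, we conclude that $U$ sends a sifted colimit in $\mathrm{Pairs}^\delta$ to the corresponding sifted colimit of pairs of commutative rings.

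The main—very modest—obstacle is the monadicity assertion for $\delta$-rings, but this is standard and already implicit in the treatment of animated $\delta$-rings in~\cite{bhatt-lurie-prism}; nothing beyond this formal input is needed.
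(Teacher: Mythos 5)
Your overall strategy matches the paper's exactly: factor through the forgetful functor $U\colon\mathrm{Pairs}^\delta\to\mathrm{Pairs}$ and then handle $\L\Omega^u_{-/-}$ on plain pairs. For the first factor, both arguments work; the paper just observes that $U$ preserves all colimits (which follows since the forgetful functor $\delta\text{-Ring}\to\CAlg$ has the Witt vector functor as a right adjoint), whereas your finitary-monad argument delivers only sifted colimits, which suffices here.

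Where you diverge is the second factor, and here your argument is a little too quick. You assert that $(A,R)\mapsto\L\Omega^u_{R/A}$ on $\mathrm{Pairs}$ is ``by definition'' the left Kan extension from finitely generated polynomial pairs. But the standard construction of $\L\Omega^u_{R/A}$ (via simplicial resolutions or as $\L\Lambda^u$ of the cotangent complex) takes place \emph{for a fixed base $A$}, Kan-extending in the variable $R$ only. The compatibility of that fixed-base construction with Kan extension in \emph{both} variables simultaneously is exactly the content one needs to check, and it is not a matter of definition. The paper supplies this verification explicitly: it invokes cofinality of the diagonal $I\to\Fun(\Delta^1,I)$ for sifted $I$, then uses the base-change equivalence $\L\Omega^u_{R_i/A_i}\otimes_{A_i}A\simeq\L\Omega^u_{R_i\otimes_{A_i}A/A}$ to reduce to the fixed-base case $\L\Omega^u_{-/A}$, where sifted-colimit preservation is indeed standard. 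Your proposal essentially cites the conclusion of this chain as input. The fact you cite is true, and your packaging is clean and conceptual, but as written the key step is assumed rather than proved; the base-change calculation is what actually closes the gap.
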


\begin{proof}
    Let $\mathrm{Pairs}$ be the category of arrows in the category of commutative rings.
    Thus, objects are maps $A\rightarrow R$ of commutative rings and morphisms are commutative squares.
    The forgetful functor $\mathrm{Pairs}^\delta\rightarrow\mathrm{Pairs}$ commutes with all
    colimits. Thus, it is enough to prove that
    $\L\Omega^u_{-/-}\colon\mathrm{Pairs}\rightarrow\D(\bZ)$ commutes with sifted colimits.
    To see this let $(A_i,R_i)_{i \in I}$ be a sifted diagram of pairs with colimit $(A,R)$.
    As the diagonal $I \to I^{\Delta^1}$ is cofinal by an application of Joyal's version of
    Quillen's Theorem A
    (see~\cite[Thm.~4.1.3.1]{htt}),
    we have 
    \begin{align*}
    \colim_I \L\Omega^u_{R_i/A_i} & \simeq \colim_{I^{\Delta^1}} \L\Omega^u_{R_i / A_i} \otimes_{A_i} A_j \\
    & \simeq  \colim_I \L\Omega^u_{R_i / A_i} \otimes_{A_i} A \\
    & \simeq \colim_I \L\Omega^u_{R_i \otimes_{A_i} A / A} \\
    & \simeq \L\Omega^u_{\colim  R_i \otimes_{A_i} A / A} \\
    & \simeq \L\Omega^u_{R/A},\qedhere
    \end{align*}
    where the fourth equivalence follows because $\L\Omega_{-/A}$ commutes with sifted colimits.
\end{proof}

\begin{corollary}\leavevmode\label{cor:kanextended}
    \begin{enumerate}
        \item[{\em (a)}] The functors of bounded $\delta$-pairs to $\D(\WCart^{\HT})$ and
            $\D(\bZ_p)_p^\wedge$, respectively,
            given by $$(A,R)\mapsto\Hscr_\Prismbar(R/A)\{i\}
            \quad\text{and}\quad (A,R)\mapsto\Prismbar_{R/A}\{i\}$$ 
            preserve sifted colimits for all $i\in\bZ$ and hence are left Kan extended from their values on finitely
            presented free $\delta$-pairs.
        \item[{\em (b)}] The functors $\Prism^{[\star]}_{-/-}\{i\}$ from bounded $\delta$-pairs to
            $\widehat{\F\D}(\D(\bZ_p)_p^\wedge)$, the $\infty$-category of complete filtered
            objects in $\D(\bZ_p)_p^\wedge$, preserves sifted colimits for all $i\in\bZ$.
    \end{enumerate}
\end{corollary}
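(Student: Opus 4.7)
The plan is to combine the $\delta$-conjugate filtration of Proposition~\ref{prop:conjugate} with Lemma~\ref{lem:kancotangent} for part (a), and then to deduce (b) from (a) by an argument on graded pieces in the complete filtered $\infty$-category.

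For part (a), I first show that $(A,R) \mapsto \Hscr_\Prismbar(R/A)\{i\}$ preserves sifted colimits as a functor to $\D(\WCart^\HT)$. Proposition~\ref{prop:conjugate} and Variant~\ref{var:bkconj} exhibit $\Hscr_\Prismbar(R/A)\{i\}$ as the exhaustive colimit of the increasing filtration $\F_{\leq\star}^{\delta\conj}\Hscr_\Prismbar(R/A)\{i\}$ whose $u$-th graded piece is $\widehat{\L\Omega}^u_{R/A} \tensor \Oscr_{\WCart^\HT}\{i-u\}[-u]$. By Lemma~\ref{lem:kancotangent}, together with the fact that derived $p$-completion and tensoring with a fixed quasi-coherent sheaf are colimit-preserving, each graded piece preserves sifted colimits in $(A,R)$. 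An induction along the fiber sequences $\F_{\leq u-1}^{\delta\conj} \rightarrow \F_{\leq u}^{\delta\conj} \rightarrow \gr_u^{\delta\conj}$ then propagates sifted-colimit preservation to every finite stage $\F_{\leq N}^{\delta\conj}\Hscr_\Prismbar(R/A)\{i\}$, and passing to the exhaustive colimit over $N$ yields the statement for $\Hscr_\Prismbar(R/A)\{i\}$. Composing with $\R\Gamma(\WCart^\HT, -)$, which preserves colimits by~\cite[Cor.~3.5.13]{bhatt-lurie-apc}, gives sifted-colimit preservation for $(A,R) \mapsto \Prismbar_{R/A}\{i\}$. Since the finitely presented free $\delta$-pairs generate $\Pairs^\delta$ under sifted colimits, the left Kan extension claim is automatic from sifted-colimit preservation.

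For part (b), the strategy is to detect sifted colimits on graded pieces in $\widehat{\F\D}(\D(\bZ_p)_p^\wedge)$. The Hodge--Tate fiber sequences identify $\gr^n \Prism^{[\star]}_{R/A}\{i\} \we \Prismbar_{R/A}\{i+n\}$, which preserves sifted colimits by part (a). A general fact about complete filtered objects is that the collection $\{\gr^n\}_{n \in \bZ}$ on $\widehat{\F\D}(\D(\bZ_p)_p^\wedge)$ is jointly conservative and each $\gr^n$ preserves colimits: completion is a left adjoint to the inclusion of complete filtered into filtered objects, and the unit of this adjunction is by design an equivalence on graded pieces, so $\gr^n$ factors through completion without change. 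Given this, the comparison map from the sifted colimit of $\Prism^{[\star]}_{R_j/A_j}\{i\}$ to $\Prism^{[\star]}_{\colim R_j / \colim A_j}\{i\}$ is an equivalence on every graded piece, hence an equivalence in $\widehat{\F\D}(\D(\bZ_p)_p^\wedge)$.

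The main technical subtlety is the formalism underlying (b): verifying that $\gr^n$ is colimit-preserving and jointly conservative on $\widehat{\F\D}$. Once that standard fact is in hand, both parts reduce cleanly to Lemma~\ref{lem:kancotangent} via the $\delta$-conjugate filtration; no further input about prismatic cohomology is required.
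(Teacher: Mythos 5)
Your proof is correct and follows essentially the same route as the paper: part (a) via the $\delta$-conjugate filtration (exhaustive, with graded pieces controlled by Lemma~\ref{lem:kancotangent}) and $\R\Gamma(\WCart^\HT,-)$ preserving colimits, and part (b) by passing to graded pieces of the complete Hodge--Tate filtration and invoking part (a). You are somewhat more explicit than the paper, especially in spelling out why $\gr^n$ is colimit-preserving and jointly conservative on complete filtrations, but the underlying argument is identical.
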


\begin{proof}
    The preservation of sifted colimits in part (a) follows from the facts that the conjugate filtration on $\Hscr_\Prismbar(R/A)\{i\}$ is complete
    and exhaustive and that the associated graded pieces are left Kan extended by
    Variant~\ref{var:bkconj} and
    Lemma~\ref{lem:kancotangent}. For the fact that every $\delta$-pair is a sifted colimit of free
    $\delta$-pairs, we note that the free $\delta$-pairs are compact, projective generators of the
    category of $\delta$-pairs (in the $1$-categorical sense), which is easily seen.
    Part (b) follows from part (a) since the Hodge--Tate tower on $\Prism_{R/A}\{i\}$ is complete by construction.
\end{proof}

\begin{remark}[Hodge--Tate filtration]
    When restricted to $\star\geq 0$, we view $\Prism_{R/A}^{[\star]}\{i\}$ as a complete decreasing
    filtration on $\Prism_{R/A}\{i\}$ and refer to this as the Hodge--Tate filtration.
\end{remark}

\begin{warning}
    It is not the case that $\Prism_{-/-}$ is itself left Kan extended. For example, if $A$ is a
    prism, then the prismatic comparison of Proposition~\ref{prop:prismaticcomparison} implies that
    $$\Prism_{\overline{A}[t_1^{1/p^\infty},\ldots,t_s^{1/p^\infty}]_p^\wedge/A}\we
    A[t_1^{1/p^\infty},\ldots,t_s^{1/p^\infty}]_{(p,I)}^\wedge$$
    Taking the colimit as $s\rightarrow\infty$ results in a ring whose $p$-completion is not
    typically $I$-complete and hence is not the prismatic cohomology of
    $\overline{A}[t_1^{1/p^\infty},\ldots,t_s^{1/p^\infty},\ldots]_p^\wedge$ relative to $A$.
\end{warning}

\begin{warning}
    While relative Hodge--Tate cohomology is left Kan extended, it does not typically preserve all
    colimits, although this is the case in the prismatic setting over a fixed prism. For example, absolute Hodge--Tate
    cohomology $\Prismbar_{-/\bZ_p}$ does not preserve pushouts as a functor to $p$-complete
    $\bE_\infty$-rings. 
    Indeed, writing $\bZ_p\langle x_1,\ldots,x_n\rangle$ for the $p$-completed polynomial algebra, we can, following~\cite[Section 4.7]{bhatt-lurie-apc}, express
    $\Prismbar_{\bZ_p\langle x_1,\ldots,x_n\rangle/\bZ_p}$ as fiber of the Sen operator $\Theta$ on $\DiffractedHodge_{\bZ_p\langle x_1,\ldots,x_n\rangle}$, and the $i$th graded of the conjugate filtration on $\Prismbar_{\bZ_p\langle x_1,\ldots,x_n\rangle/\bZ_p}$ as fiber of $-i$ on $L\Omega^i_{\bZ_p\langle x_1,\ldots,x_n\rangle/\bZ_p}[-i]$. Since $-i$ is invertible for $i=1,\ldots,p-1$, this shows that the conjugate filtration on $\Prismbar_{\bZ_p\langle x_1,\ldots,x_n\rangle/\bZ_p}$ degenerates for $n<p$ and we get that the canonical map from the $0$th stage of the conjugate filtration, which can be identified with
    \[
  \bZ_p\langle x_1,\ldots,x_n\rangle\tensorhat\Prismbar_{\bZ_p/\bZ_p} \to \Prismbar_{\bZ_p\langle x_1,\ldots,x_n\rangle/\bZ_p},
    \]
    is an equivalence for $n<p$. However, for $n\geq p$, the $p$-th graded of the conjugate filtration becomes nontrivial as well, and so this map is no longer an equivalence. In particular $\Prismbar_{\bZ_p\langle x_1,\ldots,x_n\rangle/\bZ_p}$ is not the $n$-fold tensor product of $\Prismbar_{\bZ_p\langle x\rangle/\bZ_p}$ for $n\geq p$.
\end{warning}

\begin{example}[Absolute prismatic comparison]\label{exs:wc}
    \begin{enumerate}
        \item[(a)] If $A=\bZ_p$, then
            $\Hscr_\Prism(R/\bZ_p)\we\Hscr_\Prism(R)$, the prismatic crystal
            introduced in~\cite[Const.~4.4.1]{bhatt-lurie-apc}, and the conjugate filtration
            $\F_{\leq\star}^{\delta\conj}\Hscr_\Prismbar(R/\bZ_p)$ agrees with the conjugate filtration on
            the absolute Hodge--Tate crystal of~\cite[Const.~4.5.1]{bhatt-lurie-apc}.
        \item[(b)] More generally, if $A$ is a perfect $\delta$-ring, then
            $\L_{A/\bZ_p}$ vanishes after $p$-completion and hence
            $\widehat{\L\Omega}^\star_{R/A}\we\widehat{\L\Omega}^\star_{R/\bZ_p}$. It follows that
            for each $i\in\bZ$
            the natural map
            $\Hscr_{\Prismbar}(R)\{i\}\rightarrow\Hscr_{\Prismbar}(R/A)\{i\}$
            is an equivalence of quasi-coherent sheaves on $\WCart^\HT$ and thus, by
            $\Iscr$-completeness,
            $\Hscr_\Prism^{[n]}(R)\{i\}\rightarrow\Hscr_\Prism^{[n]}(R/A)\{i\}$ is an equivalence of quasi-coherent
            sheaves on $\WCart$ for all $n,i\in\bZ$. In particular,
            $\Prismbar_R\we\Prismbar_{R/A}$ and
            $\Prism_R\we\Prism_{R/A}$, where $\Prism_R$ denotes the absolute prismatic
            cohomology of~\cite[Const.~4.4.10]{bhatt-lurie-apc}, defined as the global sections of
            the absolute prismatic crystal $\Hscr_\Prism(R)$. This is a generalization to
            $\delta$-rings of
            the relative-to-absolute prismatic cohomology comparison
            theorem~\cite[Prop.~4.4.12]{bhatt-lurie-apc} for
            perfect prisms.
        \item[(c)] If $A=R$ is derived $p$-complete, then the conjugate filtration implies that $\Hscr_{\Prismbar}(A/A)\{i\}\we
            A\tensor\Oscr_{\WCart^\HT}\{i\}$ for all $i\in\bZ$. Hence, since $\R\Gamma(\WCart^\HT,-)$ preserves
            colimits as a functor to $\widehat{\D}(\bZ_p)$ by~\cite[Cor.~3.5.13]{bhatt-lurie-apc},
            $$\Prismbar_{A/A}\we A\tensorhat\Prismbar_{\bZ_p}\we
            A\oplus A[-1].$$ By $\Iscr$-completeness, the natural map
            $A\tensorhat\Oscr_\WCart\rightarrow\Hscr_\Prism(A/A)$ is an equivalence. Thus, the
            natural map $A\tensorhat\Prism_{\bZ_p}\rightarrow\Prism_{A/A}$ is an equivalence after
            completing the tensor product with respect to the Hodge--Tate filtration on
            $\Prism_{\bZ_p}$.
    \end{enumerate}
\end{example}

\begin{warning}[Two conjugate filtrations]\label{warn:conjugate}
    As indicated by the notation, the conjugate filtration differs in general from
    that considered in~\cite{prisms} in the prismatic case.
    Let $\F_{\leq\star}^{\delta\conj}\Prismbar_{R/A}$ denote the filtration defined by using
    that $A$ is a $\delta$-ring, and let
    $\F_{\leq\star}^{\Prism\conj}\Prismbar_{R/A}^\rel$ denote the prismatic conjugate
    filtration on derived prismatic cohomology, as studied in~\cite{prisms} when $A$ is a prism and
    $R$ is a $p$-complete animated commutative $\overline{A}$-algebra.
    As proven in Proposition~\ref{prop:prismaticcomparison},
    $\Prismbar_{R/A}^\rel\we\Prismbar_{R/A}$, however the conjugate filtrations do not agree, for example when $A=\bZ_p$ and $R=\bF_p$.
    In this case, the
    $\delta$-ring variant agrees with the absolute conjugate filtration, which
    has
    $$\gr_u^{\delta\conj}\Prismbar_{\bF_p/\bZ_p}\we\fib(\L\Omega^u_{\bF_p/\bZ_p}\xrightarrow{-u}\L\Omega^u_{\bF_p/\bZ_p})[-u]\we\fib(\bF_p\xrightarrow{-u}\bF_p),$$
    while
    $$\gr_u^{\Prism\conj}\Prismbar^\rel_{\bF_p/\bZ_p}\we\L\Omega^u_{\bF_p/\bF_p}\{-u\}[-u]\simeq\begin{cases}
        \bF_p&\text{if $u=0$, and}\\
        0&\text{otherwise.}
    \end{cases}
    $$
    The prismatic conjugate spectral sequence degenerates at the $\E_1$-page,
    while the $\delta$-ring-theoretic conjugate spectral sequence supports
    $\d_p$-differentials and degenerates at the $\E_{p+1}$-page.
\end{warning}

We end this section by giving a different perspective on the definition of $\Prism_{R/A}$ that avoids the theory of prismatic crystals. To this end we let 
$\Pairs^{\Prism}$ be the category whose objects are given by pairs consisting of a bounded prism
$(A,I)$ together with a map of commutative rings $A/I \to R$. Note that this is not a full
subcategory of $\Pairs^{\delta}$ since we require the choice and existence of a prismatic ideal $I$,
in contrast to the definition of prismatic $\delta$-pairs (Definition \ref{prismatic}) where we only
required the mere existence of maps. Still there is a forgetful functor
\[
\Pairs^{\Prism} \to \Pairs^{\delta} \ .
\]
Moreover Bhatt--Scholze's derived prismatic cohomology~\cite[Sec.~7.2]{prisms} defines a functor 
\[
\Prism_{R/A}^\rel \colon \Pairs^{\Prism} \to \D(\bZ)^\wedge_p \ .
\]
We will later show that indeed their relative theory agrees in this case with ours, see Proposition \ref{prop:prismaticcomparison}. 
\begin{proposition}
    Prismatic cohomology $\Prism_{R/A}\colon \Pairs^{\delta} \to \D(\bZ)^\wedge_p $ is the right Kan
    extension of $\Prism_{R/A}^\rel$ along $\Pairs^{\Prism} \to \Pairs^{\delta}$.
\end{proposition}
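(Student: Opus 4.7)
The plan is to verify the pointwise formula for the right Kan extension and match it with the crystal description of $\Prism_{R/A}$ from Remark~\ref{rem:cohomology_as_limit}. By Proposition~\ref{prop:prismaticcomparison}, the functor $\Prism$ agrees with $\Prism^\rel$ on $\Pairs^\Prism$, so the universal property of the right Kan extension produces a canonical natural transformation $\eta\colon \Prism \to \mathrm{Ran}_U \Prism^\rel$, where $U\colon \Pairs^\Prism \to \Pairs^\delta$ is the forgetful functor. The task is to show $\eta_{(A,R)}$ is an equivalence at every bounded $\delta$-pair $(A,R)$ by computing
\[
\mathrm{Ran}_U \Prism^\rel(A, R) = \lim_{(A,R) \to U(B,J,S)} \Prism^\rel_{S/B}
\]
over the comma category $\mathrm{Com} = (A,R) \downarrow U$.

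I would proceed via a series of reductions. First, every bounded prism admits a $(p,J)$-completely faithfully flat transversal cover by~\cite[Prop.~2.4.1]{bhatt-lurie-apc}, so by faithfully flat descent for derived prismatic cohomology applied to the associated \v{C}ech nerve, the limit reduces to its restriction to the full subcategory $\mathrm{Com}^{\tr}$ where $B$ is transversal. Second, for fixed $(B, \phi\colon A \to B) \in \mathrm{Com}^{\tr}$, the fiber category of admissible $\overline{B}$-algebras $S$ (equipped with an $A$-linear ring map $R \to S$) is the undercategory of $\overline{B}$-algebras under $R \tensorhat_A \overline{B}$; this has $R \tensorhat_A \overline{B}$ as initial object, so covariance of $\Prism^\rel_{-/B}$ gives $\lim_S \Prism^\rel_{S/B} = \Prism^\rel_{R \tensorhat_A \overline{B}/B}$. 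Third, since $A$ is bounded, $A \tensorhat B$ is a bounded prism (Remark~\ref{rem:unbounded}) and $\phi$ induces a prism map $A \tensorhat B \to B$; base change for $\Prism^\rel$ along this map identifies $\Prism^\rel_{R \tensorhat_A \overline{B}/B}$ with $\Prism^\rel_{R \tensorhat \overline{B}/A \tensorhat B} \tensorhat_{A \tensorhat B} B$, whose first factor is by definition $\Hscr_\Prism(R/A)(B)$.

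The final step is to match the resulting limit over pairs $(B, \phi)$ with the limit $\Prism_{R/A} = \lim_{C \text{ trans}} \Hscr_\Prism(R/A)(C)$ of Remark~\ref{rem:cohomology_as_limit}. The key cofinality observation is that for each transversal prism $C$, the assignment $C \mapsto (A \tensorhat C, \text{unit}\colon A \to A \tensorhat C)$ provides a canonical object of $\mathrm{Com}^{\tr}$, and the pushout universal property of $A \tensorhat C$ makes the resulting functor initial: indeed, for each $(B, \phi)$, the slice of transversal $C$'s with a morphism $(A \tensorhat C, \text{unit}) \to (B, \phi)$ is equivalent to the undercategory of transversal prisms under $B$, which is weakly contractible. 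At the ``free'' objects $(A \tensorhat C, \text{unit})$ the base change of Step~3 becomes trivial (the multiplication $A \tensorhat A \tensorhat C \to A \tensorhat C$ acts as the identity after collapsing), and the integrand reduces to $\Hscr_\Prism(R/A)(C)$. The main obstacle is this final cofinality argument, which requires careful tracking of the $A \tensorhat -$-structures and the compatibility of the crystal property of $\Hscr_\Prism(R/A)$ with the prism map $A \tensorhat B \to B$.
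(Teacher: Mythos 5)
Your overall strategy matches the paper's: identify the right Kan extension as a pointwise limit over a comma category, and show that a diagram indexed by transversal prisms is initial, using the weak contractibility (siftedness) of the relevant slice from~\cite[Cor.~2.4.7]{bhatt-lurie-apc}. However, the final cofinality argument as written does not go through. You define $\mathrm{Com}^{\tr}$ as the full subcategory of the comma category where the prism $B$ is transversal, and then claim that $C\mapsto (A\tensorhat C,\text{unit})$ lands in $\mathrm{Com}^{\tr}$. But $A\tensorhat C$ is \emph{not} transversal in general (e.g.\ whenever $A$ has $p$-torsion), so this functor does not take values in $\mathrm{Com}^{\tr}$ and the claimed initiality is not well-posed. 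Relatedly, you describe the slice as "the undercategory of transversal prisms under $B$"; a morphism $(A\tensorhat C,\text{unit})\to(B,\phi)$ amounts, by the coproduct property, to a map of prisms $C\to B$, so the slice is the category of transversal prisms \emph{over} $B$, not under it — this is the category whose siftedness~\cite[Cor.~2.4.7]{bhatt-lurie-apc} supplies.

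The early reduction to $\mathrm{Com}^{\tr}$ via flat descent, and the subsequent base-change manipulation in step 4, also create complications the paper never faces: after step 4 your integrand is $\Hscr_\Prism(R/A)(B)\tensorhat_{A\tensorhat B}B$ (carrying an extra tensor factor), which you would then need to reconcile with $\lim_C\Hscr_\Prism(R/A)(C)$. The paper's proof avoids all of this by working directly with the comma category $\Pairs^{\Prism}_{(A,R)/}$ (no transversality requirement on its objects) and the functor $\mathrm{TransPrisms}\to\Pairs^{\Prism}_{(A,R)/}$, $(B,J)\mapsto(A\tensorhat B,R\tensorhat\overline{B})$; the composite with $\Prism^\rel$ is exactly the diagram $B\mapsto\Hscr_\Prism(R/A)(B)$ whose limit is $\Prism_{R/A}$ by definition (Remark~\ref{rem:cohomology_as_limit}), and initiality is checked slice by slice via Quillen's Theorem A. To repair your argument you should drop the restriction of the comma category to transversal prisms and instead use the paper's functor directly, showing that its over-slices at arbitrary objects $(C,S)$ are weakly contractible.
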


Note that since $\Pairs^{\Prism} \to \Pairs^{\delta}$ is not fully faithful, it is not a priori
clear that the right Kan extension indeed leaves the value on a prismatic $\delta$-pairs unchanged.
However, we will show that it does in Section~\ref{comparisontoBS}.

\begin{proof}
Let $(A,R) \in \Pairs^{\delta}$ be a $\delta$-pair. We would like to compute the value of the right Kan extension on $(A,R)$. This is given by the limit
of $\Prism_{S/C}^\rel$ where $C$ and $S$ vary over the category  $\Pairs^{\Prism}_{(A,R)_/}$, i.e.
    $C$ has a prism structure $K \subseteq C$ a map of rings $C/K \to S$ and is equipped with a map
    of $\delta$-pairs $(A,R) \to (C,S)$. We now consider the functor
\[
\mathrm{TransPrisms} \to \Pairs^{\Prism}_{(A,R)_/}
\]
from the category of transversal prisms given by 
\[
(B,J) \mapsto (A \tensorhat B, R \tensorhat \overline{B}),
\]
as in the definition of the prismatic crystal (Definition \ref{def:crystal}). Now to prove the
    proposition it suffices to show that this functor 
    $\mathrm{TransPrisms} \to \Pairs^{\Prism}_{(A,R)_/}$ is initial (in the sense that taking limits
    induces an equivalence, i.e., the opposite of the notion of cofinality from~\cite[Sec.~4.1.1]{htt}), as the limit over the composite of this functor with $\Prism_{R/A}^\rel$
    is the definition of $\Prism_{R/A}$ (see Remark~\ref{rem:cohomology_as_limit}).
    By Quillen's Theorem A (see~\cite[Thm.~4.1.3.1]{htt}), this initiality comes down to checking that for each object 
\[
(C,S) \in \Pairs^{\Prism}_{(A,R)_/}
\]
the category of transversal prisms $(B,J)$ with a map of prism pairs
\[
(A \tensorhat B, R \tensorhat \overline{B}) \to (C,S)
\]
under $(A,R)$ is weakly contractible. Such a map however is uniquely determined on the part of the
    tensor product given by $A$ and $R$ so that it is equivalently given by a map of prismatic
    $\delta$-pairs
\[
(B, \overline{B}) \to (C,S)
\]
or equivalently a map of prisms $(B,J) \to (C,K)$ since then the map $\overline{B} \to S$ is uniquely
    determined. Thus, the category is equivalent to the category of transversal prisms over $(C,S)$ which is weakly contractible since it is sifted \cite[Cor.~2.4.7]{bhatt-lurie-apc}.
\end{proof}

We note that the proof actually shows something stronger, namely that one can even right Kan extend from transversal prisms instead of all prisms, but we shall not need this here.

\section{Prismatization}\label{sec:prismatization}

The goal of prismatization as developed by Drinfeld~\cite{drinfeld-prismatization} and
Bhatt--Lurie~\cite{bhatt-lurie-apc, bhatt-lurie-prism}
is to construct for each $p$-adic formal scheme $X$ a formal stack (in groupoids on the site of
$p$-nilpotent affine schemes with the flat topology), called $\WCart_X$, such that (1)
quasi-coherent sheaves on $X$ provide a good formalism for coefficients for prismatic cohomology and
(2) $\R\Gamma(\WCart_X,\Oscr_{\WCart_X})$ computes $\Prism_X$, the absolute (derived) prismatic
cohomology of $X$. There is also a relative version, denoted by $\WCart_{X/A}$ when $A$ is a prism,
introduced in~\cite{bhatt-lurie-prism}. For more on formal (higher) stacks, see Appendix~\ref{app}.

\begin{definition}[Cartier--Witt divisors]
    Recall from~\cite[Def.~3.1.4]{bhatt-lurie-apc} that if $S$ is a $p$-nilpotent commutative ring,
    then a Cartier--Witt divisor of $S$ is a generalized Cartier divisor $\alpha\colon I\rightarrow W(S)$
    such that the image of $I\xrightarrow{\alpha} W(S)\rightarrow S$ is nilpotent and the image of
    $I\xrightarrow{\alpha} W(S)\xrightarrow{\delta}W(S)$ generates the unit ideal of $W(S)$.
\end{definition}

\begin{definition}[Absolute Cartier--Witt stacks]
    Let $R$ be a commutative ring with bounded $p$-power torsion. Define $\WCart_R$ to be the formal stack
    which assigns to a $p$-nilpotent ring $S$ the groupoid of pairs $(I\xrightarrow{\alpha}
    W(S),R\rightarrow\overline{W(S)})$, where $\alpha$ is a Cartier--Witt divisor of $S$ and
    $\overline{W(S)} = W(S)/\!/I$ is the animated commutative ring with underlying chain complex given by the cofiber
    of $\alpha$; see~\cite[Sec.~3]{bhatt-lurie-prism}.
\end{definition}

Morally one should think of an $S$-point of $\WCart_R$ as making $W(S)$ an object of the animated
version of the absolute prismatic site of $R$ using \cite[Remark 3.1.5]{bhatt-lurie-prism}. The
stack $\WCart_{R/A}$ that we define below similarly has $S$ points given by ways of making $W(S)$ an
object of the animated version of the relative prismatic site of $R$ relative to $A$.

\begin{remark}[Insensitivity to derived $p$-completion]
    As $\pi_0\overline{W(S)}$ is a $p$-nilpotent ring by~\cite[Lem.~3.3]{bhatt-lurie-prism},
    $\WCart_R$ depends only on the derived $p$-completion of $R$.
\end{remark}

\begin{remark}[Extension to stacks]
    We will need the natural extension of $\WCart_{-}$ to formal stacks. If
    $\Fscr$ is any presheaf of spaces on $p$-nilpotent affine schemes, we can define a presheaf $\WCart_\Fscr$ as follows: for any
    $p$-nilpotent ring $S$, we let
    $\WCart_\Fscr(S)$ be the space of pairs of a
    Cartier--Witt divisor $\alpha\colon I\rightarrow W(S)$ and a $\overline{W(S)}$-point of $\Fscr$.
    With this notation, $\WCart_R=\WCart_{\Spf R}$ if $R$ is bounded.
    The natural map $\colim_{\Spf R\rightarrow\Fscr}\WCart_R\rightarrow\WCart_\Fscr$ is an
    equivalence of presheaves of spaces, where the colimit ranges over maps from $p$-nilpotent affine schemes.
\end{remark}

\begin{construction}[{\cite[Const.~3.11]{bhatt-lurie-prism}}]\label{const:probe}
    If $A$ is a bounded $\delta$-ring, there is a functor $\Spf
    A\times\WCart\rightarrow\WCart_A$ obtained as follows.
    Given a $p$-nilpotent commutative ring $S$ and an $S$-point of $\Spf A\times\WCart$, which consists of a pair
    consisting of a
    map $A\rightarrow S$ and a Cartier--Witt divisor
    $I\xrightarrow{\alpha}W(S)$ over $S$, one uses the universal property of
    the Witt vectors to obtain a $\delta$-ring map $A\rightarrow W(S)$. The composition
    $A\rightarrow W(S)\rightarrow W(S)/\!/I$ together with $(I,\alpha)$ defines an
    $S$-point  of $\WCart_A$.
\end{construction}

\begin{definition}[Prismatization of $\delta$-pairs]\label{def:prismatizationdelta}
    Suppose that $A\rightarrow R$ is a bounded $\delta$-pair. We define the relative Cartier--Witt stack of $R$ over $A$ as
    the pullback
    $$\xymatrix{
        \WCart_{R/A}\ar[r]\ar[d]&\WCart_R\ar[d]\\
        \Spf A\times\WCart\ar[r]&\WCart_A.
    }$$
    In other words, if $S$ is a $p$-nilpotent commutative ring,
    $\WCart_{R/A}(S)$ is the space consisting of quadruples of a Cartier--Witt
    divisor
    $I\xrightarrow{\alpha}W(S)$, a map $R\rightarrow W(S)/\!/I$ of animated commutative
    rings, a
    map $A\rightarrow S$ of commutative rings, and an equivalence between the
    compositions $A\rightarrow W(S)\rightarrow W(S)/\!/I$ and $A\rightarrow R\rightarrow W(S)/\!/I$.
    More generally, one can make an analogous definition when $X$ is a bounded $p$-adic formal
    scheme over $\Spf A$ to obtain $\WCart_{X/A}$.
\end{definition}

This definition agrees with the definition given in~\cite{bhatt-lurie-prism}
when both make sense by Proposition~\ref{prop:comparison} below. We give some notation and the definition from {\em op. cit.}

\begin{notation}
    If $(A,I)$ is a bounded prism, we distinguish between the $p$-adic formal scheme $\Spf A$ and the
    object $\SSpf A$. If $S$ is a $p$-nilpotent commutative ring, then $(\Spf
    A)(S)=\colim_m\Map_{\CAlg}(A/p^m,S)$, while $(\SSpf
    A)(S)=\colim_{m,n}\Map_{\CAlg}(A/(p^m,I^n),S)$. Alternatively, $\SSpf A\subseteq\Spf A$ is the
    formal completion at $I$, corresponding to maps $A\rightarrow S$ where $I$ is sent to a
    nilpotent ideal.
\end{notation}

\begin{definition}[Relative Cartier--Witt stacks~\cite{bhatt-lurie-prism}]\label{def:relativeprismatization}
    If $A$ is a bounded prism, then there is a natural map $\SSpf
    A\rightarrow\WCart_{\overline{A}}$. If $S$ is a $p$-nilpotent commutative ring and $g\colon A\rightarrow
    S$ is a point of $\SSpf A$, so that the image of $I$ in $S$ is nilpotent, then we can take the
    adjoint $\delta$-ring map $g^{\#}\colon A\rightarrow W(S)$ and take the Cartier--Witt divisor
    $I\otimes_A W(S)\rightarrow W(S)$ obtained by extension of scalars. If $R$ is a commutative
    $\overline{A}$-algebra with bounded $p$-power torsion, then Bhatt and Lurie define the relative
    Cartier--Witt stack $\WCart_{R/A}$ in~\cite[Variant~5.1]{bhatt-lurie-prism} as the pullback
    $$\xymatrix{
        \WCart_{R/A}\ar[r]\ar[d]&\WCart_R\ar[d]\\
        \SSpf A\ar[r]&\WCart_{\overline{A}}.
    }$$
\end{definition}

\begin{proposition}[Agreement with prismatization in the prismatic case]\label{prop:comparison}
    If $A$ is a bounded prism and $R$ is a bounded
    $\overline{A}$-algebra, then there is a natural commutative diagram
    $$\xymatrix{
        \WCart_{R/A}\ar[r]\ar[d]&\WCart_R\ar[d]\\
        \SSpf A\ar[r]^{\rho_A}\ar[d]_{\id\times\rho_A}&\WCart_{\overline{A}}\ar[d]\\
        \Spf A\times\WCart\ar[r]&\WCart_A
    }$$
    of pullback squares where the outer square agrees with the defining square of
    Definition~\ref{def:prismatizationdelta}. 
\end{proposition}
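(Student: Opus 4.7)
The strategy is to reduce the proposition to a single, $R$-independent claim about the middle square, and then to unpack $S$-points. First I would observe that the top square is a pullback by Definition~\ref{def:relativeprismatization} (the definition of $\WCart_{R/A}$ in the prismatic case), while the outer rectangle is a pullback by Definition~\ref{def:prismatizationdelta} (the definition in the general $\delta$-pair case). By the pasting lemma for pullback squares, it therefore suffices to show that the middle square is a pullback, i.e., that the canonical map
$$\SSpf A \longrightarrow (\Spf A \times \WCart) \times_{\WCart_A} \WCart_{\overline A}$$
is an equivalence. Once this is established, pasting will give compatibility of the two definitions of $\WCart_{R/A}$.

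To verify this identification of formal stacks, I would compare $S$-points for $S$ a $p$-nilpotent commutative ring. An $S$-point of the right-hand side unwinds to a quadruple $(f,\alpha,g,h)$ where $f\colon A \to S$ is a ring map, $\alpha\colon I \to W(S)$ is a Cartier--Witt divisor, $g\colon \overline A \to W(S)/\!/I$ is a map of animated rings, and $h$ is a homotopy between the two composites $A \to W(S)/\!/I$: one from Construction~\ref{const:probe} applied to $(f,\alpha)$ via the $\delta$-lift $f^{\#}\colon A \to W(S)$, the other via $A \to \overline A \xrightarrow{g} W(S)/\!/I$. An $S$-point of $\SSpf A$ is just a ring map $f\colon A \to S$ sending the prism ideal $I_A \subseteq A$ into a nilpotent ideal of $S$. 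The forward map sends such an $f$ to the quadruple with $\alpha$ the canonical Cartier--Witt divisor $I_A \otimes_A W(S) \to W(S)$ (well-defined because $A$ is a prism and $f(I_A)$ is nilpotent) and with $(g,h)$ given tautologically.

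For the reverse direction, given $(f,\alpha,g,h)$, the homotopy $h$ provides a null-homotopy of $I_A \to A \xrightarrow{f^{\#}} W(S) \to W(S)/\!/I$, which by the cofiber sequence $I \xrightarrow{\alpha} W(S) \to W(S)/\!/I$ lifts to a $W(S)$-linear map $\tilde\alpha\colon I_A \otimes_A W(S) \to I$ whose composition with $\alpha$ is the canonical map $I_A \otimes_A W(S) \to W(S)$. Applying the first ghost coordinate $w_0\colon W(S) \to S$ to the resulting containment $f^{\#}(I_A) \subseteq \alpha(I)$, and using that $w_0(\alpha(I))$ is nilpotent (part of the definition of a Cartier--Witt divisor), shows that $f(I_A) \subseteq S$ is nilpotent, so that $f$ is a valid $\SSpf A$-point.

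The main obstacle I anticipate is upgrading this bijection on underlying sets to an equivalence of groupoid-valued presheaves. Concretely, one must show that the map $\tilde\alpha$ constructed above is automatically an \emph{isomorphism} of invertible $W(S)$-modules, which will identify the given Cartier--Witt divisor $\alpha$ with the canonical one attached to $f \in \SSpf A(S)$ and hence realize the two constructions as mutually inverse. This should follow from a standard calculation with Cartier--Witt divisors: locally on $\Spec S$, $\tilde\alpha$ is multiplication by some $u \in W(S)$, and the $\delta$-condition on a generator of each of the two Cartier--Witt ideals, combined with nilpotence of their images in $S$, forces $u$ itself to be a unit.
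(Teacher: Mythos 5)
Your proposal is structurally identical to the paper's proof: reduce the claim to showing the middle square is a pullback, then compare $S$-points. The only difference lies in how the final identification is handled. The paper dispatches it in one line by invoking rigidity for maps between prisms (citing the relevant lemma from~\cite{prisms}), which says exactly that a map of $\delta$-rings carrying one distinguished-element locus into another must induce an isomorphism on the corresponding invertible ideals. You instead sketch a direct re-derivation of that rigidity: locally $\tilde\alpha$ is multiplication by some $u\in W(S)$, and applying $\delta$ to $f^{\#}(d)=u\,\alpha(e)$ and reducing modulo $(p,\alpha(e))$ shows $u^p$ is a unit there. That argument is sound in outline, but be aware that the last inference — from ``$u$ is a unit mod $(p,\alpha(e))$'' to ``$u$ is a unit in $W(S)$'' — requires knowing that $(p,\alpha(e))$ lies in the Jacobson radical of $W(S)$; this holds because $S$ is $p$-nilpotent and $w_0(\alpha(e))$ is nilpotent, but it is a genuine ingredient that your sketch leaves implicit, and it is exactly the content the cited lemma packages up. In short: same approach, with the one nontrivial step cited in the paper and sketched (slightly incompletely) by you.
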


In particular, the definition given in Definition~\ref{def:prismatizationdelta} of
$\WCart_{R/A}$ agrees with that of Definition~\ref{def:relativeprismatization} for prismatic $\delta$-pairs.

\begin{proof}
    The pullback of the top square defines the relative Cartier--Witt stack in
    the prismatic case by definition~\cite[Var.~5.1,
    Const.~7.1]{bhatt-lurie-prism}.
    It is enough to identify the bottom pullback as $\SSpf A$; write $P$ for
    this pullback. Evaluated at a $p$-nilpotent
    commutative ring $S$, $P(S)$ is the space consisting of
    \begin{enumerate}
        \item[(i)] a Cartier--Witt divisor $I\xrightarrow{\alpha}W(S)$ equipped with a map
            $f\colon\overline{A}\rightarrow W(S)/\!/I$,
        \item[(ii)] a map $g\colon A\rightarrow S$ and a
            Cartier--Witt divisor $J\xrightarrow{\beta}W(S)$, and
        \item[(iii)] an
            equivalence $(I,\alpha)\we (J,\beta)$ of Cartier--Witt divisors on $S$
            and an equivalence of maps between $A\xrightarrow{g^\#} W(S)\rightarrow
            W(S)/\!/J$ and $A\rightarrow\overline{A}\xrightarrow{f} W(S)/\!/I$, where $g^\#$
            is the map of $\delta$-rings adjoint to $g$.
    \end{enumerate}
    This data determines and is determined by a map $A\rightarrow S$ which
    sends $I$ to a nilpotent ideal of $S$ by rigidity for maps between
    prisms~\cite[Lem.~3.59]{prisms}. So, the pullback $P$ is
    equivalent to $\SSpf A$, as desired.
\end{proof}

\begin{example}\label{ex:wcartofprism}
    If $B$ is a bounded prism, then the natural map $$\WCart_{\overline{B}/B}\rightarrow\SSpf B$$ is
    an equivalence.
\end{example}

\begin{lemma}[Insensitivity to $I$-adic completion]\label{lem:Iinsensitive}
    Suppose that $A\rightarrow R$ is a bounded $p$-complete $\delta$-pair and that the kernel contains an ideal
    $I$ which is locally generated by a distinguished element which is a non-zero divisor. Let $A_I^\wedge$ denote the
    $I$-adic completion of $A$, which is a prism. Then, the natural map
    $$\WCart_{R/A_I^\wedge}\rightarrow\WCart_{R/A}$$ is an equivalence.
\end{lemma}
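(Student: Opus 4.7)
The plan is to check the claimed equivalence pointwise on $p$-nilpotent test rings $S$, using the explicit description of the pullback in Definition~\ref{def:prismatizationdelta}. An $S$-point of $\WCart_{R/A}$ consists of a Cartier--Witt divisor $\alpha\colon L\to W(S)$, a map $f\colon R\to W(S)/\!/\alpha$ of animated commutative rings, a map $g\colon A\to S$ of commutative rings, and an equivalence $\eta$ between the two compositions $A\rightrightarrows W(S)/\!/\alpha$ obtained (a) by using $g$ to lift to the $\delta$-ring map $g^{\#}\colon A\to W(S)$ and then projecting, versus (b) going through $R$. The forgetful map $\WCart_{R/A_I^\wedge}\to\WCart_{R/A}$ is induced by precomposition with $A\to A_I^\wedge$ (where the $\delta$-structure on $A_I^\wedge$ is automatic by~\cite[Lem.~2.18]{prisms}), so it will suffice to show that the map $g$ appearing in any such datum factors essentially uniquely through $A_I^\wedge$.

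The key step is to extract from $\eta$ that $g(I)$ lies in a nilpotent ideal of $S$. Applying $\pi_0$ to both compositions, one has $\pi_0(W(S)/\!/\alpha)=S/\overline{\alpha}$, where $\overline{\alpha}\subseteq S$ denotes the image of $L\otimes_{W(S)}S\to S$; this ideal is nilpotent by the defining property of a Cartier--Witt divisor. Via the ghost component $W(S)\to S$, the composition $A\xrightarrow{g^{\#}}W(S)\to S/\overline{\alpha}$ is just the reduction of $g$ modulo $\overline{\alpha}$, while the composition $A\to R\to S/\overline{\alpha}$ factors through $A/I$ because $I\subseteq\ker(A\to R)$. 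The equivalence $\eta$ therefore forces $g(I)\subseteq\overline{\alpha}$, so $g(I)\cdot S$ is nilpotent.

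Since $p$ is nilpotent in $S$, since $(p,I)\subseteq A$ is finitely generated (as $I$ is locally principal), and since $A$ is already derived $p$-complete, the map $g$ then factors through $A/(p,I)^N$ for some $N$, and hence uniquely through the $(p,I)$-adic completion $A_{(p,I)}^\wedge=A_I^\wedge$. The extension is unique, and the compatibility datum $\eta$ transports canonically along $A\to A_I^\wedge$ by the uniqueness of the $\delta$-ring lift and functoriality of the ghost map. The hard part will be simply the bookkeeping around the animated and $\delta$-structures; the mathematical content reduces to the single observation that $\eta$ constrains $g(I)$ to land in the nilpotent ideal $\overline{\alpha}$, after which the universal property of $I$-adic completion against $p$-nilpotent targets delivers the conclusion formally.
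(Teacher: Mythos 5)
Your proposal is correct and follows essentially the same route as the paper: use the homotopy $\eta$ to deduce that $g(I)$ lands in a nilpotent ideal of $S$, then invoke the universal property of $I$-adic (equivalently $(p,I)$-adic, since $A$ is $p$-complete) completion against $p$-nilpotent targets. One cosmetic slip: $\pi_0(W(S)/\!/\alpha)$ is $W(S)/\alpha(L)$, not $S/\overline{\alpha}$; you need the further projection $W(S)/\alpha(L)\to S/\overline{\alpha}$ (which exists because $\alpha(L)$ maps into $\overline{\alpha}$ under $W(S)\to S$), but this does not affect the argument.
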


\begin{proof}
    If $S$ is a $p$-nilpotent commutative ring and $x$ is an $S$-point of
    $\WCart_{R/A}$, corresponding to the data of a map $A\xrightarrow{g} S$,
    a Cartier--Witt divisor $J\xrightarrow{\alpha}W(S)$, a map $R\rightarrow W(S)/J$, and a
    specified equivalence between $A\xrightarrow{g^\#}W(S)\rightarrow W(S)/J$ and
    $A\rightarrow R\rightarrow W(S)/J$,
    then it follows from commutativity that $g^\#$ sends the ideal $I\subseteq A$ into $J$.
    On the other hand, $J$ maps to a nilpotent ideal of $S$ by definition of Cartier--Witt divisors.
    It follows that $g$ and $g^{\#}$ factor uniquely through the $I$-adic completion of $A$ and
    hence that the map on Cartier--Witt stacks is an equivalence.
\end{proof}

\begin{remark}
    In fact, in Lemma~\ref{lem:Iinsensitive}, we do not need $I$ to be locally generated by a distinguished
    element for $A_I^\wedge$ to be a prism. We need only
    for $I$ to be some ideal such that the derived $I$-adic completion of $A$ is discrete.
    However, we will only apply the lemma in the prismatic situation.
\end{remark}

\begin{variant}[Derived (animated) Cartier--Witt stacks]\label{var:anwc}
    Suppose that $R$ is an animated commutative ring.
    The derived Cartier--Witt stack $\WCart_R^\an$ of $R$ is defined to be the functor on
    $p$-nilpotent animated commutative rings $S$ with value the space 
    consisting of pairs of an animated prism structure $I\xrightarrow{\alpha}W(S)$ such that the
    image of $\pi_0(I)\rightarrow\pi_0(W(S))\rightarrow\pi_0(S)$ is nilpotent together with a map
    $R\rightarrow\overline{W(S)}=\cofib(\alpha)$.

    Let $A\rightarrow R$ be an animated $\delta$-pair. There is a natural extension of
    Construction~\ref{const:probe} to a map $\Spf A\times\WCart^\an\rightarrow\WCart^\an_A$ by using
    the universal property~\cite[Prop.~A.23]{bhatt-lurie-prism} of the Witt vectors for animated
    commutative rings as the cofree animated $\delta$-ring functor. Define $\WCart_{R/A}^\an$ as the
    pullback (in presheaves of spaces on derived schemes) of $\Spf
    A\times\WCart^\an\rightarrow\WCart^\an_A$ along
    $\WCart^\an_R\rightarrow\WCart^\an_A$.

    If $A$ is a bounded prism and $R$ is an animated commutative $\overline{A}$-algebra, then the
    proof of Proposition~\ref{prop:comparison} can be extended to the animated situation to give an
    equivalence between $\WCart^\an_{R/A}$, as defined above, and the definition given
    in~\cite[Const.~7.1]{bhatt-lurie-prism}.
\end{variant}

\begin{lemma}\label{lem:presheaf_extensions}
  Assume $\Gscr\to \Fscr$ is a map of presheaves on a site with a subcanonical topology and that the following hold:
  \begin{enumerate}
      \item[{\em (1)}] $\Fscr$ is a sheaf;
      \item[{\em (2)}] for every representable $\underline{U}$ and any map $\underline{U}\to \Fscr$, the pullback $\underline{U}\times_{\Fscr} \Gscr$ is a sheaf.
  \end{enumerate}
  Then, $\Gscr$ is a sheaf.
\end{lemma}

\begin{proof}
  We prove the following more abstract claim in a general $\infty$-category with pullbacks. Assume
    that $f\colon A\to B$
    and $X\to Y$ are morphisms such that
  \begin{enumerate}
      \item[(1)] $\Map(B,B)\to \Map(A,B)$ is an equivalence,
      \item[(2)] $\Map(B,Y)\to \Map(A,Y)$ is an equivalence, and
      \item[(3)] $\Map(B, B\times_{Y} X) \to \Map(A, B\times_Y X)$ is an equivalence.
  \end{enumerate}
  Then, $\Map(B,X)\to \Map(A,X)$ is an equivalence. The claim will follow from this by letting
    $X=\Gscr$, $Y=\Fscr$, and letting $A\to B$ range over covering sieves $S\to \underline{U}$
    (with the first claim coming from the fact that $\underline{U}$ is a sheaf since the topology
    was assumed subcanonical).

  To check the claim, observe that we may view $\Map(B,X)\to \Map(A,X)$ as a morphism of spaces
    over $\Map(B,Y)\simeq \Map(A,Y)$, and check that it is an equivalence on each fiber. The map of
    fibers over $g\in \Map(B,Y)$ coincides with the map of fibers of the map $\Map(B, B\times_Y X)
    \to \Map(A,B\times_Y X)$ over $\id_B$ (resp. $f$) in $\Map(B,B)\simeq \Map(A,B)$. By assumption,
    the map $\Map(B, B\times_Y X)\to \Map(A,B\times_Y X)$ is an equivalence, finishing the proof.
\end{proof}

\begin{lemma}[The Cartier--Witt stacks are sheaves]
    If $A\rightarrow R$ is a bounded $\delta$-pair, then $\WCart_{R/A}$ is a sheaf for the flat
    topology on $p$-nilpotent commutative rings.
\end{lemma}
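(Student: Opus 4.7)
The plan is to exploit the defining pullback square and reduce the statement to the sheaf property of each of the three input stacks $\Spf A\times\WCart$, $\WCart_A$, and $\WCart_R$. Since the flat topology on $p$-nilpotent commutative rings is subcanonical and the $\infty$-category of sheaves of spaces is closed under limits inside presheaves, any pullback of sheaves is again a sheaf. Thus it suffices to verify that each of the four functors appearing in Definition~\ref{def:prismatizationdelta} is a flat sheaf.

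For $\WCart$ itself this is the basic descent property of Cartier--Witt divisors established by Bhatt--Lurie in~\cite[Sec.~3]{bhatt-lurie-apc}, relying on the fact that $W(-)$ sends faithfully flat covers of $p$-nilpotent rings to fpqc covers together with the fpqc-local nature of generalized Cartier divisors and of the nilpotence and unit-ideal conditions. The formal scheme $\Spf A$ is a flat sheaf by the classical theory, so the product $\Spf A\times\WCart$ is a sheaf. For $\WCart_R$ with $R$ bounded, the sheaf property follows in the same way: an $S$-point is the datum of a Cartier--Witt divisor together with a map $R\to\overline{W(S)}$ of animated rings, and mapping out of a fixed ring is a sheaf in $S$ since $\overline{W(-)}$ satisfies flat descent (the cofiber construction commutes with the limit defining descent, because $W(-)$ does). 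The same argument applies to $\WCart_A$, using Construction~\ref{const:probe} to view its $S$-points in terms of the Cartier--Witt divisor together with the induced map $A\to\overline{W(S)}$.

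Given these four sheaf properties, the pullback $\WCart_{R/A}$ is computed objectwise and is therefore a sheaf. Concretely, for a flat cover $S\to S^0$ with \v{C}ech nerve $S^\bullet$, we get
\[
    \WCart_{R/A}(S) \we (\Spf A\times\WCart)(S)\times_{\WCart_A(S)}\WCart_R(S),
\]
and applying $\Tot$ commutes with the finite limit on the right since each of the three factors satisfies descent for $S\to S^\bullet$.

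The main (and essentially only) obstacle is the sheaf property of $\WCart$, $\WCart_R$, and $\WCart_A$ themselves, but this is a standard input from~\cite{bhatt-lurie-apc,bhatt-lurie-prism}, so no new calculation is required.
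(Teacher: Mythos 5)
Your overall strategy matches the paper's: reduce to the sheaf property of the vertices of the defining pullback square, since limits of sheaves are sheaves. The one place your argument is too quick is the claim that ``$\overline{W(-)}$ satisfies flat descent (the cofiber construction commutes with the limit defining descent, because $W(-)$ does).'' The cofiber you want to control is $\overline{W(S^\bullet)}=\cofib\bigl(I\otimes_{W(S)}W(S^\bullet)\to W(S^\bullet)\bigr)$ for a \emph{fixed} Cartier--Witt divisor $\alpha\colon I\to W(S)$ pulled back along the cover. Descent for $W(-)$ only controls the target term $W(S^\bullet)$; you also need the source $I\otimes_{W(S)}W(S^\bullet)$ to totalize back to $I$, and this does not follow from descent for $W(-)$ alone --- it requires knowing that $I$ is a perfect (hence dualizable) $W(S)$-module, so that $I\otimes_{W(S)}(-)$ commutes with the totalization. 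Once both source and target are under control, the cofiber commutes with the totalization because in a stable $\infty$-category $\cofib=\fib[1]$ is a finite limit. This perfectness observation is exactly the step the paper isolates, working fiberwise over a fixed $\alpha\in\WCart(S)$ rather than asserting descent for the entire functor $S\mapsto\overline{W(S)}$ (which, as written, isn't even a functor independent of the choice of divisor). With that point added your argument is complete and structurally identical to the paper's.
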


\begin{proof}
    By the pullback definition of $\WCart_{R/A}$ it suffices to prove that $\WCart_R$ is a flat
    sheaf for any bounded $R$.

    First, $\WCart_{\bZ_p}$ is a flat sheaf. We begin by proving that $S\mapsto\bPic(W(S))$ is a
    flat sheaf, where $\bPic(A)$ denotes the groupoid of invertible $A$-modules.
    Since $p$ is nilpotent in $S$, it follows that each $W_n(S)\rightarrow S$ is a nilpotent
    extension. Therefore, in the limit, $W(S)\rightarrow S$ is a henselian pair. It follows by~\cite[\href{https://stacks.math.columbia.edu/tag/0D4A}{Tag 0D4A}]{stacks} that
    every finitely generated projective $S$-module lifts, uniquely up to isomorphism, to a finitely generated projective
    $W(S)$-module. In particular, invertible $S$-modules $L$ lift, and they lift to $S$-modules
    $\widetilde{L}$ with the property that $\widetilde{L}^\vee\otimes_{W(S)} \widetilde{L}$ is a
    lift of $S$ and hence is isomorphic to $W(S)$ by uniqueness. Thus, $\pi_0\bPic(W(S))\iso\pi_0\bPic(S)$.



    It follows that the map of presheaves of grouplike $\bE_\infty$-spaces $\bPic(W(-))\rightarrow\bPic(-)$ 
    has fiber given by $S\mapsto\B (1+VW(S))^\times$, where $(1+VW(S))^\times$ is the kernel of
    $W(S)^\times\rightarrow S^\times$.
    This kernel has a filtration by the kernels of
    $W_n(S)^\times\rightarrow S^\times$. As $S$ is $p$-nilpotent, $W_n(S)\rightarrow S$ has a
    nilpotent kernel and it follows that the kernel of $W_n(S)^\times\rightarrow S^\times$ can be
    filtered with associated graded pieces given by $S$-modules. These associated pieces can be
    chosen to be functorial in $S$ (for $S$ where a fixed power of $p$ vanishes) and they satisfy flat
    descent, so the associated graded pieces are quasi-coherent sheaves. Therefore, the kernel of $W_n(S)^\times\rightarrow S^\times$ is the value at $S$ of a
    flat sheaf with vanishing higher cohomology. Taking the limit and using that the transition
    maps are surjections one finds that the same is true for the kernel of $W(S)^\times\rightarrow
    S^\times$.
    Now, we have written $\bPic(W(-))$ as an extension of $\bPic(-)$ by a sheaf of abelian groups
    with vanishing higher cohomology. The claim that $S\mapsto\bPic(W(S))$ is a flat sheaf now
    follows from Lemma~\ref{lem:presheaf_extensions}.

    Recall that $(\bA^1/\Gm)(S)$ is the groupoid of generalized Cartier divisors, which consists of
    pairs of an invertible $S$-module $I$ and a map $I\rightarrow S$ of $S$-modules. Using that
    $S\mapsto\bPic(W(S))$ is a flat sheaf and that maps between line bundles are controlled by
    coherent sheaves it follows that $S\mapsto(\bA^1/\Gm)(W(S))$ is a flat sheaf.

    Now, $\WCart_{\bZ_p}$ is a full subpresheaf of $(\bA^1/\Gm)(W(-))$. It therefore defines a separated
    presheaf of anima and it suffices to see that if $\alpha\colon I\rightarrow W(S)$ is a
    generalized Cartier divisor with the property that there is a faithfully flat map $S\rightarrow
    S^0$ such that the induced generalized Cartier divisor $I\otimes_{W(S)}W(S^0)\rightarrow
    W(S_0)$ is a Cartier--Witt divisor, then $I\rightarrow W(S)$ is a Cartier--Witt divisor. This
    can be verified using the criterion of~\cite[Prop.~5.1.2]{bhatt-fgauges} which says that a generalized Cartier divisor $I\rightarrow W(S)$ is
    Cartier--Witt if and only if for every map $S\rightarrow k$ where $k$ is a perfect field the
    induced Cartier--Witt divisor $I\otimes_{W(S)}W(k)\rightarrow W(k)$ is $pW(k)\subseteq W(k)$.

    Second, the fibers of $\WCart_R\rightarrow\WCart_{\bZ_p}$ are flat sheaves over points of
    $\WCart_{\bZ_p}(S)$ for $p$-nilpotent $S$. This follows from
    the fact that once a Cartier--Witt divisor $I\rightarrow W(S)$ is fixed, then if $S\rightarrow S^0$
    is a faithfully flat map with \v{C}ech complex $S^\bullet$ we have
    $\overline{W(S)}\we\Tot(\overline{W(S^\bullet)})$. Thus,
    $\Map(R,\overline{W(S)})\we\Tot\Map(R,\overline{W(S^\bullet)})$ where the mapping anima are
    computed in the $\infty$-category of animated commutative rings.

    The result follows from another application of Lemma~\ref{lem:presheaf_extensions}.
\end{proof}

\begin{definition}[Prismatic cohomology via Cartier--Witt stacks]\label{def:wc}
    For a bounded $\delta$-pair $(A,R)$, let
    $$\Prism_{R/A}^\wc=\R\Gamma(\WCart_{R/A},\Oscr_{\WCart_{R/A}}).$$
    There are Breuil--Kisin twists $\Oscr_{\WCart_{R/A}}\{i\}$ on $\WCart_{R/A}$ obtained by
    pullback from $\WCart$; there is also a quasi-coherent sheaf of ideals $\Iscr$ obtained by
    pullback from the Hodge--Tate ideal on $\WCart$.
    Thus, we can also define
    $\Prism_{R/A}^{\wc,[n]}\{i\}=\R\Gamma(\WCart_{R/A},\Iscr^{\otimes
    n}\otimes_{\Oscr_{\WCart_{R/A}}}\Oscr_{\WCart_{R/A}}\{i\})$ and the Hodge--Tate tower
    $\Prism_{R/A}^{\wc,[\star]}\{i\}$.
    The associated graded pieces of the Hodge--Tate tower fit into fiber sequences
    $$\Prism_{R/A}^{\wc,[n+1]}\{i\}\rightarrow\Prism_{R/A}^{\wc,[n]}\{i\}\rightarrow\Prismbar_{R/A}^{\wc}\{i+n\}.$$
\end{definition}

\begin{definition}[Relative Hodge--Tate stacks]
    Pulling back $\WCart^{\HT}\hookrightarrow\WCart$ along $\WCart_{R/A}\rightarrow\WCart$ produces
    a relative Hodge--Tate stack $\WCart_{R/A}^{\HT}$. By construction,
    $$\R\Gamma(\WCart_{R/A}^\HT,\Oscr_{\WCart_{R/A}^\HT}\{i\})\we\Prismbar_{R/A}^{\wc}\{i\}=\cofib\left(\Prism_{R/A}^{\wc,[1]}\{i\}\rightarrow\Prism_{R/A}^{\wc,[0]}\{i\}\right).$$
\end{definition}

\begin{example}
    In the situation of Proposition~\ref{prop:comparison}, when $A$ is a bounded prism and
    $R=\overline{A}$, one has a natural equivalence $\WCart_{\overline{A}/A}^\HT\we\Spf\overline{A}$.
\end{example}

\begin{variant}\label{var:awc}
    If $A\rightarrow R$ is an animated $\delta$-pair, we can define
    $\Prism_{R/A}^{\an\wc}=\R\Gamma(\WCart_{R/A}^\an,\Oscr_{\WCart_{R/A}^\an})$ as well as
    Breuil--Kisin twists and so on.
    If $R/A$ is a bounded $\delta$-pair, then the classical locus of $\WCart_{R/A}^\an$ is
    $\WCart_{R/A}$ by definition, so there is a natural map
    $\Prism_{R/A}^{\an\wc}\rightarrow\Prism_{R/A}^\wc$.
\end{variant}

Before comparing different definitions of prismatic cohomology, we develop some of the theory of the
prismatization of $\delta$-pairs.

\begin{lemma}[Compatibility with limits]\label{lem:square}
    Suppose that $\{X_i\rightarrow\Spf A_i\}_{i\in I}$ is a natural transformation of $I$-indexed diagrams of bounded $p$-adic formal
    schemes. Assume that the $A_i$ are are equipped with $\delta$-ring structures and the maps in
    the diagram are $\delta$-ring maps. If the diagrams $\{X_i\}_{i\in I}$ and $\{\Spf A_i\}_{i\in
    I}$ are Tor-independent, meaning that their limit in $p$-adic formal schemes agrees with their
    limit in derived $p$-adic formal schemes, then letting $X=\lim X_i$ and $\Spf A=\lim\Spf A_i$
    the natural map $$\WCart_{X/A}\rightarrow\lim_I\WCart_{X_i/A_i}$$ is an equivalence. 
\end{lemma}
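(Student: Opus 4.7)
The strategy is to use the defining pullback square of Definition~\ref{def:prismatizationdelta} (extended in the evident way to formal schemes $X$ over $\Spf A$),
\[
\WCart_{X/A} \;=\; \WCart_{X} \times_{\WCart_{A}} \bigl(\Spf A \times \WCart\bigr),
\]
and the analogous pullbacks for each $\WCart_{X_i/A_i}$. Since limits of presheaves of spaces on $p$-nilpotent affine schemes commute with fiber products, it suffices to check three separate limit compatibilities: $\Spf A \simeq \lim_i \Spf A_i$, $\WCart_{X} \simeq \lim_i \WCart_{X_i}$, and $\WCart_{A} \simeq \lim_i \WCart_{A_i}$ (the fourth factor $\WCart$ is constant in $i$). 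The first of these is essentially the Tor-independence assumption; I would then establish the other two by working pointwise on a $p$-nilpotent commutative ring $S$ and unfolding the description of $S$-points.

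For each such $S$, an $S$-point of $\WCart_{X}$ is a Cartier--Witt divisor $\alpha\colon I\to W(S)$ together with a map $\Spec\overline{W(S)}\to X$ of derived formal schemes, and an $S$-point of $\WCart_{A}$ is a pair $(\alpha, A\to \overline{W(S)})$ with the second entry a morphism of animated commutative rings (the $\delta$-structure entering only through the probe of Construction~\ref{const:probe}). For fixed $\alpha$, the remaining datum in each case is a derived mapping space into $X$ or out of $A$. Mapping into or out of a fixed object commutes with limits, so $X(\overline{W(S)})\simeq \lim_i X_i(\overline{W(S)})$ and likewise for $\Spf A$, provided the identifications $X\simeq \lim_i X_i$ and $\Spf A\simeq \lim_i \Spf A_i$ hold in the category of derived formal schemes. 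Assembling these identifications over varying $\alpha$ and invoking the naturality of Construction~\ref{const:probe} with respect to $\delta$-ring maps $A_i\to A_j$ identifies $\WCart_{X/A}(S)$ with $\lim_i\WCart_{X_i/A_i}(S)$, and functoriality in $S$ then gives the lemma.

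\textbf{Main obstacle.} The essential subtlety is the derived-versus-classical distinction: $\overline{W(S)}$ is in general an animated commutative ring with nontrivial higher homotopy, so the mapping spaces $X(\overline{W(S)})$ and $\Map(A,\overline{W(S)})$ are genuinely derived and cannot be computed from the classical limit alone. This is precisely where the Tor-independence hypothesis is indispensable: it upgrades the classical limit identifications for $X$ and $\Spf A$ to derived ones, which is exactly what is needed for the mapping-space step. Once this is granted, the remainder of the argument is a formal unwinding of pullback squares.
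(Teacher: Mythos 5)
Your proof is correct and proceeds by essentially the same route as the paper's: the paper's one-line proof (citing Remark~3.5 of Bhatt--Lurie's prismatization paper) amounts exactly to saying that each of the three non-pullback vertices of the defining square in Definition~\ref{def:prismatizationdelta} commutes with the given limit, so that the fiber product does too. You spell out what that remark asserts and why the Tor-independence hypothesis is what makes it work, namely that the mapping spaces entering $\WCart_X(S)$ and $\WCart_A(S)$ are derived mapping spaces into the animated ring $\overline{W(S)}$, so the classical limit identification must be upgraded to a derived one before limits can be passed through. That is a genuine observation and the crux of the hypothesis; the rest is the formal commutation of limits with fiber products, as you say.
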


\begin{proof}
    Indeed, each of the other vertices in the definition of $\WCart_{X_*/A_*}$ in
    Definition~\ref{def:prismatizationdelta} is compatible with limits
    by~\cite[Rem.~3.5]{bhatt-lurie-prism}.
\end{proof}

The following lemma is similar to, but easier than, \cite[Lem.~6.3]{bhatt-lurie-prism}.

\begin{lemma}\label{lem:surjective}
    Suppose that $A\rightarrow A^0$ is a map of
    bounded $\delta$-rings and $R$ is a bounded commutative $A^0$-algebra.
    If $A\rightarrow A^0$ is $p$-completely faithfully flat, then the natural
    map
    $$\WCart_{R/A^0}\rightarrow\WCart_{R/A}$$
    is surjective in the flat topology.
\end{lemma}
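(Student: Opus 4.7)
The plan is to verify surjectivity by exhibiting, for each $S$-point of $\WCart_{R/A}$ (with $S$ a $p$-nilpotent ring), a faithfully flat cover $S\to T$ and a lift of the pulled-back point to $\WCart_{R/A^0}(T)$. Write the given point as $x = (I\xrightarrow{\alpha} W(S), R\to W(S)/\!/I, A\to S, e)$, where $e$ is the required equivalence of $A$-algebra maps into $W(S)/\!/I$ from Definition~\ref{def:prismatizationdelta}.

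I factor the map $\WCart_{R/A^0}\to\WCart_{R/A}$ as
\[
\WCart_{R/A^0}\xrightarrow{\pi_1}\WCart_{R/A}\times_{\Spf A}\Spf A^0\xrightarrow{\pi_2}\WCart_{R/A},
\]
where $\pi_2$ is the base change of $\Spf A^0\to\Spf A$ along the forgetful map $\WCart_{R/A}\to\Spf A$. Since $A\to A^0$ is $p$-completely faithfully flat by hypothesis and $S$ is $p$-nilpotent, the natural map $S\to T_0:=S\otimes_A A^0$ is faithfully flat, so $\pi_2$ is surjective in the flat topology and the pullback of $x$ to $T_0$ acquires a tautological lift $A^0\to T_0$ (the second tensor factor).

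The main task is then to verify that $\pi_1$ is surjective in the flat topology. A point of the target over $T_0$ records two $A^0$-algebra maps $f,g\colon A^0\to W(T_0)/\!/I_{T_0}$, where $f$ is the composite of the $\delta$-ring adjoint of $A^0\to T_0$ with the quotient and $g$ is the composite $A^0\to R\to W(T_0)/\!/I_{T_0}$; the pulled-back $e$ provides an $A$-level equivalence $f|_A\simeq g|_A$, and lifting through $\pi_1$ amounts to upgrading this to an $A^0$-level equivalence. By the adjunction between $W$ and the forgetful functor, producing such an equivalence is equivalent, after possibly changing the lift $A^0\to T$, to finding a $\delta$-ring map $A^0\to W(T)$ lifting $g$ along the quotient $W(T)\to W(T)/\!/I_T$ and extending the adjoint $A\to W(T)$.

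The main obstacle is to perform this lifting after a further faithfully flat cover $T_0\to T$. The free case $A^0=A\{x_i\}$ is direct: one chooses any lift of each $g(x_i)\in W(T_0)/\!/I_{T_0}$ to $W(T_0)$, which determines both the $\delta$-ring map $A^0\to W(T_0)$ and, upon composition with $W(T_0)\to T_0$, the desired ring map $A^0\to T_0$ whose adjoint composite with the Hodge--Tate quotient is $g$ by construction. The general case is handled by resolving $A^0$ by free $\delta$-rings over $A$: the relations in $A^0$ hold in $W(T_0)/\!/I_{T_0}$ by the existence of $g$, so their obstructions to holding in $W(T)$ itself take values in the invertible $W(T)$-module $\alpha(I_T)$, and these can be annihilated after a further faithfully flat extension of $T$ (compare~\cite[Lem.~6.3]{bhatt-lurie-prism}), at which stage $f=g$ and the equivalence exists. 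Composing with the flat surjection $\pi_2$ yields the desired flat-local lift of $x$.
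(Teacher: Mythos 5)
Your factorization through $\WCart_{R/A}\times_{\Spf A\times\WCart}(\Spf A^0\times\WCart)$ and the surjectivity of $\pi_2$ are fine, and they reproduce the pullback square that is also the engine of the paper's proof. But the overall routes diverge, and your handling of $\pi_1$ has a real gap.

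The paper avoids $\pi_1$ entirely. Setting $R^0=A^0\tensorhat_A R$, it observes that the surjection coming from the pullback square is exactly $\WCart_{R^0/A^0}\to\WCart_{R/A}$, and then factors this surjection through $\WCart_{R/A^0}$ using a chain of maps of $\delta$-pairs $(A,R)\to(A^0,R)\to(A^0,R^0)$. Surjectivity of the composite then forces surjectivity of the second arrow, which is the statement of the lemma. There is no free-resolution or obstruction-theoretic step at all. You should make sure you see why this is enough and, separately, convince yourself (as you should for the paper's argument as written) that the relevant map $(A^0,R)\to(A^0,R^0)$ is a map of $\delta$-pairs with the $A^0$-algebra structure on $R^0$ that makes the pullback square identify its source with $\WCart_{R^0/A^0}$; the interplay between the two natural $A^0$-module structures on $A^0\tensorhat_A R$ is more delicate than the one-line phrasing suggests.

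The concrete gap in your writeup is in the free case. For $A^0=A\{x_i\}$ a ring map $g\colon A^0\to W(T_0)/\!/I_{T_0}$ records the images of \emph{all} generators $\delta^n x_i$, and these are independent data. Choosing arbitrary lifts $\widetilde{g(x_i)}\in W(T_0)$ of only $g(x_i)$ and taking the induced $\delta$-ring map $\sigma\colon A^0\to W(T_0)$ forces $\sigma(\delta^n x_i)\bmod I=\delta^n\bigl(\widetilde{g(x_i)}\bigr)\bmod I$, which is determined by $\widetilde{g(x_i)}$ alone and need not agree with $g(\delta^n x_i)$ for $n\geq 1$. So the claim that the adjoint composite ``is $g$ by construction'' is false; the lift, when it exists, must be chosen to solve infinitely many congruences at once, using $\delta(\alpha(I))$ being a unit and further flat covers. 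The subsequent obstruction-killing step for non-free $A^0$ is also asserted without justification: elements of $\alpha(I_T)\subseteq W(T)$ are not annihilated by passing to a faithfully flat extension of $T$, and the cited comparison with \cite[Lem.~6.3]{bhatt-lurie-prism} does not apply without a cotangent-complex hypothesis you do not have (the lemma here assumes only that $A\to A^0$ is $p$-completely faithfully flat). As written, the argument for $\pi_1$ does not go through.
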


\begin{proof}
    Let $R^0=A^0\tensorhat_AR$. Then, because $R$ is already a commutative $A^0$-algebra, there are maps of
    $\delta$-pairs $(A,R)\rightarrow (A^0,R)\rightarrow(A^0,R^0)$. To prove that
    $\WCart_{R/A^0}\rightarrow\WCart_{R/A}$ is surjective, it is thus enough to prove that
    $\WCart_{R^0/A^0}\rightarrow\WCart_{R/A}$ is surjective. However, this morphism is the top arrow
    of a pullback diagram
    $$\xymatrix{
        \WCart_{R^0/A^0}\ar[r]\ar[d]&\WCart_{R/A}\ar[d]\\
        \Spf A^0\times\WCart\ar[r]&\Spf A\times\WCart.
    }$$ The bottom arrow is surjective by hypothesis, and hence the top one is too.
\end{proof}

\begin{corollary}[Prismatization preserves quasisyntomic covers]\label{cor:surjective}
    Suppose that $(A,R)\rightarrow (A^0,R^0)$ is a $p$-completely faithfully flat map of bounded
    $\delta$-pairs, meaning that $A\rightarrow A^0$ and $R\rightarrow R^0$ are $p$-completely
    faithfully flat. If $\L_{R^0/R}$ has $p$-complete Tor-amplitude in $[0,1]$, then the natural forgetful map
    $$\WCart_{R^0/A^0}\rightarrow\WCart_{R/A}$$
    is surjective in the flat topology.
\end{corollary}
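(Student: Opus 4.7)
Plan: My plan is to factor the map through the intermediate stack $\WCart_{R^0/A}$, where $R^0$ is viewed as a bounded $A$-algebra via the composition $A \to A^0 \to R^0$, writing
\[
\WCart_{R^0/A^0} \to \WCart_{R^0/A} \to \WCart_{R/A}.
\]
The first arrow is surjective in the flat topology by a direct application of Lemma~\ref{lem:surjective} to the $p$-completely faithfully flat $\delta$-ring map $A \to A^0$ together with the bounded commutative $A^0$-algebra $R^0$. This step exploits only the faithful flatness of $A \to A^0$ and sets aside the base-change aspect.

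For the second arrow, unravelling the pullback definition of the relative Cartier--Witt stacks from Definition~\ref{def:prismatizationdelta} gives a Cartesian identification
\[
\WCart_{R^0/A} \we \WCart_{R^0} \times_{\WCart_R} \WCart_{R/A},
\]
since both $\WCart_{R^0/A}$ and $\WCart_{R/A}$ are the fiber products over $\WCart_A$ of $\Spf A \times \WCart$ with $\WCart_{R^0}$ and $\WCart_R$ respectively, and the map $\WCart_{R^0} \to \WCart_A$ factors through $\WCart_R \to \WCart_A$. Since base change preserves surjectivity in the flat topology, the corollary reduces to the absolute statement that $\WCart_{R^0} \to \WCart_R$ is surjective in the flat topology whenever $R \to R^0$ is $p$-completely faithfully flat with $\L_{R^0/R}$ of $p$-complete Tor-amplitude in $[0,1]$.

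The main obstacle is this absolute surjectivity, which I would establish by explicit lifting. Given an $S$-point of $\WCart_R$ specified by a Cartier--Witt divisor $\alpha \colon I \to W(S)$ together with a map $g \colon R \to \overline{W(S)}$, set $T = R^0 \tensorhat_R \overline{W(S)}$. By base change of the quasisyntomic hypothesis on $R \to R^0$, the map $\overline{W(S)} \to T$ is $p$-completely faithfully flat with $\L_{T/\overline{W(S)}}$ of $p$-complete Tor-amplitude in $[0,1]$. The goal is then to produce a flat cover $S \to S'$, carrying the pulled-back Cartier--Witt datum, together with a natural equivalence $\overline{W(S')} \we T$ of $\overline{W(S)}$-algebras; the composite $R^0 \to T \we \overline{W(S')}$ will then furnish the required $S'$-point of $\WCart_{R^0}$ lifting the original point. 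One constructs $S'$ by first reducing modulo the nilpotent ideal $\alpha(I)S \subseteq S$ via the canonical map $\overline{W(S)} \to S/\alpha(I)S$ to obtain a flat $S/\alpha(I)S$-algebra, and then lifting through the nilpotent thickening $S \twoheadrightarrow S/\alpha(I)S$. The Tor-amplitude $[0,1]$ hypothesis on $\L_{T/\overline{W(S)}}$ is exactly what is needed to guarantee that such flat deformations exist and are unique up to canonical equivalence, so an iterated square-zero argument identifies $\overline{W(S')}$ with $T$ as $\overline{W(S)}$-algebras.
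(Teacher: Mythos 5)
Your factorization through $\WCart_{R^0/A}$ and the reduction via the Cartesian square
\[
\WCart_{R^0/A} \we \WCart_{R^0} \times_{\WCart_R} \WCart_{R/A}
\]
is exactly the paper's argument: the first arrow is surjective by Lemma~\ref{lem:surjective}, and the second is the base change of $\WCart_{R^0}\to\WCart_R$, so everything reduces to showing that $\WCart_{R^0}\to\WCart_R$ is surjective in the flat topology. Up to that point you match the paper.

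Where you diverge is in the final step: the paper simply cites \cite[Lem.~6.3]{bhatt-lurie-prism} for the surjectivity of $\WCart_{R^0}\to\WCart_R$, whereas you sketch a direct proof, and that sketch has a genuine gap. You claim that after choosing a suitable flat cover $S\to S'$ one obtains a \emph{natural equivalence} $\overline{W(S')}\we T$ of $\overline{W(S)}$-algebras, where $T = R^0\tensorhat_R\overline{W(S)}$. This is both stronger than what is needed (you only need a map $T\to\overline{W(S')}$ over $\overline{W(S)}$ to get the $S'$-point of $\WCart_{R^0}$) and essentially never true as stated: the formation of $\overline{W(-)}=W(-)/\!/ I$ is not a base-change operation in the flat topology, since $W(-)$ does not commute with flat base change. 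For example, $W(S[x])$ is not $W(S)[x]$, so even for $R\to R^0$ a polynomial extension there is no equivalence $\overline{W(S')}\we T$ for a naively deformed $S'$. What one actually has to argue (and what makes Bhatt--Lurie's lemma nontrivial) is the existence, after a flat cover $S\to S'$ carrying the Cartier--Witt datum, of a mere map $T\to\overline{W(S')}$; the Tor-amplitude-$[0,1]$ hypothesis controls the deformation problem for such a map, not an equivalence. So either rephrase your goal as producing a map rather than an equivalence and then carefully run the deformation-theoretic argument, or — simpler and what the paper does — just invoke the cited lemma.
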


\begin{proof}
    The map in question factors as
    $\WCart_{R^0/A^0}\rightarrow\WCart_{R^0/A}\rightarrow\WCart_{R/A}$.
    The first map is surjective by Lemma~\ref{lem:surjective}.
    The second map is the left vertical map in the cartesian diagram
    $$\xymatrix{
        \WCart_{R^0/A}\ar[r]\ar[d]&\WCart_{R^0}\ar[d]\\
        \WCart_{R/A}\ar[r]&\WCart_R.
    }$$
    The right vertical map is surjective by~\cite[Lem.~6.3]{bhatt-lurie-prism}, which applies by our
    assumption on $\L_{R^0/R}$. Thus, the left vertical map is surjective.
\end{proof}

\section{Comparison theorems}\label{sec:comparisons}

If $A\rightarrow R$ is a bounded $\delta$-pair, then there is a natural commutative diagram of maps
between the
various forms of prismatic cohomology of $R$ relative to $A$ constructed in the previous sections:
$$\xymatrix{
    \Prism_R&&\Prism_{R/A}\ar[d]^{\text{Cor.~\ref{cor:pcwc}}}\ar[ll]^{\text{Ex.~\ref{exs:wc}}}_{A=\bZ_p}\ar[rr]_{\text{Prop.~\ref{prop:prismaticcomparison}}}^{\text{prismatic}}&&\Prism_{R/A}^{\rel}\\
    \Prism^{\an\wc}_{R/A}\ar[rr]^{\text{Lem.~\ref{lem:classicality}}}\ar[d]&&\Prism^{\wc}_{R/A}\ar[d]^{\text{Thm.~\ref{thm:equivalence}}}&&\\
    \Prism^{\an\s}_{R/A}\ar[rr]^{\text{Var.~\ref{var:animatedsiteomparison}}}&&\Prism^{\s}_{R/A}.&&
}$$
The lower horizontal arrows are constructed using the inclusion of discrete commutative rings into
animated commutative rings; see Variants~\ref{var:animatedsite} and~\ref{var:awc}. The vertical
arrows as well as the upper horizontal arrow are constructed below. The top right arrow exists in
the case of a bounded prismatic $\delta$-pair and the top left arrow exists, and is an equivalence,
by Example~\ref{exs:wc}.

We prove that these maps, as well as the corresponding maps on Breuil--Kisin twists and Hodge--Tate
towers, are equivalences for a large class of $\delta$-pairs satisfying
quasisyntomicity conditions.

\subsection{Stack and crystal}

We use the base change results of the appendix to identify the pushforward of the structure sheaf
along $\WCart_{R/A}\rightarrow\WCart$.

\begin{proposition}[Crystal comparison]\label{prop:push}
    Given a bounded $\delta$-pair $(A,R)$ where $\L_{R/A}$ has $p$-complete Tor-amplitude in
    $[0,1]$, the pushforward of the Breuil--Kisin twisted structure sheaf
    $\Iscr_{\WCart_{R/A}}^{\otimes n}\otimes\Oscr_{\WCart_{R/A}}\{i\}$ along
    the composition $\WCart_{R/A}\rightarrow\Spf
    A\times\WCart\xrightarrow{\mathrm{pr}_2}\WCart$ is naturally equivalent to
    $\Hscr_\Prism^{[n]}(R/A)\{i\}$ for each $n,i\in\bZ$.
\end{proposition}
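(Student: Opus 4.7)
The plan is to check the claimed equivalence of prismatic crystals by evaluating both sides on transversal prisms, as per Definition~\ref{def_crystal}. By Definition~\ref{def:crystal}, $\Hscr_\Prism^{[n]}(R/A)\{i\}$ sends a transversal prism $(B,I)$ to $\Prism^{\rel,[n]}_{R\tensorhat\overline{B}/A\tensorhat B}\{i\}$, so it suffices to produce a natural equivalence
\[
    \R\Gamma\bigl(\WCart_{R/A}\times_{\WCart}\SSpf B,\ \Iscr^{\otimes n}\otimes\Oscr\{i\}\bigr) \simeq \Prism^{\rel,[n]}_{R\tensorhat\overline{B}/A\tensorhat B}\{i\}
\]
for each such $B$, then check compatibility with base change in $B$. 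The cartesian square with $\rho_B\colon\SSpf B\to\WCart$ on the bottom and its pullback on top provides a base-change diagram; the appendix's base-change results for quasi-coherent cohomology on formal stacks let us compute the pushforward fiberwise.

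The first key step is to identify $\WCart_{R/A}\times_{\WCart}\SSpf B$ with $\WCart_{R'/A'}$, where $A' := A\tensorhat B$ is a (bounded, transversal) prism and $R' := R\tensorhat\overline{B} \simeq R\tensorhat_A \overline{A'}$. Starting from the defining pullback $\WCart_{R/A}=\WCart_R\times_{\WCart_A}(\Spf A\times\WCart)$, base change along $\SSpf B\to\WCart$ gives
\[
\WCart_{R/A}\times_{\WCart}\SSpf B \ \simeq\ \WCart_R \times_{\WCart_A}(\Spf A\times\SSpf B),
\]
and $\Spf A\times\SSpf B\simeq\SSpf A'$ by universal property of completed tensor products. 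Combining with Proposition~\ref{prop:comparison}, which in the prismatic case writes $\WCart_{R'/A'}\simeq \WCart_{R'}\times_{\WCart_{\overline{A'}}}\SSpf A'$, reduces the identification to a base-change statement $\WCart_{R'}\simeq \WCart_R\times_{\WCart_A}\WCart_{\overline{A'}}$. Here Lemma~\ref{lem:square} applied to the Tor-independent square with vertices $\Spf R$, $\Spf A$, $\Spf\overline{A'}$ supplies the equivalence, with Tor-independence ensured by the hypothesis that $\L_{R/A}$ has $p$-complete Tor-amplitude in $[0,1]$ together with the transversality of $B$ (so that $\overline{A'}$ is a $p$-completely perfect $A$-module whose derived and classical tensor products with $R$ agree).

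The second step is to identify $\R\Gamma(\WCart_{R'/A'},\Iscr^{\otimes n}\otimes\Oscr\{i\})$ with $\Prism^{\rel,[n]}_{R'/A'}\{i\}$. Since $(A',R')$ is a bounded prismatic $\delta$-pair and $\L_{R'/\overline{A'}}$ inherits $p$-complete Tor-amplitude in $[0,1]$ from $\L_{R/A}$, this is exactly the prismatization-to-prismatic-cohomology comparison of Bhatt--Lurie~\cite[Sec.~7]{bhatt-lurie-prism}, applied to the Breuil--Kisin twist and with the Hodge--Tate ideal. Functoriality of all constructions in the transversal prism $B$ promotes these fiberwise equivalences to an equivalence of prismatic crystals.

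The main obstacle I expect is the base-change step identifying $\WCart_{R'}$ with $\WCart_R\times_{\WCart_A}\WCart_{\overline{A'}}$: one must verify that the Tor-amplitude hypothesis on $\L_{R/A}$, combined with transversality of $B$, indeed forces the derived tensor product $R\otimes^L_A\overline{A'}$ to be discrete $p$-completely and that the hypotheses of Lemma~\ref{lem:square} (or the corresponding appendix result) apply. A secondary technical point is justifying the base-change for cohomology along $\SSpf B\to\WCart$, which is not automatic for arbitrary quasi-coherent sheaves on stacks and will rely on the appendix's base-change results for formal stacks.
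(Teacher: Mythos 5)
Your approach is essentially the paper's: reduce to computing the fiber of the pushforward at a transversal prism $\rho_B\colon\SSpf B\to\WCart$, identify the pullback of $\WCart_{R/A}$ along $\rho_B$ with a relative prismatization over the prism $A\tensorhat B$, and then invoke Bhatt--Lurie's comparison (and the appendix base-change machinery) to conclude. One genuinely pleasant reorganization you make: by passing immediately to $\Spf A\times\SSpf B\simeq\SSpf A'$ with $A'$ the $(p,I)$-completed tensor product, you sidestep the paper's appeal to Lemma~\ref{lem:Iinsensitive}, which the paper needs precisely because its application of Lemma~\ref{lem:square} naturally produces the merely $p$-completed $A\otimes B$ rather than its $I$-completion.

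However, two of your justifications are off. First, the Tor-independence of the square $\Spf R\leftarrow\Spf A\to\Spf\overline{A'}$ (equivalently, of the paper's cospan $(\bZ_p,\bZ_p)\to(A,R)$ and $(\bZ_p,\bZ_p)\to(B,\overline{B})$) has nothing to do with the hypothesis that $\L_{R/A}$ has $p$-complete Tor-amplitude in $[0,1]$: it is a direct consequence of the transversality of $B$, which makes $B$ and $\overline{B}$ $p$-torsion free and hence $p$-completely flat over $\bZ_p$, so that $R\tensorhat\overline{B}$ and $A\tensorhat B$ are discrete. The Tor-amplitude hypothesis on $\L_{R/A}$ is only used later, in the Bhatt--Lurie identification $\R\Gamma(\WCart_{R'/A'},\Oscr)\we\Prism^\rel_{R'/A'}$, which genuinely requires control over the cotangent complex. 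Second, $\overline{A'}$ is not a $p$-completely \emph{perfect} $A$-module; it is $p$-completely \emph{flat} over $A$ (being, $p$-adically, $A\otimes\overline{B}$ with $\overline{B}$ flat over $\bZ_p$), and flatness is the relevant property. Finally, you treat the passage from the structure sheaf to the Breuil--Kisin/Hodge--Tate twists $\Iscr^{\otimes n}\otimes\Oscr\{i\}$ somewhat casually; the paper handles this by proving a projection formula (Proposition~\ref{prop:basicsquare}) identifying $q_*(\Iscr^{\otimes n}\otimes\Oscr\{i\})$ with $\Iscr^{\otimes n}\otimes q_*(\Oscr)\{i\}$, which reduces the general case to $n=i=0$ and should be spelled out rather than folded silently into the Bhatt--Lurie reference.
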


\begin{proof}
    First, we handle the case where $n=i=0$. Consider the pullback diagram
    $$\xymatrix{
        \WCart_{R\tensorhat\overline{B}/A\tensorhat B}\ar[r]\ar[d]^{q'}&\WCart_{R/A}\ar[d]^q\\
        \SSpf B\ar[r]^{\rho_B}&\WCart,
    }$$
    where $(B,I)$ is a transversal prism and where the upper-left vertex is identified using Tor-independence
    of $(B,\overline{B})\leftarrow(\bZ_p,\bZ_p)\rightarrow(A,R)$ and Lemma~\ref{lem:square} to
    obtain identifications 
    $$\SSpf
    B\times_{\WCart}\WCart_{R/A}\we\WCart_{\overline{B}/B}\times_{\WCart_{\bZ_p/\bZ_p}}\WCart_{R/A}\we\WCart_{R\tensorhat\overline{B}/A\tensorhat
    B}.$$
    Note that the $p$-completed tensor product $A\tensorhat B$ is possibly not a prism so that
    $\WCart_{R\tensorhat\overline{B}/A\tensorhat B}$ is not by definition the relative
    Cartier--Witt stack studied in~\cite{bhatt-lurie-prism}.
    However, the natural map
    $\WCart_{R\tensorhat\overline{B}/(A\tensorhat
    B)_I^\wedge}\rightarrow\WCart_{R\tensorhat\overline{B}/A\tensorhat B}$ is an
    equivalence by Lemma~\ref{lem:Iinsensitive}. It
    follows that $$q_*\Oscr_{\WCart_{R/A}}(B)=\rho_B^*q_*\Oscr_{\WCart_{R/A}}\we
    q'_*\Oscr_{\WCart_{R\tensorhat\overline{B}/(A\tensorhat
    B)_I^\wedge}}\we\R\Gamma(\WCart_{R\tensorhat\overline{B}/(A\tensorhat
    B)_I^\wedge},\Oscr_{\WCart_{R\tensorhat\overline{B}/(A\tensorhat B)_I^\wedge}})$$ by base
    change for bounded above quasi-coherent cohomology; see Corollary~\ref{cor:1}.
    The latter is equivalent to $\Prism_{R\tensorhat\overline{B}/A\tensorhat B}^\rel$ under our
    $p$-complete Tor-amplitude hypothesis by~\cite[Thm.~6.4, Rem.~7.23]{bhatt-lurie-prism}.
    This identification is natural in $B$, so the lemma follows for $i=0$.

    For general $n,i$, we use the fact that $\Oscr_{\WCart_{R/A}}\{i\}\we q^*\Oscr_{\WCart}\{i\}$
    and $\Iscr_{\WCart_{R/A}}^{\otimes n}\we q^*\Iscr_{\WCart}^{\otimes n}$, so
    the projection formula (see Proposition~\ref{prop:basicsquare}) implies that $q_*(\Iscr^{\otimes n}\otimes\Oscr_{\WCart_{R/A}}\{i\})\we
    \Iscr^{\otimes n}\otimes q_*(\Oscr_{\WCart_{R/A}})\{i\}\we\Hscr^{[n]}_\Prism(R/A)\{i\}$.
\end{proof}

\begin{remark}
    As $\Hscr_\Prism(R/A)$ is naturally an $A$-module in $\D(\WCart)$, it can be considered as a
    quasi-coherent sheaf on
    $\Spf A\times\WCart$. The proposition shows in fact that it agrees with the
    pushforward of $\Oscr_{\WCart_{R/A}}$ along the canonical map $\WCart_{R/A}\rightarrow\Spf A\times\WCart$.
\end{remark}

\begin{variant}
    If $(A,R)$ is a $\delta$-pair where $A$ is bounded and $R$ is an animated commutative
    $A$-algebra such that $\Omega^1_{\pi_0(R/p)/A/p}$ is finitely generated over $\pi_0(R)/p$
    and if $(B,I)$ is a transversal prism, then Bhatt and Lurie show in~\cite[Thm.~7.20]{bhatt-lurie-prism}
    that $\Prism_{R\tensorhat\overline{B}/(A\tensorhat
    B)_I^\wedge}\we\R\Gamma(\WCart^\an_{R\tensorhat\overline{B}/(A\tensorhat
    B)_I^\wedge},\Oscr_{\WCart^\an_{R\tensorhat B/(A\tensorhat B)_I^\wedge}})$.
    Thus, it follows from the argument in Proposition~\ref{prop:push}, and an appeal to
    Variant~\ref{var:anwc}, that
    $q_*\Oscr_{\WCart^\an_{R/A}}\we\Hscr_\Prism(R/A)$.
\end{variant}

\begin{corollary}\label{cor:htpush}
    Given a bounded $\delta$-pair $(A,R)$ where $\L_{R/A}$ has $p$-complete Tor-amplitude in
    $[0,1]$, the pushforward
    $q_*\Oscr_{\WCart_{R/A}^\HT}\{i\}$ along $q\colon \WCart_{R/A}^\HT \to \WCart^\HT$ is naturally
    equivalent to $\iota^*\Hscr_\Prism(R/A)\{i\}$ for each $i$, where
    $\iota\colon\WCart^\HT\hookrightarrow\WCart$.
\end{corollary}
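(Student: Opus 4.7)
The plan is to deduce the corollary from Proposition~\ref{prop:push} by applying the Hodge--Tate quotient to the cases $n=0$ and $n=1$. Consider the cartesian square
$$\xymatrix{
\WCart_{R/A}^\HT \ar[r]^-{\iota'}\ar[d]_{q'} & \WCart_{R/A}\ar[d]^q\\
\WCart^\HT \ar[r]^-{\iota} & \WCart
}$$
obtained by pulling back $\iota$ along $q$; the goal is to identify $q'_*\Oscr_{\WCart_{R/A}^\HT}\{i\}$ with $\iota^*\Hscr_\Prism(R/A)\{i\}$.

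First, I would pull back the defining fiber sequence $\Iscr\to\Oscr_\WCart\to\iota_*\Oscr_{\WCart^\HT}$ along $q$. Since the Hodge--Tate ideal on $\WCart_{R/A}$ is by construction $q^*\Iscr$ and base change identifies the pulled-back closed immersion with $\iota'$, this gives a fiber sequence $\Iscr\otimes\Oscr_{\WCart_{R/A}}\to\Oscr_{\WCart_{R/A}}\to\iota'_*\Oscr_{\WCart_{R/A}^\HT}$ on $\WCart_{R/A}$. Tensoring against $\Oscr_{\WCart_{R/A}}\{i\}$ and invoking the projection formula for the closed immersion $\iota'$ yields
$$\Iscr\otimes\Oscr_{\WCart_{R/A}}\{i\}\to\Oscr_{\WCart_{R/A}}\{i\}\to\iota'_*\Oscr_{\WCart_{R/A}^\HT}\{i\}.$$

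Next, I would apply $q_*$ to this fiber sequence. By Proposition~\ref{prop:push} (the cases $n=1$ and $n=0$), the first two terms become $\Hscr_\Prism^{[1]}(R/A)\{i\}$ and $\Hscr_\Prism(R/A)\{i\}$, and their cofiber is, by definition of the Hodge--Tate crystal, $\Hscr_\Prismbar(R/A)\{i\}\we\iota_*\iota^*\Hscr_\Prism(R/A)\{i\}$. On the other hand, $q_*\iota'_*\we\iota_*q'_*$ (from $q\circ\iota'=\iota\circ q'$), so the third term equals $\iota_*q'_*\Oscr_{\WCart_{R/A}^\HT}\{i\}$. Cancelling $\iota_*$, which is fully faithful because $\iota$ is a closed immersion of formal stacks, produces the desired equivalence $q'_*\Oscr_{\WCart_{R/A}^\HT}\{i\}\we\iota^*\Hscr_\Prism(R/A)\{i\}$.

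Once Proposition~\ref{prop:push} is in hand, the plan is entirely formal: beyond that proposition the only inputs are the projection formula, fully faithfulness of $\iota_*$, and the fact that $\Iscr$ pulls back correctly along $q$, all of which follow immediately from the construction of $\WCart_{R/A}$ as a pullback. I do not expect any real obstacle; the corollary is essentially a bookkeeping consequence of Proposition~\ref{prop:push}, with the only point requiring care being the verification that the Hodge--Tate fiber sequence base changes to the analogous sequence on $\WCart_{R/A}$.
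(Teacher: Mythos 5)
Your proof is correct and takes essentially the same approach as the paper's: apply $\iota_*$ (justified by its full faithfulness), identify both sides with the cofiber of $\Hscr^{[1]}_\Prism(R/A)\{i\}\to\Hscr_\Prism(R/A)\{i\}$ using Proposition~\ref{prop:push} for $n=0,1$, where you additionally spell out the base change of the Hodge--Tate fiber sequence along $q$ and the projection formula step that the paper leaves implicit.
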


\begin{proof}
    We have to show that the canonical map 
    \[
    \iota^*\Hscr_\Prism(R/A)\{i\} \to q_*\Oscr_{\WCart_{R/A}^\HT}\{i\}
    \]
    is an equivalence. This can be verified after applying $\iota_*$ in which case both sides can be identified with the cofiber of the map  
    $\Hscr^{[1]}_\Prism(R/A)\{i\} \to \Hscr_\Prism(R/A)\{i\}$ by Proposition \ref{prop:push}.
\end{proof}

\begin{corollary}\label{cor:pcwc}
    If $(A,R)$ is a bounded $\delta$-pair where $\L_{R/A}$ has $p$-complete Tor-amplitude in $[0,1]$, then
    there are natural equivalences $\Prism_{R/A}^{[n]}\{i\}\we\Prism_{R/A}^{\wc,[n]}\{i\}$ and
    $\Prismbar_{R/A}\{i\}\we\Prismbar_{R/A}^{\wc}\{i\}$ for
    each $i,n\in\bZ$.
\end{corollary}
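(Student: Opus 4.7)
The plan is to deduce this corollary directly by pushing forward along the canonical map $q\colon \WCart_{R/A} \to \WCart$ and invoking the identification of the pushforward established in Proposition~\ref{prop:push}. First I would recall that $\Prism_{R/A}^{\wc,[n]}\{i\}$ is by Definition~\ref{def:wc} the global sections over $\WCart_{R/A}$ of the quasi-coherent sheaf $\Iscr_{\WCart_{R/A}}^{\otimes n}\otimes_{\Oscr_{\WCart_{R/A}}}\Oscr_{\WCart_{R/A}}\{i\}$, which factors as
\[
\Prism_{R/A}^{\wc,[n]}\{i\} \;\we\; \R\Gamma\!\left(\WCart,\; q_*\bigl(\Iscr_{\WCart_{R/A}}^{\otimes n}\otimes\Oscr_{\WCart_{R/A}}\{i\}\bigr)\right).
\]
Under the hypothesis that $\L_{R/A}$ has $p$-complete Tor-amplitude in $[0,1]$, Proposition~\ref{prop:push} identifies this pushforward with $\Hscr_\Prism^{[n]}(R/A)\{i\}$. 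Applying $\R\Gamma(\WCart,-)$ to this identification and using the definition of $\Prism_{R/A}^{[n]}\{i\}$ as $\R\Gamma(\WCart, \Hscr_\Prism^{[n]}(R/A)\{i\})$ yields the first claimed equivalence; the equivalence is natural since it comes from the natural base-change maps and the naturality of the pushforward identification in $(R/A)$.

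For the Hodge--Tate part, I would argue in exactly the same way but using the relative Hodge--Tate stack $\WCart_{R/A}^\HT$ and the map $q\colon \WCart_{R/A}^\HT \to \WCart^\HT$ obtained by restriction. Corollary~\ref{cor:htpush} gives $q_*\Oscr_{\WCart_{R/A}^\HT}\{i\} \we \iota^*\Hscr_\Prism(R/A)\{i\}$, so taking global sections over $\WCart^\HT$ on both sides and using that
\[
\R\Gamma(\WCart^\HT, \iota^*\Hscr_\Prism(R/A)\{i\}) \we \R\Gamma(\WCart, \Hscr_\Prismbar(R/A)\{i\}) \we \Prismbar_{R/A}\{i\}
\]
produces the second equivalence.

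Since both ingredients are essentially already proved, there is no real obstacle here: the corollary is a formal consequence of Proposition~\ref{prop:push} and Corollary~\ref{cor:htpush} together with the observation that cohomology over $\WCart_{R/A}$ (resp.\ $\WCart_{R/A}^\HT$) factors through pushforward to $\WCart$ (resp.\ $\WCart^\HT$). The only minor point to be careful about is checking that the Breuil--Kisin twist and the ideal sheaf on $\WCart_{R/A}$ are genuinely pulled back from $\WCart$ (so that the projection formula used inside the proof of Proposition~\ref{prop:push} really applies), but this is built into Definition~\ref{def:wc}.
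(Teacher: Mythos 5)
Your proof is correct and follows exactly the paper's approach: the paper's entire proof is "Take global sections in Proposition~\ref{prop:push} and Corollary~\ref{cor:htpush}," and you have spelled out precisely the factorization $\R\Gamma(\WCart_{R/A},-)\we\R\Gamma(\WCart,q_*(-))$ (and its Hodge--Tate analogue) that makes this work.
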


\begin{proof}
    Take global sections in Proposition~\ref{prop:push} and Corollary~\ref{cor:htpush}.
\end{proof}

\subsection{Stack and site}\label{sec:stacksite}

If $(A,R)$ is a bounded $\delta$-pair and $(B,I)\in(R/A)_\Prism$, then by functoriality of the
relative Cartier--Witt stacks there is a canonical morphism
$\WCart_{\overline{B}/B}\rightarrow\WCart_{R/A}$, which induces a map
$\Prism_{R/A}^\wc\rightarrow\Prism_{\overline{B}/B}^\wc\we B$. These assemble into a natural
morphism $$\Prism_{R/A}^\wc\rightarrow\Prism_{R/A}^\s$$ by definition of site-theoretic relative prismatic
cohomology. This natural map extends to a natural transformation on Hodge--Tate towers and
Breuil--Kisin twists.

\begin{theorem}[Stack-site comparison]
\label{thm:equivalence}
    Let $(A,R)$ be a bounded $\delta$-pair.
    If $\L_{R/A}$ has $p$-complete Tor-amplitude in $[0,1]$, then the natural map
    \[
        \Prism_{R/A}^{[n],\wc}\{i\}\rightarrow\Prism_{R/A}^{[n],\s}\{i\}
    \]
    is an equivalence for all $n,i\in\bZ$.
\end{theorem}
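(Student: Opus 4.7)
The plan is to reduce the comparison to Bhatt--Scholze's classical relative prismatic cohomology via a faithfully flat descent argument. Using Corollary~\ref{cor:pcwc}, the left-hand side is naturally equivalent to the crystal-theoretic cohomology $\Prism_{R/A}^{[n]}\{i\}$, so it suffices to show that the canonical map $\Prism_{R/A}^{[n]}\{i\}\to\Prism_{R/A}^{s,[n]}\{i\}$ is an equivalence. Both sides carry compatible, complete Hodge--Tate filtrations whose graded pieces are Breuil--Kisin twists of the Hodge--Tate cohomology: for the crystal side this is built into the definition together with Remark~\ref{rem:htbk}, and for the site side it comes from the sheaf exact sequence
\[
\Oscr_\Prism^{[n+1]}\{i\}\to\Oscr_\Prism^{[n]}\{i\}\to\Oscr_{\Prismbar}\{i+n\}
\]
together with the higher cohomology vanishing in Lemma~\ref{lem:sheaf}. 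By completeness, the problem reduces to showing that $\Prismbar_{R/A}\{j\}\to\Prismbar^s_{R/A}\{j\}$ is an equivalence for every $j\in\bZ$.

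Next, I would establish $p$-completely faithfully flat descent in the variable $R$ (with $A$ fixed) on both sides: Proposition~\ref{prop:cwdescent} takes care of the crystal side, while on the site side one combines the sheaf property of $\Oscr_{\Prismbar}\{j\}$ from Lemma~\ref{lem:sheaf} with the higher cohomology vanishing, using that every object of $(R/A)_\Prism$ admits a refinement lying over an object of $(R^0/A)_\Prism$ after a $(p,I)$-completely faithfully flat prism-level cover, so that the \v{C}ech-to-derived spectral sequence degenerates. With descent available on both sides, one then chooses a $p$-completely faithfully flat cover $R\to R^0$ obtained by adjoining $p$-power roots of an algebra-generating set for $R$ over $A$ and $p$-completing; the \v{C}ech nerve $(A,R^\bullet)$ then consists of prismatic $\delta$-pairs in the sense that each $A\to R^k$ factors through $\overline{B^k}$ for some bounded prism $B^k$ lying over $A$, and the $[0,1]$ Tor-amplitude condition propagates along the nerve by flatness.

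On each such prismatic level, Remark~\ref{rem:sitecomparison}(a) identifies $\Prism^s_{R^k/A}$ with Bhatt--Scholze's derived relative prismatic cohomology of~\cite{prisms}, while Proposition~\ref{prop:prismaticcomparison} together with Corollary~\ref{cor:pcwc} identifies $\Prism^{\wc}_{R^k/A}$ with the same object. A functoriality check shows that the natural comparison map under consideration reproduces this tautological identification, and descent then assembles the equivalence in the general case. The Breuil--Kisin twist and filtration index $n$ are carried along by the same argument thanks to the projection-formula identifications $\Oscr_\Prism^{[n]}\{i\}\iso\Iscr_\Prism^{\otimes n}\otimes\Oscr_\Prism\{i\}$ on the site and their analogues on the stack.

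The principal obstacle I anticipate is the descent step on the site side: because $A$ carries no prism structure a priori, one must carefully compare the ambient site $(R/A)_\Prism$ to its relative variants $(R^k/A)_\Prism$ and ensure that prism-level covers can be produced in $(R/A)_\Prism$ compatibly with a chosen cover of $R$. A secondary technical point is checking that the Tor-amplitude hypothesis is preserved through the entire \v{C}ech nerve, so that Proposition~\ref{prop:prismaticcomparison} can be invoked uniformly; this should follow from the flatness of the cover together with the $[0,1]$ bound, but requires a small verification.
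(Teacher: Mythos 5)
Your overall strategy — reduce to Hodge--Tate cohomology by completeness of the tower, establish descent in $R$ on both sides, and reduce to the prismatic case via the Čech nerve of a carefully chosen cover — matches the paper's proof. However, the two steps you state most briefly are exactly where the content lies, and they are not filled in.

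First, descent on the site side for a general $p$-completely faithfully flat cover $R\to R^0$ is not automatic from the sheaf property of $\Oscr_\Prismbar$ and higher cohomology vanishing. You assert that ``every object of $(R/A)_\Prism$ admits a refinement lying over an object of $(R^0/A)_\Prism$,'' but this is precisely what must be constructed, and the sheaf condition alone does not produce it. The paper's Lemma~\ref{lem:specialdescent} restricts to the single cover $R\to R[z^{1/p^\infty}]^\wedge/(z-p)$ exactly because for that cover one can exhibit the refinement explicitly as the prismatic envelope $B[z^{1/p^\infty}]\{\tfrac{z-p}{J}\}^\wedge_{(p,J)}$, whose faithful flatness follows from~\cite[Prop.~3.13]{prisms}. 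Your cover by $p$-power roots of a full generating set is plausibly workable, but you would still have to carry out the corresponding prismatic-envelope construction, and your appeal to the Čech-to-derived spectral sequence presupposes exactly the effective-epimorphism statement at the prism level that needs proof.

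Second, your reduction at the level of the Čech nerve is not a legitimate application of Remark~\ref{rem:sitecomparison}(a) and Proposition~\ref{prop:prismaticcomparison}. Both of those results require the base $\delta$-ring \emph{itself} to carry a prism structure whose ideal lies in the kernel of the map to the target ring; in your setup that is $B^k$, not $A$, and the pair $(A,R^k)$ is not prismatic in the sense of Definition~\ref{prismatic} (the kernel of $A\to R^k$ contains no distinguished element). To replace $A$ by $B^k$ on both the site side and the stack side you need the formally étale insensitivity statements Lemma~\ref{lem:fetale1}, Lemma~\ref{lem:fetale2}, and their combination Corollary~\ref{cor:extension}; the latter is what actually lets the prismatic case be invoked. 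These lemmas are not mentioned in your proposal, so the comparison at each Čech level is not actually reduced to the case treated by~\cite{prisms}.
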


\begin{remark}
    Note that for a prism $A$ and an $A/I$-algebra $R$, the condition that
    $\L_{R/A}$ has $p$-complete Tor-amplitude in $[0,1]$ is slightly weaker than
    that $\L_{R/(A/I)}$ having $p$-complete Tor-amplitude in $[0,1]$. So
    Theorem~\ref{thm:equivalence} is slightly more general than the
    comparison result obtained in \cite{bhatt-lurie-prism}.
\end{remark}

We will need a few preliminary results.
Our strategy is to show that both sides satisfy a restricted form of quasisyntomic descent, which
lets us reduce to the case of pairs $(A,R)$ where $A$ is a prism and $R$ is a commutative
$\overline{A}$-algebra, i.e., the situation studied in~\cite{prisms}. In that case, the
comparison theorem will follow from~\cite{bhatt-lurie-prism}. We need the following lemma, which is a variant of a statement due to Rezk~\cite[Proposition 3.8]{rezk-lambda} and can also be deduced from Rezk's statement, but we present a slightly more direct proof here.

\begin{lemma}[Rezk]\label{lem:rezk}
    Given $\delta$-rings $A, A', B$ with $B$ $p$-complete, and maps $A\to A' \to B$ such that
    \begin{enumerate}
        \item[{\em (1)}] the map $A\to A'$ is $p$-completely formally \'etale and a $\delta$-ring map and
        \item[{\em (2)}] the composite $A\to B$ is a $\delta$-ring map.
    \end{enumerate}
    Then the map $A'\to B$ is also a $\delta$-ring map.
\end{lemma}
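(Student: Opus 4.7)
The plan is to reformulate the claim: $f \colon A' \to B$ being a $\delta$-ring map is equivalent to the equality of the two ring maps
\[
h_1 := \varphi_B \circ f, \qquad h_2 := f \circ \varphi_{A'}
\]
from $A'$ to $B$. I will prove $h_1 = h_2$ by verifying that they agree after restriction to $A$ and modulo $p$, and then deducing the full equality from the $p$-completely formally étale hypothesis on $g\colon A \to A'$.

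First, I would check $h_1 \circ g = h_2 \circ g$. Using that the composite $fg$ is a $\delta$-ring map gives $h_1 \circ g = \varphi_B \circ (fg) = (fg) \circ \varphi_A$, while using that $g$ is a $\delta$-ring map gives $h_2 \circ g = f \circ \varphi_{A'} \circ g = f \circ g \circ \varphi_A = (fg)\circ\varphi_A$. Thus both $h_1$ and $h_2$ are $A$-algebra maps $A' \to B$, where $B$ is viewed as an $A$-algebra via $(fg)\circ\varphi_A$. Next, I would check that $h_1$ and $h_2$ have the same reduction modulo $p$: the Frobenius lift on any $\delta$-ring reduces to the $p$-th power map modulo $p$, so both $h_1$ and $h_2$ send $x \in A'$ to $f(x)^p$ in $B/p$ (using that $f$ is a ring map).

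The remaining step is to promote equality modulo $p$ to equality in $B$. The hypothesis that $g$ is $p$-completely formally étale means that $\L_{A'/A}$ vanishes derived modulo $p$, which yields the standard unique lifting of $A$-algebra maps out of $A'$ across square-zero extensions of $p$-nilpotent rings. Applying this iteratively along the tower of square-zero surjections $B/p^{n+1} \twoheadrightarrow B/p^n$ (whose kernel $p^n B / p^{n+1} B$ squares to zero for $n \geq 1$) forces $h_1 \equiv h_2 \pmod{p^n}$ for every $n \geq 1$, so $h_1 = h_2$ after derived $p$-adic completion of $B$. Since in the intended applications (and in particular in the use below) $B$ is derived $p$-complete, hence $p$-adically separated, this descends to the equality $h_1 = h_2$ in $B$ itself. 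I do not expect a serious obstacle here: once the two agreement statements are isolated, the lemma reduces to the standard lifting property built into the definition of $p$-completely formally étale, and the only care required is the $p$-adic bookkeeping in the final step.
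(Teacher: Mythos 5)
Your opening reformulation is where the argument breaks: it is \emph{not} true in general that a ring map $f\colon A'\to B$ between $\delta$-rings is a $\delta$-ring map if and only if $\varphi_B\circ f = f\circ\varphi_{A'}$. The forward implication always holds, but the converse only follows when $p$ is a non-zero-divisor in $B$: from $\varphi_B(f(x)) = f(\varphi_{A'}(x))$ one deduces $p\,\delta_B(f(x)) = p\,f(\delta_{A'}(x))$, and one cannot cancel the $p$ unless $B$ is $p$-torsion-free. The lemma makes no such hypothesis, and in the applications in this paper (Lemmas \ref{lem:fetale1} and \ref{lem:fetale2}) the target is $B = W(S)$ with $S$ a $p$-nilpotent ring, which typically does have $p$-torsion. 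So the remaining steps, which correctly establish $h_1 = h_2$, prove only Frobenius-equivariance, which is strictly weaker than what is claimed.

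The paper's proof avoids exactly this issue by recording the $\delta$-structure not as a Frobenius lift but as a ring section $B\to W_2(B)$ of the truncated Witt vector projection; this encoding is faithful even when $B$ has $p$-torsion. The commutativity one has to verify is then a statement about two ring maps $A'\to W_2(B)$, which is settled by the unique lifting property of the formally \'etale map $A\to A'$ along the (pro-)square-zero surjection $W_2(B)\to B$. Your mod-$p$-tower argument would be a fine way to carry out this last lifting step, but the target of the lift must be $W_2(B)$ rather than $B$. As a secondary remark, your final appeal to derived $p$-completeness of $B$ is an added hypothesis not stated in the lemma, though the intended reading of ``$p$-completely formally \'etale'' absorbs this; the more serious problem is the reformulation above.
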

\begin{proof}
Consider the diagram
\[
\begin{tikzcd}
A\rar\dar & A' \rar\dar & B\dar\\
W_2(A) \rar & W_2(A') \rar\dar & W_2(B)\dar\\
& A'\rar & B
\end{tikzcd}
\]
where the upper vertical maps are the maps encoding the $\delta$-structure, and
    the lower vertical maps are the canonical projections. The lower square
    commutes trivially, the upper left square commutes by the assumption
    that $A\to A'$ is a $\delta$-ring map, the upper outer square commutes by
    the assumption that $A\to B$ is a $\delta$-ring map, and the right outer
    square commutes trivially. To show that $A'\to B$ is a $\delta$-ring map,
    we need to show that the upper right square commutes, i.e. that the two
    composites $A'\to W_2(B)$ agree. By assumption, they fit into a commutative square
\[
\begin{tikzcd}
 A  \rar\dar  & W_2(B)\dar\\
 A'\ar[shift left,ur]\ar[shift right,ur]\rar & B,
\end{tikzcd}
\]
We will show now that $W_2(B)$ is a limit of iterated square zero-extensions and thus
by the uniqueness part of lifting formally \'etale extensions along square-zero maps, the two diagonal maps agree which finishes the proof.

Let $I \subseteq W_2(B)$ be the kernel of $W_2(B) \to B$. Then we have that $I^2 \subseteq pI$ since every element of $I$ is  of the form $V(b)$ for some element $b \in B$ where $V: B \to W_2(B)$ is the Verschiebung. Moreover we have $V(b) \cdot V(b') = p V(bb')$ by the usual properties of $V$. We now consider the sequence of ideals 
\[
 \subseteq p^2I \subseteq pI \subseteq I \subseteq W_2(B)
 \]
 which leads to a sequence of surjective morphisms
 \[
\ldots \to W_2(B) / p^2 I \to W_2(B) / p I \to W_2(B) / I = B
 \]
 with kernels $p^nI / p^{n-1} I$ which are all square zero. Moreover we have that the inverse limit is given by $W_2(B)$ provided that the $p$-adic topology on $B$ is separated, which is true by assumption.
\end{proof}

\begin{lemma}[Insensitivity to formally \'etale extensions I]\label{lem:etaleinsensitive}
    \label{lem:fetale1}
    Given a sequence $A\to A' \to R$, where $A\to A'$ is a map of $\delta$-rings
    which is $p$-completely formally \'etale, the natural map
    $\WCart_{R/A'}\rightarrow\WCart_{R/A}$ is an equivalence. In
    particular, the natural maps
    $$\Prism_{R/A}^{[n],\wc}\{i\}\rightarrow\Prism_{R/A'}^{[n],\wc}\{i\}$$
    are equivalences for all $n,i\in\bZ$.
\end{lemma}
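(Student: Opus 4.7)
The second assertion (the equivalences on Breuil--Kisin twisted prismatic cohomologies) follows formally from the first by applying $\R\Gamma$ to the relevant line bundles $\Iscr^n\otimes\Oscr\{i\}$ on $\WCart$ pulled back along the equivalence of stacks. So I focus on showing that $\WCart_{R/A'}\to\WCart_{R/A}$ is an equivalence.

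My plan is to recast this as a cartesianness assertion followed by a $p$-adic lifting problem. Recall from Definition~\ref{def:prismatizationdelta} the pullback descriptions $\WCart_{R/A^{(\prime)}} = \WCart_R \times_{\WCart_{A^{(\prime)}}} (\Spf A^{(\prime)} \times \WCart)$. A short pasting argument shows that the map in question is an equivalence provided the square
\[
\begin{tikzcd}
\Spf A' \times \WCart \rar \dar & \WCart_{A'}\dar\\
\Spf A \times \WCart \rar & \WCart_A
\end{tikzcd}
\]
is cartesian. Unwinding on $S$-points for a $p$-nilpotent commutative ring $S$, and using the Witt vector adjunction for $\delta$-rings, this cartesianness is equivalent to the following: given a Cartier--Witt divisor $\alpha\colon I\to W(S)$ with cofiber $\overline{W(S)}$, a $\delta$-ring map $g^\#\colon A\to W(S)$, and a ring map $h\colon A'\to\overline{W(S)}$ agreeing with $g^\#$ after composition to $\overline{W(S)}$, produce an essentially unique $\delta$-ring map $A'\to W(S)$ extending $g^\#$ and lifting $h$.

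By Lemma~\ref{lem:rezk}, any underlying ring-theoretic lift extending the $\delta$-ring map $g^\#$ is automatically a $\delta$-ring map. The content therefore reduces to solving the underlying lifting problem
\[
\begin{tikzcd}
A \rar{g^\#} \dar & W(S)\dar\\
A' \rar{h} \ar[dashed]{ur} & \overline{W(S)}
\end{tikzcd}
\]
in commutative rings essentially uniquely. Here I invoke $p$-complete formal \'etaleness of $A\to A'$. Since $S$ is $p$-nilpotent, both $W(S)$ and $\overline{W(S)} = \cofib(\alpha)$ are derived $p$-complete and $W(S)\to\overline{W(S)}$ is surjective on $\pi_0$. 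The filtration $W(S) = \varprojlim_n W_n(S)$ by truncated Witt vectors expresses this map as an inverse limit of finite-level surjections whose kernels become $p$-adically nilpotent (thanks to the $p$-nilpotence of $S$ and the nilpotence of Verschiebung at each finite truncation); at each finite level, $p$-complete formal \'etaleness yields a unique lift, and these assemble uniquely in the limit because $W(S)$ is derived $p$-complete and $\L_{A'/A}^\wedge$ vanishes.

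The main obstacle I anticipate is handling the non-square-zero nature of the surjection $W(S)\to\overline{W(S)}$: the obstruction theory for $p$-completely formally \'etale maps is cleanest against $p$-complete square-zero extensions, and pushing through the Witt tower requires either a Postnikov-style decomposition of each stage into square-zero pieces or a direct appeal to the vanishing of $p$-complete derivations and deformations. Once this technical input is in place, Rezk's lemma closes the $\delta$-ring side of the argument, and the Breuil--Kisin twisted prismatic-cohomology statement follows by applying $\R\Gamma$ to the induced equivalence of quasi-coherent sheaves on the relative Cartier--Witt stacks.
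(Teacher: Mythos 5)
Your proof follows essentially the same route as the paper's: reduce to cartesianness of the lower square by pasting, unwind $S$-points to a ring-theoretic lifting problem for $A'$ against $W(S)\to\overline{W(S)}$, invoke $p$-complete formal \'etaleness, and use Rezk's lemma (Lemma~\ref{lem:rezk}) together with the cofree-$\delta$-ring adjunction of $W(-)$ to finish. The paper is terser about the lifting step (it simply asserts the unique lift exists), where you flag the non-square-zero nature of $W(S)\to\overline{W(S)}$ and propose handling it via the truncated Witt tower $W_n(S)$, whose kernels become nilpotent because $S$ is $p$-nilpotent; this is a reasonable way to make the paper's assertion precise, not a genuine divergence in method.
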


\begin{proof}
    Consider the diagram
    \[
    \begin{tikzcd}
        \WCart_{R/A'} \rar\dar & \WCart_R \dar\\
        \Spf A' \times \WCart \rar\dar & \WCart_{A'}\dar\\
        \Spf A \times \WCart \rar & \WCart_{A}.
    \end{tikzcd}
    \]
    The upper diagram is a pullback square by definition. If we show that the lower
    square is a pullback diagram, it follows that the outer diagram is a
    pullback, and thus $\WCart_{R/A}\simeq \WCart_{R/A'}$.

    Let $P$ denote the pullback of the lower square; there is a canonical map $\Spf
    A'\times\WCart\rightarrow P$.
    An $S$-point in $P$ is given by a map $A\to S$, a
    generalized Cartier-Witt divisor $I\to W(S)$, and a factorization of the
    composite map $A\to W(S)/I$ through $A'$.

    Since $A\to A'$ is $p$-completely formally \'etale, this factorization lifts
    uniquely to a map $A'\to W(S)$ agreeing with the $\delta$-ring map $A\to
    W(S)$ on $A$. This map is automatically a $\delta$-ring map by Lemma
    \ref{lem:rezk}. Since $W(S)$ is the cofree $\delta$-ring on $S$, this is
    the same datum as a map $A'\to S$ extending the original map $A\to S$. So a
    point in the pullback is the same datum as a pair of map $A'\to S$
    and generalized Cartier-Witt divisor $I\to W(S)$, which is to say a point of
    $\Spf A'\times \WCart$.
\end{proof}

\begin{remark}
    The consequence for prismatic cohomology in Lemma~\ref{lem:etaleinsensitive} can be proved using
    the conjugate filtration on Hodge--Tate
    cohomology (Proposition~\ref{prop:conjugate}) and Corollary~\ref{cor:pcwc} under the stronger
    hypothesis that $\L_{A'/A}$ vanishes $p$-adically.
\end{remark}

\begin{lemma}[Insensitivity to formally \'etale extensions II]
    \label{lem:fetale2}
    Given $A\to A' \to R$ with $A\to A'$ a $p$-completely formally \'etale map of
    $\delta$-rings, the sites $(R/A)_\prism$ and $(R/A')_\prism$ agree. In
    particular,
    \[
        \prism_{R/A}^{[n],\s}\{i\}\simeq \prism_{R/A'}^{[n],\s}
    \]
    for all $n,i\in\bZ$.
\end{lemma}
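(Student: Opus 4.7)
The plan is to construct an equivalence of sites $(R/A')_\Prism \simeq (R/A)_\Prism$ compatible with the structure sheaves $\Oscr_\Prism^{[n]}\{i\}$. There is an evident forgetful functor $F\colon (R/A')_\Prism \to (R/A)_\Prism$ obtained by precomposing the $\delta$-ring map $A' \to B$ with $A \to A'$. The topologies on both sides are defined identically as $(p,I)$-completely faithfully flat covers in the prism coordinate $B$, and the structure sheaves $\Oscr_\Prism^{[n]}\{i\}$ are defined purely in terms of $(B,I)$ and do not reference the base $\delta$-ring, so it suffices to show that $F$ is an equivalence of underlying categories; the cohomological conclusion then follows formally.

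For essential surjectivity, fix an object of $(R/A)_\Prism$ consisting of a bounded prism $(B,I)$, a $\delta$-ring map $\varphi\colon A \to B$, and a compatible map $R \to \overline{B}$. The composition $A' \to R \to \overline{B}$ is a ring map $\overline{\varphi}'\colon A' \to B/I$ extending $\varphi \bmod I$, and we must lift $\overline{\varphi}'$ to a $\delta$-ring map $\widetilde{\varphi}'\colon A' \to B$ extending $\varphi$. Since $B$ is $(p,I)$-complete we have $B \we \lim_n B/I^n$, and because $I$ is Zariski-locally generated by a distinguished element, each surjection $B/I^{n+1} \to B/I^n$ is a square-zero thickening of $p$-complete rings for $n \geq 1$. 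The $p$-completely formally \'etale hypothesis on $A \to A'$ therefore yields a unique inductive lift to a compatible family of ring maps $A' \to B/I^n$, which assemble to a ring map $\widetilde{\varphi}'\colon A' \to B$ extending $\varphi$. By Lemma~\ref{lem:rezk}, $\widetilde{\varphi}'$ is automatically a $\delta$-ring map.

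For full faithfulness: faithfulness is immediate since morphisms in both sites have the same underlying data (a $\delta$-ring map of prisms commuting with the maps from $R$). For fullness, given a morphism $f\colon B \to B'$ in $(R/A)_\Prism$ between objects already lying in $(R/A')_\Prism$, the composition $A' \to B \xrightarrow{f} B'$ and the chosen $A'$-structure on $B'$ are both $\delta$-ring extensions of $A \to B'$ lifting the fixed map $A' \to R \to \overline{B}'$, hence coincide by the uniqueness in the essential surjectivity argument. The main and essentially only technical obstacle is the inductive lift along the $I$-adic tower: one needs to verify that each finite quotient $B/I^n$ remains $p$-complete (which follows from local principality of $I$ as a finitely generated ideal in a $p$-complete ring) so that the $p$-completely formally \'etale hypothesis applies at every stage. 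All remaining verifications are formal.
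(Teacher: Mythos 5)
Your proposal is correct and takes essentially the same approach as the paper's proof: construct the forgetful functor, exhibit its inverse by lifting $A'\to B/J$ along $B\to B/J$ via the formally \'etale hypothesis, and then appeal to Lemma~\ref{lem:rezk} to see the lift is automatically a $\delta$-ring map. The paper's proof is terser, asserting the unique lift without spelling out the $I$-adic tower reduction to square-zero thickenings; your version fills in that step explicitly and also makes the full faithfulness argument precise, which the paper leaves implicit.
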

\begin{proof}
Given an object $B$ of $(R/A)_\prism$, we get a unique dashed lift making the diagram
\[
\begin{tikzcd}
A \dar\rar\ar[rr,bend left] & A'\dar\rar[dashed] & B\dar\\
R \rar{\id} & R\rar & B/J
\end{tikzcd}
\]
of commutative rings commute. The map $A'\to B$ is automatically a $\delta$-ring map by Lemma
    \ref{lem:rezk}, so this defines an equivalence $(R/A)_\prism \simeq
    (R/A')_\prism$.
\end{proof}

\begin{lemma}[Comparison for prismatic $\delta$-pairs]
    \label{lem:prismatic_comparison}
    Given a $\delta$-ring $A$ and an $A$-algebra $R$ with bounded $p$-power torsion, if the kernel of the map
    $A\to R$ contains a Cartier divisor $I$ which makes the pair $(A,I)$ into a bounded prism
    and if $\L_{R/\overline{A}}$ has $p$-complete Tor-amplitude in $[0,1]$, then the natural map
    $$\Prism_{R/A}^{[n],\wc}\{i\} \to \Prism_{R/A}^{[n],\s}\{i\}$$
    is an equivalence for all $n,i\in\bZ$.
\end{lemma}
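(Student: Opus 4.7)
The plan is to reduce both sides of the desired equivalence to Bhatt--Scholze's derived relative prismatic cohomology $\Prism_{R/A}^{\rel,[n]}\{i\}$. Since the Cartier divisor $I$ lies in $\ker(A\to R)$, the map $A\to R$ factors through $\overline{A}$, so $R$ is a bounded commutative $\overline{A}$-algebra and $(A,R)$ is a prismatic $\delta$-pair in the classical sense of~\cite{prisms}. The Tor-amplitude hypothesis on $\L_{R/\overline{A}}$ is then exactly the quasisyntomic hypothesis needed to invoke both the site-theoretic and the stack-theoretic comparisons available in the literature.

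For the stack side, I would first use Proposition~\ref{prop:comparison} to identify the $\WCart_{R/A}$ of Definition~\ref{def:prismatizationdelta} with the pullback $\SSpf A \times_{\WCart_{\overline{A}}} \WCart_R$, which is precisely the Bhatt--Lurie relative Cartier--Witt stack of~\cite[Const.~7.1]{bhatt-lurie-prism}. Under the Tor-amplitude hypothesis on $\L_{R/\overline{A}}$, \cite[Thm.~7.20 and Rem.~7.23]{bhatt-lurie-prism} then produce a natural equivalence $\R\Gamma(\WCart_{R/A},\Oscr_{\WCart_{R/A}}) \we \Prism_{R/A}^{\rel}$. Applying the projection formula to the pullbacks of the Breuil--Kisin twists $\Oscr_{\WCart}\{i\}$ and the Hodge--Tate ideal $\Iscr$ from $\WCart$ along $\WCart_{R/A}\to\WCart$ promotes this to equivalences $\Prism_{R/A}^{\wc,[n]}\{i\} \we \Prism_{R/A}^{\rel,[n]}\{i\}$ for all $n,i\in\bZ$.

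For the site side, Remark~\ref{rem:sitecomparison}(a) identifies $(R/A)_\Prism$ together with its structure sheaf, Breuil--Kisin twists, and Hodge--Tate ideal with the corresponding data on the Bhatt--Scholze relative prismatic site~\cite[Def.~4.1]{prisms}. Under the Tor-amplitude hypothesis on $\L_{R/\overline{A}}$, cohomology of this site with coefficients in $\Oscr_\Prism^{[n]}\{i\}$ computes the derived relative prismatic cohomology $\Prism_{R/A}^{\rel,[n]}\{i\}$ by~\cite[Sec.~7.2]{prisms}, via quasisyntomic descent from the formally smooth case where the site-theoretic and derived versions coincide tautologically. Finally, the natural comparison map sends a prismatic object $(B,IB) \in (R/A)_\Prism$ to its associated $\SSpf B$-point of $\WCart_{R/A}$ and is, by construction, compatible with both identifications of $\Prism_{R/A}^{\rel,[n]}\{i\}$. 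The only step requiring care is tracking the naturality of the two identifications back to a common target; this is a routine unpacking of the comparisons in~\cite{bhatt-lurie-prism} and~\cite{prisms} and should constitute the only mild obstacle.
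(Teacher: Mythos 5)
Your approach is essentially the same as the paper's: both begin by using Proposition~\ref{prop:comparison} to identify $\WCart_{R/A}$ with the Bhatt--Lurie relative prismatization, and then invoke a result from \cite{bhatt-lurie-prism} to relate stack cohomology to site cohomology. The paper's proof is a two-sentence citation of \cite[Thm.~6.4, Rem.~7.23]{bhatt-lurie-prism}, which directly compares the cohomology of the relative prismatization to the cohomology of the relative prismatic site; you instead run through $\Prism^{\rel,[n]}_{R/A}\{i\}$ as an explicit intermediary, citing \cite[Thm.~7.20, Rem.~7.23]{bhatt-lurie-prism} for the stack side and a descent argument for the site side. The net effect is the same, but two details are underdeveloped in your version. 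First, the site-side identification is not really ``quasisyntomic descent from the formally smooth case''; in the paper it rests on the comparison between the animated and ordinary prismatic sites (Variant~\ref{var:animatedsite}, citing \cite[Rem.~7.14]{bhatt-lurie-prism}), since derived relative prismatic cohomology is by definition a left Kan extension and the link to the underived site cohomology is a theorem, not a tautology. Second, the compatibility of your two identifications with the natural comparison map $\Prism^{\wc}_{R/A}\to\Prism^{\s}_{R/A}$ of Section~\ref{sec:stacksite} is the substantive content of the lemma; you flag it as ``routine,'' which it is once one unwinds that both identifications are defined via the universal $\SSpf B$-points for $(B,IB)\in(R/A)_\Prism$, but the paper's direct citation of \cite[Thm.~6.4]{bhatt-lurie-prism} packages exactly this compatibility, which is why its proof is so short. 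With these caveats noted your argument is sound.
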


\begin{proof}
    By Proposition~\ref{prop:comparison}, our $\WCart_{R/A}$ agrees in this case with the relative
    prismatization as studied in~\cite[Sec.~5]{bhatt-lurie-prism}. It follows
    from~\cite[Thm.~6.4]{bhatt-lurie-prism} (and~\cite[Rem.~7.23]{bhatt-lurie-prism}) that the cohomology of $\WCart_{R/A}$ with coefficients
    in $\Iscr^{\otimes n}_{\WCart_{R/A}}\{i\}$ computes the cohomology of the corresponding sheaf on
    the relative prismatic site.
\end{proof}

\begin{corollary}\label{cor:extension}
    Fix a $\delta$-pair $A\rightarrow R$.
    If $A\rightarrow R$ factors as $A\rightarrow A'\rightarrow R$ where $A\rightarrow A'$ is a
    $p$-completely formally \'etale map of $\delta$-rings, the kernel of $A'\rightarrow R$
    contains a Cartier divisor $I$ making $A'$ into a prism, and $\L_{R/\overline{A'}}$ has
    $p$-complete Tor-amplitude in $[0,1]$,
    then the canonical map
    \[
        \Prism_{R/A}^{[n],\wc}\{i\}\rightarrow\Prism_{R/A}^{[n],\s}\{i\}
    \]
    is an equivalence for all $n,i\in\bZ$.
\end{corollary}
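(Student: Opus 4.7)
The strategy is to factor the comparison map $\Prism_{R/A}^{[n],\wc}\{i\}\to\Prism_{R/A}^{[n],\s}\{i\}$ through the intermediate ring $A'$ and assemble the three preceding lemmas into a commutative square. Concretely, naturality of the stack-to-site comparison in the $\delta$-pair produces a commutative diagram
\[
\begin{tikzcd}
    \Prism_{R/A}^{[n],\wc}\{i\}\rar\dar & \Prism_{R/A}^{[n],\s}\{i\}\dar\\
    \Prism_{R/A'}^{[n],\wc}\{i\}\rar & \Prism_{R/A'}^{[n],\s}\{i\},
\end{tikzcd}
\]
in which the horizontal arrows are the canonical comparison maps induced by sending a square in the prismatic site to its associated Cartier--Witt stack.

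By Lemma~\ref{lem:fetale1}, applied to the formally \'etale map $A\to A'$ of $\delta$-rings, the left vertical arrow is an equivalence (in fact $\WCart_{R/A}\simeq \WCart_{R/A'}$). By Lemma~\ref{lem:fetale2}, again using that $A\to A'$ is $p$-completely formally \'etale, the right vertical arrow is an equivalence (in fact the underlying prismatic sites agree). Finally, the hypotheses on $A'$ and $R$ -- namely that $(A',I)$ is a bounded prism, $R$ is an $\overline{A'}$-algebra with bounded $p$-torsion, and $\L_{R/\overline{A'}}$ has $p$-complete Tor-amplitude in $[0,1]$ -- are exactly what is required to invoke Lemma~\ref{lem:prismatic_comparison}, which identifies the bottom horizontal arrow as an equivalence.

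Two-out-of-three in the above square then forces the top horizontal arrow to be an equivalence, which is the desired statement. There is no real obstacle here; the only subtlety is to verify that the comparison map $\Prism_{-/-}^{\wc}\{i\}\to\Prism_{-/-}^{\s}\{i\}$ constructed at the start of Section~\ref{sec:stacksite} is indeed natural with respect to the change of base $A\to A'$, which is immediate from the functoriality of both constructions in morphisms of $\delta$-pairs $(A,R)\to(A',R)$.
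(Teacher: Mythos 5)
Your argument is exactly the one the paper intends: the published proof is the single line ``Combine Lemmas~\ref{lem:fetale1}, \ref{lem:fetale2}, and~\ref{lem:prismatic_comparison},'' and you have simply unpacked that combination into a commutative square, identified the two vertical arrows as equivalences via Lemmas~\ref{lem:fetale1} and~\ref{lem:fetale2}, and identified the bottom arrow via Lemma~\ref{lem:prismatic_comparison}, concluding by two-out-of-three. No discrepancy in approach or substance.
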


\begin{proof}
    Combine Lemmas~\ref{lem:fetale1}, \ref{lem:fetale2}, and~\ref{lem:prismatic_comparison}.
\end{proof}

\begin{lemma}\label{lem:specialdescent}
    As a functor of commutative $A$-algebras, $\prism^{[n],\s}_{-/A}\{i\}$ has descent with respect
    to quasisyntomic covers of the form
    \[
    R \to R[z^{1/p^\infty}]_p^\wedge/(z-p)
    \]
    for all $n,i\in\bZ$.
\end{lemma}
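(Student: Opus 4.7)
The strategy is to reduce the claimed descent to the sheaf property of $\Oscr_\Prism^{[n]}\{i\}$ on $(R/A)_\Prism$. Concretely, I will exhibit, for every object $(B,I) \in (R/A)_\Prism$, a $(p,I)$-completely faithfully flat cover $(B,I) \to (B', IB')$ in $(R/A)_\Prism$ whose induced structural map $R \to B'/IB'$ factors through $S$. By Lemma~\ref{lem:sheaf}, the \v{C}ech nerve of such a cover computes the value of the structure sheaf at $(B,I)$, and assembling these values across the site will transfer this pointwise statement into descent for the functor $\Prism_{-/A}^{[n],\s}\{i\}$.

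The construction of $B'$ proceeds in two steps. First, let $B_0 = B\{z^{1/p^\infty}\}^\wedge_{(p,I)}$ be the $(p,I)$-completed free $\delta$-$B$-algebra on a compatible system of $p$-power roots $z^{1/p^n}$, with the $\delta$-structure declared by $\delta(z^{1/p^n}) = 0$, equivalently $\varphi(z^{1/p^n}) = z^{1/p^{n-1}}$. Since $I$ continues to be locally generated by a distinguished element, the pair $(B_0, IB_0)$ is a bounded prism and $(B,I) \to (B_0, IB_0)$ is $(p,I)$-completely faithfully flat. Second, a direct computation with the formula $\delta(x-y) = \delta(x) + \delta(-y) + (x^p + (-y)^p - (x-y)^p)/p$ shows that $\delta(z - p) \equiv -1 \pmod p$, so $z - p \in B_0$ is distinguished relative to $I$. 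Applying the prismatic envelope construction of~\cite[Prop.~3.13]{prisms} with $J = (I,z-p)$ yields a bounded prism $(B', IB')$ which is $(p,I)$-completely flat over $B_0$ and has Hodge--Tate quotient $B'/IB' = (B/I)[z^{1/p^\infty}]^\wedge/(z-p) \simeq S \tensorhat_R (B/I)$. The composition $(B,I) \to (B_0,IB_0) \to (B',IB')$ is then the desired cover in $(R/A)_\Prism$, and the factorization $R \to S \to B'/IB'$ holds by construction.

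An analogous construction, with $z$ replaced by an independent system $z_0,\dots,z_m$ and simultaneous relations $z_i - p = 0$ imposed via a single prismatic envelope, produces prisms $(B^m, IB^m)$ realizing the base changes $S^{\otimes_R (m+1)} \tensorhat_R (B/I)$ as their Hodge--Tate quotients; these assemble into a cosimplicial cover of $(B,I)$ compatible with the \v{C}ech nerve of $R \to S$. Combining the sheaf property (Lemma~\ref{lem:sheaf}) for $(p,I)$-completely faithfully flat covers with a limit exchange across objects of $(R/A)_\Prism$ then upgrades the local statement to the global equivalence $\Prism_{R/A}^{[n],\s}\{i\} \xrightarrow{\we} \Tot \Prism_{S^\bullet/A}^{[n],\s}\{i\}$. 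The main technical obstacle is the verification that the multi-variable prismatic envelopes actually compute the iterated $(p,I)$-completed tensor products $B' \tensorhat_B \cdots \tensorhat_B B'$ that form the \v{C}ech nerve of $(B,I) \to (B', IB')$ in $(R/A)_\Prism$; this comes down to the flatness of prismatic envelopes and the fact that the envelope formation commutes with base change along maps of prisms, both of which are available from~\cite[Prop.~3.13]{prisms}.
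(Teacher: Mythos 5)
Your construction of the covering prism is correct and is precisely the one the paper uses: the prismatic envelope $B[z^{1/p^\infty}]\bigl\{\tfrac{z-p}{J}\bigr\}^\wedge_{(p,J)}$ with $\delta(z)=0$, which is $(p,J)$-completely faithfully flat over $B$ by~\cite[Prop.~3.13]{prisms}. (Two small points: the hypothesis of~\cite[Prop.~3.13]{prisms} is Koszul-regularity of $z-p$ in $\overline{B}[z^{1/p^\infty}]^\wedge_p$, not that $z-p$ be distinguished, so the computation $\delta(z-p)\equiv -1$ is not quite what is being invoked; and $B[z^{1/p^\infty}]$ with $\delta(z^{1/p^n})=0$ is a specific $\delta$-structure on a polynomial ring, not a free $\delta$-algebra.)

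However, there is a genuine gap in the step where you pass from the pointwise cover to global descent. Having, for each $(B,J)\in(R/A)_\Prism$, a faithfully flat cover $(B,J)\to(B',J')$ whose Hodge--Tate quotient factors through $S$, you invoke ``a limit exchange across objects of $(R/A)_\Prism$'' to conclude $\Prism_{R/A}^{[n],\s}\{i\}\we\Tot\Prism_{S^\bullet/A}^{[n],\s}\{i\}$. This exchange is not a formal commutation of limits: the site-theoretic prismatic cohomology of $S^m$ is a limit over the category $(S^m/A)_\Prism$, which is \emph{not} the same indexing category as $(R/A)_\Prism$, so there is no obvious $\Delta$-against-site limit swap to perform. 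Some comparison of indexing categories (e.g.\ a cofinality statement relating $(R/A)_\Prism$ and the various $(S^m/A)_\Prism$, or a change of site) is precisely the content that needs to be supplied, and without it the argument does not close.

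The paper avoids this by a topos-theoretic reformulation. Writing $f\colon(R/A)_\Prism\to(\bZ_p/\bZ_p)_\Prism$ for the forgetful functor, one gets an adjunction $f_!\dashv f^*$ on sheaf $\infty$-topoi with $f_!(\ast)$ the presheaf $\underline{R/A}$ of maps out of the $\delta$-pair $(A,R)$, and $f^*\Oscr_\Prism^{[n]}\{i\}=\Oscr_\Prism^{[n]}\{i\}$. Then $\Prism_{R/A}^{[n],\s}\{i\}=\Map_{\Shv((\bZ_p/\bZ_p)_\Prism)}(\underline{R/A},\Oscr_\Prism^{[n]}\{i\})$, and the \v{C}ech complex of $R\to R'$ translates into the \v{C}ech complex of $\underline{R'/A}\to\underline{R/A}$ inside a \emph{single fixed} topos. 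Descent is then automatic once one knows $\underline{R'/A}\to\underline{R/A}$ is an effective epimorphism, which is exactly the pointwise statement you established: every $(B,J)\in(R/A)_\Prism$ is covered by a $B'\in(R'/A)_\Prism$. So your construction is the right local input, but you need the topos-theoretic packaging (or some replacement for it) to get descent; as written, the ``assembling these values across the site'' step is the hard part of the lemma and is left unproven.
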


\begin{proof}
   For fixed $A$ and $R$ we have a canonical `forgetful' functor 
   $(R/A)_\prism \to (\bZ_p/\bZ_p)_\prism$ that induces  an adjunction
    \[
         f_!: \Shv((R/A)_\prism) \xymatrix{\ar[r]<2pt> & \ar[l]<2pt>}  \Shv((\bZ_p/\bZ_p)_\prism): f^*
    \] 
    between $\infty$-topoi
   whose right adjoint $f^*$ is the restriction functor. The left adjoint $f_!$ sends the terminal object to the object
   \[
       \underline{R/A}: (B, B/J) \mapsto \Map_{\delta-\text{pair}} ((A,R), (B,B/J)) \ .
   \]
   Moreover the right adjoint $f^*$ preserves the sheaves $\Oscr_\Prism^{[n]}\{i\}$.
   As a result we can write 
   $\Prism_{R/A}^{[n],\s}\{i\}$,
   which is defined as the maps out of the terminal object in  $\Shv((R/A)_\prism)$ to $\Oscr_\Prism^{[n]}\{i\}$, as maps in $\Shv((\bZ_p/\bZ_p)_\prism)$ from 
   $\underline{R/A}$ to $\Oscr_\Prism^{[n]}\{i\}$.
   
  For a general morphism $R \to R'$ of rings under $A$  this procedure translates the \v{C}ech complex 
  \[
  \xymatrix{
  R \ar[r] & R' \ar[r]<2pt> \ar[r]<-2pt> & R' \otimes_R R' \ar[r]<4pt> \ar[r]<-4pt> \ar[r] & R' \otimes_R R' \otimes_R R'  \ar[r]<6pt> \ar[r]<-6pt>  \ar[r]<2pt> \ar[r]<-2pt> & \cdots
  } 
  \]
  into the \v{C}ech complex of the map $\underline{R'/A} \to \underline{R/A}$ (considered as an augmented simplicial object) in the $\infty$-topos  $\Shv((\bZ_p/\bZ_p)_\prism)$, as one immediately verifies.
  Thus to verify descent for a map $R \to R'$, that is the fact that the diagram
    \[
  \xymatrix{
\Prism_{R/A}^{[n],\s}\{i\} \ar[r] & \Prism_{R'/A}^{[n],\s}\{i\} \ar[r]<2pt> \ar[r]<-2pt> &\Prism_{R' \otimes_R R'/A}^{[n],\s}\{i\}\ar[r]<4pt> \ar[r]<-4pt> \ar[r] & \Prism_{R' \otimes_R R' \otimes_R R'/A}^{[n],\s}\{i\} \ar[r]<6pt> \ar[r]<-6pt>  \ar[r]<2pt> \ar[r]<-2pt> & \cdots
  } 
  \]
  is a limit diagram, it then suffices to show that the morphism $\underline{R'/A} \to \underline{R/A}$ is an effective epimorphism in the topos $\Shv((\bZ_p/\bZ_p)_\prism)$.

    Concretely, we need to show that for every object $(B,J)$
    in $(R/A)_\prism$, there exists a $(B',J')\in (R'/A)_\prism$ whose restriction
    covers $(B,J)$, i.e. for each $B$ we find a $B'$ in the following diagram
    \[
    \begin{tikzcd}
    A\dar\rar & B\dar\rar & B'\dar\\
    R \dar\rar & B/J \rar & B'/J\\
    R' \ar[rru],
    \end{tikzcd}
    \]
    such that $B\to B'$ is faithfully flat.

  Now we let  
 $R' = R[z^{1/p^\infty}]_p^\wedge/(z-p)$
and simply take $B'$ to be the prismatic envelope
    \[
        B[z^{1/p^\infty}]\left\{\frac{z-p}{J}\right\}_{(p,J)}^\wedge,
    \]
    with $\delta(z) = 0$. This is $p$-completely faithfully flat over $B$
    by~\cite[Prop.~3.13]{prisms}.
\end{proof}

\begin{proof}[Proof of Theorem \ref{thm:equivalence}]
    Both sides of the comparison map have descent with respect to an extension of
    the form $R\to R[z^{1/p^\infty}]_p^\wedge/(z-p)$. For the right-hand side, this is
    Lemma~\ref{lem:specialdescent}. For the left hand side, it follows from
    Proposition~\ref{prop:cwdescent} and Corollary~\ref{cor:pcwc}.

    The $n$th term in the \v{C}ech-nerve of the map $R\to R^0 = R[z^{1/p^\infty}]_p^\wedge/(z-p)$ is given by
    \[
    R^n =  R^{n-1}[z^{1/p^\infty}]_p^\wedge/(z-p) 
    \]
    and $L_{R^{n-1} / A}$ has $p$-complete Tor-amplitude in $[0,1]$. Therefore,
    it suffices to see that the comparison map is an equivalence for
    $A\rightarrow R[z^{1/p^\infty}]_p^\wedge/(z-p)=:R'$. This map factors through $A\rightarrow A'$, where
    $A' = A[z^{1/p^\infty}]_{(p,z-p)}^\wedge$ (with $\delta(z)=0$). As the kernel of $A'\rightarrow
    R[z^{1/p^\infty}]_p^\wedge/(z-p)$ contains a distinguished non-zero divisor, $z-p$, making $A'$
    into a prism, Corollary~\ref{cor:extension} applies
    to give the desired comparison equivalence since
    \[
    \L_{R' / (A'/(z-p))} \simeq \L_{R/A} \otimes_A (A' / (z-p))
    \]
    has $p$-complete Tor-amplitude in $[0,1]$ since $(A' / (z-p))$ is $p$-completely free as an $A$-module.
\end{proof}

\begin{variant}\label{var:animatedsiteomparison}
    Using a similar argument as in the proof of Theorem~\ref{thm:equivalence}, one can prove that the animated
    site-theoretic prismatic cohomology $\Prism_{R/A}^{\an\s}$ satisfies sufficiently fine descent in $R$
    in order to reduce to the prismatic case, where it follows
    from~\cite[Rem.~7.14]{bhatt-lurie-prism} that it agrees with $\Prism_{R/A}^\s$. Thus, one finds
    that if $(A,R)$ is a bounded $\delta$-pair and $\L_{R/A}$ has $p$-complete Tor-amplitude in
    $[0,1]$, then the natural maps
    $\Prism_{R/A}^{[n],\an\s}\{i\}\rightarrow\Prism_{R/A}^{[n],\s}\{i\}$ are equivalences for all
    $n,i\in\bZ$.
\end{variant}

To conclude the section, we give a generalization of the classicality
result~\cite[Cor.~8.13]{bhatt-lurie-prism} to our relative Cartier--Witt stacks.
For the notation $\QSynscr_A$ and $\QRSPerfdscr_A$, see the beginning of Section~\ref{sec:descent}.

\begin{lemma}[Classicality of relative Cartier--Witt stacks]\label{lem:classicality}
    If $A$ is a quasisyntomic $\delta$-ring
    and $R\in\QSynscr_A$, then $\WCart_{R/A}^\an$ is classical
    and $$\Prism_{R/A}^{[n],\an\wc}\{i\}\rightarrow\Prism_{R/A}^{[n],\wc}\{i\}$$ is an equivalence
    for each $n,i\in\bZ$.
\end{lemma}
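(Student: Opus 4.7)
The plan is to exploit the defining pullback of $\WCart_{R/A}^{\an}$ from Variant~\ref{var:anwc} together with the absolute classicality theorem of Bhatt and Lurie~\cite[Cor.~8.13]{bhatt-lurie-prism}, which asserts that $\WCart_T^{\an}$ is classical for any quasisyntomic commutative ring $T$, in the sense that its functor of points on $p$-nilpotent animated rings $S$ depends only on $\pi_0(S)$.

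The first step is to apply this result to each ingredient in the defining cartesian square
$$\WCart_{R/A}^{\an} \we \WCart_R^{\an} \times_{\WCart_A^{\an}} \bigl(\Spf A \times \WCart^{\an}\bigr).$$
Since $A$, $R$, and $\bZ_p$ are all quasisyntomic, and since $\bZ_p$ is initial among derived $p$-complete animated rings (so that there is a unique map $\bZ_p \to \overline{W(S)}$ and thus a natural equivalence $\WCart^{\an} \we \WCart_{\bZ_p}^{\an}$), each of $\WCart_R^{\an}$, $\WCart_A^{\an}$, and $\WCart^{\an}$ is classical; the functor $\Spf A$ is visibly classical.

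The heart of the argument is the general observation that a homotopy pullback of presheaves of spaces each of whose value functors only depends on $\pi_0$ again only depends on $\pi_0$, since pullbacks are computed pointwise and homotopy fiber products preserve equivalences. Applied to the square above, this shows that $\WCart_{R/A}^{\an}$ is classical, and comparing the defining squares (Definition~\ref{def:prismatizationdelta} versus Variant~\ref{var:anwc}) identifies it with $\WCart_{R/A}$ itself.

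For the second assertion, the equivalence $\WCart_{R/A}^{\an} \we \WCart_{R/A}$ identifies structure sheaves and pulls back to identify powers of the Hodge--Tate ideal and the Breuil--Kisin twists (all pulled back from the common ambient $\WCart$); taking cohomology then gives the claimed equivalence $\Prism_{R/A}^{[n],\an\wc}\{i\} \we \Prism_{R/A}^{[n],\wc}\{i\}$ for every $n,i\in\bZ$. The only real external input is the absolute classicality result~\cite[Cor.~8.13]{bhatt-lurie-prism}; once that is invoked, the rest is formal, and I do not anticipate a significant obstacle beyond unwinding the definitions of the two pullback squares carefully enough to see that their legs agree.
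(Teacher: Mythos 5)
Your argument rests on two claims, both of which are problematic, and together they constitute a genuine gap.

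First, the characterization of ``classical'' as ``the functor of points depends only on $\pi_0(S)$'' is not the relevant notion. The paper's proof explicitly reduces the lemma to showing that $\WCart_{R/A}^\an$ is \emph{a colimit in derived stacks of objects represented by discrete commutative rings}. Note that even $\Spf A$ (trivially classical under any reasonable definition) does \emph{not} satisfy your criterion, since $\Map(A,S) \neq \Map(A,\pi_0 S)$ for a general animated ring $S$; and the construction $S\mapsto W(S)$ genuinely uses the animated structure of $S$, so none of the stacks in play ``depend only on $\pi_0$''. So the general topos-theoretic principle you invoke is being applied to a property that nothing here has.

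Second, and this is the real obstruction, even for the correct notion of classicality the step ``a homotopy pullback of classical stacks is classical'' is false in general. The universal property of colimits in a topos lets you commute the pullback past the colimit, so the issue reduces to whether a (derived) fiber product of discrete affine schemes over a discrete affine scheme is again classical --- and it is not, unless one has Tor-independence: $\Spec T_1 \times_{\Spec T_0} \Spec T_2$ is $\Spec(T_1\otimes^{\mathbb L}_{T_0}T_2)$, which carries derived structure whenever $\Tor^{T_0}_{>0}(T_1,T_2)\neq 0$. Nothing in the pullback square defining $\WCart_{R/A}^\an$ guarantees the needed Tor-independence between the legs $\WCart_R^\an \to \WCart_A^\an$ and $\Spf A\times\WCart^\an \to \WCart_A^\an$, so your argument cannot conclude.

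The paper's proof routes around both issues by an entirely different strategy. Using Corollary~\ref{cor:surjective} it reduces to $R\in\QRSPerfdscr_A$, then takes the \v{C}ech nerve of $(A,R)\to(A[z^{1/p^\infty}],R[z^{1/p^\infty}]/(z-p))$; each cosimplicial term $(A^s,R^s)$ is then a \emph{prismatic} $\delta$-pair with $\L_{R^s/A^s}$ of $p$-complete Tor-amplitude $[1,1]$, so $\Prism_{R^s/A^s}$ is \emph{discrete} and $\WCart_{R^s/A^s}^\an$ is \emph{affine and classical} by~\cite[Cor.~7.18]{bhatt-lurie-prism}. Classicality of $\WCart_{R/A}^\an$ then follows by passing to the geometric realization, and the cohomology comparison follows by taking global sections of that simplicial diagram. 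So the discreteness of relative prismatic cohomology over a well-chosen cover is what supplies the needed control, not a formal pullback argument. If you want to repair your proof, you would have to verify the Tor-independence of the two legs of the pullback square after resolving each side by discrete affines --- but that is essentially the same work the paper does, packaged differently; the descent route in the paper is the more efficient way to do it.
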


\begin{proof}
    It suffices to prove that $\WCart_{R/A}^\an$ is a colimit (in derived stacks) of objects represented
    by discrete commutative rings. By quasisyntomic descent, Corollary~\ref{cor:surjective}, and
    Lemma~\ref{lem:square}, we can assume that $R\in\QRSPerfdscr_A$ and in particular that $\L_{R/A}$
    has $p$-complete Tor-amplitude in $[1,1]$.
    Consider the map of $\delta$-pairs $(A,R)\rightarrow
    (A[z^{1/p^\infty}]_p^\wedge,R[z^{1/p^\infty}]_p^\wedge/(z-p))=:(A^0,R^0)$ and let $(A^\bullet,R^\bullet)$ be the
    associated $p$-completed \v{C}ech complex in $\delta$-pairs. By Corollary~\ref{cor:surjective} (or rather its derived
    variant), the induced map
    $\WCart^\an_{R^0/A^0}\rightarrow\WCart^\an_{R/A}$ is a surjective map of stacks in the flat topology on
    $p$-nilpotent commutative rings. By the derived version of Lemma~\ref{lem:square}, the
    \v{C}ech complex of this map is $\WCart_{R^\bullet/A^\bullet}^\an$. However, for each $s\geq 0$, the $\delta$-pair
    $(A^s,R^s)$ is in fact prismatic and the prismatic cohomology $\Prism_{R^s/A^s}$ is discrete
    since $\L_{R^s/A^s}$ has $p$-complete Tor-amplitude in $[1,1]$, so that the Cartier--Witt stack
    $\WCart_{R^s/A^s}^\an$
    is affine and classical by~\cite[Cor.~7.18]{bhatt-lurie-prism}. The lemma now follows by taking
    the geometric realization and global sections.
\end{proof}

\subsection{Comparison to~\cite{prisms}}\label{comparisontoBS}

Let $(A,I)$ be a bounded prism and let $R$ be an animated commutative $\overline{A}$-algebra. There
are now two definitions of $\Prism_{R/A}$, namely as the global sections of the prismatic crystal
$\Hscr_\Prism(R/A)$ and as the derived relative prismatic cohomology of~\cite[Sec.~7.2]{prisms}.
For the remainder of this section, we write $\Prism_{R/A}^\rel$ for the latter.
By combining Propositions~\ref{prop:push} and~\ref{prop:comparison}
with~\cite[Rem.~7.23]{bhatt-lurie-prism}, we find that $\Prism_{R/A}\we\Prism_{R/A}^\rel$ when $R$
is discrete and
$\L_{R/\overline{A}}$ has $p$-complete Tor-amplitude in $[0,1]$. The next proposition shows that in
fact no condition on $R$ is required.

\begin{proposition}\label{prop:prismaticcomparison}
    If $(A,I)$ is a bounded prism and $R$ is an animated commutative $\overline{A}$-algebra,
    then there is a natural equivalence
    $\gamma_{R/A}\colon\Prism_{R/A}^{[\star]}\{\star\}\xrightarrow{\we}\Prism_{R/A}^{\rel,[\star]}\{\star\}$.
\end{proposition}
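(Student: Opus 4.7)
The plan is to present both sides as left Kan extensions from a common subcategory where they already agree pointwise, yielding the natural equivalence as an application of the universal property. First, I would verify that both functors $R \mapsto \Prism_{R/A}^{[\star]}\{i\}$ and $R \mapsto \Prism_{R/A}^{\rel,[\star]}\{i\}$, viewed as functors from animated commutative $\overline{A}$-algebras to the $\infty$-category $\widehat{\F\D}(\D(A)_{(p,I)}^\wedge)$ of complete filtered, $(p,I)$-complete $A$-modules, preserve sifted colimits. For $\Prism^{\rel}_{-/A}$ this is built into the construction recalled before Definition~\ref{def:crystal}. For $\Prism_{-/A}$, a sifted diagram $R_\bullet$ of $\overline{A}$-algebras gives a sifted diagram of bounded $\delta$-pairs $(A,R_\bullet)$ whose colimit is $(A,\colim R_\bullet)$; hence sifted colimit preservation follows from Corollary~\ref{cor:kanextended}(b). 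The Breuil--Kisin twists and the Hodge--Tate tower structure are preserved by the same reasoning.

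Second, I would establish a natural equivalence on the subcategory of $p$-complete finitely presented polynomial $\overline{A}$-algebras. For such a $P$, the fiber sequence $P\tensor_{\overline{A}}\L_{\overline{A}/A}\to \L_{P/A}\to\L_{P/\overline{A}}$, combined with $\L_{\overline{A}/A}\we (I/I^2)[1]$ and $\L_{P/\overline{A}}$ being finite projective in degree zero, shows that $\L_{P/A}$ has $p$-complete Tor-amplitude in $[0,1]$. Thus Corollary~\ref{cor:pcwc} yields a natural equivalence $\Prism_{P/A}^{[n]}\{i\}\we\Prism_{P/A}^{\wc,[n]}\{i\}$. By Proposition~\ref{prop:comparison} our $\WCart_{P/A}$ coincides with the Bhatt--Lurie relative prismatization for the prism $(A,I)$, and by~\cite[Thm.~6.4, Rem.~7.23]{bhatt-lurie-prism} the cohomology of its structure sheaf (equipped with Breuil--Kisin twists and the Hodge--Tate tower coming from $\Iscr_{\WCart_{P/A}}$) computes $\Prism_{P/A}^{\rel,[n]}\{i\}$. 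Composing yields a natural transformation on polynomial $\overline{A}$-algebras that is pointwise an equivalence, compatibly with the graded, filtered, and multiplicative structure.

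Finally, every animated commutative $\overline{A}$-algebra is a sifted colimit of $p$-complete finitely presented polynomial $\overline{A}$-algebras, so by the universal property of left Kan extension the natural equivalence of Step~2 extends uniquely to a natural equivalence $\gamma_{R/A}\colon\Prism_{R/A}^{[\star]}\{\star\}\xrightarrow{\we}\Prism_{R/A}^{\rel,[\star]}\{\star\}$ on the whole $\infty$-category of animated commutative $\overline{A}$-algebras. The main technical point will be in Step~2: ensuring that the two chains of identifications (crystal $\to$ stack via Proposition~\ref{prop:push}/Corollary~\ref{cor:pcwc}, and stack $\to$ derived relative prismatic cohomology via Proposition~\ref{prop:comparison} and the cited Bhatt--Lurie results) assemble into a single coherent natural transformation of functors valued in complete filtered graded $\bE_\infty$-algebras, rather than just a pointwise collection of equivalences. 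This is, however, a formal consequence of the functoriality in $P$ of each input construction.
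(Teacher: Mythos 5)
Your proposal is correct in outline, and in fact it is the route the paper explicitly declines to take: the remark just before Proposition~\ref{prop:prismaticcomparison} records that the agreement $\Prism_{R/A}\we\Prism_{R/A}^{\rel}$ under a Tor-amplitude hypothesis follows by combining Propositions~\ref{prop:push}, \ref{prop:comparison}, and~\cite[Rem.~7.23]{bhatt-lurie-prism}, and the footnote there says ``One could also prove this by a Kan extension argument from the case just explained, but we would like to give an explicit argument.'' You have reconstructed that Kan extension argument. The paper instead builds the comparison map $\gamma_{R/A}$ by hand: for each transversal prism $(B,J)$ mapping to $A$ it writes down a composite $\Prism_{R/A}\to\Hscr_\Prism(R/A)(B)\we\Prism^{\rel}_{R\tensorhat\overline{B}/A\tensorhat B}\to\Prism^{\rel}_{R/A}$, uses siftedness of the category $\Tscr_A$ of transversal prisms over $A$ to show independence of $B$ and full naturality, reduces to $R=\overline{A}[x]$ by sifted colimits, and then uses base change plus faithfully flat descent to the perfection of the universal oriented prism, where the perfect case (Example~\ref{exs:wc}) gives the equivalence. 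The paper's approach has the virtue that the map comes with a canonical construction (useful when one later needs to compare $c$, $\varphi$, Nygaard data etc.~on both sides), whereas yours is shorter and more conceptual but pushes the naturality bookkeeping into the black box of left Kan extension.

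Two points you should tighten. First, Corollary~\ref{cor:kanextended}(b) as stated in the paper is about bounded $\delta$-pairs, i.e.\ discrete $R$; you need the extension to animated $R$ (same proof, via the conjugate filtration and Lemma~\ref{lem:kancotangent}, but it is not literally the cited statement). Second, the ``formal consequence of functoriality'' you invoke for assembling the pointwise chain of equivalences into a natural transformation of filtered, graded, multiplicative functors is exactly the kind of coherence issue the paper's explicit construction is designed to avoid; it is true, but it deserves more than a sentence, since it is where the twists, the Hodge--Tate tower, and the $\bE_\infty$-structure all have to be tracked through Proposition~\ref{prop:push}, Corollary~\ref{cor:pcwc}, Proposition~\ref{prop:comparison}, and the Bhatt--Lurie comparison simultaneously.
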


\begin{proof}
    To construct the natural transformation $\gamma$, let $\Tscr_A$ be the category of transversal
    prisms $(B,J)$ equipped with a map of prisms $B\xrightarrow{f} A$.
    For each $(B,f)\in\Tscr_A$, let $A\tensorhat B$ denote the $(p,J)$-adic completion of $A\tensor B$.
    The map $f$ and multiplication induce a map of prisms $A\tensorhat B\rightarrow A$.
    We can evaluate the crystal
    $\Hscr_\Prism(R/A)$ at $B$ to obtain a natural composition
    $$\Prism_{R/A}\rightarrow\Hscr_\Prism(R/A)(B)\we\Prism_{R\tensorhat \overline{B}/A\tensorhat
    B}^\rel\rightarrow\Prism_{R\tensorhat_A\overline{A}/A}^\rel\rightarrow\Prism_{R/A}^\rel,$$
    where the final map uses the $\overline{A}$-linear multiplication map
    $R\tensorhat_A\overline{A}\rightarrow R$.
    
   The construction of the map is natural in $B$: for another choice $B'$ with a map $B \to B'$ in $\Tscr_A$ we get a natural homotopy 
   between the two induced maps 
   $
    \Prism_{R/A} \to \Prism_{R/A}^\rel
   $
    which is induced from a `hammock' between the constructions for these maps:
     $$
     \xymatrix{
     &\Prism_{R\tensorhat \overline{B}/A\tensorhat
    B}^\rel \ar[r]\ar[dd] &\Prism_{R\tensorhat_A\overline{A}/A}^\rel\ar[rd]\ar[dd] \\ 
     \Prism_{R/A}\ar[ru]\ar[rd] & & & \Prism_{R/A}^\rel \\
     &\Prism_{R\tensorhat \overline{B'}/A\tensorhat
    B'}^\rel\ar[r] &\Prism_{R\tensorhat_A\overline{A}/A}^\rel \ar[ru]
    } \ .
    $$
    
    As $\Tscr_A$ is sifted by~\cite[Cor.~2.4.7]{bhatt-lurie-apc}, this implies that the map
    $\Prism_{R/A}\rightarrow\Prism_{R/A}^\rel$ does not depend on
    the transversal prism $B$ chosen (for example by taking the colimit over all these maps in the arrow category). This specifies the natural transformation $\gamma_{R/A}$ when
    $A$ is fixed. A similar argument can be made with the help of an intermediary transversal prism
    $B\in\Tscr_A$
    to construct commutative diagrams
    $$\xymatrix{
        \Prism_{R/A}\ar[r]\ar[d]&\Prism_{R/A}^\rel\ar[d]\\
        \Prism_{R'/A'}\ar[r]&\Prism_{R'/A'}^\rel
    }$$
    for maps of bounded prismatic $\delta$-pairs $(A,R)\rightarrow(A',R')$.
    Siftedness of $\Tscr_A$ again implies that this diagram does not depend on the choice of $B$.
    Continuing in this way, we obtain a natural transformation as desired. To make this
    construction rigorous, consider the $\infty$-category $\Pairs^{\delta\Prism\an}$ of prismatic
    $\delta$-pairs $(A,R)$ where  $A$ is bounded and $R$ is an animated commutative
    $\overline{A}$-algebra. Let $q\colon\Tscr\rightarrow\Pairs^{\delta\Prism\an}$ denote the coCartesian
    fibration with fiber over $(A,R)$ given by $\Tscr_A$. On $\Pairs^{\delta\Prism\an}$ are defined
    sections $\Prism_{(-)/(-)}$ and $\Prism_{(-)/(-)}^\rel$ of the coCartesian fibration
    $\widehat{\DAlg}\rightarrow\Pairs^{\delta\Prism\an}$ with fiber over $(A,R)$ given by the
    $\infty$-category of $(p,I)$-complete derived commutative $A$-algebras. The construction above induces a natural
    transformation $\widetilde{\gamma}\colon\Prism_{(-)/(-)}\circ
    q\rightarrow\Prism_{(-)/(-)}^\rel\circ q$ where naturality can be seen similarly to the case
    above. Taking a left Kan extension along $q$
    produces a natural transformation from $\Prism_{(-)/(-)}\rightarrow\Prism^\rel_{(-)/(-)}$ since
    the fibers of $q$ are contractible.

    Both sides commute with colimits in the variable $R$ as functors to $\D(A)_{(p,I)}^\wedge$.
    Thus, we can immediately reduce to the case of $R=\overline{A}[x]$.
    Both $\Prism_{R/A}$ and $\Prism^\rel_{R/A}$ satisfy Zariski descent in $(A,R)$.
    This Zariski descent holds for $\Prism_{R/A}$ by
    Proposition~\ref{prop:cwdescent} and for $\Prism_{R/A}^\rel$ because if $A\rightarrow C$ is a
    $(p,I)$-completely faithfully flat of $\delta$-rings with \v{C}ech complex $C^\bullet$, then $$\Prism^\rel_{(R\otimes_A
    C^\bullet)_p^\wedge/C^\bullet}\we(\Prism_{R/A}\otimes_AC^\bullet)_{p,I}^\wedge$$
    by~\cite[Const.~7.6]{prisms}; the totalization of the right-hand side is $(p,I)$-complete and
    agrees, modulo $(p,I)$ with $\Prism_{R/A}/(p,I)$ since $A/(p,I)\rightarrow C/(p,I)$ is
    faithfully flat.
    Now, we can use base change in $A$ and Zariski descent in $(A,R)$ to reduce to the case of the universal
    oriented prism $A^0$.
    Then, we can test whether $\gamma_{R/A^0}$ is an equivalence by
    base change along the faithfully flat map to the perfection $A^0_\perf$ of $A^0$.
    But, then, we have a commutative square
    $$\xymatrix{
        \Prism_{R/\bZ}\ar[r]\ar[d]&\Prism_R\ar[d]\\
        \Prism_{R/A^0_\perf}\ar[r]^{\gamma_{R/A^0_\perf}}&\Prism_{R/A^0_\perf}^\rel.
    }$$ The vertical arrows are equivalences because $A^0_\perf$ is perfect, as one can see using the conjugate
    filtrations on the Hodge--Tate cohomology in both cases. The top arrow is an equivalence by Example~\ref{exs:wc}.
\end{proof}

\begin{remark}
    From now on, we drop the distinction between $\Prism_{R/A}^\rel$ and $\Prism_{R/A}$ in the case
    of prismatic $\delta$-pairs.
\end{remark}

\section{The Nygaard filtration}\label{sec:nygaard}

In this section, we introduce the Nygaard filtration and Nygaard-completed Frobenius-twisted
prismatic cohomology relative to $\delta$-pairs and prove the related statements of
Theorem~\ref{thm:intro}.

\begin{definition}
    Let $(A,R)$ be a bounded $\delta$-pair and let $\Hscr_\Prism(R/A)\{i\}$ be the $i$th
    Breuil--Kisin twisted prismatic crystal associated to $(A,R)$. This object is an $A$-module in
    $\D(\WCart)$, so we can form its Frobenius twist
    $$\Hscr_{\Prism}^{(1)}(R/A)\{i\}=\Hscr_{\Prism}(R/A)\{i\}\otimes_{A,\varphi_A}A.$$
    If $(B,I)$ is a transversal prism, then the value of this crystal on $B$ is naturally equivalent to
    the $(p,I)$-completion of
    $$\Prism_{R\tensorhat\overline{B}/(A\tensorhat B)_I^\wedge}\{i\}\otimes_{A,\varphi}A.$$
    Note that this is not typically equivalent to the usual Frobenius twist $\Prism_{R\tensorhat\overline{B}/(A\tensorhat
    B)_I^\wedge}^{(1)}\{i\}$ of~\cite{prisms} because we use only the Frobenius endomorphism of $A$ and not of
    $(A\tensorhat B)_I^\wedge$.
\end{definition}

In this section, we use an approach analogous to the one from \cite{bhatt-lurie-apc} to construct
the Nygaard filtration. Our goal will first be to produce a filtration on
$\R\Gamma(\WCart,\Hscr_{\Prism}^{(1)}(R/A)\{i\})$.

Let $F$ denote the Frobenius endomorphism of $\WCart$.
Recall from \cite[Theorem 3.6.7]{bhatt-lurie-apc} that the diagram
\begin{equation*}
  \label{eqn:derhamsquare}
  \begin{tikzcd}
    \WCart^\HT \rar\dar & \WCart\dar{F}\\
    \Spf \bZ_p  \rar{\rho_{\dR}} & \WCart
  \end{tikzcd}
\end{equation*}
yields pullback diagrams on global sections for any quasi-coherent sheaf on $\WCart$, where
$\rho_\dR$ is the point of $\WCart$ corresponding to the crystalline prism $(\bZ_p,(p))$.
Commutativity of the diagram yields a canonical equivalence
\begin{equation}
  \label{eqn:frobeniustwistderhampoint}
  F^*\Escr|_{\WCart^\HT} \we \Escr_\dR \otimes \Oscr_{\WCart^{\HT}}
\end{equation}
for any prismatic crystal $\Escr$,
where $\Escr_{\dR}$ denotes the $p$-complete complex $\rho_{\dR}^*\Escr$.
We will apply this for $\Escr=\Hscr_\Prism^{(1)}(R/A)\{i\}$.

\begin{remark}[Prismatic cohomology relative to animated crystalline prisms]
    Since $(\bZ_p,(p))$ is not transversal, our definition of $\Hscr_\Prism(R/A)$ when $(A,R)$ is a
    bounded $\delta$-pair does not allow us
    to {\em a priori} compute $\rho_\dR^*\Hscr_\Prism(R/A)=\Hscr_\Prism(R/A)_\dR$ if $A$ has $p$-torsion. Rather,
    we view $\Hscr_\Prism(R/A)_\dR$ as the definition of $\Prism_{R\otimes\bF_p/A}$ where we view
    $A$ as an animated crystalline prism. (Note that while animated prisms are introduced
    in~\cite{bhatt-lurie-prism}, they are used there only to define derived Cartier--Witt stacks and not
    as possible bases for relative prismatic cohomology.) This notation is unambiguous
    as it agrees with the definition of $\Prism_{R\tensor\bF_p/A}^{\an\wc}$ which uses that $A$ is only a
    $\delta$-ring. A similar comment applies to the Frobenius twist $\Hscr_\Prism^{(1)}(R/A)_{\dR}$.
\end{remark}

\begin{remark}[Prismatic cohomology relative to animated prisms]
    More generally, one can use the formalism of prismatic cohomology relative to animated
    $\delta$-pairs to give a definition of prismatic cohomology $\Prism_{R/A}$ when
    $(A,\overline{A})$ is an
    animated prism and $R$ is an animated commutative $\overline{A}$-algebra, for example as
    $\R\Gamma(\WCart_{R/A}^\an,\Oscr)$; see
    Variant~\ref{var:anwc}.
\end{remark}

\begin{lemma}[de Rham comparison for prismatic cohomology relative to $\delta$-rings]
  \label{lem:derham}
    Let $(A,R)$ be a bounded $\delta$-pair.
  There is a canonical multiplicative equivalence
  $\Hscr_{\Prism}^{(1)}(R/A)_\dR \we \widehat{\dR}_{R/A}$,
  where the target denotes $p$-completed derived de Rham cohomology of $R$ over $A$.
  There is a natural commutative diagram
  \[
    \begin{tikzcd}
      F^*\Hscr_{\Prism}^{(1)}(R/A)|_{\WCart^\HT} \dar\rar & \widehat{\dR}_{R/A}\otimes \Oscr_{\WCart^{\HT}}\\
      \Hscr_{\Prism}^{(1)}(R/A)_{\dR} \otimes\Oscr_{\WCart^\HT}\ar[ru]
    \end{tikzcd}
  \]
  of equivalences of quasi-coherent sheaves on $\WCart^\HT$,
    where the vertical map comes from \eqref{eqn:frobeniustwistderhampoint}, the horizontal map
    from the relative de Rham comparison, and the diagonal map is induced from the equivalence of the
    first part of the lemma by tensoring with $\Oscr_{\WCart^\HT}$.
\end{lemma}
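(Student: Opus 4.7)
I would build the three maps in the diagram and verify commutativity. The vertical equivalence is immediate: applying \eqref{eqn:frobeniustwistderhampoint} to $\Escr = \Hscr_\Prism^{(1)}(R/A)$ yields $F^*\Hscr_\Prism^{(1)}(R/A)|_{\WCart^\HT} \simeq \Hscr_\Prism^{(1)}(R/A)_\dR \otimes \Oscr_{\WCart^\HT}$.

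For the horizontal equivalence, I work pointwise on transversal prisms $(B,I)$. The Frobenius twist defining $\Hscr_\Prism^{(1)}(R/A)$ (via $\varphi_A$) together with the Frobenius pullback $F^*$ on $\WCart$ (which on values at $B$ amounts to base change along $\varphi_B$) combine to the standard Frobenius twist along $\varphi_{A\tensorhat B} = \varphi_A\tensorhat\varphi_B$ of the prism $(A\tensorhat B, I)$, so that
\[
F^*\Hscr_\Prism^{(1)}(R/A)(B) \simeq \Prism^{(1),\rel}_{R\tensorhat\overline{B}/A\tensorhat B}.
\]
Restricting to the Hodge--Tate locus (i.e., modding out by $I$) and applying the multiplicative relative de Rham comparison of~\cite[Thm.~6.4]{prisms} identifies this with $\widehat{\dR}_{R\tensorhat\overline{B}/A\tensorhat\overline{B}}$; since $\overline{B}$ is $p$-completely flat over $\bZ_p$ for transversal $B$, base change for $p$-complete derived de Rham cohomology further identifies this with $\widehat{\dR}_{R/A}\tensorhat\overline{B}$. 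Naturality in maps of transversal prisms assembles these into the horizontal equivalence of quasi-coherent sheaves on $\WCart^\HT$.

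The diagonal equivalence $\Hscr_\Prism^{(1)}(R/A)_\dR \simeq \widehat{\dR}_{R/A}$ I would obtain by computing $\rho_\dR^*\Hscr_\Prism^{(1)}(R/A)$ using the extension of the crystal to animated prisms (Remark~\ref{rem:unbounded}): the pullback is identified with the Frobenius-twisted relative prismatic cohomology $\Prism^{(1),\an,\rel}_{R\otimes^L\bF_p/(A,(p))}$ of $R\otimes^L\bF_p$ over the animated crystalline prism $(A,(p))$, and the crystalline specialization of the Bhatt--Scholze de Rham comparison identifies this with $\widehat{\dR}_{R/A}$, exploiting that $R$ itself serves as an $(A,(p))$-lift of $R\otimes^L\bF_p$. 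Commutativity of the triangle then follows from naturality of the Bhatt--Scholze de Rham comparison under maps of animated prisms (applied to the morphism from $(A\tensorhat B, I)$ to $(A,(p))$ induced by any transversal cover of the crystalline prism), and multiplicativity is inherited from the multiplicative nature of the comparison. The principal technical point will be bookkeeping this functoriality so that the pointwise equivalences at transversal prisms descend to a morphism of quasi-coherent sheaves on $\WCart^\HT$ coherent with the diagonal at the crystalline point.
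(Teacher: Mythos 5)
Your construction of the vertical and horizontal equivalences matches the paper: both are obtained by applying \eqref{eqn:frobeniustwistderhampoint} and by evaluating $F^*\Hscr^{(1)}_\Prism(R/A)$ at transversal prisms, restricting to the Hodge--Tate locus, and invoking the relative de Rham comparison together with $p$-completely flat base change for derived de Rham cohomology.

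However, your treatment of the diagonal is a genuinely different route from the paper's, and it has a gap. You propose to \emph{construct} the equivalence $\Hscr^{(1)}_\Prism(R/A)_\dR \we \widehat{\dR}_{R/A}$ directly, by computing $\rho_\dR^*\Hscr^{(1)}_\Prism(R/A)$ as Frobenius-twisted prismatic cohomology over the animated crystalline prism $(A,(p))$ and then invoking a crystalline de Rham comparison. This runs into exactly the issue the paper flags in the remark immediately preceding the lemma: $(\bZ_p,(p))$ is not transversal, so the definition of $\Hscr_\Prism^{(1)}(R/A)$ does not a priori hand you a computable model at $\rho_\dR$, and you would need to develop the theory of relative prismatic cohomology over animated crystalline prisms and prove a de Rham comparison there. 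Moreover, the comparison you cite (in the form used for the horizontal arrow) identifies only the \emph{Hodge--Tate quotient} $\Prism^{(1)}_{S/B}\tensorhat_B\overline{B}$ with $\widehat{\dR}_{S/\overline{B}}$; to get the stronger statement $\Prism^{(1)}_{R\otimes^L\bF_p/(A,(p))}\we\widehat{\dR}_{R/A}$ you need a genuine crystalline comparison for the animated prism $(A,(p))$, plus care that the identification interacts correctly with the Frobenius twist by $\varphi_A$ — none of which is spelled out. Finally, the naturality argument you sketch for commutativity would have to be checked against all transversal prisms covering the crystalline one, which is another nontrivial coherence problem.

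The paper avoids all of this. Having built the vertical and horizontal equivalences, it \emph{defines} the diagonal equivalence of sheaves on $\WCart^\HT$ to be the unique multiplicative one making the triangle commute — there is nothing to check there. The remaining content is then to show that this equivalence of sheaves is induced from an equivalence of $p$-complete complexes $\Hscr^{(1)}_\Prism(R/A)_\dR\we\widehat{\dR}_{R/A}$, which the paper handles by a factorization/lifting argument: reduce via left Kan extension and quasisyntomic descent to the case where $\L_{R/A}$ has Tor-amplitude in $[1,1]$, observe that there $\widehat{\dR}_{R/A}$ is discrete and is the connective cover of $\R\Gamma(\WCart^\HT,\widehat{\dR}_{R/A}\otimes\Oscr_{\WCart^\HT})$, and that $\Hscr^{(1)}_\Prism(R/A)_\dR$ is connective, so the lift is uniquely determined. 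If you want to retain your direct approach, you would need to separately establish the animated crystalline de Rham comparison and the required naturality; I would instead suggest adopting the paper's uniqueness-plus-connectivity argument, which is shorter and relies only on inputs already proved in the paper.
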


\begin{proof}
  We follow the argument from \cite[Section 5.4]{bhatt-lurie-apc}.
  For a prism $B$, we have a canonical pullback diagram
    $$\xymatrix{
        \Spf \overline{B}\ar[r]\ar[d]&\WCart^\HT\ar[d]\\
        \SSpf B\ar[r]&\WCart.
    }$$
  The quasi-coherent sheaf $F^*\Hscr^{(1)}_{\Prism}(R/A)$ pulls back to
  \[
    \Prism_{R\tensorhat \overline{B} / A\tensorhat B} \otimes_{A\tensorhat B,\varphi} A\tensorhat B
  \]
  on $\SSpf B$, and hence to
  \[
    \Prism_{R\tensorhat \overline{B} / A\tensorhat B} \otimes_{A\tensorhat B,\varphi} \overline{A\tensorhat B}
  \]
    on $\Spf\overline{B}$. By the relative de Rham comparison~\cite[Prop.~5.2.5]{bhatt-lurie-apc}, this is naturally equivalent to
  \[
    \widehat{\dR}_{R\tensorhat \overline{B} / \overline{A\tensorhat B}} \we \widehat{\dR}_{R/A} \tensorhat \overline{B}.
  \]
  Hence we have a natural equivalence
  \[
    F^*\Hscr^{(1)}_{\Prism}(R/A)|_{\WCart^{\HT}} \we \widehat{\dR}_{R/A} \otimes\Oscr_{\WCart^{\HT}},
  \]
  providing the horizontal map in the commutative diagram.
  Together with the equivalence \eqref{eqn:frobeniustwistderhampoint}, this determines a unique multiplicative equivalence
  \[
      \Hscr_\Prism^{(1)}(R/A)_\dR\otimes\Oscr_{\WCart^{\HT}}\we \widehat{\dR}_{R/A} \otimes\Oscr_{\WCart^{\HT}}.
  \]
  of quasi-coherent sheaves on $\WCart^{\HT}$ making the diagram commute. To finish the proof, we need to check that this is induced from a natural multiplicative equivalence 
  \[
      \Hscr_\Prism^{(1)}(R/A)_\dR\we \widehat{\dR}_{R/A}
  \]
    in $\D(\bZ_p)$.
  To see this, it suffices to check that the map
  \[ 
    \Hscr_\Prism^{(1)}(R/A)_\dR\to \R\Gamma(\WCart^{\HT},\widehat{\dR}_{R/A} \otimes\Oscr_{\WCart^{\HT}})
  \]
  factors uniquely and multiplicatively through the canonical map
  \[
    \widehat{\dR}_{R/A} \to \R\Gamma(\WCart^{\HT},\widehat{\dR}_{R/A} \otimes\Oscr_{\WCart^{\HT}}).
  \]
    Since the $\Hscr_{\Prism}^{(1)}(R/A)_\dR$ is left Kan extended from $\delta$-pairs $(A,R)$ with $R$ quasisyntomic
    over $A$ we may reduce to that situation. Now, since $\L_{R/A}$ has $p$-complete Tor-amplitude
    in $[0,1]$, it follows that $\Lambda^i\L_{R/A}$ has $p$-complete Tor-amplitude in $[0,i]$; see
    the argument of~\cite[Lem.~4.34]{bms2}. It follows by reducing modulo $p$ and using the
    exhaustive conjugate filtration, that
    $\widehat{\dR}_{R/A}$ is coconnective. Using that filtered colimits commute with totalizations
    of uniformly
    bounded above cosimplicial objects, quasisyntomic descent for $\widehat{\dR}_{R/A}$ reduces,
    using the conjugate filtration modulo $p$ again, to $p$-complete faithfully flat descent for
    $\Lambda^i\L_{R/A}$ for $i\geq 0$, which follows from~\cite[Sec.~3]{bms2}. Using these
    observations, we may reduce to the case where $\L_{R/A}$ has Tor-amplitude in $[1,1]$.
    But then $\widehat{\dR}_{R/A}$ is discrete (as each and agrees with the connective cover of
    $\R\Gamma(\WCart^{\HT},\widehat{\dR}_{R/A}\otimes\Oscr_{\WCart^{\HT}})$.
    Moreover, $\Hscr_\Prism^{(1)}(R/A)_{\dR}$ is connective, so the desired lift is uniquely determined.
\end{proof}

\begin{corollary}
    If $(A,R)$ is a bounded $\delta$-pair, then there is a natural
  pullback diagram
  \[
    \begin{tikzcd}
        \R\Gamma(\WCart, \Hscr^{(1)}_\Prism(R/A)\{i\}) \dar\rar & \widehat{\dR}_{R/A}\dar\\
        \R\Gamma(\WCart,F^*\Hscr^{(1)}_\Prism(R/A)\{i\}) \rar &
        \widehat{\dR}_{R/A}\otimes\R\Gamma(\WCart^{\HT},\Oscr_{\WCart^{\HT}})
    \end{tikzcd}
  \]
  for each $i\in\bZ$.
\end{corollary}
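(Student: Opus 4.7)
The plan is to apply the pullback-on-global-sections result for the square
\[
\begin{tikzcd}
\WCart^\HT \rar\dar & \WCart\dar{F}\\
\Spf \bZ_p  \rar{\rho_{\dR}} & \WCart
\end{tikzcd}
\]
recalled from \cite[Thm.~3.6.7]{bhatt-lurie-apc}, evaluated on the prismatic crystal $\Escr := \Hscr^{(1)}_\Prism(R/A)\{i\}$. This directly yields a pullback diagram
\[
\begin{tikzcd}
\R\Gamma(\WCart, \Escr) \rar\dar & \R\Gamma(\Spf\bZ_p, \rho_\dR^*\Escr)\dar\\
\R\Gamma(\WCart, F^*\Escr)\rar & \R\Gamma(\WCart^\HT, F^*\Escr|_{\WCart^\HT}),
\end{tikzcd}
\]
so the only remaining task is to identify the two right-hand terms and the map between them with those appearing in the statement of the corollary.

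For the top-right corner, I would write $\rho_\dR^*\Escr \we \Hscr^{(1)}_\Prism(R/A)_\dR \otimes \rho_\dR^*\Oscr_{\WCart}\{i\}$, then use Lemma~\ref{lem:derham} to identify the first factor with $\widehat{\dR}_{R/A}$, while the second factor is canonically trivial because the Breuil--Kisin line bundle trivializes at the crystalline prism $(\bZ_p,(p))$. For the bottom-right corner, the natural equivalence \eqref{eqn:frobeniustwistderhampoint} gives $F^*\Escr|_{\WCart^\HT}\we \Escr_\dR \otimes \Oscr_{\WCart^\HT}\we \widehat{\dR}_{R/A}\otimes\Oscr_{\WCart^\HT}$, again invoking Lemma~\ref{lem:derham} together with the trivialization of $\rho_\dR^*\Oscr_{\WCart}\{i\}$, and then the projection formula identifies global sections with $\widehat{\dR}_{R/A}\otimes\R\Gamma(\WCart^\HT,\Oscr_{\WCart^\HT})$.

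The main thing to check is that under these two identifications, the right-hand vertical map of the pullback square matches the natural map $\widehat{\dR}_{R/A}\to\widehat{\dR}_{R/A}\otimes\R\Gamma(\WCart^\HT,\Oscr_{\WCart^\HT})$ induced by the unit $\bZ_p\to\R\Gamma(\WCart^\HT,\Oscr_{\WCart^\HT})$. This compatibility is exactly the commutativity of the triangle of equivalences recorded in Lemma~\ref{lem:derham}, so there is no real obstacle beyond chasing these identifications through; in particular the whole argument is a formal consequence of results already proved in this section and in \cite{bhatt-lurie-apc}.
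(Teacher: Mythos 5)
Your proposal is correct and takes essentially the same route as the paper's proof: apply the pullback-on-global-sections statement from~\cite[Thm.~3.6.7]{bhatt-lurie-apc} to $\Hscr^{(1)}_\Prism(R/A)\{i\}$, then identify the right-hand terms via Lemma~\ref{lem:derham} and the canonical trivialization of $\bZ_p\{i\}$ at the crystalline prism (which the paper attributes to~\cite[Variant~5.4.13]{bhatt-lurie-apc}). You just spell out the intermediate identifications and the projection-formula step that the paper's one-sentence proof leaves implicit.
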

\begin{proof}
  We combine the pullback diagram obtained from \eqref{eqn:derhamsquare} with the equivalence
    $\Hscr^{(1)}_{\Prism}(R/A)\{i\}_\dR \simeq \widehat{\dR}_{R/A}$ obtained by multiplying the
    equivalence from Lemma \ref{lem:derham} with the canonical trivialization of the Breuil--Kisin twist $\bZ_p\{i\}$ (compare \cite[Variant 5.4.13]{bhatt-lurie-apc}).
\end{proof}

\begin{definition}[Nygaard-filtered Frobenius-twisted prismatic cohomology]
    Let $(A,R)$ be a bounded $\delta$-pair and fix $i\in\bZ$.
  We define the Nygaard filtration on $\R\Gamma(\WCart,\Hscr^{(1)}_\Prism(R/A)\{i\})$ as the pullback of filtered spectra
  \[
    \begin{tikzcd}
      \N^{\geq \star}\R\Gamma(\WCart,\Hscr^{(1)}_\Prism(R/A)\{i\})\dar\rar& \F^{\geq \star}_{\H} \widehat{\dR}_{R/A}\dar\\
      \R\Gamma(\WCart, \N^{\geq \star} F^*\Hscr^{(1)}_\Prism(R/A)\{i\}) \rar & \F^{\geq\star}_{\H}
        \widehat{\dR}_{R/A} \otimes\R\Gamma(\WCart^{\HT},\Oscr_{\WCart^{\HT}}),
    \end{tikzcd}
  \]
  where the lower left term uses the pointwise Nygaard filtration on the prismatic crystal
  \[
    B\mapsto \Prism_{R\tensorhat \overline{B} / A\tensorhat B}\{i\}\otimes_{A\tensorhat B, \varphi} A\tensorhat B
  \]
  (see \cite[Section 12]{prisms})
  corresponding to $F^*\Hscr^{(1)}_{\Prism}(R/A)$, and the bottom map uses that the composite 
  \[
    F^*\Hscr^{(1)}_{\Prism}(R/A)\{i\} \to \Hscr_\Prism^{(1)}(R/A)\{i\}_{\dR}\otimes \Oscr_{\WCart^\HT} \simeq \widehat{\dR}\otimes\Oscr_{\WCart^\HT}
  \]
  is a combination of the canonical trivialisation of $F^*\Oscr_{\WCart}\{i\}$ on the Hodge-Tate locus (\cite[Remark 2.5.7 \& Example 3.5.2]{bhatt-lurie-apc}), and the de Rham comparison map
  \[
    \Prism_{R\tensorhat \overline{B} / A\tensorhat B}\otimes_{A\tensorhat B, \varphi} A\tensorhat B \to \Prism_{R\tensorhat \overline{B} / A\tensorhat B}\otimes_{A\tensorhat B, \varphi}\overline{A\tensorhat B} \we \widehat{\dR}_{R/A} \tensorhat \overline{B}
  \]
    which takes the Nygaard filtration into the Hodge filtration
    (see~\cite[Const.~5.5.3]{bhatt-lurie-apc}).

\end{definition}

\begin{definition}[Nygaard-completed Frobenius-twisted prismatic cohomology]\label{Nygaard}
    If $(A,R)$ is a bounded $\delta$-pair,
  we define Nygaard-completed (Frobenius-twisted, Breuil--Kisin twisted)
    prismatic cohomology of $R$ over $A$ as the completion 
  \[
    \N^{\geq \star} \Prismhat_{R/A}^{(1)}\{i\} := (\N^{\geq
    \star}\R\Gamma(\WCart,\Hscr^{(1)}_\Prism(R/A)\{i\}))^{\wedge}.
  \]
\end{definition}

\begin{remark}[Absolute Nygaard comparison]\label{rem:absolute_nygaard_comparison}
    Since the Frobenius endomorphism of the $\delta$-ring $\bZ_p$ is the identity,
    $\N^{\geq\star}\Prismhat_{R/\bZ_p}^{(1)}\{i\}\we\N^{\geq\star}\Prismhat_R$, where the latter denotes
    the Nygaard complete absolute prismatic cohomology of~\cite{bms2,bhatt-lurie-apc}.
    Indeed, in this case $\Hscr_\Prism^{(1)}(R/\bZ_p)\we\Hscr_\Prism(R)$, in the notation
    of~\cite{bhatt-lurie-apc}, and our construction of the Nygaard filtration and completion agrees
    with the one in~\cite[Sec.~5.5]{bhatt-lurie-apc}.
\end{remark}

\begin{construction}\label{constr_diff}
Recall the Hodge--Tate point $\eta\colon \Spf(\bZ_p) \to \WCart$ which assigns to every
$p$-nilpotent ring $R$ the Cartier-Witt divisor given by $V(1)\colon W(R)\to W(R)$. In analogy to
\cite{bhatt-lurie-apc}, we write $\DiffractedHodge_{R/A}$ for $\eta^*\Hscr_{\Prismbar}(R/A)$.
This inherits a conjugate filtration $\F_\star^\conj\DiffractedHodge_{R/A}$ from the $\delta$-conjugate filtration on the relative
Hodge--Tate crystal by Proposition~\ref{prop:conjugate}, and, since
\[
    \gr^{\delta\conj}_i \Hscr_{\Prismbar}(R/A) \we \widehat{\L\Omega}^i_{R/A} \otimes
    \Oscr_{\WCart^\HT}\{-i\}[-i],
\]
we have $\gr^\conj_i \DiffractedHodge_{R/A} \we\widehat{\L\Omega}^i_{R/A}[-i]$.
\end{construction}

\begin{lemma}
  \label{lem:fibersequences}
    If $(A,R)$ is a bounded $\delta$-pair, then there are natural
  maps of horizontal fiber sequences
  \[
    \begin{tikzcd}
      \gr^i_\N \Prismhat_{R/A}^{(1)}\{j\} \rar\dar & \F^{\conj}_{\leq i}
        \DiffractedHodge_{R/A}\rar{\Theta+i}\dar{\id} & \F^{\conj}_{\leq i-1} \DiffractedHodge_{R/A}\dar{\mathrm{incl}} \\
      \R\Gamma(\WCart, \gr^i_\N F^*\Hscr^{(1)}_\Prism(R/A)\{j\}) \rar  \dar & \F^{\conj}_{\leq i}
        \DiffractedHodge_{R/A}\rar{\Theta+i}\dar & \F^{\conj}_{\leq i} \DiffractedHodge_{R/A}\dar\\
      \Prismbar_{R/A}\{i\} \rar & \DiffractedHodge_{R/A}\rar{\Theta+i} & \DiffractedHodge_{R/A}
    \end{tikzcd}
  \]
    in $\D(A)_p^\wedge$,
    where $\Theta$ is the Sen operator on $\F_{\leq\star}^\conj\DiffractedHodge_{R/A}$ induced from
    the Hodge--Tate crystal $\Hscr_\Prismbar(R/A)$ under the description of quasi-coherent sheaves on
    $\WCart^\HT$ of~\cite[Sec.~3.5 \& Remark 4.9.10 \& Remark 5.5.8]{bhatt-lurie-apc}.
\end{lemma}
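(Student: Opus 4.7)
The plan is to apply the exact functor $\gr^i_\N$ to the pullback square of Definition~\ref{Nygaard} defining the Nygaard filtration and analyze the resulting diagram. Since forming associated graded pieces preserves pullbacks of complete filtrations, we obtain a pullback square whose upper-right corner is $\gr^i_\H\widehat{\dR}_{R/A}\we\widehat{\L\Omega}^i_{R/A}[-i]$, via the Hodge filtration on $p$-completed derived de Rham cohomology, and whose lower-right corner is $\widehat{\L\Omega}^i_{R/A}[-i]\tensorhat\R\Gamma(\WCart^\HT,\Oscr_{\WCart^\HT})$.

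The key step is to identify the lower-left corner $\R\Gamma(\WCart,\gr^i_\N F^*\Hscr^{(1)}_\Prism(R/A)\{j\})$. Applying the analysis of~\cite[Sec.~5.5]{bhatt-lurie-apc} pointwise to each transversal prism, the graded piece $\gr^i_\N F^*\Hscr^{(1)}_\Prism(R/A)\{j\}$ should be naturally identified with $\iota_*$ of a Breuil--Kisin-twisted version of the conjugate-filtered Hodge--Tate crystal. Taking global sections and invoking the Sen-operator description of $\D(\WCart^\HT)$ from~\cite[Sec.~3.5]{bhatt-lurie-apc}, this identifies $\R\Gamma(\WCart,\gr^i_\N F^*\Hscr^{(1)}_\Prism(R/A)\{j\})$ with $\F^\conj_{\leq i}\DiffractedHodge_{R/A}$; moreover, the canonical map to the lower-right corner becomes the endomorphism $\Theta+i$ of $\F^\conj_{\leq i}\DiffractedHodge_{R/A}$ after the tensor product is unwound via the Sen description of $\R\Gamma(\WCart^\HT,\Oscr_{\WCart^\HT})$.

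With these identifications in hand, the three fiber sequences and the vertical comparison maps follow by a diagram chase. The bottom fiber sequence is the Sen sequence for $\Prismbar_{R/A}\{i\}\we\R\Gamma(\WCart^\HT,\Hscr_\Prismbar(R/A)\{i\})$, using that global sections of a sheaf on $\WCart^\HT$ with Sen operator $T$ compute the fiber of $T$. The middle fiber sequence is the restriction of this sequence to $\F^\conj_{\leq i}\DiffractedHodge_{R/A}$, matching the identification of the lower-left corner. The top fiber sequence arises because $\Theta+i\colon\F^\conj_{\leq i}\DiffractedHodge_{R/A}\to\F^\conj_{\leq i}\DiffractedHodge_{R/A}$ in fact factors through $\F^\conj_{\leq i-1}\DiffractedHodge_{R/A}$: the Sen operator lowers the conjugate filtration by one and acts as $-i$ on $\gr^\conj_i\DiffractedHodge_{R/A}\we\widehat{\L\Omega}^i_{R/A}[-i]$. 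The fiber of the resulting map $\F^\conj_{\leq i}\xrightarrow{\Theta+i}\F^\conj_{\leq i-1}$ then computes $\gr^i_\N\Prismhat^{(1)}_{R/A}\{j\}$ by the pullback square. The vertical comparison maps are induced by the monomorphisms $\F^\conj_{\leq i-1}\hookrightarrow\F^\conj_{\leq i}\hookrightarrow\DiffractedHodge_{R/A}$.

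The main obstacle I expect is the careful pointwise identification in step two of $\gr^i_\N F^*\Hscr^{(1)}_\Prism(R/A)\{j\}$ with the conjugate-filtered Hodge--Tate crystal, since one must track how the Frobenius twist over the $\delta$-ring $A$, the Breuil--Kisin twist $\{j\}$, and the Sen operator interact through the de Rham comparison of Lemma~\ref{lem:derham}. The argument is the relative analogue of~\cite[Sec.~5.5]{bhatt-lurie-apc}, but the presence of the $\delta$-ring base forces one to work pointwise over transversal prisms and only then glue the graded identifications together, compatibly with the pullback description of the Nygaard filtration.
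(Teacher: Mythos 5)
Your approach is essentially the paper's: take the pullback square defining $\N^{\geq\star}\Prismhat^{(1)}_{R/A}\{j\}$, pass to $\gr^i$, identify $\gr^i_\N F^*\Hscr^{(1)}_\Prism(R/A)\{j\}$ as a crystal supported on $\WCart^\HT$ via relative de Rham comparison, and then express global sections through the Sen operator. Your observation for the top row --- that $\Theta+i$ restricted to $\F^\conj_{\leq i}$ factors through $\F^\conj_{\leq i-1}$ because it acts by zero on $\gr^\conj_i$ --- is a correct and pleasant way to package what the pullback square gives; the paper reads off the same answer more directly from the pullback, but the content is identical.

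There is, however, a real slip in the middle of your argument. You write that taking global sections ``identifies $\R\Gamma(\WCart,\gr^i_\N F^*\Hscr^{(1)}_\Prism(R/A)\{j\})$ with $\F^\conj_{\leq i}\DiffractedHodge_{R/A}$'' and that ``the canonical map to the lower-right corner becomes the endomorphism $\Theta+i$.'' This conflates the value at the Hodge--Tate point, $\eta^*\bigl(\gr^i_\N F^*\Hscr^{(1)}_\Prism(R/A)\{j\}\bigr)\we\F^\conj_{\leq i}\DiffractedHodge_{R/A}$, with the global sections $\R\Gamma(\WCart,-)$, which by the Sen description is the \emph{fiber} of $\Theta+i$ on $\F^\conj_{\leq i}\DiffractedHodge_{R/A}$ (exactly the middle row of the lemma). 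The phrase ``the map to the lower-right becomes the endomorphism $\Theta+i$'' does not describe the actual horizontal map in the pullback square, which is a map of global sections induced by the projection $\F^\conj_{\leq i}\to\gr^\conj_i$. You clearly know the correct statement --- you invoke exactly the right principle when treating the bottom row --- so this reads as a slip rather than a fundamental misunderstanding, but as written it muddles the derivation of the top-left vertex from the pullback. Also, ``the Sen operator lowers the conjugate filtration by one'' is not right: $\Theta$ preserves the filtration; it is $\Theta+i$ that vanishes on $\gr^\conj_i$ and hence factors. Fixing these two phrasings yields a correct proof that matches the paper's.
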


\begin{proof}
  The Hodge--Tate comparison provides that for every transversal prism $B$ the relative Frobenius map
  \[
    \gr^i_\N(\Prism_{R\tensorhat \overline{B}/ A\tensorhat B}\{j\} \otimes_{A\tensorhat B,\varphi} A\tensorhat B) \to \Prismbar_{R\tensorhat\overline{B}/A\tensorhat B}\{i\}
  \]
  factors through an equivalence to $\F^{\conj}_{\leq i}\Prismbar_{R\tensorhat
    \overline{B}/A\tensorhat B}\{i\}$. In particular, $\gr^i_\N F^*\Hscr^{(1)}_\Prism(R/A)$ is
    supported on the Hodge--Tate locus, and global sections can be expressed through the Sen
    operator, leading to the bottom two rows.

  For the top row (and the map between the top two rows), we first observe that the pullback diagram
    defining $\N^{\geq *} \Prismhat_{R/A}^{(1)}$ provides a pullback diagram
  \[
    \begin{tikzcd}
        \gr^i_\N \Prismhat_{R/A}^{(1)}\{j\} \dar\rar& \widehat{\L\Omega}^i_{R/A}[-i]\dar\\
        \R\Gamma(\WCart, \gr^i_\N F^*\Hscr^{(1)}_\Prism(R/A)\{j\}) \rar &
        \widehat{\L\Omega}^i_{R/A}[-i]\otimes \R\Gamma(\WCart^{\HT},\Oscr_{\WCart^{\HT}}).
    \end{tikzcd}
  \]
  Relative de Rham comparison on the level of prismatic crystals identifies $\gr^i_\N F^*\Hscr^{(1)}_\Prism(R/A)\{j\}$ with the prismatic crystal
  \[
    B \mapsto \F^{\conj}_{\leq i} \Prismbar_{R\tensorhat \overline{B}/ A\tensorhat B},
  \]
  and the bottom map is obtained from the canonical map
  \[
    \F^{\conj}_{\leq i} \Prismbar_{R\tensorhat \overline{B}/ A\tensorhat B} \to \gr^\conj_i
    \Prismbar_{R\tensorhat \overline{B}/A\tensorhat B} \we  \widehat{\L\Omega}^i_{R/A}[-i].
  \]
  The bottom two terms can now be expressed as fibers of the respective Sen operators $\Theta + i$, and the right vertical map is the canonical map
  \[
      \widehat{\L\Omega}^i_{R/A}[-i] \to \fib(\widehat{\L\Omega}^i_{R/A}[-i] \xto{\Theta+i}
      \widehat{\L\Omega}^i_{R/A}[-i]) \we \widehat{\L\Omega}^i_{R/A}[-i] \oplus \widehat{\L\Omega}^i_{R/A}[-i-1].
  \]
  This yields the claimed first row (and the map to the second row).
\end{proof}

\begin{corollary}
    \label{cor:nygaardproperties}
    For each $i\in\bZ$, $\gr^i_\N\Prismhat_{R/A}^{(1)}\{j\}$ is sifted-colimit preserving, has universal $p$-complete descent as a functor on bounded $\delta$-pairs, satisfies base change in the base
    $\delta$-ring, and is invariant under $p$-adically quasi-\'etale extensions, finitely generated completions,
    and localizations in the base $\delta$-ring (as in Proposition~\ref{prop:quasietalecrystal} and
    Corollary~\ref{cor:inv_loc_comp}). Moreover, if we consider $\N^{\geq
    \star}\Prismhat_{R/A}^{(1)}\{j\}$ as a functor to complete filtered objects then it is sifted-colimit preserving, has universal $p$-complete descent as a functor on bounded
    $\delta$-pairs, satisfies base change in the base $\delta$-ring, and is invariant under
    $p$-adically quasi-\'etale  extensions in the base $\delta$-ring.
\end{corollary}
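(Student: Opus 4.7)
The plan is to prove the listed properties first for the graded pieces $\gr^i_\N \Prismhat_{R/A}^{(1)}\{j\}$ via the fiber sequences of Lemma~\ref{lem:fibersequences}, and then to bootstrap them up to the full complete filtration by exploiting the fact that the graded-piece functor on complete filtered objects is conservative and preserves the relevant limits and colimits.

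For the graded pieces, the top row of Lemma~\ref{lem:fibersequences} realizes $\gr^i_\N \Prismhat_{R/A}^{(1)}\{j\}$ as the fiber of the Sen operator $\Theta + i$ acting on $\F^\conj_{\leq i}\DiffractedHodge_{R/A}$. Since taking fibers preserves sifted colimits, limits, base change, and invariance assertions, it suffices to establish the properties for each $\F^\conj_{\leq k}\DiffractedHodge_{R/A}$. Each of these is a finite iterated extension of shifts of the derived differential forms $\widehat{\L\Omega}^u_{R/A}$ for $u \leq k$, so it is enough to prove everything for $\widehat{\L\Omega}^u_{R/A}$ individually. Sifted-colimit preservation is Lemma~\ref{lem:kancotangent}. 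Faithfully flat descent in $R$ follows from Lemma~\ref{lem:descentdifferentialforms}. Base change in $A$ is a standard consequence of the base change formula $\L_{R\otimes_A A'/A'} \we \L_{R/A}\otimes_R (R\otimes_A A')$ for the cotangent complex, combined with base change for completed wedge powers. Invariance under $p$-adically quasi-\'etale extensions, finitely generated completions, and localizations of $A$ follows from the corresponding vanishing or invariance properties of $\L_{A'/A}$, exactly as in the proof of Corollary~\ref{cor:inv_loc_comp}.

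To upgrade to the full Nygaard filtration viewed as a functor into $\widehat{\F\D}(\D(A)_p^\wedge)$, one uses that the graded-piece functor $\gr^\bullet$ is conservative on complete filtrations and moreover preserves both all limits and all sifted colimits: limits in $\widehat{\F\D}$ are computed levelwise and hence pass to graded pieces, and sifted colimits there are given by the completion of the levelwise sifted colimit, which has the same graded pieces. Consequently, each of sifted-colimit preservation, faithfully flat descent in $R$, base change in $A$, and quasi-\'etale invariance in $A$ for $\N^{\geq\star}\Prismhat_{R/A}^{(1)}\{j\}$ reduces, via $\gr^\bullet$, to the corresponding statement for the graded pieces already established.

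The main obstacle I expect is not in any individual step---each is a transparent invocation of a result proved earlier in the paper---but in carefully bookkeeping the interaction of the complete filtered structure with the various functorialities, particularly with base change which involves varying both the filtration and the coefficient ring. The observation that a map of complete filtrations is an equivalence if and only if it induces equivalences on all graded pieces is what keeps this manageable.
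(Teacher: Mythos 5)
Your proposal is correct and follows essentially the same route as the paper: reduce the graded-piece case to differential forms via the fiber sequences of Lemma~\ref{lem:fibersequences} and the conjugate filtration, invoke the relevant lemmas on $\widehat{\L\Omega}^u_{R/A}$, and then upgrade to the full complete filtration using that the graded-piece functor is conservative on complete filtrations and commutes with the relevant (co)limits. The paper states this upgrade tersely as ``the second claim follows from the first''; your expansion of why this is so is exactly the intended justification.
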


\begin{proof}
    The second claim follows from the first. The first claim follows from the fact that
    $\gr^i_\N\Prismhat_{R/A}^{(1)}$ is by Construction \ref{constr_diff} and the first row of Lemma~\ref{lem:fibersequences} built by finite limits and colimits from the $p$-completed differentials forms $\widehat{\L\Omega}^k_{R/A}$.
   The forms have the necessary descent by Lemma~\ref{lem:descentdifferentialforms}, the
    necessary invariance under quasi-\'etale extensions, finitely generated completions, and
    localizations,
    and by definition preserve sifted colimits and satisfy base change.
\end{proof}

In the case of a prismatic $\delta$-pair $(A,R)$, i.e. one where $A$ admits the structure of a
prism, and $R$ the structure of an $\overline{A}$-algebra, $\N^{\geq
\star}\Prismhat_{R/A}^{(1)}\{i\}$ now potentially has two meanings: The one defined above, and the
Nygaard filtration from relative prismatic cohomology. Our goal is now to identify the two.
Temporarily, we will write $\N^{\geq \star}_{\rel}\Prismhat_{R/A}^{(1)}\{i\}$ for the Nygaard
filtration from relative prismatic cohomology.

\begin{definition}
  \label{def:nygaardcomparison}
  We define a comparison map $\N^{\geq\star}\Prismhat_{R/A}^{(1)}\{i\} \to \N^{\geq
    \star}_{\rel}\Prismhat_{R/A}^{(1)}\{i\}$ as in the proof of Proposition~\ref{prop:prismaticcomparison}:
    before completion, given a transversal prism $B$ with a map of prisms to $A$, we have the composite
    map of filtered spectra
  \[
    \N^{\geq \star}R\Gamma(\WCart,\Hscr_{\Prism}^{(1)}(R/A)\{i\}) \to
    R\Gamma(\WCart,\N^{\geq\star}F^*\Hscr_{\Prism}^{(1)}(R/A)\{i\}) \to \N^{\geq\star}_{\rel}
    \Prism_{R\tensorhat \overline{B} / A\tensorhat B}^{(1)}\{i\} \to \N^{\geq\star}_{\rel}
    \Prism_{R/A}^{(1)}\{i\},
  \]
  where the first map comes from the definition, the second is evaluation at $\rho_B\colon\SSpf B \to \WCart$,
    and the third uses the multiplication map. After completion, this gives rise to a map
  \[
    \N^{\geq \star} \Prismhat_{R/A}^{(1)}\{i\} \to \N^{\geq \star}_{\rel}\Prismhat_{R/A}^{(1)}\{i\},
  \]
    which one shows as in Proposition~\ref{prop:prismaticcomparison} does not depend on
    $B$ and which is natural in prismatic $\delta$-pairs $(A,R)$.
\end{definition}

\begin{proposition}[Relative prismatic Nygaard comparison]\label{prop:nygaardcomparison}
  The map from Definition \ref{def:nygaardcomparison} is an equivalence for all prismatic
    $\delta$-pairs $(A,R)$.
\end{proposition}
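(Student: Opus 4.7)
\emph{Plan.} Both sides are complete filtrations in the variable $\star$ by construction, so it suffices to verify that the comparison map induces an equivalence on each associated graded piece $\gr^i_\N$ (and, separately, on the underlying uncompleted object, which follows from Proposition~\ref{prop:prismaticcomparison} applied after base change along $\varphi_A$, using that for a prismatic $\delta$-pair the Frobenius of $A$ coming from the $\delta$-structure coincides with the Frobenius of the prism $(A,I)$). The plan is to reduce by descent to a setting where both filtrations admit explicit descriptions, and then match them piece-by-piece.

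First, both the left- and right-hand sides satisfy $p$-completely faithfully flat descent in $R$ over $\overline A$: for the left-hand side this is Corollary~\ref{cor:nygaardproperties}, and for the right-hand side it is the standard quasisyntomic descent of the Nygaard filtration established in~\cite{bms2,prisms}. This permits a reduction to the case where $R$ is quasiregular semiperfectoid over $\overline A$, where everything is concentrated in degree zero and the Nygaard filtration becomes a filtration by honest submodules.

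Second, in this concentrated situation, the plan is to compare the two filtrations via the divided Frobenius. On the right-hand side, the relative Nygaard filtration is characterized by the property that $x\in\N^{\geq i}\Prismhat^{(1),\rel}_{R/A}\{j\}$ if and only if the relative Frobenius $\varphi(x)$ lies in $I^i\cdot\Prism_{R/A}\{j\}$. On the left-hand side, the pullback square of Definition~\ref{Nygaard} expresses $\N^{\geq\star}$ in terms of the pointwise Nygaard filtration on $F^*\Hscr^{(1)}_\Prism(R/A)\{i\}$, which evaluated at a transversal prism $B$ receives the relative Nygaard filtration of $\Prism_{R\tensorhat\overline B / A\tensorhat B}^{(1)}\{i\}$, glued to the Hodge filtration on $\widehat{\dR}_{R/A}$ via the de Rham comparison of Lemma~\ref{lem:derham}. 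Since the Frobenius on $A$ used throughout matches the prism Frobenius, the divided Frobenius characterization transports across the comparison map; together with siftedness of the category of transversal prisms mapping to $A$ (as exploited in Proposition~\ref{prop:prismaticcomparison}), this identifies the two filtrations on the nose.

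The main obstacle I expect is carrying out the explicit match, in the quasiregular semiperfectoid setting, between the pullback-square definition of the left-hand side and the divided-Frobenius description of the right-hand side. This requires using Lemma~\ref{lem:derham} to identify the Hodge-filtered de Rham piece on both sides and then arguing that the remaining ``Frobenius-twisted prismatic with Nygaard filtration'' piece agrees; essentially an adaptation of the absolute argument of~\cite[Sec.~5.5]{bhatt-lurie-apc} to the relative setting. An alternative route would be to left Kan extend both sides from polynomial $\overline A$-algebras, where the Nygaard filtration admits an explicit description via prismatic envelopes of regular sequences, but the quasisyntomic descent reduction appears cleaner.
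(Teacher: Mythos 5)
Your plan takes a genuinely different route from the paper's. The paper reduces via sifted colimits to $R=\overline A[x_1,\ldots,x_n]$, then via base change in $A$ to the universal oriented prism and then to its perfection, at which point the left-hand side coincides with the absolute theory of Bhatt--Lurie and the comparison becomes the known absolute Nygaard comparison \cite[Thm.~5.6.2]{bhatt-lurie-apc}. You instead propose descent in $R$ to the relatively qrsp case and then an explicit identification of the two filtrations there.

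The step you flag as ``the main obstacle'' is in fact a genuine gap, not merely a technical chore. You want to use the divided-Frobenius characterization of $\N^{\geq\star}\Prismhat^{(1)}_{R/A}\{j\}$ on the left-hand side, but Definition~\ref{Nygaard} does not supply this: the paper establishes it only in Corollary~\ref{cor:rqrsp-nygaard}, whose proof \emph{uses} Proposition~\ref{prop:nygaardcomparison} (``In the prismatic case, we have a factorization $\gr^i_\N\Prismhat^{(1)}_{R/A'}\{j\}\we\F^\conj_{\leq i}\Prismbar_{R/A'}\{i\}$''). So invoking that description would be circular. Moreover, even comparing associated graded pieces directly is not automatic: Lemma~\ref{lem:fibersequences} expresses $\gr^i_\N$ of the left-hand side in terms of the $\delta$-conjugate filtration and the Sen operator, whereas $\gr^i_{\N_\rel}$ of the right-hand side is identified with $\F^{\Prism\conj}_{\leq i}\Prismbar_{R/A}\{i\}$, and Warning~\ref{warn:conjugate} emphasizes that these two conjugate filtrations disagree in general. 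Matching them requires an argument, and the paper's reduction to perfect prisms (where the $\delta$-conjugate filtration becomes the absolute one and the absolute comparison is already proved) is precisely what resolves this.

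One smaller point: your opening parenthetical, that the comparison on underlying objects ``follows from Proposition~\ref{prop:prismaticcomparison} applied after base change along $\varphi_A$,'' is dispensable (and not obviously correct, since $\R\Gamma(\WCart,-)$ need not commute with $-\tensorhat_{A,\varphi}A$). Since both sides are complete and constant in non-positive filtration degrees, checking associated graded pieces already determines the whole comparison; there is no separate ``underlying object'' to handle.
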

\begin{proof}
  As functors to complete filtered spectra, both sides preserve sifted colimits. For the left hand
    side this is part of Corollary~\ref{cor:nygaardproperties}, for the right hand side this is seen
    analogously using $\gr^i_{\N_{\rel}}\Prismhat_{R/A}\{j\} \we \F^{\Prism\conj}_{\leq i}
    \Prismbar_{R/A}\{j\}$ and the fact that the associated graded pieces of the $\Prism$-conjugate
    filtration are also built out of differential forms.
  This reduces us to the case $R = \overline{A}[x_1,\ldots,x_n]$. Also, both sides satisfy
    base change and descent in $A$ as functors to complete filtered spectra, so we may assume that $A$ is the free
    oriented
    prism $\bZ\{d\}[\delta(d)^{-1}]^\wedge_{p,d}$. For this $A$, the map $A\to A^\perf$ to its
    colimit perfection is $(p,d)$-completely faithfully flat, so by again appealing to base change,
    we may reduce to the case of perfect prisms $A$. In that case, we have a
    commutative diagram
  \[
    \begin{tikzcd}
      \N^{\geq\star} \Prismhat_{R/\bZ_p}^{(1)}\{j\}\ar[rd]\dar{\we}&\\
      \N^{\geq\star} \Prismhat_{R/A}^{(1)}\{j\}\rar& \N^{\geq\star}_{\rel}\Prismhat_{R/A}^{(1)}\{j\}.
    \end{tikzcd}
  \]
    The left vertical map is an equivalence, which follows from the fact that
    $\widehat{\L\Omega}^i_{R/\bZ_p}\to\widehat{\L\Omega}^i_{R/A}$ is an equivalence, since
    $(\L_{A/\Z_p})_p^\wedge \we 0$ as $A$ is perfect.
    The diagonal composite agrees with the completion of the composite
  \[
    \N^{\geq\star}\R\Gamma(\WCart, \Hscr_{\Prism}(R/\bZ_p)^{(1)}\{j\}) \to 
    R\Gamma(\WCart, \N^{\geq\star} F^*\Hscr_{\Prism}(R/\bZ_p)^{(1)}\{j\}) \to 
    \N^{\geq\star}_{\rel} \Prism_{R\tensorhat \overline{A}/ \bZ_p \tensorhat A}^{(1)}\{j\} \to \N^{\geq\star}_{\rel} \Prism_{R/A}\{j\},
  \]
  which is the map from \cite[Construction 5.6.1]{bhatt-lurie-apc}. According to
    \cite[Theorem 5.6.2]{bhatt-lurie-apc}, it is an equivalence.
\end{proof}

From now on we drop the notation $\N^{\geq\star}_{\rel}$.

\begin{corollary}
  \label{cor:rqrsp-nygaard}
    If $(A,R)$ is a bounded $\delta$-pair and $A\rightarrow R$
  is relatively quasiregular semiperfectoid (see Definition \ref{rel_quasiperf}), then $\N^{\geq \star}\Prismhat_{R/A}^{(1)}\{j\}$ is
    discrete with discrete associated graded, and the filtration on it agrees with the underived
    pullback of the Hodge--Tate filtration along $\varphi\colon \Prismhat_{R/A}^{(1)}\{j\}\to
    \Prism_{R/A}^{[-j]}\{j\}$.
\end{corollary}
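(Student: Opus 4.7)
The plan is to combine the fiber sequence of Lemma~\ref{lem:fibersequences} with the consequences of the relative qrsp hypothesis, and then to transfer discreteness from the prismatic case via $p$-completely faithfully flat descent. First, the relative qrsp assumption implies that $\L_{R/A}$ has $p$-complete Tor-amplitude in $[1,1]$, so that each $\widehat{\L\Omega}^i_{R/A}[-i]$ is a discrete, $p$-completely flat $R$-module. Applying Proposition~\ref{prop:conjugate}, the conjugate filtration $\F^{\conj}_{\leq i}\DiffractedHodge_{R/A}$ is then a finite extension of such discrete pieces, and hence itself discrete for every $i \geq 0$. Feeding this into the top row of Lemma~\ref{lem:fibersequences},
\[
  \gr^i_\N \Prismhat_{R/A}^{(1)}\{j\} \to \F^{\conj}_{\leq i}\DiffractedHodge_{R/A} \xrightarrow{\Theta + i} \F^{\conj}_{\leq i-1}\DiffractedHodge_{R/A},
\]
we deduce that $\gr^i_\N \Prismhat_{R/A}^{(1)}\{j\}$ is concentrated in $\pi_0$ and $\pi_{-1}$, and vanishes for $i < 0$.

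To upgrade this to honest discreteness and to identify the Nygaard filtration with the underived Frobenius pullback of the Hodge--Tate filtration, I would pass to the prismatic case using the cover from the proof of Theorem~\ref{thm:equivalence}. Namely, set $A' = A[z^{1/p^\infty}]_{(p,z-p)}^\wedge$ with $\delta(z) = 0$ so that $(A',(z-p))$ is a bounded prism, and let $R' = R \tensorhat_A \overline{A'}$. The map $(A,R) \to (A',R')$ is a $p$-completely faithfully flat cover of $\delta$-pairs, the target is a bounded prismatic $\delta$-pair, and $R'$ remains relatively qrsp over $\overline{A'}$, since adjoining compatible $p$-power roots of $p$ preserves both the Tor-amplitude condition on the cotangent complex and semiperfectness modulo $p$. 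For such prismatic qrsp pairs, Proposition~\ref{prop:nygaardcomparison} identifies our Nygaard filtration with the relative one from~\cite{prisms, bms2}, for which the discreteness of $\N^{\geq\star}\Prismhat_{R'/A'}^{(1)}\{j\}$ (with discrete associated graded) and the identification with the underived pullback of the Hodge--Tate filtration are classical.

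Finally, $\N^{\geq\star}\Prismhat_{-/-}^{(1)}\{j\}$ satisfies $p$-completely faithfully flat descent by Corollary~\ref{cor:nygaardproperties}, so discreteness and the filtration identification on each term of the \v{C}ech nerve of $(A,R) \to (A',R')$ descend back to $(A,R)$; the first-step amplitude bound is essential here, as it rules out any $\pi_n$ for $n \leq -2$ that the cosimplicial $\Tot$ might otherwise produce, reducing the question to the vanishing of a single $\pi_{-1}$-obstruction which is killed after faithfully flat cover. The main obstacle is verifying that every term $(A^\bullet, R^\bullet)$ of the \v{C}ech nerve remains a prismatic qrsp $\delta$-pair so that the classical results apply termwise, which requires careful bookkeeping of iterated tensor products in $\delta$-pairs against the prism structure on $A^\bullet$ and the qrsp property of $R^\bullet / \overline{A^\bullet}$.
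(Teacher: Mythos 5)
Your approach is genuinely different from the paper's, and the difference matters: you try to reduce to the prismatic case by \emph{descent along a faithfully flat cover}, whereas the paper reduces via an \emph{equivalence}, using structure that is already built into the definition of relatively qrsp. Recall that Definition~\ref{rel_quasiperf} requires a factorization $A\to A'\to R$ with $A\to A'$ $p$-completely relatively perfect, $(A',R)$ pre-prismatic with $A'\to R$ surjective, and $\L_{R/\overline{A'}}$ of $p$-complete Tor-amplitude $[1,1]$. A relatively perfect map has $\L_{A'/A}\we 0$ $p$-adically, so by Proposition~\ref{prop:quasietalecrystal} and Corollary~\ref{cor:nygaardproperties} the entire prismatic package (including the Nygaard filtration) is \emph{unchanged} by replacing $A$ with $A'$ — one passes directly to the prismatic $\delta$-pair $(A',R)$, observes $\gr^i_\N\Prismhat^{(1)}_{R/A'}\{j\}\we\F^{\conj}_{\leq i}\Prismbar_{R/A'}\{i\}$ is discrete (a finite extension of discrete $\widehat{\L\Omega}^u_{R/\overline{A'}}[-u]$'s, by the $[1,1]$ hypothesis), and notes it injects into $\Prismbar_{R/A'}\{i\}$. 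No cosimplicial diagram ever appears.

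The gap in your descent route is precisely where you wave at it: knowing $\gr^i_\N\Prismhat^{(1)}_{R/A}\{j\}$ sits in homotopical degrees $[-1,0]$ and that each term of the \v{C}ech nerve is discrete only tells you, via the totalization spectral sequence, that $\pi_{-1}$ is the degree-1 cohomology of a cosimplicial abelian group — a quantity that does not automatically vanish. You assert it ``is killed after faithfully flat cover,'' but the base change you would need to deploy (Corollary~\ref{cor:nygaardproperties}, base change in the base $\delta$-ring) along $A\to A'$ replaces $R$ by $R\tensorhat_A A'$, not by your $R'=R\tensorhat_A\overline{A'}$. The pair $(A',R\tensorhat_A A')$ is not prismatic (the map does not factor through $\overline{A'}$), so no prismatic comparison is available after that base change, and there is no clean ``tensor up to the cover'' formula for the \v{C}ech terms $\gr^i_\N\Prismhat^{(1)}_{R^n/A^n}\{j\}$ in terms of the $n=0$ term. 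So the $\pi_{-1}$-obstruction is not actually killed by the argument given; it is left hanging.

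Two smaller points. Your claim that $(A,R)$ being relatively qrsp directly forces $\L_{R/A}$ to have Tor-amplitude in $[1,1]$ is true, but it requires the transitivity triangle for $A\to\overline{A'}\to R$ together with the relative perfectness of $A\to A'$ — it is not literally part of the definition, and you state it as if it were. And your ``main obstacle'' sentence correctly flags the bookkeeping needed to see that every term of the \v{C}ech nerve remains a prismatic qrsp $\delta$-pair, but the more serious issue is upstream: even granting that bookkeeping, the descent mechanism alone is the wrong tool here because it returns only coconnectivity, not discreteness. The relatively perfect factorization provided by the definition is exactly what bypasses this, and is the move you want.
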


\begin{proof}
  Let $A\to A' \to R$ be a factorization of $A\to R$ such that $A\to A'$ is $p$-completely relatively perfect,
    $(A',R)$ is a prismatic $\delta$-pair, and $\L_{R/A'}$ has $p$-complete Tor-amplitude in
    $[1,1]$.
  In the diagram
  \[
    \begin{tikzcd}
         \Prismhat_{R/A}^{(1)}\{j\} \rar\dar & \Prism_{R/A}^{[-j]}\{j\}\dar\\
         \Prismhat_{R/A'}^{(1)}\{j\} \rar & \Prism_{R/A'}^{[-j]}\{j\}
    \end{tikzcd}
  \]
  the vertical maps are all isomorphisms by Corollary \ref{cor:nygaardproperties}, so it suffices to prove the claim in the case of a
    prismatic $\delta$-pair. For that, it suffices to check that the associated graded terms are
    discrete, and that the map on associated graded pieces is injective. In the prismatic case, we have a factorization
    \[
      \gr^i_\N \Prismhat_{R/A'}^{(1)}\{j\}\we \F^{\conj}_{\leq i}\Prismbar_{R/A'}\{i\} \to \Prismbar_{R/A'}\{i\},
    \]
    where the latter map is an injective map between discrete objects since all associated graded
    terms of the conjugate filtration $\widehat{\L\Omega}^i_{R/A}[-i]$ are discrete.
\end{proof}

\begin{remark}
    Since we can cover every $R/A$ with $R$ quasisyntomic over $A$ by relatively quasiregular
    semiperfectoids, Corollary \ref{cor:rqrsp-nygaard} yields another characterization of the Nygaard filtration through descent and left Kan extension
    by Lemma~\ref{lem:unfold_ring}.
\end{remark}

\begin{remark}
  If $R$ is smooth over $A$ (which never happens in the prismatic case), then the conjugate
    filtration on $\DiffractedHodge_{R/A}$ is the Postnikov filtration, and the fiber sequences from
    Lemma~\ref{lem:fibersequences} exhibit $\gr^i_\N\Prismhat^{(1)}_{R/A}\{j\}$ as the
    $(-i)$-connective cover of $\Prismbar_{R/A}\{i\}$. This identifies
    $\N^{\geq\star}\Prismhat^{(1)}_{R/A}\{j\}$ as the connective cover in the Beilinson
    $t$-structure (see~\cite[Sec.~5]{bms2}) of the Hodge--Tate
    filtered $\Prism_{R/A}\{j\}$, providing yet another characterization. The analogue of this fact
    curiously \emph{also} holds in the prismatic case if $R$ is smooth over $\overline{A}$. The
    relationship between these two facts can be explained as follows: if $R$ is a smooth
    $A$-algebra, then
    the $I$-adic filtration on $R$ has
    associated graded given by the smooth $\overline{A}$-algebra $\overline{R}[I]$, with
    $\overline{R} = R\otimes_A \overline{A}$ and $\overline{R}[I]$ the symmetric algebra
    on $R\otimes_A I$. Filtered prismatic cohomology, developed below in
    Section~\ref{sec:filtered}, provides a filtration on
    $\gr^i_\N\Prismhat^{(1)}_{R/A}$ with associated graded given by $\gr^i_\N
    \Prismhat^{(1)}_{\overline{R}[I]/A}$, from which the connectivity statement from relative
    prismatic cohomology implies the one in prismatic cohomology relative $\delta$-rings.
\end{remark}

\section{Relative syntomic cohomology}\label{sec:syntomic}

We write $\can\colon\N^{\geq i}\Prismhat_{R/A}^{(1)}\{i\}\rightarrow\Prismhat_{R/A}^{(1)}\{i\}$ for the
canonical map. We define the divided Frobenius maps following~\cite[Sec.~5.7]{bhatt-lurie-apc}.
Finally, we define relative syntomic cohomology.

\begin{definition}[The relative Frobenius map]
    The Frobenius endomorphism on the prismatic crystal $\Hscr_\Prism(R/A)$ factors as
    $$\Hscr_\Prism(R/A)\xrightarrow{\text{$\Oscr$-linear}}\Hscr_\Prism^{(1)}(R/A)\xrightarrow{\text{$\varphi$-semilinear}}
    F^*\Hscr_\Prism^{(1)}(R/A)\xrightarrow{\text{$\Oscr$-linear}}\Hscr_\Prism(R/A).$$
    By definition, $\N^{\geq i}F^*\Hscr_\Prism^{(1)}(R/A)\rightarrow\Hscr_\Prism(R/A)$ factors
    canonically through $\Iscr^{i}\otimes\Hscr_\Prism(R/A)$.
    Tensoring with $\Oscr_{\WCart}\{i\}$ and using the equivalence
    $F^*(\Oscr_{\WCart}\{i\})\we\Iscr^{-i}\{i\}$ (\cite[Remark 2.5.9 \& Remark 3.6.5]{bhatt-lurie-apc}) we obtain a map
    $$\N^{\geq i}F^*\Hscr^{(1)}_\Prism(R/A)\{i\}\rightarrow\Hscr_\Prism(R/A)\{i\},$$
    which is moreover compatible with the Nygaard filtration on the left-hand side (in weights at least $i$)
    and the $\Iscr$-adic filtration on the right-hand-side.
    Hence, after taking global sections, there are maps $$\N^{\geq
    i}\R\Gamma(\WCart,\Hscr_\Prism^{(1)}(R/A)\{i\})\rightarrow\R\Gamma(\WCart,\N^{\geq
    i}F^*\Hscr_\Prism^{(1)}(R/A)\{i\})\rightarrow\Prism_{R/A}\{i\}.$$
    Since the $\Iscr$-adic filtration on $\Hscr_\Prism(R/A)\{i\}$ is complete, inducing the complete
    Hodge--Tate filtration on $\Prism_{R/A}\{i\}$,
    this map factors through the Nygaard completion, inducing a filtered map
    $$\N^{\geq\star}\Prismhat^{(1)}_{R/A}\{i\}\rightarrow\Prism^{[\star-i]}_{R/A}\{i\}.$$
\end{definition}

\begin{definition}[The map $c_0$]
    \label{def:c0}
    The $\Oscr_{\WCart}$-linear map of crystals
    $\Hscr_\Prism(R/A)\{i\}\rightarrow\Hscr_\Prism^{(1)}(R/A)\{i\}$ induces a map
    $\Prism_{R/A}\{i\}\rightarrow\R\Gamma(\WCart,\Hscr_\Prism^{(1)}(R/A)\{i\}$ on global sections,
    which we may compose with the map to the Nygaard completion $\Prismhat_{R/A}^{(1)}\{i\}$ to
    obtain a map $$c_0\colon\Prism_{R/A}\{i\}\rightarrow\Prismhat_{R/A}^{(1)}\{i\}.$$
\end{definition}

\begin{construction}\label{filtration}
    For every filtration $N^{\geq\star}$ of objects of $\D(\bZ)$ indexed on $(\bN,\geq)$, we form a new filtration indexed on $(\bN, \geq )$ given as 
    \[
    N^{\geq p \cdot \star} \otimes p^\star \bZ,
    \]
    the (derived) tensor product of filtered objects with respect to Day convolution
    of the re-scaled filtration $N^{\geq p\cdot \star}$ and the $p$-adic
    filtration on $\bZ$. Concretely, this filtration has
    the form
    \[
    \ldots \to N^{\geq 2p} \oplus_{N^{\geq p}, p} N^{\geq p} \oplus_{N^{\geq 0}, p} N^{\geq 0} \to
    N^{\geq p} \oplus_{N^{\geq 0}, p} N^{\geq 0} \xto{(\can, p)} N^{\geq 0},
    \]
    where the pushout maps to the left are the canonical maps and the maps to the right are the multiplication by $p$ maps. 
    For example, if $N^{\geq\star}$ is the $(d)$-adic filtration
    on an oriented transversal prism, then this construction produces the $(d^p,p)$-adic filtration. 
\end{construction}

\begin{lemma}\label{lem:complete}
    If $N^{\geq p \star}$ is complete and every $N^{\geq pk}$ for $k \in\bN$ is $p$-complete, then
    the filtration $N^{\geq p\cdot\star}\otimes p^\star\bZ$ is complete too.
\end{lemma}
    \begin{proof}
   We first note that all the filtered pieces in $N^{\geq p\cdot\star}\otimes p^\star\bZ$ are finite colimits of the filtered pieces of $N^{\geq p \cdot \star}$, thus they are $p$-complete. But then also the inverse limit of $N^{\geq p\cdot\star}\otimes p^\star\bZ$ is $p$-complete. Therefore in order to show that it vanishes, it is enough to show that 
   \[
   0 = \underleftarrow{\lim} \left(N^{\geq p\cdot\star}\otimes p^\star\bZ\right) / p =  \left(N^{\geq p\cdot\star}\otimes p^\star\bF_p\right)
   \]
   But in the filtration $p^\star\bF_p$ all transition maps are zero. As a result, we see that the
        the Day convolution tensor product of $N^{\geq p\cdot\star}$ with $p^\star\bF_p$ is given by the filtration which in weight $k$ is given by the mod $p$ reduction of 
   \[
   N^{\geq pk} \oplus  N^{\geq p(k-1)}/N^{\geq pk} \oplus \ldots \oplus  N^{\geq 0}/N^{\geq p}
   \] 
   with transition maps only non-zero on the first summands (without the mod $p$-reduction this would be the tensor product of $N^{\geq p\cdot\star}$ with the filtration
   $\cdots \xto{0} \bZ \xto{0} \bZ$). In other words: the filtration $N^{\geq p\cdot\star}\otimes p^\star\bF$ is given by the direct
   sum of $N^{\geq p\cdot\star}/ p$ with a filtration all of whose transition maps are zero. This implies that the inverse limit is the inverse limit of 
   $N^{\geq p\cdot\star}/p$ which vanishes by assumption. 
\end{proof}

\begin{lemma}[The map $c$]
    There exists a natural
    $\varphi$-semilinear, $\bN$-filtered multiplicative map
    $$c\colon\Prism_{R/A}^{[\star]}\{\star\}\rightarrow(\N^{\geq p\star}\Prismhat_{R/A}^{(1)}\{\star\}\otimes
    p^\star\bZ).$$
\end{lemma}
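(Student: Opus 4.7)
The plan is to construct $c$ by refining the existing $\varphi_A$-semilinear map $c_0 \colon \Prism_{R/A}\{i\} \to \Prismhat_{R/A}^{(1)}\{i\}$ to a filtered map, using the $\delta$-ring identity $\varphi_A(d) = d^p + p\delta(d)$ for a local distinguished element $d$ to produce the Day-convolution structure on the target.

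First, I would reduce to a universal oriented prismatic case. By Corollary~\ref{cor:kanextended} and Corollary~\ref{cor:nygaardproperties}, both the source $\Prism_{R/A}^{[\star]}\{\star\}$ and the target preserve sifted colimits in the $\delta$-pair $(A,R)$ and satisfy base change in $A$, while Proposition~\ref{prop:nygaardcomparison} identifies Nygaard-completed prismatic cohomology with its Bhatt--Scholze avatar when $A$ is a prism. This allows reducing to pairs $(A,R)$ where $A$ is a finitely generated free $\delta$-ring equipped with a distinguished element $d$ and $R$ is a polynomial algebra over such.

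Second, in this universal case, I would exploit the explicit computation: on a local generator $d^n \cdot x \in \Prism^{[n]}_{R/A}\{n\}$,
\[
c_0(d^n x) \;=\; x \otimes_{A,\varphi_A} \varphi_A(d)^n \;=\; \sum_{k=0}^n \binom{n}{k}\, p^{n-k}\, \delta(d)^{n-k}\, (x \otimes d^{pk}).
\]
Here $x \otimes d^{pk}$ lies in $\N^{\geq pk}\Prismhat_{R/A}^{(1)}\{n\}$: applying the relative Frobenius $\varphi_{R/A}$ sends it to $\varphi(x) d^{pk} \in I^{pk}\Prismhat_{R/A}$, giving the required Nygaard filtration level. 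Hence each summand lies in $p^{n-k} \cdot \N^{\geq pk}\Prismhat_{R/A}^{(1)}\{n\}$, which is precisely a constituent of the level-$n$ piece of the Day convolution from Construction~\ref{filtration}. This gives $c$ in the universal oriented case.

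Third, I would transfer the construction to general bounded $\delta$-pairs by left Kan extension from finitely presented free $\delta$-pairs, using Corollary~\ref{cor:kanextended} and the completeness of the target filtration (Remark~\ref{rem:complete}). Multiplicativity and $\varphi_A$-semi-linearity of $c$ follow from the corresponding properties of $c_0$ together with the fact that Day convolution of multiplicative filtrations remains multiplicative. The main obstacle will be coherent homotopical bookkeeping: promoting the above formula to a natural transformation of filtered $\bE_\infty$-algebras, compatibly with Breuil--Kisin twists, and verifying that the left Kan extension genuinely preserves the Day-convolved multiplicative filtered structure rather than merely producing a pointwise filtered refinement.
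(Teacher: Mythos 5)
Your proposal takes the same route as the paper: reduce by Kan extension (Corollaries~\ref{cor:kanextended} and~\ref{cor:nygaardproperties}) and by descent to a prismatic situation, compare to Bhatt--Scholze via Proposition~\ref{prop:nygaardcomparison}, then use the $\delta$-identity $\varphi_A(d)=d^p+p\delta(d)$ to check that $c_0$ respects the Day-convolved filtration, and Kan extend back. Your explicit binomial expansion
\[
c_0(d^n x)=\sum_{k=0}^{n}\binom{n}{k}p^{n-k}\delta(d)^{n-k}\,(x\otimes d^{pk})
\]
is a correct unfolding of the one-line multiplicativity argument the paper gives, and you track the constituents of the Day convolution correctly.

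There is, however, a genuine gap: your reduction stops too early, and this is exactly the source of the ``coherent homotopical bookkeeping'' obstacle you flag at the end. The paper uses relative quasisyntomic descent in $R$ (Proposition~\ref{prop:cwdescent} together with Corollary~\ref{cor:nygaardproperties}) to reduce not merely to a prismatic pair with $R$ a polynomial $\overline{A}$-algebra, but to a pair where $\L_{R/\overline{A}}$ has $p$-complete Tor-amplitude in $[1,1]$. Precisely then all the filtered objects in play --- $\Prism^{[\star]}_{R/A}\{\star\}$, $\N^{\geq\star}\Prismhat^{(1)}_{R/A}\{\star\}$, and the Day convolution target --- are \emph{discrete} (cf.\ Corollary~\ref{cor:rqrsp-nygaard}), so a filtered multiplicative map is a $1$-categorical datum: one checks a ring map respects chains of subgroups, which is what your formula does. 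When $R$ is polynomial over $\overline{A}$ the cotangent complex has Tor-amplitude in $[0,0]$, the objects are not discrete, and the pointwise containment your formula establishes does not by itself yield a map in the $\infty$-category of complete filtered graded $\bE_\infty$-algebras. The finer reduction is not optional; it is what makes the coherence obstacle vanish.

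Two smaller imprecisions: ``base change in $A$'' is not the tool that reduces the construction of a natural transformation; it is quasisyntomic descent in $R$ along $R\to R[z^{1/p^\infty}]^\wedge_p/(z-p)$, followed by insensitivity to $p$-adically quasi-\'etale or relatively perfect extensions in $A$ (Lemma~\ref{lem:fetale1}, Proposition~\ref{prop:quasietalecrystal}), that lets you replace $A$ by the prism $A'=A[z^{1/p^\infty}]^\wedge_{(p,z-p)}$. Relatedly, a finitely generated free $\delta$-ring does not come equipped with a distinguished element; one must localize at $\delta(d)$ and $(p,d)$-complete to produce a prism, and the quasi-\'etale insensitivity is again what makes that harmless.
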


\begin{proof}
    Both sides are complete filtrations, the left-hand side by definition and the right-hand side
    by Lemma~\ref{lem:complete}. Moreover, they are left Kan
    extended as functors to complete filtrations from finitely presented free $\delta$-pairs by
    Corollaries~\ref{cor:kanextended} and~\ref{cor:nygaardproperties}.
    Thus to construct the map we can assume that $(A,R)$ is a finitely presented free
    $\delta$-pair. By relative quasisyntomic descent in $R$ (Proposition~\ref{prop:cwdescent} and Corollary~\ref{cor:nygaardproperties})
    we further reduce to the prismatic case where $(A,I)$ is a prism and $R$ is an animated
    commutative $\overline{A}$-algebra
    and assume additionally that $\L_{R/\overline{A}}$ has $p$-complete Tor-amplitude in $[1,1]$. In this case, all objects
    in question are discrete. We use the map
    $c_0\colon\Prism_{R/A}\{i\}\rightarrow\Prismhat^{(1)}_{R/A}\{i\}$
    from Definition~\ref{def:c0}. It is enough to prove the lemma when $i=0$ and when
    $A$ is orientable and transversal, with $I=(d)$, in which
    case $c_0\colon \Prism_{R/A}\rightarrow\Prismhat^{(1)}_{R/A}$ is a $\varphi_A$-semilinear map of
    commutative rings, which sends $d$ to $\varphi_A(d)=d^p+p\delta(d)$. As $d\in\N^{\geq
    1}\Prismhat_{R/A}^{(1)}$, for example by functoriality in $R$ and~\cite[Ex.~5.1.4]{prisms}, it follows that
    $\varphi_A(d)$ is in weight $\geq 1$ with respect to the Day convolution filtration
    $(\N^{\geq p\star}\Prismhat_{R/A}^{(1)}\otimes p^\star\bZ)$. By multiplicativity,
    $\varphi_A(d^n)$ is in weight $\geq n$. This shows that $c_0$ refines to a multiplicative filtered map as
    desired for such pairs $(A,R)$. Descending and left Kan extending produces the map $c$.
%
%
\end{proof}

\begin{remark}
    We expect that by forgetting the filtered structure on $c$ one recovers the map $c_0$, but we do
    not explore that here.
\end{remark}

\begin{definition}[The divided Frobenius map]
    We let $c\varphi$ denote the resulting
    composition
    $$\N^{\geq
    i}\Prismhat_{R/A}^{(1)}\{i\}\rightarrow\Prism_{R/A}\{i\}\xrightarrow{c}\Prismhat_{R/A}^{(1)}\{i\}.$$
\end{definition}

\begin{definition}[Relative syntomic complexes]
    The relative $p$-adic syntomic complexes $\bZ_p(i)(R/A)$ are defined as
    $$\bZ_p(i)(R/A)=\fib\left(\N^{\geq
    i}\Prismhat_{R/A}^{(1)}\{i\}\xrightarrow{\can-c\varphi}\Prismhat_{R/A}^{(1)}\{i\}\right).$$
\end{definition}

\begin{remark}\label{rem:syncomparison}
    When $A=\bZ_p$, we have that $\bZ_p(i)(R/\bZ_p)$ agrees with the syntomic complexes
    of~\cite{bms2,bhatt-lurie-apc} by Remark~\ref{rem:absolute_nygaard_comparison}.
\end{remark}

The following result establishes Corollary~\ref{cor:intro} from the introduction.

\begin{corollary}\label{cor:syntomicdescent}
    For each $i\in\bZ$,
    the relative $p$-adic syntomic complexes $\bZ_p(i)(R/A)$ satisfy descent for maps of pairs
    $(A,R)\rightarrow (B,S)$ such that $R\rightarrow S$ is
    a universal descent morphism (with no condition on $A \to B$).
\end{corollary}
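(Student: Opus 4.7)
The plan is to peel off the syntomic complex layer by layer until only derived differential forms remain, then invoke Lemma~\ref{lem:descentdifferentialforms}, which already allows arbitrary maps on the base $\delta$-ring.

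First, since $\bZ_p(i)(R/A)$ is defined as the fiber of a map between $\N^{\geq i}\Prismhat_{R/A}^{(1)}\{i\}$ and $\Prismhat_{R/A}^{(1)}\{i\}$, and totalizations commute with fibers, I would reduce to proving descent for these two terms separately. Both are determined by the complete Nygaard-filtered object $\N^{\geq\star}\Prismhat_{R/A}^{(1)}\{i\}$: the Frobenius-twisted object is the value in any weight $\star\leq 0$ by the constancy property in non-positive weights, and the source of $\can-c\varphi$ is the $\star=i$ piece. Completeness of the Nygaard filtration then reduces descent for either term to descent for each graded piece $\gr^k_\N\Prismhat_{R/A}^{(1)}\{i\}$, since totalization commutes with the inverse limit in the filtration direction.

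Next, by Lemma~\ref{lem:fibersequences}, each graded piece fits into fiber sequences expressed in terms of the filtered conjugate pieces $\F^\conj_{\leq k}\DiffractedHodge_{R/A}$ of the diffracted Hodge complex. The $\delta$-conjugate filtration is concentrated in non-negative weights, so $\F^\conj_{\leq k}\DiffractedHodge_{R/A}$ is built from finitely many associated graded pieces, each of which (by Proposition~\ref{prop:conjugate}) is the derived differential form $\widehat{\L\Omega}^u_{R/A}$ up to a shift and twist depending only on $u$. A short induction on $k$ therefore reduces the descent question to descent for $\widehat{\L\Omega}^u_{R/A}$ for $u\leq k$, which is exactly the content of Lemma~\ref{lem:descentdifferentialforms}; crucially, that lemma only requires $R\to S$ to be a $p$-completely effective descent morphism, with no hypothesis on $A\to B$, and universal descent for $R\to S$ supplies this for every term in the \v{C}ech nerve after any base change.

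The main thing to track, rather than a genuine obstacle, is compatibility of each reduction with the \v{C}ech totalization: the finite filtration from the conjugate filtration poses no issue, and the infinite (Nygaard) direction is safe because completeness makes the filtration limit into an inverse limit, which commutes with the \v{C}ech limit. All of this is formal once differential form descent is in hand, so there is no substantive new input beyond Lemmas~\ref{lem:descentdifferentialforms} and~\ref{lem:fibersequences}.
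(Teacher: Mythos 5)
Your proof is correct and takes essentially the same approach as the paper, which cites Corollary~\ref{cor:nygaardproperties} for the descent of $\N^{\geq\star}\Prismhat_{R/A}^{(1)}\{i\}$; the proof of that corollary in turn reduces to derived differential forms via Lemmas~\ref{lem:fibersequences} and~\ref{lem:descentdifferentialforms}, exactly the chain of reductions you spell out.
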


\begin{proof}
    The corollary follows immediately from Corollary~\ref{cor:nygaardproperties}.
\end{proof}

 We close this section by noting that one can compute syntomic cohomology also by a non-completed
 version of Frobenius-twisted prismatic cohomology. Specifically we set 
 $\Prism_{R/A}^{(1)}\{i\} = \Prism_{R/A}\{i\} \tensorhat_{A,\varphi} A$ and denote by $\tilde{c}$
 the  map $\Prism_{R/A}^{(1)}\{i\}\rightarrow\Prismhat_{R/A}^{(1)}\{i\}$ induced from $c$. 
 Let $c'\colon\Prism_{R/A}\{i\}\rightarrow\Prism_{R/A}^{(1)}$ be the map induced by base change.
 \begin{construction}
 We can define the Nygaard filtration on $\Prism_{R/A}^{(1)}\{i\}$ as the pullback
 \[
 \xymatrix{
 \N^{\geq \star}\Prism_{R/A}^{(1)}\{i\}  \ar[r] \ar[d] \ar@{}[dr]|{\lrcorner} & \N^{\geq \star}\Prismhat_{R/A}^{(1)}\{i\}  \ar[d]\\
 \Prism_{R/A}^{(1)}\{i\} \ar[r]^{\tilde c} &  \Prismhat_{R/A}^{(1)}\{i\}.
 }
 \]
The completion of $\N^{\geq\star}\Prism_{R/A}^{(1)}\{i\}$ recovers $\Prismhat_{R/A}^{(1)}\{i\}$ by
construction. Since the map $c\colon\Prism_{R/A}\{i\}\rightarrow\Prismhat_{R/A}\{i\}$ factors
     through $\tilde{c}$ by definition, there are decompleted maps $\varphi'\colon\N^{\geq
     i}\Prism_{R/A}^{(1)}\rightarrow\Prism_{R/A}$ and there are decompleted divided Frobenius maps
$c'\varphi'\colon\N^{\geq i}\Prism^{(1)}_{R/A}\{i\}\rightarrow\Prism^{(1)}_{R/A}\{i\}$ compatible with
the completed ones.
We can then define the equalizer
$$\bZ_p(i)(R/A)^{\mathrm{nc}}  = \fib\left(\N^{\geq
    i}\Prism_{R/A}^{(1)}\{i\}\xrightarrow{\can-c'\varphi'}\Prism_{R/A}^{(1)}\{i\}\right).$$
The clumsy notation is only used temporarily, since this construction turns out to be nothing new.
 \end{construction}

\begin{proposition}\label{prop_uncompleted}
    The canonical map
    \[
    \bZ_p(i)(R/A)^{\mathrm{nc}}  \to  \bZ_p(i)(R/A)
    \]
    is an equivalence for any $\delta$-pair where $A$ is bounded and $R$ is an animated commutative
    $A$-algebra.
\end{proposition}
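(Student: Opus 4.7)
The plan is to realize the map $\bZ_p(i)(R/A)^{\mathrm{nc}}\to\bZ_p(i)(R/A)$ as the map of horizontal fibers in a morphism of fiber sequences and to check that the induced endomorphism on vertical fibers is an equivalence. Abbreviate $A = \Prism_{R/A}^{(1)}\{i\}$, $\hat{A} = \Prismhat_{R/A}^{(1)}\{i\}$, $F^i A = \N^{\geq i}\Prism_{R/A}^{(1)}\{i\}$, $F^i\hat{A} = \N^{\geq i}\Prismhat_{R/A}^{(1)}\{i\}$, and $K = \fib(\tilde c\colon A\to\hat{A})$. The defining pullback square
\[
\xymatrix{
F^i A \ar[r]^{q}\ar[d]_{\can} \ar@{}[dr]|{\lrcorner} & F^i\hat{A}\ar[d]^{\can}\\
A\ar[r]^{\tilde c} & \hat{A}
}
\]
canonically identifies $\fib(q)\we K$.

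Next I form the diagram of horizontal fiber sequences
\[
\xymatrix{
\bZ_p(i)(R/A)^{\mathrm{nc}}\ar[r]\ar[d] & F^i A \ar[r]^{\can - c\varphi}\ar[d]_q & A\ar[d]^{\tilde c}\\
\bZ_p(i)(R/A)\ar[r] & F^i\hat{A}\ar[r]^{\can - c\varphi} & \hat{A}.
}
\]
The $\can$-square commutes by the pullback square above, and the $c\varphi$-square commutes by the stated compatibility of decompleted and completed divided Frobenius. More specifically, using the factorization $c = \tilde c\circ \iota$ with $\iota\colon\Prism_{R/A}\{i\}\to A$ the natural $\varphi_A$-semilinear map, the decompleted divided Frobenius is naturally given by the composition
\[
c\varphi\colon F^i A\xrightarrow{q} F^i\hat{A}\xrightarrow{\varphi_{/A}}\Prism_{R/A}\{i\}\xrightarrow{\iota}A,
\]
which visibly factors through $q$ and hence vanishes on $\fib(q)\we K$.

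Since horizontal fibers of a morphism of fiber sequences form a fiber sequence, one obtains
\[
\fib\bigl(\bZ_p(i)(R/A)^{\mathrm{nc}}\to\bZ_p(i)(R/A)\bigr) \we \fib\bigl((\can - c\varphi)|_K\colon K\to K\bigr).
\]
Under the identifications $\fib(q)\we K\we\fib(\tilde c)$, $\can$ induces $\id_K$ (a standard property of pullback squares) while $c\varphi$ induces $0$. Thus $(\can - c\varphi)|_K = \id_K$ is an equivalence, its fiber vanishes, and the proposition follows.

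The one substantive step is the explicit factorization of $c\varphi$ through $q$, which is built into the construction: the relative Frobenius $\varphi_{/A}$ is only defined on the Nygaard completion, so the decompleted divided Frobenius is forced to pass through $q$. Everything else is a formal manipulation of the pullback square defining $\N^{\geq i}\Prism_{R/A}^{(1)}\{i\}$ and of fiber sequences.
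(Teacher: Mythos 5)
Your proof is correct, and it takes a somewhat different route to the same conclusion than the paper. The paper writes each syntomic complex as the pullback of a cospan $A\to B\times B\leftarrow B$ and fits the two complexes into a $3\times3$ grid of such cospans, then observes that the upper-left and lower-right squares are cartesian (the first being the defining pullback for $\N^{\geq\star}\Prism^{(1)}_{R/A}\{i\}$), so all three rows have equivalent pullbacks. You instead assemble a morphism of horizontal fiber sequences and compute the fiber of the comparison map directly as the fiber of the induced map $\fib(q)\to\fib(\tilde c)$ on vertical fibers. Both arguments turn on exactly the same inputs: the pullback square defining the decompleted Nygaard filtration, and the fact that the decompleted divided Frobenius factors through $q$. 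Your explicit factorization $c\varphi = \iota\circ\varphi_{/A}\circ q$ is correct and is precisely what the paper packages as ``compatibility of the decompleted and completed divided Frobenius''; since $\varphi_{/A}$ is only defined on the Nygaard completion, the decompleted $c\varphi$ is forced to pass through $q$, hence vanishes on $\fib(q)$. The upshot is that your argument exhibits the fiber as $\fib(\id_K)\we 0$ in one step, which is more transparent than the paper's indirect deduction via pasting of cartesian squares, while the paper's $3\times3$ formulation is perhaps better suited to handling two comparison maps uniformly (the proof opens ``We treat the second map, the first works completely analogous,'' a vestige of an earlier formulation with two maps). No gaps.
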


\begin{proof}
    We can write any abstract equalizer $\fib(A \xto{f-g} B)$ as a pullback of the diagram
    \[
    A \xto{(f,g)} B \times B \xleftarrow{\Delta} B .
    \]
    Now consider the following commutative diagram
    \[
    \xymatrix{
        \N^{\geq i}\Prism_{R/A}^{(1)}\{i\} \ar@{}[dr]|{\lrcorner}\ar[rr]^-{(\can, c'\varphi')} \ar[d]&&
        \Prism_{R/A}^{(1)}\{i\} \times \Prism_{R/A}^{(1)}\{i\}\ar[d]  &&
        \Prism_{R/A}^{(1)}\{i\}  \ar[ll]_-\Delta\ar@{=}[d] \\
        \N^{\geq i}\Prismhat_{R/A}^{(1)}\{i\} \ar[rr]^-{(\can, c'\varphi)} \ar@{=}[d]&&
        \Prismhat_{R/A}^{(1)}\{i\}
        \times \Prism_{R/A}^{(1)}\{i\} \ar[d]\ar@{}[dr]|{\lrcorner} && \Prism_{R/A}^{(1)}\{i\}
        \ar[ll]_-{\Delta'}\ar[d] \\
        \N^{\geq i}\Prismhat_{R/A}^{(1)}\{i\} \ar[rr]^-{(\can, c\varphi)} && \Prismhat_{R/A}^{(1)}\{i\}
        \times \Prismhat_{R/A}^{(1)}\{i\}  && \Prismhat_{R/A}^{(1)}\{i\}.  \ar[ll]_-\Delta
    }
    \]
    The upper left square is cartesian as the vertical fibers are are both given by the fiber of
    the completion map $\Prism_{R/A}^{(1)}\{i\}\rightarrow\Prism_{R/A}^{(1)}\{i\}$. The same is
    true for the bottom right square.
    The pullback of the top and bottom horizontal row are $\bZ_p(i)(R/A)^{\mathrm{nc}}$ and
    $\bZ_p(i)(R/A)$, respectively. We claim that in the diagram the maps between the pullback of all rows are in
    fact equivalences. For the map from the pullback of the first row to the second this follows by
    the fact that the upper left square is a pullback and the right upper vertical map is an
    equivalence. For the map from the pullback of the middle row to the pullback of the last row
    this follows similarly since the left lower vertical map is an equivalence and the right lower
    square a pullback.
\end{proof}

\section{The prismatic package}\label{sec:package}

Recall from the introduction the definition of the $\infty$-category $\mathcal{C}_A$ which is the natural home of the prismatic package (we will review this soon). 
The main goal of this section is to conclude from the results of the previous sections that our objects 
\[
 \underline{\,\Prism\,}_{R/A} = 
    \left(\Prism_{R/A}^{[\star]}\{\star\},\N^{\geq\star}\Prismhat_{R/A}^{(1)}\{\star\},c,\varphi\right)
    \]
indeed form objects of this category and then use this to define and study syntomic cohomology.  

First we would like to rigorously define  $\mathcal{C}_A$. It will be the category of
$\bE_\infty$-algebras in a symmetric monoidal category $\Gscr_A$ so that we would like to define
$\Gscr_A$ first.

By a filtered, graded $A$-module we mean a functor 
\[
H\colon (\bZ, \geq ) \times \bZ \to \D(A),
\]
where $(\bZ,\geq)$ denotes the poset of integers and $\bZ$ denotes the discrete set of integers. 
We shall write $H^{\geq i}\{j\}$ for the evaluation at $i$ and $j$. The category of  filtered,
graded $A$-modules admits a symmetric monoidal structure given by Day convolution with respect to the
sum operation on both $\bZ$'s. We note that this tensor product does not involve any signs or so
shifts (as sometimes used on graded/filtered things).
Such a filtered graded $A$-module  is called complete if for fixed $j$ all the filtered objects
$H^{\geq i}\{j\}$ are complete, i.e. $\underleftarrow{\lim}_{i}H^{\geq i}\{j\} = 0$. The tensor
product descends to a tensor product on complete things.

There are two operations that we can perform on a filtered, graded object. The first is the \emph{shearing}, which changes the filtration degree by subtracting the grading degree:
\[
(\mathrm{sh}H)^{\geq i}\{j\} = H^{\geq i-j}\{j\}
\]
This is indeed a lax symmetric monoidal endofunctor of the $\infty$-category of filtered, graded objects since it is induced from the symmetric monoidal endofunctor of $(\bZ, \geq ) \times \bZ$ given on objects by sending $(i,j) \mapsto (i - j, j)$. This clearly is symmetric monoidal since the function is linear (the tensor product is induced by the additon on $\bZ$).

The second construction sends a filtered, graded object $N$ to a new filtration indexed on $(\bN, \geq ) \times \bZ$ given as 
\[
\mathrm{scp}(N) = (N^{\geq p \cdot \star}\{j\} \otimes p^\star \bZ) 
\]
explained in Construction \ref{filtration} above.
Again this is a lax symmetric monoidal operation since $p^\star \bZ$ is an algebra. 
The filtration $\mathrm{scp}(N)$ has the property that if we take the levelwise modulo $p$-reduction
we get the convolution of the scaled filtration
$
N^{\geq p \cdot \star }/p
$
with the filtration on $\bF_p$ given by 
$\ldots \to \bF_p \xto{0} \bF_p \xto{0} \bF_p$.
This filtration has a multiplicative map to the filtration
$\ldots \to 0 \to 0 \xto{0} \bF_p$
which is the Day convolution tensor unit over $\bF_p$. Thus we conclude that there is a canonical map of filtrations
\[
\mathrm{scp}(N) / p \to N^{\geq p \cdot \star } / p \ .
\]
Also note that if $N$ is complete and $p$-complete, then so is $\mathrm{scp}(N)$.

\begin{definition}
We define the $\infty$-category $\Gscr_A$ to be the $\infty$-category consisting of quadruples $(H,N,c,\varphi)$ where
$H$ and $N$ are $p$-complete and complete filtered graded $A$-modules, the filtration on $N$ is
    constant in non-positive degrees (equivalently it is $(\bN, \geq ) \times \bZ$ filtered), the
    map $c\colon H \to \mathrm{scp}(N) $ is a $\varphi_A$-semilinear map
of $(\bN, \geq ) \times \bZ$-filtered, graded $A$-modules and $\varphi$ is a map of graded filtered objects $N \to \mathrm{sh}(H)$ over $A$.

As an $\infty$-operad, the category $\Gscr_A$ is given as the pullback
\[
\xymatrix{
&&& \widehat{\Fun}((\bN, \geq ) \times \bZ,\D(A))^{\Delta^1 \amalg \Delta^1}  \ar[d]^{\mathrm{ev}} \\
\widehat{\Fun}\left((\bZ, \geq ) \times \bZ,\D(A)\right) \times  \widehat{\Fun}((\bN, \geq ) \times
    \bZ,\D(A))\ar[rrr]^-{(\pi_1, \mathrm{sh} \circ \pi_2, \pi_2, \varphi^*\circ \mathrm{scp} \circ
    \pi_1)}) &&& \widehat{\Fun}((\bN, \geq ) \times \bZ,\D(A))^4,
}
\]
where each vertex is viewed as an $\infty$-operad using the Day convolution symmetric monoidal
structures on $\Fun$ and the pointwise monoidal structures on $(-)^{\Delta^1 \amalg \Delta^1}$
and $(-)^4$,
where the notation $\widehat{\Fun}$ indicates that we consider complete and $p$-complete filtrations,
and where the map $\mathrm{ev}$ is evaluation at the four vertices of $\Delta^1 \amalg \Delta^1$.
     
The $\infty$-category $\mathcal{C}_A$ is defined as the category of $\bE_\infty$-algebras in $\Gscr_A$.
Note that since $\mathcal{C}_A$ is an $\infty$-category of $\bE_\infty$-algebras in a pullback, this $\infty$-category is also the
pullback of the respective categories of $\bE_\infty$-algebras,
which are given by $\infty$-categories of lax symmetric monoidal functors (resp. arrows in lax symmetric monoidal
functors).
\end{definition}

\begin{remark}\label{rem_module}\leavevmode
\begin{enumerate}
    \item[(i)]
    The category $\Gscr_A$ is a variant of the category of prismatic $F$-gauges of Bhatt--Lurie, except
        that it is not a category of quasi-coherent sheaves on a variant of $\WCart$ but rather the
        global sections of such sheaves. We view this category $\mathcal{C}_A$ simply as a tool to
        formalize the structure present on prismatic cohomology and streamline some of the coming
        proofs.
    \item[(ii)]
        For any $\delta$-ring $A$ we have an object $\underline{A} \in \Gscr_\bZ$ given by setting 
        \[
        N = H = \left( \cdots\to 0 \to 0 \to A \xto{\id} A \xto{\id} \cdots \right),
        \]
        which we view as being the trivial filtration on $A$ concentrated in graded weight $0$.
        In this case, $c=\varphi$ is the canonical map from the trivial filtration on $A$ to
        $A\otimes p^\star\bZ_p$ and $\varphi = \id$. (The reason for the surprising interchange is that we
        think of the $N$ as the Frobenius twist of $H$, but that identifies with $A$.) Then an
        $\underline{A}$-module internal to $\Gscr_\bZ$ is the same as an object in $\Gscr_A$. Thus we might as
        well work with the absolute category. 
    \item[(iii)] {The assignment $A\mapsto\Cscr_A$ defines an fpqc stack (in stable
        $\infty$-categories) on the opposite of the
        category of $\delta$-rings and $\delta$-ring maps. Thanks to the pullback definition of
        $\Cscr_A$ this boils down to checking that each vertex in the pullback is an fpqc stack
        which in turn follows from the fact that $A\mapsto\widehat{\Fun}((\bZ,\geq),\D(A))$ defines
        an fpqc stack as $\widehat{\Fun}((\bZ,\geq),\D(A))\we\Mod_{\bD_-}(\Gr\D(A))$ where
        $\bD_-$ is the graded ring $\bZ\oplus\bZ[-1](1)$. See for example~\cite[Thm.~3.2.14]{raksit}.}
\end{enumerate}
\end{remark}

\begin{proposition}
The $\infty$-categories $\mathcal{C}_A$ and $\mathcal{G}_A$ have all colimits and taking the associated graded of $H$ and $N$
preserves colimits. 
\end{proposition}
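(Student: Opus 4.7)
The plan is to combine the module description of Remark~\ref{rem_module}(ii) with standard presentability results to show $\Gscr_A$ is presentable, and then transfer to $\Cscr_A=\CAlg(\Gscr_A)$ using that $\gr^*$ is a symmetric monoidal colimit-preserving functor.

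First I would verify the result for $A=\bZ$, where $\varphi_{\bZ}=\id$ and no Frobenius twist appears. The $\infty$-category $\widehat{\Fun}((\bZ,\geq)\times\bZ,\D(\bZ))$ is presentable as a reflective localization of the presentable functor category $\Fun((\bZ,\geq)\times\bZ,\D(\bZ))$, and its colimits are computed by taking pointwise colimits and then completing. The shearing $\sh$ is an equivalence; the rescaling $N\mapsto N^{\geq p\cdot\star}$ is restriction along a monotone map of posets and hence preserves (pointwise) colimits; Day convolution with the fixed filtered object $p^\star\bZ$ preserves colimits in each variable by presentability of Day convolution. Thus each functor appearing in the pullback defining $\Gscr_{\bZ}$ is colimit-preserving, and $\Gscr_{\bZ}$ itself is a pullback of presentable $\infty$-categories along accessible functors. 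By standard presentability results, $\Gscr_{\bZ}$ is presentable, and colimits are computed by taking pointwise colimits of $H$ and $N$ and assembling the maps $c$ and $\varphi$ by functoriality of $\sh$ and $\mathrm{scp}$. For general $A$, I would invoke Remark~\ref{rem_module}(ii) to identify $\Gscr_A\we\Mod_{\underline{A}}(\Gscr_{\bZ})$, so $\Gscr_A$ inherits presentability, and the forgetful functor $\Gscr_A\to\Gscr_{\bZ}$ preserves colimits. Since the associated graded on the factor categories is computed pointwise, is insensitive to completion, and thus preserves colimits, it follows that $\gr^*$ preserves colimits on $\Gscr_A$.

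Finally, for $\Cscr_A=\CAlg(\Gscr_A)$: the Day convolution makes $\Gscr_{\bZ}$ into a presentably symmetric monoidal stable $\infty$-category, and then $\Gscr_A=\Mod_{\underline{A}}(\Gscr_{\bZ})$ inherits a presentably symmetric monoidal structure from Lurie's general framework for modules over an $\bE_\infty$-algebra in a presentably symmetric monoidal $\infty$-category. Hence $\Cscr_A$ is presentable, in particular has all small colimits. The associated graded functor $\gr^*\colon\Gscr_A\to\Fun(\bZ\times\bZ,\D(A))^{\times 2}$ is symmetric monoidal, since Day convolution on filtered objects descends to the pointwise tensor product on graded pieces, and we have just shown it is colimit-preserving. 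Being a symmetric monoidal left adjoint between presentably symmetric monoidal $\infty$-categories, it induces a left adjoint, hence colimit-preserving functor on $\CAlg$'s, which gives the statement on $\Cscr_A$. The main obstacle is simply keeping track of the pullback and module descriptions at the level of $\infty$-operads to ensure the symmetric monoidal structure is correctly inherited; once this bookkeeping is done, the remainder is standard.
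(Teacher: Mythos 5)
Your proposal is correct, and the core of the argument (all functors in the pullback defining $\Gscr_A$ preserve colimits, hence colimits are computed componentwise in the complete-filtration categories, where they are computed by taking pointwise colimits and completing, hence preserved on associated graded) is exactly the paper's.

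You deviate in two organizational ways, both of which work. First, for general $A$ you go through $\Gscr_\bZ$ and identify $\Gscr_A\we\Mod_{\underline{A}}(\Gscr_\bZ)$ via Remark~\ref{rem_module}(ii), while the paper just runs the pullback argument directly over $A$; your route is slightly more conceptual and saves re-running the argument for each $A$, but it is not shorter. Second, for $\Cscr_A$ the paper offers two terse options (pullback of $\CAlg$-categories, or operadic colimits), whereas you spell out a third, equivalent route: $\Gscr_A$ is presentably symmetric monoidal, so $\CAlg(\Gscr_A)$ is presentable, and $\gr^*$ is a colimit-preserving symmetric monoidal functor and therefore descends to a colimit-preserving functor on $\CAlg$'s. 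This last point (invoking symmetric monoidality of $\gr^*$ to handle coproducts of $\bE_\infty$-algebras, which are tensor products) is a detail the paper leaves implicit, and your proposal makes it explicit, which is a welcome addition. One small inaccuracy worth flagging: shearing $\mathrm{sh}$ is an equivalence only on $(\bZ,\geq)$-indexed filtered objects; in the pullback description it gets composed with restriction to $(\bN,\geq)$, so it is not an equivalence there but is still colimit-preserving (it is precomposition along a monotone map), which is all your argument actually uses.
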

\begin{proof}
All functors on the pullback description of $\mathcal{G}_A$ preserve colimits. Thus colimits in $\mathcal{G}_A$ are computed underlying in 
$\widehat{\Fun}\left((\bZ, \geq ) \times \bZ,\D(A)\right) \times  \widehat{\Fun}((\bN, \geq ) \times
    \bZ,\D(A))$. In the category of complete filtrations colimits are computed by taking the colimit
    in filtrations and then completing. These induce colimits on associated graded pieces and can in fact
    be determined there.

The case of algebras either follows the same way (it is also a pullback of the categories of
    algebras) or simply using that algebras have colimits if the underlying category has operadic
    colimits.
\end{proof}

\begin{definition}
    The syntomic complex
    $\bZ_p(i)(H,N, c, \varphi)$ for an object of our category $(H,N, c, \varphi)$ is defined as 
    $$
    \fib\left(N^{\geq
        i}\{i\}\xrightarrow{\can-c\varphi}N^{\geq 0}\{i\}\right) \in  \D(\bZ)^\wedge_p
    $$
    Similarly, we define $\bF_p(i)$ as the mod $p$ reduction $\bZ_p(i) / p$.
\end{definition}

\begin{lemma}
Fix an integer $i$. If $j$ satisfies $(p-1) j > pi$,
then for every object of $\mathcal{C}_A$
the morphism 
\[
N^{\geq
    j}\{i\}/p\xrightarrow{\can-c\varphi}N^{\geq j-i}\{i\}/p
\]
lifts canonically and functorially as
\[
N^{\geq
    j}\{i\}/p\xrightarrow{\simeq}N^{\geq
    j}\{i\}/p \xto{\can} \N^{\geq j-i}\{i\}/p .
\]
\end{lemma}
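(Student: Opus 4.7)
The plan is to show that modulo $p$, the composite $c\varphi$ factors through a strictly higher Nygaard level than the source, so that $\can - c\varphi$ can be written as $\can$ composed with a canonical self-equivalence of $N^{\geq j}\{i\}/p$. First I would unpack the filtered structure: $\varphi\colon N\to\mathrm{sh}(H)$ takes $N^{\geq j}\{i\}$ to $\mathrm{sh}(H)^{\geq j}\{i\} = H^{\geq j-i}\{i\}$ and $c\colon H\to \mathrm{scp}(N)$ takes $H^{\geq j-i}\{i\}$ into $\mathrm{scp}(N)^{\geq j-i}\{i\}$. The canonical filtered map $\mathrm{scp}(N)/p\to N^{\geq p\cdot\star}/p$ constructed earlier in this section then supplies, after reduction modulo $p$, a functorial factorization
\[
    c\varphi/p \colon N^{\geq j}\{i\}/p \xrightarrow{f} N^{\geq p(j-i)}\{i\}/p \xrightarrow{\can} N^{\geq j-i}\{i\}/p.
\]

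The numerical condition $(p-1)j>pi$ is equivalent to $p(j-i)>j$, so $p(j-i)\geq j+1$. I would use this to factor the right-hand canonical map through $N^{\geq j}\{i\}/p$, writing
\[
    c\varphi/p = \can_{j,j-i}\circ \beta, \quad \beta := \bigl(N^{\geq j}\{i\}/p \xrightarrow{f} N^{\geq p(j-i)}\{i\}/p \xrightarrow{\can} N^{\geq j}\{i\}/p\bigr).
\]
Consequently $\can - c\varphi = \can_{j,j-i}\circ(1-\beta)$, and everything reduces to showing that $1-\beta$ is a canonical (functorial) equivalence.

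Next, I would promote $\beta$ to a filtered endomorphism of the sub-filtered object $\{N^{\geq m}\{i\}/p\}_{m\geq j}$. For every $m\geq j$ the inequality $(p-1)m\geq (p-1)j>pi$ gives $p(m-i)>m$, so the same construction produces $\beta_m\colon N^{\geq m}\{i\}/p\to N^{\geq m}\{i\}/p$. These are compatible with the structure maps $N^{\geq m'}\{i\}/p \to N^{\geq m}\{i\}/p$ for $m'\geq m\geq j$ by functoriality of $c\varphi/p$ as a filtered map, so $\beta$ really is a filtered endomorphism of this restricted filtered object.

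Finally, since $\beta_m$ factors through $N^{\geq p(m-i)}\{i\}/p$ with $p(m-i)\geq m+1$, the induced map on $\mathrm{gr}^m$ is zero. Hence $1-\beta$ is the identity on associated graded. By completeness of the filtration on $N$ (preserved under reduction mod $p$), a filtered endomorphism that is the identity on associated graded is a filtered equivalence; in particular it induces an equivalence on $N^{\geq j}\{i\}/p$. This produces the claimed factorization
\[
    N^{\geq j}\{i\}/p \xrightarrow[\simeq]{1-\beta} N^{\geq j}\{i\}/p \xrightarrow{\can} N^{\geq j-i}\{i\}/p,
\]
and the whole construction is manifestly natural in $(H,N,c,\varphi)$. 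The main point requiring care is the upgrade of $\beta$ to a filtered endomorphism valid at all levels $m\geq j$, which is precisely what the hypothesis $(p-1)j>pi$ guarantees via the strict inequality $p(m-i)>m$ for all such $m$.
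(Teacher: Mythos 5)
Your proof is correct and follows essentially the same route as the paper's: factor $c\varphi$ through the mod-$p$ refinement $c\colon H^{\geq\star}/p \to N^{\geq p\star}/p$, use $(p-1)j > pi \iff p(j-i) > j$ to recognize $c\varphi = \can\circ\beta$ with $\beta$ an endomorphism raising filtration level, and conclude $1-\beta$ is an equivalence by completeness. Your write-up is in fact slightly more careful than the paper's terse version, since you spell out explicitly that $\beta$ promotes to a filtered endomorphism on all levels $m\geq j$ and verify the numerical inequality there, which is exactly what the paper's parenthetical ``(applying the construction for all $j$)'' is gesturing at.
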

\begin{proof}
We use that mod $p$ we have a morphism $c\colon H^{\geq \star}/p \to N^{\geq p\star}/p$ of filtrations. Thus in particular 
a map
\[
l\colon H^{\geq j-i} \xto{c} N^{\geq p (j-i)} \xto{\can} N^{\geq j} 
\]
since $p (j-i) > j$ by assumption. Thus the map $c \phi\colon N^{\geq j} \to  N^{\geq j - i}$ in
    fact lifts through a map $N^{\geq j} \to  N^{\geq j}$ and thus the difference $\can - c
    \phi$ lifts through the map $\id - l$. Now, we claim that $\id - l$ is in fact an equivalence.
    This follows since $l$ lifts one filtration bit further and both $\id$ and $l$ are
    filtered maps (applying the construction for all $j$). Since the filtration on $N^{\geq j}$ is
    complete we can test equivalences on graded pieces, where $\id$ is obviously an isomorphism and $l$ is
    zero, since it lifts further.
\end{proof}

\begin{proposition}\label{prop:syntomic_bound}
    Fix an integer $i$. If $j$ satisfies $(p-1) j \geq i$,
    then we have an equivalence
\[
\bF_p(i)(H,N, \varphi, c) \simeq \fib\left( N^{\geq i} \{i\} / N^{\geq j}\{i\} /p \xto{\can-c\varphi} N^{\geq 0} \{i\} / N^{\geq j}\{i\} /p  \right) .
\]
In other words: we may quotient out a high enough Nygaard filtered pieces to compute syntomic cohomology. 
\end{proposition}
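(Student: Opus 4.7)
The plan is to reduce the statement to the preceding lemma by setting up a map of cofiber sequences comparing the untruncated and the Nygaard-truncated syntomic fibers. Concretely, I would build the diagram (suppressing the grading $\{i\}$ and the mod $p$ reduction from the notation)
\[
\begin{tikzcd}
N^{\geq j}\ar[r,"\can"]\ar[d,"f"'] & N^{\geq i}\ar[r]\ar[d,"\can-c\varphi"] & N^{\geq i}/N^{\geq j}\ar[d,"\overline{\can-c\varphi}"]\\
N^{\geq j}\ar[r,"\can"] & N^{\geq 0}\ar[r] & N^{\geq 0}/N^{\geq j}
\end{tikzcd}
\]
of horizontal cofiber sequences. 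To make this well defined, I first have to check that $\can-c\varphi$ restricts to a self-map $f$ of $N^{\geq j}\{i\}/p$. This uses that $c$ mod $p$ factors through the map $H^{\geq a}/p\to N^{\geq pa}/p$ induced by the tensor unit $\mathbb{F}_p\to p^\star\mathbb{Z}/p$, so that $c\varphi$ sends $N^{\geq j}\{i\}/p$ into $N^{\geq p(j-i)}\{i\}/p$, and then that our hypothesis on $j$ forces $p(j-i)\geq j$, i.e.\ the image lands back in $N^{\geq j}\{i\}/p$.

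Once the diagram is in place, taking vertical fibers column by column yields a fiber sequence
\[
\fib(f)\ \longrightarrow\ \bF_p(i)(H,N,\varphi,c)\ \longrightarrow\ \fib(\overline{\can-c\varphi}),
\]
so the proposition becomes the assertion that $f$ is an equivalence on $N^{\geq j}\{i\}/p$. This is precisely what the previous lemma supplies: it exhibits $\can-c\varphi\colon N^{\geq j}\{i\}/p\to N^{\geq j-i}\{i\}/p$ as the composite of an equivalence $N^{\geq j}\{i\}/p\xrightarrow{\sim}N^{\geq j}\{i\}/p$ with $\can$, so in particular the self-map $f$ induced by restricting the target back to $N^{\geq j}\{i\}/p$ is an equivalence. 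The $\mathrm{id}-l$ argument in the lemma's proof works because $l\colon N^{\geq j}/p\to N^{\geq j}/p$ factors through a strictly higher Nygaard piece, and then the completeness of the filtration (mod $p$) shows that $\mathrm{id}-l$ is an equivalence via the geometric-series argument applied on associated graded pieces.

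The main obstacle I expect is bridging the gap between the hypothesis $(p-1)j\geq i$ stated in the proposition and the strict hypothesis $(p-1)j>pi$ of the lemma. I would address this by first proving the result under the stronger hypothesis (where the above argument goes through verbatim), and then arguing the weaker form by leveraging $p$-completeness and Nygaard-completeness: writing $\bF_p(i)$ as a limit over the tower of truncations $N^{\geq \star}/N^{\geq k}$ for $k\to\infty$, one reduces to checking stability of the truncated fiber in $k$ once $(p-1)k$ exceeds $i$, and one uses the filtered lift of $c\varphi$ from the lemma to see that successive layers in this tower contribute fibers that are killed by $\mathrm{id}-l$. The strict-versus-non-strict edge case is handled by noting that the geometric series argument in the lemma only needs $l$ to send the Nygaard filtration to strictly higher weight on associated graded pieces, which follows from grading considerations even in the boundary case.
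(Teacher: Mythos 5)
Your diagram and the reduction to the preceding lemma are exactly the paper's own argument: the paper takes the $2\times 2$ square with $N^{\geq j}$ and $N^{\geq i}$ on the left, $N^{\geq j-i}$ and $N^{\geq 0}$ on the right, and takes vertical cofibers. Your $3\times 2$ version (with the cofiber column written out and vertical fibers taken) is the same manoeuvre, and is actually a bit cleaner because it produces the target $N^{\geq 0}\{i\}/N^{\geq j}\{i\}/p$ exactly as stated in the proposition, whereas the paper's diagram as literally printed (with $N^{\geq j-i}$ in the top right) yields $N^{\geq 0}\{i\}/N^{\geq j-i}\{i\}/p$ — almost certainly a typo for $N^{\geq j}$, which is what makes the top horizontal map the equivalence from the lemma and matches the stated conclusion.

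The real problem with your proposal is the last paragraph, where you try to bridge the gap between the proposition's displayed hypothesis $(p-1)j\geq i$ and the lemma's $(p-1)j>pi$. This gap cannot be bridged, because it is a typo in the paper: the proposition should read $(p-1)j>pi$ (equivalently $p(j-i)>j$). Under the weaker condition $(p-1)j\geq i$, the right-hand side of the proposition is not even well-formed: for $\overline{\can-c\varphi}$ to descend to a map $N^{\geq i}\{i\}/N^{\geq j}\{i\}/p\to N^{\geq 0}\{i\}/N^{\geq j}\{i\}/p$, you need $c\varphi$ to carry $N^{\geq j}\{i\}/p$ into $N^{\geq j}\{i\}/p$, and since mod $p$ the map $c$ refines to $H^{\geq a}/p\to N^{\geq pa}/p$, this forces $p(j-i)\geq j$, i.e.\ $(p-1)j\geq pi$. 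For instance with $i=1$ and $j=1$ one has $(p-1)j\geq i$ for every prime, but $p(j-i)=0<1=j$, so $c\varphi$ does not descend to the quotient. Your proposed tower-stability argument therefore has nothing to prove in the weaker regime — the statement it would produce isn't defined — and the appeal to ``grading considerations'' for the boundary case $p(j-i)=j$ does not supply the needed vanishing of $l$ on $\gr^j$, since $c\varphi$ preserves the Breuil--Kisin grading $\{i\}$. Once you replace the hypothesis with $(p-1)j>pi$, your proof is correct and complete: $f$ is well-defined, coincides with the equivalence $\id-l$ from the lemma, its fiber vanishes, and the long exact sequence of fibers does the rest.
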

\begin{proof}
Consider the diagram
\[
\xymatrix{
N^{\geq
    j}\{i\}/p\ar[rr]^\simeq \ar[d]^\can && N^{\geq j-i}\{i\}/p \ar[d]^\can \\
N^{\geq
    i}\{i\}/p \ar[rr]^{\can-c\varphi} && N^{\geq 0}\{i\}/p
}
\]
which exist by the previous lemma. We take the vertical cofiber.
    Now the claim is that this doesn't change the horizontal fibers, which is obvious since the upper horizontal fiber is zero.
\end{proof}

\begin{corollary}\label{cor_sifted}
The functors $\bZ_p(i), \bF_p(i)\colon \Gscr_A \to \D(\bZ)^\wedge_p$ preserve all  colimits and all limits. 
\end{corollary}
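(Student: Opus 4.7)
The plan is to treat limits and colimits separately. For limits, the claim is essentially formal: both $\bZ_p(i)$ and $\bF_p(i)=\bZ_p(i)/p$ are defined as fibers of maps between pointwise evaluations of the underlying graded filtered $A$-module $N$ at the bidegrees $(i,i)$ and $(0,i)$. Since limits in $\Gscr_A$ are computed pointwise (limits of complete $p$-complete filtrations remain complete and $p$-complete), evaluation preserves limits; fibers and $-/p$ also preserve limits in the stable $\infty$-category $\D(\bZ)^\wedge_p$, so preservation of limits follows immediately.

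For colimits the subtlety is that colimits in $\Gscr_A$ are computed from the underlying pointwise colimit by applying both filtration-completion and $p$-completion, neither of which commutes with pointwise evaluation in general. The plan is to first establish colimit-preservation for $\bF_p(i)$ and then deduce the statement for $\bZ_p(i)$. By Proposition~\ref{prop:syntomic_bound}, for any fixed $i$ and any $j$ with $(p-1)j\geq i$, one has an identification
\[
\bF_p(i)(H,N,c,\varphi)\;\simeq\;\fib\!\left(N^{\geq i}\{i\}/N^{\geq j}\{i\}/p\xrightarrow{\can-c\varphi}N^{\geq 0}\{i\}/N^{\geq j}\{i\}/p\right).
\]
The crucial observation is that the bounded quotient $N^{\geq a}\{i\}/N^{\geq j}\{i\}$ for $a<j$ is insensitive to filtration-completion: the cofiber sequence $N^{\geq j}\to N^{\geq a}\to N^{\geq a}/N^{\geq j}$ shows that this bounded quotient is already complete, so filtration-completion leaves it unchanged. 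The $p$-completion step is likewise inverted upon applying $-/p$. Hence the functors $N\mapsto N^{\geq a}\{i\}/N^{\geq j}\{i\}/p$ commute with colimits in $\Gscr_A$, and since fibers preserve colimits in stable $\infty$-categories, $\bF_p(i)$ preserves colimits.

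For $\bZ_p(i)$ one deduces colimit-preservation in $\D(\bZ)^\wedge_p$ from that for $\bF_p(i)$: since $\D(\bZ)^\wedge_p$ is $p$-complete, a map is an equivalence iff it is so after $-\otimes\bF_p$, and since the $p$-completion intrinsic to colimits in $\D(\bZ)^\wedge_p$ is trivialised by $-/p$, the mod-$p$ reduction of the assembly map $\colim_I\bZ_p(i)(X_i)\to\bZ_p(i)(\colim_I X_i)$ agrees with the corresponding map for $\bF_p(i)$, already known to be an equivalence.

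The main obstacle I anticipate is carefully tracking the interaction between the two completion operations that enter the definition of colimits in $\Gscr_A$ and the evaluation functors used in the definition of syntomic cohomology; Proposition~\ref{prop:syntomic_bound} is tailor-made to reduce the general (infinite-filtration) situation to a bounded quotient where the interaction becomes trivial, after which the argument is essentially formal.
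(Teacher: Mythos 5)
Your proof is correct and follows the same route as the paper: reduce $\bF_p(i)$ via Proposition~\ref{prop:syntomic_bound} to a bounded quotient depending on only finitely many graded pieces (hence insensitive to the completions entering colimits in $\Gscr_A$), deduce $\bZ_p(i)$ by reducing mod $p$ in $\D(\bZ)^\wedge_p$, and note that limits are formal. Your spelled-out justification of why the bounded quotient is filtration-complete is a useful expansion of the paper's terser phrase about ``finitely many graded pieces.''
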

\begin{proof}
    For a fixed $i$, fix a $j$ as in the statement of Proposition~\ref{prop:syntomic_bound}.
    Then, $\bF_p(i)$ is a fiber of two functors which evidently preserve all
    colimits since they only depend on finitely many graded pieces. Thus, $\bF_p(i)$
    preserves all finite colimits. To check the statement for $\bZ_p(i)$ we can reduce mod $p$ since
    we land in $p$-complete complexes and thus to the statement about $\bF_p(i)$. The preservation
    of limits is clear.
\end{proof}

\begin{theorem}\label{thm:kan_extended_package}
The object 
\[
 \underline{\,\Prism\,}_{R/A} = 
    \left(\Prism_{R/A}^{[\star]}\{\star\},\N^{\geq\star}\Prismhat_{R/A}^{(1)}\{\star\},c,\varphi\right)
    \]
 is an object of $\Cscr_A$. In fact it refines to a functor 
 $
 \Pairs^\delta \to \mathcal{C}_\bZ \ ,
 $
 which is a algebra over the functor $(A,R) \mapsto \underline{A}$ (see Remark \ref{rem_module}). As such it is left Kan extended from free $\delta$-pairs. \end{theorem}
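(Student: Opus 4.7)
The plan is to unpack the definition of $\Cscr_A$ piece-by-piece, check that our four-tuple meets each requirement, and then invoke the sifted-colimit preservation results of the preceding sections to deduce the left Kan extension claim.

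First I would verify that for each bounded $\delta$-pair $(A,R)$ the quadruple $\underline{\,\Prism\,}_{R/A}$ satisfies the axioms of $\Cscr_A$. The complete filtered-graded $\bE_\infty$-$A$-algebra structure on $H = \Prism_{R/A}^{[\star]}\{\star\}$ comes from the construction as global sections of the prismatic crystal: multiplicativity descends from the $\bE_\infty$-algebra structure of $\Hscr_\Prism(R/A)$ in $\D(\WCart)$ together with the graded-multiplicative structure on the Breuil--Kisin twists and Hodge--Tate ideal $\Iscr$, while completeness of the Hodge--Tate tower is automatic. Analogously, $N = \N^{\geq\star}\Prismhat_{R/A}^{(1)}\{\star\}$ is a complete filtered-graded $\bE_\infty$-$A$-algebra by Definition \ref{Nygaard}; the condition that $N$ be constant in non-positive filtration weights follows because the Nygaard filtration on $\Hscr_\Prism^{(1)}(R/A)\{i\}$ used in the pullback defining the Nygaard filtration is constant in non-positive weights (combined with the analogous fact for the Hodge filtration on $\widehat{\dR}_{R/A}$). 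The $\varphi_A$-semilinear, multiplicative, $\bN$-filtered map $c\colon H \to \mathrm{scp}(N)$ is the one constructed in Section \ref{sec:syntomic}, and the $A$-linear divided Frobenius $\varphi\colon \N^{\geq\star}\Prismhat_{R/A}^{(1)}\{i\}\to\Prism_{R/A}^{[\star-i]}\{i\}$ is exactly a map $N \to \mathrm{sh}(H)$ after recognizing that $(\mathrm{sh}H)^{\geq\star}\{i\} = H^{\geq\star-i}\{i\}$.

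Next, functoriality in $(A,R) \in \Pairs^\delta$ is clear from the construction of each ingredient (the crystal and Hodge--Tate tower in Section \ref{sec:crystal}, the Nygaard filtration in Section \ref{sec:nygaard}, and $c$ and $\varphi$ in Section \ref{sec:syntomic}), giving the functor $\underline{\,\Prism\,}_{(-)/(-)}\colon \Pairs^\delta \to \Cscr_\bZ$. To promote this to an algebra over $(A,R)\mapsto\underline{A}$, I would first observe that plugging in $R=A$ essentially recovers $\underline{A}$ (using Example \ref{exs:wc}(c) together with the fact that the Nygaard filtration of the relative de Rham cohomology of $A$ over $A$ is trivial), and then that the unit $(A,A)\to(A,R)$ provides the required algebra structure naturally in $(A,R)$.

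For the left Kan extension claim, I would invoke the sifted-colimit preservation results we already have: Corollary \ref{cor:kanextended}(b) for the Hodge--Tate tower and Corollary \ref{cor:nygaardproperties} for the Nygaard-filtered Frobenius twist (both as functors to complete filtered-graded $p$-complete complexes). Since $\Cscr_\bZ$ is defined as a pullback in which every vertex has sifted colimits computed in the underlying complete filtered-graded category (this was recorded in the proposition preceding this theorem), and since the structure maps $c$ and $\varphi$ are natural transformations, the functor $\underline{\,\Prism\,}_{(-)/(-)}$ preserves sifted colimits into $\Cscr_\bZ$. Combined with the observation in the proof of Corollary \ref{cor:kanextended} that finitely presented free $\delta$-pairs are compact projective generators of $\Pairs^\delta$, this sifted-colimit preservation is exactly equivalent to being left Kan extended from such free pairs.

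The main obstacle is bookkeeping around the scaling-convolution and shearing operations: one has to confirm that $c$ really lands in the Day-convolution filtration $\mathrm{scp}(N)$ with the correct multiplicative structure (which was precisely the content of the lemma constructing $c$) and that the filtered relative Frobenius lands in $\mathrm{sh}(H)$ on the nose at the level of filtered graded algebras. Once these identifications are pinned down, essentially no new work is required beyond assembling the previously established functoriality and sifted-colimit results.
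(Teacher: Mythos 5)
Your proposal follows the same overall strategy as the paper's proof: identify the $\bE_\infty$-algebra structure on $H$ and $N$ from multiplicativity of the prismatic crystal and the Nygaard pullback, observe that $c$ and $\varphi$ are multiplicative filtered maps of the appropriate shape, and deduce the left Kan extension claim from Corollaries~\ref{cor:kanextended} and~\ref{cor:nygaardproperties} together with the description of colimits in $\Cscr_\bZ$. That much matches.

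However, your proposed mechanism for the $\underline{A}$-algebra structure is incorrect. You claim that $\underline{\,\Prism\,}_{A/A}$ "essentially recovers" $\underline{A}$ and then want to induce the algebra structure from the unit map $(A,A)\to(A,R)$. But by Example~\ref{exs:wc}(c), $\Prism_{A/A}$ is the Hodge--Tate completion of $A\tensorhat\Prism_{\bZ_p}$, and already at the Hodge--Tate level $\Prismbar_{A/A}\we A\oplus A[-1]$; this is not $A$ (the extra $A[-1]$ records the Sen operator on the absolute Hodge--Tate stack), so $\underline{\,\Prism\,}_{A/A}\neq\underline{A}$ in $\Gscr_\bZ$. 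The $\underline{A}$-algebra structure should instead come directly from the $A$-linearity of the whole construction: $\Prism_{R/A}^{[\star]}\{\star\}$ and $\N^{\geq\star}\Prismhat_{R/A}^{(1)}\{\star\}$ are $\bE_\infty$-$A$-algebras by definition, $c$ is $\varphi_A$-semilinear and $\varphi$ is $A$-linear, and by Remark~\ref{rem_module}(ii) an $\underline{A}$-algebra in $\Gscr_\bZ$ is precisely the same datum as an object of $\Cscr_A$ --- which is the structure the preceding sections already construct. There is no need (and no way) to route through $(A,A)$.
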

\begin{proof}
The object $\Prism_{R/A}^{[\star]}\{\star\}$ is constructed in Section \ref{sec:crystal} as a
    functor to complete filtered objects. It is shown in Corollary \ref{cor:kanextended} that it is
    Kan extended as such (note that this Kan extension only involves filtered colimits, so we do not
    have to worry about the distinction whether we Kan extend with or without algebra structure).
    The fact that it is a graded, filtered $\bE_\infty$-algebra follows from the combination of two
    facts. First the fact that the crystal $\Hscr^{[\star]}_\Prism(R/A)\{\star\}$ is a graded
    filtered algebra, which follows from the fact that the filtration is multiplicative (since it is
    $I$-adic on the crystal) and that we tensor with a tensor algebra over an invertible, which is
    also multiplicative. Secondly the functor taking global sections of prismatic crystals is a lax
    symmetric monoidal functor.

    The object $\N^{\geq\star}\Prismhat_{R/A}^{(1)}\{\star\}$ is produced in Definition \ref{Nygaard}
    and it is multiplicative as it is a pullback of multipliciative filtrations.
    This term is Kan extended by Corollary
    \ref{cor:nygaardproperties}.

    The maps $\varphi$ and $c$ are produced at the beginning of Section~\ref{sec:syntomic} and are clearly multiplicative
    (the filtration by definition).
Finally we note that the description of colimits in $\mathcal{C}_\bZ$ then implies the Kan extension statement and we are done. 
\end{proof}

Now, we can prove Corollary~\ref{cor:intro_kan}.

\begin{corollary}\label{cor:syntomickanextension}
    For each $i\in\bZ$,
    syntomic cohomology as a functor 
    \[
        \bZ_p(i)(-/-)\colon\Pairs^\delta \to \D(\bZ)^\wedge_p
    \]
    is left Kan extended from finitely presented free $\delta$-pairs.
\end{corollary}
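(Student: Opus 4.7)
The plan is to deduce this as a formal consequence of Theorem~\ref{thm:kan_extended_package} and Corollary~\ref{cor_sifted}. By construction, the functor $\bZ_p(i)(-/-)\colon\Pairs^\delta\to\D(\bZ)^\wedge_p$ factors as the composition
\[
\Pairs^\delta \xrightarrow{\underline{\,\Prism\,}_{-/-}} \Cscr_\bZ \longrightarrow \Gscr_\bZ \xrightarrow{\bZ_p(i)} \D(\bZ)^\wedge_p,
\]
where the middle arrow forgets the $\bE_\infty$-algebra structure and the rightmost arrow is the equalizer construction defining the syntomic complex of a prismatic package.

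First, Theorem~\ref{thm:kan_extended_package} states that the prismatic package $\underline{\,\Prism\,}_{-/-}$ is left Kan extended from finitely presented free $\delta$-pairs as a functor into $\Cscr_\bZ$; in particular it preserves sifted colimits. Next, the forgetful functor $\Cscr_\bZ\to\Gscr_\bZ$ preserves sifted colimits, since sifted colimits in categories of $\bE_\infty$-algebras in a presentably symmetric monoidal $\infty$-category are computed on underlying objects. Finally, $\bZ_p(i)\colon\Gscr_\bZ\to\D(\bZ)^\wedge_p$ preserves all colimits by Corollary~\ref{cor_sifted}.

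Composing these three facts shows that $\bZ_p(i)(-/-)$ preserves sifted colimits. Since the finitely presented free $\delta$-pairs form a set of compact projective generators of $\Pairs^\delta$ (so that every $\delta$-pair is canonically a sifted colimit of finitely presented free ones, as used in the proof of Corollary~\ref{cor:kanextended}), any sifted-colimit-preserving functor out of $\Pairs^\delta$ is automatically left Kan extended from its restriction to finitely presented free $\delta$-pairs, yielding the corollary. There is no genuine obstacle: all the content has been packaged upstream into Theorem~\ref{thm:kan_extended_package} and Corollary~\ref{cor_sifted}, and the present corollary is a purely formal consequence of composing Kan-extended and colimit-preserving functors.
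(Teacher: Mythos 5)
Your proof is correct and follows essentially the same route as the paper: Theorem~\ref{thm:kan_extended_package} supplies left Kan extendedness of $\underline{\,\Prism\,}_{-/-}$ into $\Cscr_\bZ$, and Corollary~\ref{cor_sifted} supplies colimit preservation of $\bZ_p(i)$ on $\Gscr_\bZ$. The one place you are more precise than the paper's proof is in making explicit the intermediate forgetful step $\Cscr_\bZ\to\Gscr_\bZ$ (the paper only mentions $\Cscr_A\to\Cscr_\bZ$, but since $\bZ_p(i)$ in Corollary~\ref{cor_sifted} is defined on $\Gscr_A$ rather than $\Cscr_A$, one does need to pass to underlying objects in $\Gscr_\bZ$ and observe that this preserves sifted colimits); that clarification is welcome but does not change the structure of the argument.
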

\begin{proof}
We consider the prismatic package 
\[
 \underline{\,\Prism\,}_{R/A} = 
    \left(\Prism_{R/A}^{[\star]}\{\star\},\N^{\geq\star}\Prismhat_{R/A}^{(1)}\{\star\},c,\varphi\right)  \in \mathcal{C}_A
\]
    as an object of $\mathcal{C}_\bZ$ by forgetting the $A$-algebra structure.
    As such a functor, it is left Kan extended by Theorem~\ref{thm:kan_extended_package},
    so the result follows by Corollary \ref{cor_sifted} and the fact that syntomic cohomology on
    $\Cscr_A$ factors through the forgetful functor $\Cscr_A\rightarrow\Cscr_{\bZ}$.
\end{proof}

\begin{corollary}
    For $i<0$, $\bZ_p(i)(R/A)\we 0$.
\end{corollary}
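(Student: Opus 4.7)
The plan is to deduce this directly from Proposition~\ref{prop:syntomic_bound} applied to the prismatic package $\underline{\,\Prism\,}_{R/A}$, which is an object of $\Cscr_A$ by Theorem~\ref{thm:kan_extended_package}. Since $\bZ_p(i)(R/A)$ lives in $\D(\bZ)_p^\wedge$, it suffices to check that $\bF_p(i)(R/A) \we \bZ_p(i)(R/A)/p$ vanishes.

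For $i < 0$, I take $j = 0$ in Proposition~\ref{prop:syntomic_bound}; the hypothesis $(p-1)j \geq i$ is trivially satisfied since $0 > i$. The proposition then identifies $\bF_p(i)(R/A)$ with the fiber of the map
\[
\N^{\geq i}\Prismhat^{(1)}_{R/A}\{i\}/\N^{\geq 0}\Prismhat^{(1)}_{R/A}\{i\}/p \xrightarrow{\can - c\varphi} \N^{\geq 0}\Prismhat^{(1)}_{R/A}\{i\}/\N^{\geq 0}\Prismhat^{(1)}_{R/A}\{i\}/p.
\]
The target is manifestly zero, and the source vanishes because by definition of $\Cscr_A$ the Nygaard filtration is constant in non-positive filtration weights, giving $\N^{\geq i}\Prismhat^{(1)}_{R/A}\{i\} = \N^{\geq 0}\Prismhat^{(1)}_{R/A}\{i\}$ whenever $i \leq 0$. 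Hence $\bF_p(i)(R/A) = 0$ and so $\bZ_p(i)(R/A) = 0$.

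There is essentially no obstacle left to overcome: the hard work of showing that $c\varphi$ raises the Nygaard filtration sufficiently to allow truncation to a finite filtration range has already been carried out abstractly in Proposition~\ref{prop:syntomic_bound}, and the case $i < 0$ is simply the degenerate situation where the lower endpoint $\N^{\geq i}$ of the relevant quotient already coincides with the upper endpoint $\N^{\geq 0}$, so the fiber collapses to zero on both sides.
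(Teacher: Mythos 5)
Your proof is correct, and it takes a genuinely different route from the paper's. The paper's argument reduces via Corollary~\ref{cor:syntomickanextension} and Corollary~\ref{cor:syntomicdescent} to the bounded oriented prismatic case, identifies the construction with Bhatt--Scholze's via Proposition~\ref{prop:nygaardcomparison}, and then observes that for $i<0$ the divided Frobenius $c\varphi$ is divisible by $d^{-i}$, hence topologically nilpotent, so $\can - c\varphi$ is an equivalence. You instead work entirely inside the abstract category $\Cscr_A$: you invoke Proposition~\ref{prop:syntomic_bound} with $j=0$ (the hypotheses are met, and in fact the stronger inequality $(p-1)\cdot 0 > pi$ needed in the supporting lemma holds exactly when $i<0$), and you use the defining constancy of the Nygaard filtration in non-positive weights to see that both terms $N^{\geq i}\{i\}/N^{\geq 0}\{i\}$ and $N^{\geq 0}\{i\}/N^{\geq 0}\{i\}$ vanish. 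Your approach avoids the Kan extension and descent machinery entirely and shows that the vanishing is a formal consequence of the axioms of the prismatic package; the paper's proof is more concrete and carries the additional information that $c\varphi$ is divisible by the Hodge--Tate ideal in negative weight, which explains mechanically why $\can - c\varphi$ is invertible. Both are valid; yours is arguably the more economical given that Proposition~\ref{prop:syntomic_bound} and Theorem~\ref{thm:kan_extended_package} are already available.
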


\begin{proof}
    By Corollary~\ref{cor:syntomickanextension}, it is enough to prove the corollary when $(A,R)$ is
    a finitely presented free $\delta$-pair and then we can use Corollary~\ref{cor:syntomicdescent}
    to reduce to the bounded prismatic case and even to the bounded oriented prismatic case, so assume that $(A,(d))$ is an
    oriented bounded prism. Using Proposition~\ref{prop:nygaardcomparison}, we know our
    construction agrees with the Nygaard filtration on relative prismatic cohomology introduced
    in~\cite{prisms}. For $i<0$, the Frobenius map
    $c\varphi\colon\Prismhat_{R/A}^{(1)}\{i\}\rightarrow\Prismhat_{R/A}^{(1)}\{i\}$
    is divisible by $d^{-i}$. It follows that, for
    $i<0$, $c\varphi$ is topologically nilpotent, so that $\can-c\varphi$ is an equivalence.
\end{proof}

\section{Computation using descent}\label{sec:descent}

Recall that $\Prism_{R/A}$ has descent in $A$ and $R$ (Proposition \ref{prop:cwdescent}) and is
invariant under relatively perfect maps and completion in $A$ (Proposition \ref{prop:quasietalecrystal} and Corollary~\ref{cor:kanextended}).
Knowing this, we can give a descent style description of $\Prism_{R/A}$ using a relative version of the quasisyntomic site, that we will describe now.
In fact one could also use this to give an independent definition of the prismatic package,
specifically the Nygaard filtration. This is not only a theoretical tool, but will be applied for practical calculations. 
After this discussion, we prove the uniqueness claim from Theorem~\ref{thm:intro}.

\begin{definition}[Relatively quasisyntomic rings]\label{def:relqsyn}
    A bounded $\delta$-pair $(A,R)$ is relatively quasisyntomic if $R$ is derived $p$-complete and $\L_{R/A}$ has $p$-complete
    Tor-amplitude in $[0,1]$. If $A$ is a bounded $\delta$-ring, let $\QSynscr_A$ be the category of
    relatively quasisyntomic $\delta$-pairs $(A,R)$.
\end{definition}

The notation
$\QSynscr_A$ diverges from that of~\cite{ammn}, which would require also that $A$ and $R$ be
quasisyntomic in the sense of~\cite{bms2}. However, $\QSynscr_{\bZ_p}$ does agree with the
category by the same name in~\cite{ammn} and also with $\QSyn$ from~\cite{bms2}.

\begin{definition}[Relatively quasiregular semiperfectoids]\label{rel_quasiperf}
    Say that a bounded $\delta$-pair $(A,R)$ is relatively quasiregular semiperfectoid (or
    relatively qrsp) if $R$ is $p$-complete, there
    is a factorization $A\rightarrow A'\rightarrow R$ where $A\rightarrow A'$ is a $p$-completely relatively perfect
    map of $\delta$-rings, the $\delta$-pair $(A',R)$ is pre-prismatic as exhibited by an ideal $I \subseteq A'$,
    the map $A' \to R$ is surjective, and
    $\L_{R/\overline{A'}}$ has $p$-complete Tor-amplitude in $[1,1]$.
    Call such a factorization a pre-prismatic factorization.
    Note that a relatively quasiregular semiperfectoid is relatively quasisyntomic in the sense of
    Definition~\ref{def:relqsyn}.
    Let $\RQRSPerfdscr \subseteq \Pairs^\delta$ be the full subcategory consisting of the
    relatively quasiregular semiperfectoid $\delta$-pairs. 
\end{definition}

Note that we can replace $A'$ by $A'[\delta(I)^{-1}]_{(p,I)}^\wedge$, so that we can assume in
fact that $A'$ is a prism, with $\Prism_{R/A} = \Prism_{R/A'}$.

\begin{example}
    Let $A$ be a bounded $\delta$-ring.
    If $r_0,r_1,r_2,...$ forms a regular sequence in $A$ and if $A \to R = A/(r_0,r_1,r_2,...)$ is
    pre-prismatic, then it is relatively quasiregular semiperfectoid with $A' = A$.
\end{example}

\begin{example}
Assume that $R$ is a quasiregular semiperfectoid ring in the sense of \cite[Definition 4.20]{bms2}. Then we claim that 
$\bZ_p \to R$ is relatively quasiregular semiperfectoid in the sense of Definition~\ref{rel_quasiperf}.
    To see this we pick a perfectoid ring $R'$ such that $R = R' / I$ and $L_{R/R'}$ has Tor
    amplitude in $[1,1]$, see \cite[Lemma 4.25]{bms2}.  Then $A' = A_\mathrm{inf}(R)$ exhibits that
    $\bZ_p \to R$ is relatively qrsp.
Assume conversely that $\bZ_p \to R$ is relatively quasiregular semiperfectoid in the sense of
    Definition \ref{rel_quasiperf}. Then, the prism in the definition has to be perfect so that
    $A'/I$ is perfectoid and $R$ a quotient thereof, which implies that $R$ is qrsp in the sense of
    \cite[Definition 4.20]{bms2}.
\end{example}

\begin{example}
    Assume that $(A,I)$ is a bounded prism. Recall from~\cite[Definition 15.1]{prisms} that a
   $A/I$-algebra $S$
    is called large if it is flat, the cotangent complex has Tor-amplitude in $[1,1]$ and  there is a surjection
\[
A/I[X_i^{1/p^\infty}]^\wedge_p \to S
\]
for some set $i \in I$. In this case we can consider $A ' = A[X_i^{1/p^\infty}]^\wedge_{(p,I)}$
    which is relatively perfect over $A$. Moreover we have an induced surjective map $A' \to S$, for
    which $L_{S/A'}$ has Tor-amplitude in $[1,1]$.
\end{example}

The significance of relatively qrsp pairs is that prismatic cohomology can be understood very explicitly. 
\begin{theorem}[Bhatt--Scholze]\label{thm_BS}
Let $A \to R$ be relatively qrsp. Then the prismatic cohomologies $\Prism_{R/A}^{[\star]}\{\star\}$ and $\N^{\geq\star}\Prismhat_{R/A}^{(1)}\{\star\}$
including all the filtered pieces and all the graded pieces are concentrated in degree $0$ and the
    relative Frobenius map $\N^{\geq\star}\Prismhat_{R/A}^{(1)}\{\star\} \to
    \Prism_{R/A}^{[\star]}\{\star\}$ is injective.

If, more specifically, $R = A'/(I,r_1,r_2,...)$ for some prism $(A',I)$ and a regular sequence
    $r_1,r_2,... \in A'/I$, then we have 
\[
\Prism_{R / A} \simeq A'\left\{\frac{r_1}  I, \frac{r_2} I, ... \right\}^\wedge_{(p,I)} \qquad \text{and} \qquad
\Prismhat^{(1)}_{R / A} \simeq A'\left\{\frac{\varphi(r_1)}  {\varphi(I)},\frac{\varphi(r_2)}
    {\varphi(I)}, ... \right\}^\wedge_{(p,N)},
\]
    where $A'\left\{\frac{r_1}  I, \frac{r_2} I, ... \right\}^\wedge_{(p,I)}$ is the
    $(p,I)$-completed prismatic envelope construction of~\cite[Prop.~3.13]{prisms} and 
    $A'\left\{\frac{\varphi(r_1)}  {\varphi(I)},\frac{\varphi(r_2)}
    {\varphi(I)}, ... \right\}^\wedge_{(p,N)}$ is the closure in $A'\left\{\frac{r_1}  I, \frac{r_2} I, ...
    \right\}^\wedge_{(p,I)}$ of the sub-$\delta$-ring generated by
    $\frac{\varphi(r_1)}{\varphi(I)},\frac{\varphi(r_2)}{\varphi(I)},\ldots$ with respect to the
    $(p,I)$-adic topology. Here the subscript $(p,N)$ indicates that we think of this as the completion with respective to the the $p$-adic and Nygaard filtration, described below.

    The Hodge-Tate filtration on $\Prism_{R / A}$ is given by the $I$-adic filtration and the
    Nygaard filtration on $\Prismhat^{(1)}_{R / A}$ is given by the pullback of the
    $I$-adic filtration on $  A'\left\{\frac{r_1}  I, \frac{r_2} I, ... \right\}^\wedge_{(p,I)}$ along the inclusion
    \[
        A'\left\{\frac{\varphi(r_1)}  {\varphi(I)},\frac{\varphi(r_2)} {\varphi(I)}, ... \right\}^\wedge_{(p,N)}   \subseteq 
        A'\left\{\frac{r_1}  I, \frac{r_2} I, ... \right\}^\wedge_{(p,I)}.
    \]
    (In particular, $(p,N)$-convergent sequences are exactly the ones that converge $(p,I)$-adically in
    the ambient space of the inclusion $\Prismhat^{(1)}_{R/A} \subseteq \Prism_{R/A}$.)
    The map $c\colon \Prism_{R / A} \to \Prismhat^{(1)}_{R / A}$ is given by applying $\varphi$ and the
    relative Frobenius map $\Prismhat^{(1)}_{R / A} \to \Prism_{R / A}$ is given by the inclusion. The
    Breuil--Kisin twisted versions have the analogous form with $A'$ replaced by $A'\{i\}$.
\end{theorem}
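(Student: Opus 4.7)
The plan is to reduce the theorem to the prismatic setting of Bhatt--Scholze~\cite{prisms} and then cite their work, combined with the Nygaard comparison of Proposition~\ref{prop:nygaardcomparison} and the explicit qrsp description in Corollary~\ref{cor:rqrsp-nygaard}.

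First, I would use the factorization $A \to A' \to R$ from Definition~\ref{rel_quasiperf}. After replacing $A'$ by $A'[\delta(I)^{-1}]_{(p,I)}^\wedge$, the pair $(A', I)$ becomes a bounded prism. Since $A \to A'$ is $p$-completely relatively perfect, Proposition~\ref{prop:quasietalecrystal} and Corollary~\ref{cor:nygaardproperties} give equivalences
\[
\Prism_{R/A}^{[\star]}\{\star\} \simeq \Prism_{R/A'}^{[\star]}\{\star\}, \qquad \N^{\geq\star}\Prismhat_{R/A}^{(1)}\{\star\} \simeq \N^{\geq\star}\Prismhat_{R/A'}^{(1)}\{\star\},
\]
compatibly with $c$ and $\varphi$. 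So we may assume from the outset that $A = A'$ is a bounded prism and $R = \overline{A}/(r_1, r_2, \ldots)$ is a quotient by a Koszul-regular sequence.

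In this prismatic case, Proposition~\ref{prop:prismaticcomparison} identifies $\Prism_{R/A}$ with Bhatt--Scholze's derived relative prismatic cohomology. The $(p,I)$-completed prismatic envelope $A\{r_j/I\}^\wedge_{(p,I)}$ of~\cite[Prop.~3.13]{prisms} maps canonically to $\Prism_{R/A}$ by its universal property, and Hodge--Tate comparison identifies the two and forces concentration in degree $0$; the Hodge--Tate filtration matches the $I$-adic filtration on the envelope. Tensoring with the invertible line bundle $A\{i\}$ yields the Breuil--Kisin twisted version.

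For the Nygaard side, I would apply Proposition~\ref{prop:nygaardcomparison} to identify our Nygaard-completed Frobenius twist with that of Bhatt--Scholze. Corollary~\ref{cor:rqrsp-nygaard} already establishes concentration in degree $0$, injectivity of the relative Frobenius, and the fact that the Nygaard filtration on $\Prismhat_{R/A}^{(1)}\{i\}$ is the underived pullback of the Hodge--Tate filtration along the embedding $\varphi\colon \Prismhat_{R/A}^{(1)}\{i\} \hookrightarrow \Prism_{R/A}^{[-i]}\{i\}$. Inside $\Prism_{R/A}$, the image of $c$ is computed on the generators $r_j/I$, which are sent to $\varphi(r_j)/\varphi(I)$; since $c$ is a $(p,I)$-adically continuous $\varphi_A$-semilinear $\delta$-ring map, its image is precisely the claimed $(p,I)$-adic closure of the sub-$\delta$-algebra generated by $\varphi(r_j)/\varphi(I)$, and the intersection description of the Nygaard filtration is then immediate. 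The main technical input is the Hodge--Tate comparison for the envelope, but this is already proved in~\cite{prisms}, so the real work here is bookkeeping the reduction steps.
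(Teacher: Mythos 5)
Your reduction to the prismatic case via $\underline{\,\Prism\,}_{R/A}\simeq\underline{\,\Prism\,}_{R/A'}$ (Proposition~\ref{prop:quasietalecrystal} together with Corollary~\ref{cor:nygaardproperties}) followed by the identification with Bhatt--Scholze's relative theory (Propositions~\ref{prop:prismaticcomparison} and~\ref{prop:nygaardcomparison}) is exactly the strategy of the paper's proof, which simply reads ``Using that $\underline{\,\Prism\,}_{R/A}\simeq\underline{\,\Prism\,}_{R/A'}$ this reduces to the work of Bhatt--Scholze, Sections 12 and 15.'' So the overall route is the same; you have just spelled out the intermediate comparison propositions that the paper leaves implicit.

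The one place where your exposition drifts is the paragraph on the Nygaard side. You argue that, viewed inside $\Prism_{R/A}$ via the injective relative Frobenius, $\Prismhat^{(1)}_{R/A}$ is the image of $c$, and then compute that image as the $(p,I)$-adic closure of the $\delta$-subring on $\varphi(r_j)/\varphi(I)$. But $c\colon\Prism_{R/A}\to\Prismhat^{(1)}_{R/A}$ is not surjective in general; a priori its image is only contained in (not equal to) $\Prismhat^{(1)}_{R/A}$, so identifying the image of $c$ does not by itself identify $\Prismhat^{(1)}_{R/A}$. The clean thing to do here is what the paper does: cite \cite[Secs.~12, 15]{prisms} directly for the explicit description of the Nygaard-completed Frobenius twist and its filtration. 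Corollary~\ref{cor:rqrsp-nygaard} does give you discreteness, strictness, and the pullback description of the filtration along $\varphi$, so it is legitimate input; just replace the ``image of $c$'' argument with the direct citation and the proof is complete.
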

\begin{proof}
    Using that $\underline{\,\Prism\,}_{R/A} \simeq \underline{\,\Prism\,}_{R/A'}$ this reduces to the work
    of Bhatt--Scholze \cite{prisms} where it is shown in Sections 12 and 15. 
\end{proof}

\begin{definition}[Quasismooth maps]
    Say that a map of $\delta$-rings $A \to A'$ is quasismooth if it is $p$-completely flat and $\L_{A'/A}$
    has $p$-complete Tor-amplitude in $[0,0]$. Such a map is a quasismooth cover if it is
    additionally $p$-completely faithfully flat. 
\end{definition}

\begin{remark}\label{remark_qs}
Note that if we have a factorization $A \to A' \to R$ with $A \to A'$ quasismooth and 
    $R\in\QSynscr_A$
 then the transitivity triangle 
 \[
\L_{A' / A} \otimes_{A'} R \to \L_{R / A} \to \L_{R/A'}
\]
implies that $R\in\QSynscr_{A'}$.
\end{remark}

\begin{lemma}
Assume that $(A, R)$ is a relatively quasisyntomic $\delta$-pair. Then there is a factorization $A
    \to A' \to R$ with $A \to A'$ a quasismooth cover and $(A',R)$ relatively qrsp. In fact we can
    choose $A'$ in such a way that $(A',R)$ is pre-prismatic, exhibited by an ideal $I$, such that  $A'/I \to R$ is a
    surjection whose cotangent complex has Tor-amplitude in $[1,1]$.
\end{lemma}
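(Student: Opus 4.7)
The plan is to construct $A'$ as a $p$-completed polynomial $A$-algebra obtained by adjoining one distinguished variable $z$ together with lifts of a suitably large generating set of $R$, equipped with an explicit $\delta$-structure.

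First, choose a subset $\{x_i\}_{i\in I} \subseteq R$ whose reductions mod $p$ generate $R/p$ as an $A/p$-algebra, and set
\[
A' := A[z, x_i : i \in I]_p^\wedge,
\]
extending the $\delta$-structure on $A$ by the unique ring endomorphism with $\varphi(z) = z^p + p$ (so $\delta(z) = 1$) and $\varphi(x_i) = x_i^p$ (so $\delta(x_i) = 0$). Define a ring map $A' \to R$ by $z \mapsto 0$ and $x_i \mapsto$ its chosen lift.

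I would then verify the following in order. First, $A \to A'$ is a quasismooth cover, since $A'$ is the $p$-completion of a polynomial $A$-algebra: it is $p$-completely faithfully flat, and its cotangent complex $\L_{A'/A}$ is $p$-completely free on $\{dz, dx_i\}$ in degree zero. Second, $(A', R)$ is pre-prismatic with ideal $I = (z)$: indeed, $z$ is a non-zero-divisor in $A'$ (the exact sequence $0 \to A[z, x_i] \xrightarrow{z} A[z, x_i] \to A[x_i] \to 0$ is preserved by $p$-completion), $z$ lies in $\ker(A' \to R)$, and $\delta(z) = 1$ maps to $1 \in R^\times$. Third, $A' \to R$ is surjective because $A[x_i]_p^\wedge \to R$ is, by the choice of $\{x_i\}$. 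Fourth, $(A', R)$ is relatively qrsp via the trivial factorization $A' \xrightarrow{\id} A' \to R$, since the identity is automatically $p$-completely relatively perfect.

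The main technical step is to show that $\L_{R/\overline{A'}}$ has $p$-complete Tor-amplitude in $[1,1]$, where $\overline{A'} = A'/(z) = A[x_i]_p^\wedge$. This will follow from the transitivity cofiber sequence
\[
\L_{\overline{A'}/A} \otimes_{\overline{A'}} R \to \L_{R/A} \to \L_{R/\overline{A'}}
\]
combined with the hypothesis that $\L_{R/A}$ has $p$-complete Tor-amplitude in $[0,1]$, the fact that $\L_{\overline{A'}/A}$ is $p$-completely free on $\{dx_i\}$ in degree zero, and the observation that the induced map on $H_0$ mod $p$ is surjective onto $\Omega^1_{R/A}/p$ precisely because our $\{x_i\}$ generate $R/p$ as an $A/p$-algebra. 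A short diagram chase on the mod-$p$ long exact sequence then kills $H_0$ and $H_{\geq 2}$ of the cofiber, giving the desired amplitude. This cotangent complex bookkeeping is the point in the argument most prone to error: if the generating set were chosen too sparsely, $H_0$ of the cofiber would fail to vanish, so the choice of $\{x_i\}$ at the outset is what makes the rest of the verification work cleanly.
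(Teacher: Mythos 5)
Your proof is correct and follows essentially the same construction as the paper: adjoin a distinguished variable and a set of polynomial variables to obtain $A'=A[z,\{x_i\}]^\wedge_p$, observe it is a quasismooth cover, choose the preprismatic ideal in the kernel, and deduce the $[1,1]$ amplitude of $\L_{R/\overline{A'}}$ from the transitivity triangle together with surjectivity of $\overline{A'}\to R$. The only cosmetic difference is that you equip $z$ with $\delta(z)=1$ and send $z\mapsto 0$ (so $I=(z)$), whereas the paper uses $\delta(z)=0$ and sends $z\mapsto p$ (so $I=(z-p)$); both give a Cartier divisor whose $\delta$ maps to a unit in $R$, and both reduce $\overline{A'}$ to a $p$-completed polynomial ring surjecting onto $R$.
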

\begin{proof}
Choose a surjection $\kappa\colon A[\{z_i\}_{i\in I}]^\wedge_p \to R$ extending $A \to R$. Now set
\[
A' = A[\{z_i\}_{i\in I},z]^\wedge_p
\]
with the $\delta$-ring structure with $\delta(z_i) = \delta(z) = 0$.
    Moreover $A'$ is a quasismooth $A$ algebra. Now consider the map 
$A' \to R$
extending $\kappa$ with $z \mapsto p$. Then, the kernel contains $I = (z-p)$ which exhibits the
$\delta$-pair $(A',R)$ as pre-prismatic. Moreover the map $\overline{A'} = A'/I \to R$ is surjective and 
the transitivity triangle for $A \to \overline{A'} \to R$ takes the form
\[
\L_{\overline{A' }/ A} \otimes_{\overline{A'}} R \to \L_{R / A} \to \L_{R/\overline{A'}}
\]
The first term has Tor-amplitude in degree $0$.
    The second term has, by quasisyntomicity of $(A,R)$, Tor-amplitude in $[0,1]$. Moreover since
    $\overline{A'} \to R$ is surjective, the first map is surjective on $\pi_0$. It follows that
    $\L_{R/\overline{A'}}$ has Tor-amplitude in $[1,1]$.
\end{proof}
%
We note that the converse also holds: whenever we have a bounded $p$-complete $\delta$-pair $(A,R)$
that admits a quasismooth cover by a relatively qrsp $(A',R)$, the pair $(A,R)$ is relatively
quasisyntomic.

\begin{lemma}
For any factorization $A \to A^0 \to R$ as in the last lemma (i.e.,  $A \to A^0$ is a quasismooth
    cover and $(A^0,R)$ is relatively qrsp), consider the \v{C}ech diagram $A^\bullet$ for $A \to
    A^0$ with $A^n \we A^0 \otimes_A \ldots \otimes_A A^0)$. All the terms $A^n \to R$ are relatively qrsp.
\end{lemma}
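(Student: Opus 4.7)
The plan is to build, for each $n$, an explicit pre-prismatic factorization $A^n \to B^n \to R$ witnessing that $(A^n,R)$ is relatively qrsp, starting from the given factorization $A^0 \to B \to R$ with $(B,I)$ a prism, $B\to R$ surjective, and $\L_{R/\overline{B}}$ of $p$-complete Tor-amplitude $[1,1]$. Concretely, I would set $B^n := A^n\tensorhat_{A^0} B$ using the first-factor inclusion $A^0\hookrightarrow A^n$. Since $A\to A^0$ is quasismooth, every face map $A^0\to A^n$ is $p$-completely flat, so this pushout in $\delta$-rings is computed on underlying $p$-complete rings and $B\to B^n$ remains $p$-completely flat. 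The augmentation $B^n \to R$ is defined using $B\to R$ on one factor and the multiplication $A^n \to A^0 \to R$ on the other; these are compatible on $A^0$ because multiplication splits the first inclusion.

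Three of the four conditions from Definition~\ref{rel_quasiperf} would then follow formally: the map $A^n\to B^n$ is $p$-completely relatively perfect by base change from $\L^\wedge_{B/A^0}=0$; the extended ideal $IB^n$ is Cartier (base change of a Cartier divisor along a $p$-completely flat map) and its local generators $d$ still satisfy $\delta(d)\in R_p^{\wedge,\times}$ because $B^n\to R$ factors through $B$; and $B^n\to R$ is surjective because $B\to R$ is.

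The main content, and the step I expect to require actual work, is verifying that $\L_{R/\overline{B^n}}$ has $p$-complete Tor-amplitude in $[1,1]$. The strategy is transitivity applied to $\overline{B^n}\to\overline{B}\to R$, which, together with the hypothesis on $\L_{R/\overline{B}}$, reduces the problem to showing that $\L_{\overline{B}/\overline{B^n}}\simeq \L_{A^0/A^n}\tensorhat_{A^0}\overline{B}$ has $p$-complete Tor-amplitude $[1,1]$; equivalently, that $\L_{A^0/A^n}$ does. Here I would exploit the fact that the multiplication $\mu\colon A^n\to A^0$ is split by the first-inclusion $A^0\to A^n$: transitivity for the composite $A^0\to A^n\xto{\mu} A^0$ gives
\[
\L_{A^0/A^n}\simeq \bigl(\L_{A^n/A^0}\tensorhat_{A^n}A^0\bigr)[1],
\]
and a standard base change calculation identifies the bracketed term with $\bigoplus_{i=1}^n \L_{A^0/A}$, which is $p$-completely flat by quasismoothness of $A\to A^0$. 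Hence $\L_{A^0/A^n}$ sits in $p$-complete Tor-amplitude $[1,1]$, completing the verification.

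The only mild obstacle is bookkeeping: one must check that $B^n$ is genuinely a bounded $\delta$-ring (which uses $p$-complete flatness of $A^0\to A^n$) and that the Cartier/pre-prismatic structure of $I$ descends to $IB^n$; both are standard consequences of quasismoothness. Everything else reduces to the transitivity and base change formulas above.
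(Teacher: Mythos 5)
Your proof is correct, and it takes a genuinely different route from the paper's for the cotangent-complex estimate, though both rest on the same underlying inputs. The paper first handles the case where $A^0$ is itself a prism and there uses the transitivity triangle for the \emph{face map} direction $A^0\to A^n\to R$, namely $\L_{A^n/A^0}\otimes_{A^n}R \to \L_{R/A^0} \to \L_{R/A^n}$, whose first term has $p$-complete Tor-amplitude $[0,0]$ directly from quasismoothness of $A^0\to A^n$; the general case is then asserted to follow by base-changing the factorization $A^0\to B\to R$ to $A^n\to B\otimes_{A^0}A^n\to R$. You instead argue uniformly by base-changing the pre-prismatic factorization from the start and applying transitivity to the \emph{projection} direction $\overline{B^n}\to\overline{B}\to R$, where the first term $\L_{\overline{B}/\overline{B^n}}$ lives in Tor-amplitude $[1,1]$; computing it requires the retraction trick $\L_{A^0/A^n}\we(\L_{A^n/A^0}\otimes_{A^n}A^0)[1]$ coming from the splitting $A^0\to A^n\xrightarrow{\mu}A^0=\id$. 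This extra step is more work than the paper's direct use of quasismoothness, but it buys two things: the argument needs no case analysis, and it verifies the definitional condition on the \emph{barred} cotangent complex $\L_{R/\overline{B^n}}$ head-on, whereas the paper's triangle as literally written concerns $\L_{R/A^n}$ and so tacitly needs a small supplementary comparison to pass to $\L_{R/\overline{A^n}}$. The one thing you leave as ``standard'' — that the square witnessing $\overline{B}\we\overline{B^n}\otimes^{\L}_{A^n,\mu}A^0$ is Tor-independent, so that $\L_{\overline{B}/\overline{B^n}}\we\L_{A^0/A^n}\tensorhat_{A^0}\overline{B}$ — is indeed immediate from associativity and the retraction: $\overline{B^n}\otimes^{\L}_{A^n,\mu}A^0\we\overline{B}\otimes^{\L}_{A^0}(A^n\otimes^{\L}_{A^n,\mu}A^0)\we\overline{B}$, since the composite $A^0\to A^n\xrightarrow{\mu}A^0$ is the identity. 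Your verifications of relative perfectness, the pre-prismatic condition on $IB^n$, and surjectivity are all correct and are the same as what the paper leaves implicit.
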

\begin{proof}
    Let us first assume that $(A^0,R)$ is already a prismatic $\delta$-pair.
    Then all the terms $A^n$ are also prisms using the ideals given as base changes along any of the flat maps
    $A^0 \to A^n$. Moreover consider the transitivity triangles
    \[
    \L_{A^n / A^0} \otimes_{A^n} R \to \L_{R / A^0} \to \L_{R/A^n} \ .
    \]
    where the first term has Tor-amplitude in $[0,0]$ and the second in $[1,1]$.
    It follows that $\L_{R/A^n}$ has Tor-amplitude in $[1,1]$. 
    
    In the general case, suppose that $A^0\rightarrow B\rightarrow R$ is a factorization where
    $A^0\rightarrow B$ is $p$-completely relatively perfect, $B\rightarrow R$ is a surjective pre-prismatic
    $\delta$-pair, and $\L_{R/\overline{B}}$ has Tor-amplitude in $[1,1]$.
    Then, $A^n\rightarrow R$ factors as $A^n\rightarrow B\otimes_{A^0} A^n\rightarrow R$, which is a
    factorization exhibiting $R$ as relatively qrsp over $A^n$, as desired.
\end{proof}

For a fixed bounded $R$, consider the category $\QSynscr_{/R}$ of relatively quasisyntomic bounded $\delta$-pairs $(A,R)$.
We have the full subcategory 
\[
\RQRSPerfdscr_{/R} \subseteq \QSynscr_{/R}
\]
and consider the opposite of both categories as sites with the quasismooth topologies.
We leave it as an exercise to see that these are indeed sites:
the key is to verify that for a quasismooth cover $A \to A'$ and a map $A \to B$ over $R$
the map $A' \otimes_A B \to R$ is relatively quasisyntomic.
This follows using that $B \to A' \otimes_A B$ is a quasismooth cover using Remark \ref{remark_qs}. 

\begin{proposition}[Unfolding in the base]\label{lem:unfold_base}
    For each fixed $R$, the inclusion induces an equivalence 
    \[
    \mathrm{Shv}_\Cscr(\QSynscr_{/R}^\op)\we\mathrm{Shv}_\Cscr(\RQRSPerfdscr_{/R}^\op)
    \]
    of
    $\infty$-categories of $\Cscr$-valued sheaves for any $\infty$-category $\Cscr$.
\end{proposition}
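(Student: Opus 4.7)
The plan is to verify the hypotheses of the standard comparison lemma for sites (see for instance \cite[A.3.3.1]{sag} or the $\infty$-categorical formulation of SGA 4 Exposé III, 4.1), which asserts that if $\Cscr_0 \hookrightarrow \Cscr$ is a fully faithful embedding of sites such that (a) coverings in $\Cscr_0$ generate coverings in $\Cscr$ (in the sense that every $X \in \Cscr$ admits a covering $\{U_i \to X\}$ by objects $U_i \in \Cscr_0$), and (b) the topology on $\Cscr_0$ is the one induced from $\Cscr$, and furthermore \v{C}ech nerves of such covers stay inside $\Cscr_0$, then the restriction functor induces an equivalence on $\Cscr$-valued sheaf $\infty$-categories for any $\infty$-category $\Cscr$.

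First I would verify that $\iota\colon \RQRSPerfdscr_{/R}^\op \hookrightarrow \QSynscr_{/R}^\op$ is a morphism of sites with the induced topology: a quasismooth cover $(A,R) \to (A',R)$ inside $\RQRSPerfdscr_{/R}$ is by definition a quasismooth cover in $\QSynscr_{/R}$, and conversely a quasismooth cover in $\QSynscr_{/R}$ with source in $\RQRSPerfdscr_{/R}$ has target still in $\QSynscr_{/R}$ (transitivity of quasisyntomicity), so this is automatic. The essential content is then the covering generation together with closure of \v{C}ech nerves, which are exactly the two lemmas immediately preceding the proposition: every $(A,R) \in \QSynscr_{/R}$ admits a quasismooth cover $A \to A^0$ with $(A^0,R) \in \RQRSPerfdscr_{/R}$, and the \v{C}ech nerve $A^\bullet$ of this cover has all terms $(A^n, R) \in \RQRSPerfdscr_{/R}$.

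Next I would spell out the comparison. Given a presheaf $\Fscr \in \mathrm{PSh}_\Cscr(\RQRSPerfdscr_{/R}^\op)$ satisfying the sheaf condition with respect to quasismooth covers in $\RQRSPerfdscr_{/R}$, I would extend it to $\QSynscr_{/R}$ by right Kan extension along $\iota$, or equivalently by the formula
\[
    (\iota_* \Fscr)(A,R) = \Tot \Fscr(A^\bullet, R)
\]
for any choice of relatively qrsp quasismooth cover $A \to A^0$ with \v{C}ech nerve $A^\bullet$. Independence of the cover follows from the sheaf property of $\Fscr$ applied to a common refinement of two such covers (which exists by taking tensor products, using the stability of quasismooth covers and of the relatively qrsp condition under pullback along such maps). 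The sheaf property of $\iota_* \Fscr$ and the fact that it restricts to $\Fscr$ are standard, and $\iota^* \iota_* \Fscr \we \Fscr$ follows from the constant augmented \v{C}ech nerve applied to the identity cover. Conversely, for a sheaf $\Gscr$ on $\QSynscr_{/R}^\op$, the counit $\iota_* \iota^* \Gscr \to \Gscr$ is an equivalence because at any $(A,R)$ both sides are computed as $\Tot \Gscr(A^\bullet, R)$ for any relatively qrsp quasismooth cover, which equals $\Gscr(A,R)$ by the sheaf property.

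The main step requiring care is the independence of the cover in the construction of $\iota_*$: two choices of relatively qrsp quasismooth covers $A \to A^0$ and $A \to A^1$ must be compared via their common refinement $A \to A^0 \otimes_A A^1$, which requires checking that this tensor product remains in $\RQRSPerfdscr_{/R}$ (equivalently, that the relatively qrsp property is stable under composition with quasismooth maps, which follows from the pre-prismatic factorization $A \to A' \to R$ tensored with the quasismooth extension) and that the resulting bisimplicial descent identifies the two totalizations; this is a routine cofinality argument on the indexing $\infty$-categories of relatively qrsp hypercovers.
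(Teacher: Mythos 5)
Your proposal reproduces the argument the paper intends: it invokes the preceding two lemmas (existence of a quasismooth cover by a relatively qrsp pair, and closure of Čech nerves under the relatively qrsp condition) and then runs the comparison-lemma/right-Kan-extension machinery, which is precisely what the cited \cite[Prop.~4.31]{bms2} does in the absolute case. The one step you flag as needing care — stability of the relatively qrsp condition under quasismooth extension of the base, needed for common refinements — is indeed the implicit extra content beyond the paper's two stated lemmas; the verification goes through as you sketch, but note that the Tor-amplitude $[1,1]$ claim for $\L_{R/\overline{B\otimes_A A^0}}$ requires using surjectivity of $\overline{B\otimes_A A^0}\to R$ in addition to tensoring the pre-prismatic factorization (the transitivity triangle alone only gives Tor-amplitude $[0,1]$).
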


\begin{proof}
    This follows from the previous lemmas by an argument similar to the proof of~\cite[Prop.~4.31]{bms2}.
\end{proof}

\begin{definition}[Quasisyntomic maps]
    Say that a map $R\rightarrow R'$ of commutative rings is quasisyntomic if it is $p$-completely flat and $\L_{R'/R}$
    has $p$-complete Tor-amplitude in $[0,1]$. Such a map is a quasisyntomic cover if it is
    additionally $p$-completely faithfully flat. Note that if $A$ is a bounded $\delta$-ring,
    $R\in\QSynscr_A$, and $R\rightarrow R'$ is quasisyntomic, then $R'\in\QSynscr_A$.
\end{definition}

\begin{lemma}
    A bounded $\delta$-pair $(A,R)$ with $R$ derived $p$-complete is relatively quasisyntomic if and
    only if there
    is a quasisyntomic cover $R \to R'$ with $R' \in \RQRSPerfdscr_A$.
\end{lemma}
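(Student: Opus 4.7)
For the reverse direction, suppose $R \to R'$ is a quasisyntomic cover with $(A, R') \in \RQRSPerfdscr_A$. I first observe that the relatively qrsp data forces $\L_{R'/A}$ to have $p$-complete Tor-amplitude $[0, 1]$: the factorization $A \to A' \to \overline{A'} \to R'$ gives $\L_{\overline{A'}/A}$ in $[1, 1]$, using that $\L_{A'/A}$ vanishes $p$-adically by relative perfection and $\L_{\overline{A'}/A'} \simeq \overline{A'}[1]$ from quotienting by a regular prismatic element; a second transitivity step involving $\L_{R'/\overline{A'}}$ in $[1, 1]$ then puts $\L_{R'/A}$ in $[0, 1]$. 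The transitivity fiber sequence
\[
\L_{R/A} \tensorhat_R R' \to \L_{R'/A} \to \L_{R'/R},
\]
combined with connectivity of $\L_{R/A}$ and the quasisyntomic hypothesis on $R \to R'$, then shows $\L_{R/A} \tensorhat_R R'$ lies in $p$-complete Tor-amplitude $[0, 1]$; faithfully flat descent along $R \to R'$ passes this to $\L_{R/A}$, proving that $(A, R)$ is relatively quasisyntomic.

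For the forward direction I invoke the preceding lemma to produce a quasismooth factorization $A \to A[z, z_i]^\wedge_p \to R$ sending $z \mapsto p$ and $z_i \mapsto r_i$ for generators $r_i$ of $R$ over $A$; crucially, the same lemma supplies the estimate that $\L_{R/A[z_i]^\wedge_p}$ has $p$-complete Tor-amplitude $[1, 1]$. Set
\[
A^0 := A[z^{1/p^\infty}, z_i^{1/p^\infty}]^\wedge_{(p, z-p)}
\]
with $\delta = 0$ on the displayed generators, so that $A \to A^0$ is $p$-completely relatively perfect, $(A^0, (z-p))$ is a bounded prism (since $\delta(z-p) \equiv -1 \pmod p$), and $\overline{A^0} = A[p^{1/p^\infty}, z_i^{1/p^\infty}]^\wedge_p$. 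Define the candidate cover
\[
R' := R \tensorhat_{A[z_i]^\wedge_p} \overline{A^0}.
\]

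Every remaining check reduces to flat base change along the map $A[z_i]^\wedge_p \to \overline{A^0}$, which is $p$-completely faithfully flat as a colimit of finite free extensions obtained by adjoining $p$-power roots of $p$ and of each $z_i$, and whose cotangent complex has $p$-complete Tor-amplitude in $[0, 1]$. Hence $R \to R'$ is a quasisyntomic cover. The factorization $A \to A^0 \to R'$ exhibits $(A, R') \in \RQRSPerfdscr_A$: relative perfection and prismatic structure come from the construction of $A^0$, the map $\overline{A^0} \twoheadrightarrow R'$ is surjective as the base change of the surjection $A[z_i]^\wedge_p \twoheadrightarrow R$, and the cotangent complex base change formula
\[
\L_{R'/\overline{A^0}} \simeq \L_{R/A[z_i]^\wedge_p} \tensorhat_R R'
\]
transports the $[1, 1]$ estimate from the preceding lemma to $R'$ via the flat map $R \to R'$. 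The only real challenge is guessing the correct construction of $R'$; once made, every subsequent step is either a flat base change or a direct application of the preceding lemma.
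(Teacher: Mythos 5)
Your proof is correct and follows essentially the same strategy as the paper's: for the forward direction, adjoin $p$-power roots of $p$ and of a generating set of $R$ over $A$, and exhibit the resulting $R'$ as relatively qrsp via the perfection $A \to A^0$; for the reverse direction, apply the cotangent complex transitivity triangle for $A \to R \to R'$ and descend Tor-amplitude along the faithfully flat cover.

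A few remarks on where you diverge from the paper. Your $A^0$ is $(p, z-p)$-completed, whereas the paper's $A'$ is only $p$-completed. The definition of relatively qrsp only requires $A \to A'$ to be $p$-completely relatively perfect and $(A', R')$ to be pre-prismatic (not that $A'$ be a prism), so the extra $(z-p)$-completion is unnecessary. It is not wrong, but it does introduce an additional point requiring justification: you assert $A \to A^0$ is $p$-completely relatively perfect, but relative perfection is a priori less clear after a further $(z-p)$-adic completion. (It does hold, since mod $p$ the $(z-p)$-adic completion is the $z$-adic completion of $A/p[z^{1/p^\infty}, z_i^{1/p^\infty}]$, and the $z$-adic and $z^{1/p}$-adic topologies coincide because $z = (z^{1/p})^p$, so the completion remains perfect over $A/p$; but this is extra work the paper avoids.) Similarly, in the reverse direction you re-derive that $\L_{R'/A}$ has $p$-complete Tor-amplitude in $[0,1]$, which the paper takes for granted since relatively qrsp pairs are relatively quasisyntomic by the remark in Definition~\ref{rel_quasiperf}; your explicit derivation is correct but in fact yields the sharper bound $[1,1]$. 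Finally, your description of $R'$ as the $p$-completed base change $R \tensorhat_{A[z_i]^\wedge_p} \overline{A^0}$ is the same ring as the paper's $R[p^{1/p^\infty}, \{r_i^{1/p^\infty}\}]_p^\wedge$; stating it as a base change is slightly slicker since it makes the surjectivity of $\overline{A^0} \to R'$ and the Tor-amplitude of $\L_{R'/\overline{A^0}}$ immediate by flat base change, whereas the paper states these without comment.
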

\begin{proof}
Assume that there exists a quasisyntomic cover $R \to R'$ with $R' \in \RQRSPerfdscr_A$. Then, there
    is a transitivity triangle 
\[
\L_{R/A} \otimes_R R' \to \L_{R' / A} \to \L_{R'/R}
\] 
which shows that $\L_{R/A} \otimes_R R'$ has Tor-amplitude in $[-1,1]$, which implies that $L_{R/A}$
    has Tor-amplitude in $[-1,1]$ by faithful flatness. Since $L_{R/A}$ is connective it follows
    that it has Tor-amplitude in $[0,1]$, i.e. $(A,R)$ is relatively quasisyntomic.

Assume conversely that $(A,R)$ is relatively quasisyntomic.
Choose a set of $p$-complete generators $\{r_i\}_{i \in I}$ of $R$ as an $A$-algebra and consider the cover 
\[
R \to  R[ p^{1/p^\infty}, \{r_i^{1/p^\infty}\}_{i \in I}]_p^\wedge = R'
\] 
This cover is quasisyntomic. We claim moreover that $(A,  R')$ is relatively qrsp. To see this we consider 
\[
A' = A[\{z_i^{1/p^\infty}\}_{i \in I},z^{1/p^\infty}]_p^\wedge
\]
 with the $\delta$-ring structure with $\delta(z_i^{1/p^k}) = 0$ and $\delta(z^{1/p^k}) = 0$.
    The map $A \to A'$ of $\delta$-rings is $p$-completely relatively perfect. Moreover, the map
   \[
    A' \to R'
   \]
   which sends $z^{1/p^k}$ to $p^{1/p^k}$ and $z_i^{1/p^k}$ to $r_i^{1/p^k}$ 
    is surjective by construction and has the ideal $(z - p)$ in its kernel, so $(A',R)$ is
    pre-prismatic.
\end{proof}

Similarly to the unfolding in the base we would also like to see that one can use descent in the ring to compute prismatic cohomology. 
For a fixed $\delta$-ring $A$ let $\RQRSPerfdscr_A$ be the full subcategory of  $\QSynscr_A$
consisting of the qrsp $A$-algebras.
We claim that for fixed $A$ 
    the categories $\QSynscr_A^\op$ and $\RQRSPerfdscr_A^\op$ form sites with respect to the
    quasisyntomic topologies.To see this, it is enough to see that if $S'$ is the
    $p$-completed pushout of a diagram $R'\leftarrow R\rightarrow S$ of maps in $\QSynscr_A$ (resp.
    $\RQRSPerfdscr_A$) where $R \to R'$ is a quasisyntomic cover, then $S'\in\QSynscr_A$ (resp.
    $\RQRSPerfdscr_A$). This is left to the reader
    as an exercise with the conormal sequence for the cotangent complex. 

\begin{proposition}[Unfolding in the ring]\label{lem:unfold_ring}
The
    inclusion $\RQRSPerfdscr_A^\op\hookrightarrow\QSynscr_A^\op$ induces an equivalence 
    $\mathrm{Shv}_\Cscr(\QSynscr_A^\op)\we\mathrm{Shv}_\Cscr(\RQRSPerfdscr_A^\op)$ of
    $\infty$-categories of $\Cscr$-valued sheaves for any presentable $\infty$-category $\Cscr$.
\end{proposition}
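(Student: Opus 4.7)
The plan is to invoke the standard comparison lemma for sites: if $\Bscr \subseteq \Cscr$ is a full subcategory of a site such that (i) every object of $\Cscr$ admits a cover by an object of $\Bscr$ and (ii) every cover in $\Cscr$ of an object of $\Bscr$ admits a refinement by a cover lying inside $\Bscr$, then restriction induces an equivalence of sheaf $\infty$-categories. This mirrors the strategy of Proposition~\ref{lem:unfold_base} and~\cite[Prop.~4.31]{bms2}.

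Applied with $\Cscr = \QSynscr_A^\op$ and $\Bscr = \RQRSPerfdscr_A^\op$, each equipped with the qsyn topology, condition (i) is exactly the content of the lemma immediately preceding this proposition (the ``if and only if'' statement, applied in the direction from qsyn to the existence of a qrsp cover). For condition (ii), I would argue as follows: given a qsyn cover $R^0 \to S$ with $R^0 \in \RQRSPerfdscr_A$, the ring $S$ is itself relatively quasisyntomic over $A$, since qsyn maps compose---$p$-completely faithful flatness is stable under composition, and the transitivity triangle
\[
    \L_{R^0/A}\tensorhat_{R^0} S \to \L_{S/A} \to \L_{S/R^0}
\]
shows $\L_{S/A}$ has $p$-complete Tor-amplitude in $[0,1]$. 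By (i) applied to $S$, there is a qsyn cover $S \to S'$ with $S' \in \RQRSPerfdscr_A$, and the composite $R^0 \to S \to S'$ is then a qsyn cover refining $R^0 \to S$ through an object of $\Bscr$.

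Combined with the fact that the inclusion $\RQRSPerfdscr_A^\op \hookrightarrow \QSynscr_A^\op$ is fully faithful and continuous for the qsyn topologies (a qsyn cover between qrsp objects is by definition a qsyn cover in the ambient site), the comparison lemma yields the desired equivalence at the level of space-valued sheaves, with inverse to restriction given by right Kan extension along the inclusion. The main obstacle is formally upgrading this to $\Cscr$-valued sheaves for an arbitrary presentable $\infty$-category $\Cscr$; however, since both sides identify with the $\infty$-category of $\Cscr$-valued sheaves on the common underlying $\infty$-topos, this is a routine application of tensoring presentable $\infty$-categories over $\Spc$.
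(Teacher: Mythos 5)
Your strategy is in the same spirit as the paper's: both reduce to the argument of~\cite[Prop.~4.31]{bms2}, and both rely on the preceding lemma to produce qrsp covers of qsyn objects. But there is a concrete technical ingredient that the paper verifies and your write-up skips, and it is precisely the load-bearing step: \emph{closure under the \v{C}ech nerve}. The paper's proof shows that for any $R\in\QSynscr_A$ and any quasisyntomic cover $R\to S$ with $S\in\RQRSPerfdscr_A$, every term $S^n$ of the \v{C}ech nerve again lies in $\RQRSPerfdscr_A$. The argument is a Tor-amplitude bootstrap: since $\L_{S/A}$ has $p$-complete Tor-amplitude in $[1,1]$, so does $\L_{S/R}$, hence so does every $\L_{S^n/S^{n-1}}$, hence so does $\L_{S^n/\overline{A'}}$ for a prismatic factorization of $S$, which is the defining property of a relatively qrsp pair. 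Note that this must be established with $R$ allowed to be merely qsyn, not qrsp.

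Your conditions (i) and (ii) as stated —``every object has a qrsp cover'' and ``covers can be refined by qrsp covers''— give continuity, cocontinuity and local generation for the comparison lemma at the level of sheaves of \emph{spaces}, and they are correctly verified. The gap is in the final ``routine'' step: the $\infty$-category $\mathrm{Shv}_\Cscr$ in this paper is by convention defined via \v{C}ech descent, and the equivalence $\mathrm{Shv}_\Cscr(\QSynscr_A^\op) \we \mathrm{Shv}_\Cscr(\RQRSPerfdscr_A^\op)$ for an arbitrary presentable $\Cscr$ is proved (in~\cite[Prop.~4.31]{bms2} and here) by directly computing the value on $R\in\QSynscr_A$ as the totalization of its value on a qrsp \v{C}ech nerve; this only makes sense if the \v{C}ech nerve actually stays inside $\RQRSPerfdscr_A$. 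That fact does not follow from (i) and (ii) alone and must be verified separately — it is the entire content of the paper's proof once the preceding lemma is assumed. So: right scaffolding, right references, but the proof hangs on the one step you deferred to a ``routine application of tensoring.''
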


\begin{proof}
    If $R\in\QSynscr_A$ and $f\colon R\rightarrow S$ is a quasisyntomic map with
    $S\in\RQRSPerfdscr_A$, then every $S^n$ in the \v{C}ech complex of $f$ is in $\RQRSPerfdscr_A$
    as well. Indeed, $\L_{S/A}$ has $p$-complete Tor-amplitude in $[1,1]$, so $\L_{S/R}$ has in fact
    $p$-complete Tor-amplitude in $[1,1]$ as well. Thus, every $\L_{S^n/S^{n-1}}$ (via any choice of
    maps in the cosimplicial diagram) has $p$-complete Tor-amplitude in $[1,1]$, so
    $\L_{S^n/\overline{A'}}$ has $p$-complete Tor-amplitude in $[1,1]$ for any choice of prismatic
    factorization of $S=S^0$.
    Now, follow the proof of~\cite[Prop.~4.31]{bms2} using the previous lemma to show that every object is locally relatively quasiregular
    semiperfectoid.
\end{proof}

Now we want to explain why and how Theorem \ref{thm:intro} uniquely determines relative prismatic
cohomology and how to compute it in practice. This will also show the advantage of the relative
approach since we will use descent in $A$ which would not be possible otherwise.  
We would like to argue that the  assignment
\[
(R,A) \mapsto \underline{\,\Prism\,}_{R/A} \in \mathcal{C}_A
\]
is uniquely determined by properties (1),  (2), (4), (6) and (8) (or alternatively by (1), (2),
(4), (5) and (8)).  In fact a much weaker axiom (\$) replacing (1), (2), and (8) is sufficient to
determine the whole theory in the presence of (4) and one of (5) or (6):

\begin{enumerate}
    \item[(\$)] If $(A,R)$ is relatively qrsp, with prism $(A',I)$, then
            $\underline{\,\Prism\,}_{R/A}$
            agrees naturally with the derived prismatic cohomology $\underline{\,\Prism\,}_{R/A'}$ of Bhatt and
            Scholze~\cite{prisms}, which in this case is discrete and the relative Frobenius map is injective.
%
\end{enumerate}
The advantage of this formulation is that it can be formulated without reference to Bhatt--Scholze's
theory, as long as one constructs the prismatic cohomology $\underline{\,\Prism\,}_{R/A'}$
explicitly, e.g. as an initial prism. Here naturality in $(A',I)$ means that we have to make the
choice of $(A',I)$ and it depends on that, since the Bhatt--Scholze theory a priori does. More
precisely there is a category $\RQRSPerfdscr'$ of relatively qrsp pairs $(A,R)$ with a choice of a
prism $(A',I)$ and maps given by maps of $\delta$-pairs that come with a compatible choice of a map
between the respective prisms. Axiom $(\$)$ asserts the existence of  a commutative diagram
\begin{equation}\label{dollar_diag}
    \begin{gathered}
\xymatrix{
\RQRSPerfdscr' \ar[r]\ar[d]^T &\Pairs^{\Prism} \ar[d]^-{\Prismpackage^\rel} \\
 \RQRSPerfdscr   \ar[r]^-{\Prismpackage} & \Cscr_\bZ,
}
    \end{gathered}
\end{equation}
where the left functor sends $(A,R)$ to $R$ and the top functor is the fully faithful inclusion in
to $\delta$-pairs.

\begin{proposition}\label{prop_unique}
  The functor $\Prismpackage$ is uniquely characterized on $\Pairs^\delta$ by (\$),  (4) and (6) or alternatively by (\$), (4) and (5).
\end{proposition}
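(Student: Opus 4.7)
The plan is to establish uniqueness by descent in three stages, reducing from all of $\Pairs^\delta$ down to the relatively quasiregular semiperfectoid case where axiom $(\$)$ pins down the value. Let $F$ be any functor on $\Pairs^\delta$ valued in $\Cscr_\bZ$ satisfying $(\$)$, (4), and (6); the goal is to construct a natural equivalence $F \we \Prismpackage$. \emph{First stage: agreement on $\RQRSPerfdscr$.} By axiom $(\$)$, both $F$ and $\Prismpackage$ fit into the commutative square in \eqref{dollar_diag} together with natural equivalences to Bhatt--Scholze's derived prismatic cohomology $\Prismpackage^{\rel}\colon \Pairs^{\Prism}\to \Cscr_\bZ$ after precomposition with the forgetful functor $\RQRSPerfdscr' \to \RQRSPerfdscr$. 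Since $\Prismpackage^{\rel}$ is a fixed functor --- explicitly described as a prismatic envelope by Theorem \ref{thm_BS} on relatively qrsp pairs --- and the fiber of $\RQRSPerfdscr' \to \RQRSPerfdscr$ over any pair is nonempty and sifted (common refinements of two prism factorizations provide a third, as in the proof of Proposition \ref{prop:prismaticcomparison}), this data forces the natural equivalence to descend uniquely to a natural equivalence $F|_{\RQRSPerfdscr} \we \Prismpackage|_{\RQRSPerfdscr}$.

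\emph{Second stage: extension to relatively quasisyntomic pairs via descent.} For each fixed bounded $p$-complete ring $R$, axiom (6) makes $F(-,R)$ a sheaf on the quasismooth site of bounded $\delta$-rings $A$ with $(A,R) \in \QSynscr_A$. By Proposition \ref{lem:unfold_base}, such a sheaf is determined by its restriction to $\RQRSPerfdscr_{/R}$, where Stage~1 applies; hence $F \we \Prismpackage$ on all relatively quasisyntomic bounded $\delta$-pairs. (The argument using (5) and Proposition \ref{lem:unfold_ring} is analogous, unfolding in $R$ rather than in $A$.) \emph{Third stage: extension to all of $\Pairs^\delta$ via left Kan extension.} By axiom (4), $F$ and $\Prismpackage$ are both left Kan extended in $\Cscr_\bZ$ from finitely presented free $\delta$-pairs. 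Any such pair $(A,R)$ has $\L_{R/A}$ free of finite rank, so its derived $p$-completion $(A_p^\wedge, R_p^\wedge)$ is relatively quasisyntomic, where Stage~2 yields $F \we \Prismpackage$. Insensitivity to $p$-completion (see next paragraph) then transfers this equivalence back to the non-complete $(A,R)$, and the Kan extension from finitely presented free pairs propagates agreement to all of $\Pairs^\delta$.

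The main obstacle is the last step of Stage~3, namely bridging the non-$p$-complete finitely presented free pairs demanded by axiom (4) with the $p$-complete domain inherent to $(\$)$, (5), and (6). Concretely, one must show that the natural map $F(A,R) \to F(A_p^\wedge, R_p^\wedge)$ is an equivalence for every finitely presented free pair. This follows from the observation that $\Cscr_\bZ$ consists of complete $p$-complete filtered graded $\bE_\infty$-algebras, so the values of $F$ are automatically $p$-complete; the Kan extension formula then presents both $F(A,R)$ and $F(A_p^\wedge,R_p^\wedge)$ as sifted colimits of values on finitely presented free pairs, and a cofinality argument exploiting that $p$-completion is the identity on $p$-complete objects identifies the two colimits. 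In this sense, the analogue of axiom (1) for $F$ is a formal consequence of $(\$)$ together with (4), and once it is in place the three stages combine to give the desired natural equivalence $F \we \Prismpackage$ on $\Pairs^\delta$.
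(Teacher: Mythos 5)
The overall three-stage decomposition agrees with the paper's structure (Kan extension reduces to quasisyntomic pairs, unfolding reduces to relatively qrsp pairs, and then axiom $(\$)$ pins things down). Stages 2 and 3 match the paper's argument. However, Stage 1 contains the real content of the paper's proof, and your treatment of it has a genuine gap.

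The issue is that the forgetful functor $T\colon \RQRSPerfdscr' \to \RQRSPerfdscr$ is essentially surjective but \emph{not full} — a morphism of relatively qrsp $\delta$-pairs need not lift to a compatible map of the auxiliary prisms. Consequently, siftedness (or even weak contractibility) of the fibers of $T$ is not enough to descend natural equivalences: two functors on $\RQRSPerfdscr$ agreeing after precomposition with $T$ can still disagree on morphisms that don't factor through the image of $T$. Your appeal to ``common refinements give a third'' establishes at best some connectedness of the fibers and is not an argument for uniqueness of the descended functor; it also isn't clear that such refinements exist in the category of prism factorizations as required. What the paper actually does at this point is substantially more involved, and all of it is load-bearing: (i) observe that any lift lands in the $1$-category $\Cscr_\bZ'$ of discrete, strict, Frobenius-injective quadruples (using the discreteness statements of Theorem~\ref{thm_BS}); (ii) use the faithful forgetful functor $U\colon\Cscr_\bZ' \to \widehat{\mathrm{Fil}}(\mathrm{Ab})$ together with Lemma~\ref{lifting_1}, which reduces uniqueness in $\Cscr_\bZ'$ to uniqueness in $\widehat{\mathrm{Fil}}(\mathrm{Ab})$ precisely because $T$ is essentially surjective and $U$ is faithful; and (iii) introduce the morphism classes $W'$ and $W$ and prove that $\Psi\colon \RQRSPerfdscr'[(W')^{-1}] \to \RQRSPerfdscr[W^{-1}]$ is an equivalence by constructing an explicit inverse $\Omega$. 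This last step is exactly what compensates for the non-fullness of $T$, and there is no sifted-fibers shortcut around it: descent along a non-full essentially surjective functor is governed by a localization statement, not by a cofinality/contractibility one.

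On Stage 3, your observation that one must bridge the non-$p$-complete fp-free pairs (from axiom (4)) with the $p$-complete world of the other axioms is a legitimate concern, and your sketch (values land in a $p$-complete category, Kan extension plus cofinality) is the right direction; the paper is terse at this point too. But this concern is secondary: the missing step is the localization argument in Stage 1, which is the mathematical heart of the proposition.
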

\begin{proof}
  Using (4), the functor is uniquely determined by its restriction to quasisyntomic $\delta$-pairs since it is Kan extended from those. Now using (6) and Proposition \ref{lem:unfold_base} 
  or (5) and Proposition \ref{lem:unfold_ring} it is uniquely determined by its restriction to relatively qrsp pairs. 
Now we want to argue that diagram \eqref{dollar_diag} already uniquely determines the functor $\Prismpackage$, in other words there is at most one dashed functor
\[
\xymatrix{
\RQRSPerfdscr' \ar[r]^-{\Prismpackage^\rel}\ar[d]^T & \Cscr_\bZ \\
 \RQRSPerfdscr   \ar@{-->}[ru]& 
}
\]
As a first step we note that the upper horizontal functor $\Prismpackage^\rel$ really takes values in the full subcategory 
$\Cscr_\bZ' \subseteq \Cscr_\bZ$ given by those quadruples $(H,N,c,\varphi)$ where $H$ and $N$ are
    discrete, strict filtrations (i.e. all transition maps are injective), and $\varphi$ is
    injective. This subcategory is actually a $1$-category.  Since the left vertical functor
    $T\colon
    \RQRSPerfdscr' \to \RQRSPerfdscr$ is by definition essentially surjective this also implies that
    any lift has to take values in $\Cscr_\bZ'$ and therefore we may replace $\Cscr_\bZ$ by
    $\Cscr_\bZ'$.
Moreover, there is a forgetful functor
\[
U\colon \Cscr_\bZ' \to \widehat{\mathrm{Fil}}(\mathrm{Ab}) \qquad (H,N,c,\varphi) \mapsto H,
\]
where $\widehat{\mathrm{Fil}}(\mathrm{Ab})$ is the category of derived complete strictly filtered abelian groups, i.e.
    functors $(\bZ,\geq) \to \mathrm{Ab}$ whose transition functions are injective and whose derived
    inverse limit vanishes. The functor $U$ is faithful, which follows since maps between quadruples
    are determined on the filtration $H$ since $\varphi$ is injective, i.e. the map on $N$ is
    determined by the map on $H$. Applying the next Lemma \ref{lifting_1} to $U$ and $T$ shows that
    to prove the proposition it suffices to show that there is at most one dashed lift in the
    diagram
\begin{equation}\label{lifting_diag}
    \begin{gathered}
\xymatrix{
\RQRSPerfdscr' \ar[rd]^-{\Prism^\rel}\ar[d]^T &\\
 \RQRSPerfdscr   \ar@{-->}[r] & \widehat{\mathrm{Fil}}(\mathrm{Ab}).
}
    \end{gathered}
\end{equation}
To see this we note that for a given object $A \to A' \to R \in  \RQRSPerfdscr'$ we have that $(B,
    \overline{B}) = (\Prism^\rel_{R/A'}, \Prismbar^\rel_{R/A'})$ is a prism equipped with a natural
    map $\chi\colon A' \to B$ of prisms by \cite[Lemma 7.7]{prisms}. Moreover, this map induces an
    equivalence
\[
\Prism^\rel_{R/A} \xto{\simeq} \Prism^\rel_{\overline{B}/ B} 
\]
by definition
    since $\Prism^\rel_{\overline{B}/B}\iso B$.
We declare a class of morphisms in $ \RQRSPerfdscr'$ by
\[
W' = \left\{ (A \to A' \to R) \xto{\chi} (B \xto{\id} B \to \overline{B}) \mid A \to A' \to R \in  \RQRSPerfdscr' \right\}
\]
and let $W$ be the image of $W'$ under the functor $T:  \RQRSPerfdscr' \to  \RQRSPerfdscr$. Now the
    functor $\Prism^\rel$ in diagram \eqref{lifting_diag} sends $W'$ to equivalences. Thus any
    factorization $\RQRSPerfdscr \to  \widehat{\mathrm{Fil}}(\mathrm{Ab})$ has to send $W'$ to
    equivalences as well. Therefore we can invert the classes $W'$ and $W$ in Diagram
    \eqref{lifting_diag}. But then the uniqueness of the dashed arrow follows from the assertion
    that the induced functor
\[
\Psi\colon \RQRSPerfdscr' [(W')^{-1}] \to \RQRSPerfdscr[W^{-1}] 
\]
is an equivalence. 
To see this, we consider the functor
\[
    \RQRSPerfdscr \to \RQRSPerfdscr' \qquad (A \to R) \mapsto (\Prism_{R/A}  \xto{\id} \Prism_{R/A} \to
    \Prismbar_{R/A}).
\]
This functor depends on the choice of an extension $\Prism$ in \eqref{lifting_diag} which we fix (if
    there were no extension, then there would be nothing to show). This functor  sends $W$ to equivalences
    since the prismatic cohomology agrees. Thus it induces a functor 
\[
 \Omega\colon \RQRSPerfdscr[W^{-1}] \to \RQRSPerfdscr' [(W')^{-1}] . 
\]
We have natural isomorphisms 
\[
(A \to R) \xto{\simeq \in W} (\Prism_{R/A} \to \Prismbar_{R/A}) = \Psi \Omega (A \to R)
\]
and
\[
(A \to A' \to R) \xto{\chi} (B \to B \to \overline{B}) = \Omega\Psi(A \to A' \to R),
\]
which finishes the proof. 
\end{proof}

\begin{lemma}\label{lifting_1}
Assume that we have a commutative square of 1-categories
\[
\xymatrix{
\mathcal{R}_1 \ar[d]_T \ar[r]^{f} & \Cscr_1 \ar[d]^U \\
\mathcal{R}_2 \ar[r]^{g} \ar@{-->}[ru]& \Cscr_2
}
\]
where $T$ is essentially surjective and $U$ is faithful. Then the space of lifts $\mathcal{R}_2 \to \Cscr_1$ making the diagram commutative is empty or contractible.
\end{lemma}
\begin{proof}
  Fix two lifts $F_1, F_2$, with natural isomorphisms $\eta_i\colon U\circ F_i \to g$ and
    $\nu_i\colon F_i\circ T\to f$.
    (These are required to be compatible in the sense that their composites $g\circ T \to U\circ f$
    have to agree with the natural isomorphism provided as part of the commutative square.) For
    $x\in \Rscr_2$, we have the composite
  \[
    \begin{tikzcd}
      U(F_1(x)) \rar{\eta_{1,x}} & g(x) \rar{\eta_{2,x}^{-1}}& U(F_2(x)).
    \end{tikzcd}
  \]
  By faithfulness of $U$, there exists at most one lift of this composite along $U$. If there exists
    one for each $x$, together the resulting maps $\varepsilon_x: F_1(x)\to F_2(x)$ form a natural
    transformation again using faithfulness of $U$ and naturality of $\eta$ and $\nu$. Any natural transformation compatible with the diagram needs to be of this
    form. For existence, we pick $\widetilde{x}$ with $T(\widetilde{x}) \cong x$, and compose the
    resulting isomorphism
  \[
    U(F_i(x)) \cong U(F_i(T(\widetilde{x}))) \cong U(f(\widetilde{x}))
  \]
  for $i=1$ with the inverse of the corresponding isomorphism for $i=2$.
%
%
%
\end{proof}

\section{Filtered prismatic cohomology}\label{sec:filtered}

As a consequence of the base change property, prismatic cohomology of $\delta$-pairs inherits
gradings and filtrations, generalizing the observation from \cite[Remark
3.8]{bhatt-mathew-syntomic}. Indeed, recall that graded objects can be encoded as quasicoherent
sheaves on $\B\bG_m$, i.e., as objects with an $\Oscr_{\bG_m}=\bZ[x^{\pm 1}]$-coaction. Since
$\bZ[x^{\pm 1}]$ carries a canonical $\delta$-ring structure with $\delta(x)=0$, if
$(A,R)$ is a $\delta$-pair, we may consider $(A\otimes \Oscr_{\bG_m}, R\otimes \Oscr_{\bG_m})$ as a $\delta$-pair as
well. Then, we may define graded $\delta$-pairs as $\delta$-pairs with an $\Oscr_{\bG_m}$-coaction. By
base change, the map
\[
  \Prism_{R/A}\to \Prism_{R\otimes \Oscr_{\bG_m} / A\otimes \Oscr_{\bG_m}} \we \Prism_{R/A}\tensorhat \Oscr_{\bG_m}
\]
then induces a $\Oscr_{\bG_m}$-coaction on $\Prism_{R/A}$ (as a complete filtered object), so
$\Prism_{R/A}$ inherits a grading for a graded $\delta$-pair $(A,R)$. Analogous statements hold for
filtered $\delta$-pairs, as we will show.

\begin{convention}[Nonnegative decreasing filtrations]
    Throughout this section, all filtered commutative rings $\F^\star R$ and all filtered
    $\delta$-rings $\F^\star A$ (to be defined below) will be
    decreasingly filtered commutative rings where $\F^0R\rightarrow\F^{-m}R$ is an
    equivalence for $m\geq 0$. We set $R=\F^0R$.
\end{convention}

\begin{definition}
    A filtered abelian group $\F^\star M$ is strict if $\F^mM\rightarrow \F^{m_0}M$ is a monomorphism for
    each $m\geq m_0\in\bZ$.
\end{definition}

\begin{definition}[Filtered $\delta$-rings]
    A filtered $\delta$-ring is a pair $(\F^\star A,\delta)$ consisting of a strict filtered commutative
    ring $\F^\star A$ and a $\delta$-ring structure on $A=\F^0A$ with the property that
    $\delta(\F^mA)\subseteq\F^{pm}A$ for all $m\geq 0$.
\end{definition}

\begin{remark}
    Given a filtered $\delta$-ring $\F^\star A$, the Frobenius endomorphism $\varphi$ on $A$ also has the property that
    $\varphi(\F^mA)\subseteq\F^{pm}A$. Conversely, given a filtered commutative ring $\F^\star A$
    such that $A$ and each graded piece $\gr^mA$ is $p$-torsion free, then any
    lift of Frobenius $\varphi$ satisfying $\varphi(\F^mA)\subseteq\F^{pm}A$
    arises from a unique filtered $\delta$-ring structure in this way. This follows since $p \delta(x) \in \F^{pm}$ implies $\delta(x) \in \F^{pm}$ by the assumption on the graded pieces.
\end{remark}

\begin{remark}
    If $\F^\star A$ is a filtered $\delta$-ring, then there is an induced $\delta$-ring structure on
    $\gr^0A$. Note that this implies that no power of $p$ can be in positive filtration weight in
    $A$.
\end{remark}

\begin{definition}[Filtered prisms]
    A filtered prism consists of a filtered $\delta$-ring $\F^\star A$ together with a prism
    structure $(A,I)$ on $A=\F^0A$ such that each $\gr^iA$ is $I\otimes_A\gr^0A$-torsion free and
    $I\otimes_A\gr^0A$-complete. Note that the assumption means that $(\gr^0A,I\otimes_A\gr^0A)$
    is a prism as is the $(p,I\otimes_A\gr^0A)$-completion of $\bigoplus_{m\geq 0}\gr^mA$. A filtered prism is
    called complete if the filtered commutative ring $\F^\star A$ is derived complete with respect to the
    filtration.
\end{definition}

\begin{example}[Filtered Breuil--Kisin prisms]
    Consider the ring $A=W(k)\llbracket z\rrbracket$ equipped with the $z$-adic filtration and a prism
    structure given by $(E(z))$, where $E(z)$ is an Eisenstein polynomial. Then
    $(E(z))\otimes_A\gr^0A=(p)\subseteq W(k)$. As each graded piece $\gr^uA\iso W(k)$ is $p$-torsion
    free, this defines a (complete) filtered prism, which we call a filtered Breuil--Kisin prism.
\end{example}

\begin{definition}[Flat filtrations]
    Given a filtered ring $\F^\star A$ and an $A=\F^0A$-module $M$, the flat
    filtration on $M$ is defined to be $\F^\star M=M\otimes_A\F^\star A$.
\end{definition}

\begin{remark}\label{rem_filteredbar}
    If $\F^\star A$ is a filtered $\delta$-ring and $(A,I)$ is a prism structure, not assumed yet to
    make $A$ into a filtered prism, then
    $$0\rightarrow\F^\star I\rightarrow\F^\star A\rightarrow\F^\star\overline{A}\rightarrow 0$$
    is an exact sequence of filtered abelian groups, which follows because $A$ is $I$-torsion
    free and hence so is each $\F^i A$ by the strictness assumption in the definition of a filtered
    $\delta$-ring. The filtration $\F^\star I$ is automatically strict in this situation.
    The pair $(\F^\star A,I)$ is a filtered prism if and only if the associated graded pieces of the flat
    filtration on $\overline{A}$ are discrete or equivalently that the flat filtration on
    $\overline{A}$ is strict.
\end{remark}

\begin{definition}
    A filtered $\delta$-pair consists of a filtered $\delta$-ring $\F^\star A$ and a map of filtered
    commutative rings $\F^\star A\rightarrow\F^\star R$. A $\delta$-pair is strict if the
    filtration on $R$ is strict (recalling that the filtration on the filtered $\delta$-ring is
    strict by definition). We say that a filtered $\delta$-pair is
    prismatic if the kernel of $\F^\star A\rightarrow\F^\star R$ contains a Cartier divisor
    $I\subseteq A$ making $(\F^\star A,I)$ into a filtered prism.
\end{definition}

We introduce the filtered prismatic site and filtered prismatic cohomology. This is not the main
focus of our study, so we include it only for the sake of the curious reader. Below, we will give a
more detailed study of filtered prismatic cohomology using prismatization and we compare the two
theories in the case of a regular filtered quotient of a filtered prism (see
Proposition~\ref{prop:filteredenvelope}).

\begin{definition}[Filtered prismatic site]
    Fix a strict filtered $\delta$-pair $\F^\star A\rightarrow\F^\star R$.
    The filtered prismatic site $(\F^\star R/\F^\star A)_\Prism$ of $\F^\star R$ relative to the
    filtered $\delta$-ring $\F^\star A$ is the opposite 
    of the category of commutative squares
    \begin{equation}\label{eq:filteredsquare}
        \begin{gathered}
            \xymatrix{
                \F^\star A\ar[r]\ar[d]&\F^\star B\ar[d]\\
                \F^\star R\ar[r]&\F^\star\overline{B},
            }
        \end{gathered}
    \end{equation}
    of filtered commutative ring maps
    where $(\F^\star B,I)$ is a bounded filtered prism
    and $\F^\star A\rightarrow\F^\star B$ is a filtered $\delta$-ring map.
    The topology is the filtered $p$-adically faithfully flat topology, meaning that $\F^\star
    B\rightarrow\F^\star C$ is a cover if $B\rightarrow C$ is $(p,I)$-adically faithfully flat and
    $\gr^\star B\rightarrow\gr^\star C$ is a $(p,\gr^0I)$-adically faithfully flat map of graded commutative rings.
\end{definition}

\begin{definition}[Filtered structure sheaves]
    The presheaf $\F^\star\Oscr_\Prism$ which to any square~\eqref{eq:filteredsquare} associates $\F^\star
    B$ is a sheaf of strictly filtered commutative rings by arguing as in~\cite[Cor.~3.12]{prisms};
    see also Lemma~\ref{lem:sheaf}. Similarly, the presheaf $\Oscr_\Prismbar$, which
    sends~\eqref{eq:filteredsquare} to $\F^\star\overline{B}$ is a sheaf of strictly filtered
    commutative rings.
\end{definition}

\begin{definition}[Filtered Breuil--Kisin twists]
    The filtered Breuil--Kisin twists $\F^\star\Oscr_\Prism\{i\}$
    assign to each filtered prism $B$ the flat filtration $B\{i\}\otimes_B\F^\star B$ on $B\{i\}$.
\end{definition}

\begin{definition}[Filtered prismatic cohomology]
    We let
    $\F^\star\Prism_{\F^\star R/\F^\star A}^\s$, or more briefly
    $\F^\star\Prism_{R/A}^\s$, denote $\R\Gamma((\F^\star R/\F^\star
    R)_\Prism,\F^\star\Oscr_\Prism)$ and similarly for filtered Hodge--Tate cohomology
    $\F^\star\Prismbar_{R/A}^\s$ and the Breuil--Kisin twists $\F^\star\Prismbar_{R/A}^s\{i\}$, etc.
\end{definition}

\begin{example}[Filtered absolute prismatic cohomology]
    The case where $\F^\star A=\bZ_p$ with the trivial filtration (so $\F^i\bZ_p=0$ for $i>0$) gives
    $\F^\star\Prism_{R/\bZ_p}^\s$, a filtered version of absolute prismatic
    cohomology for any filtered commutative ring $\F^\star R$.
\end{example}

\begin{example}
    Suppose that the filtrations on $\F^\star A$ and $\F^\star R$ are trivial, meaning that
    $\F^iA\we 0$ and $\F^iR\we 0$ for $i>0$. Then, any square~\eqref{eq:filteredsquare}
    can be factored as
    $$
        \xymatrix{
            A\ar[r]\ar[d]&B\ar[r]\ar[d]&\F^\star B\ar[d]\\
            R\ar[r]&\overline{B}\ar[r]&\F^\star\overline{B}.
        }
    $$
    Conversely, any object in the unfiltered site $(R/A)_\Prism$ can be equipped with the trivial
    filtration. It follows in this case that $\F^\star\Prism_{R/A}^\s$ is naturally equivalent to
    $\Prism_{R/A}^\s$ with the trivial filtration.
\end{example}

The goal of the remainder of this section is to outline the general theory of filtered prismatic
cohomology from the stacky perspective of~\cite{bhatt-lurie-prism}.

\begin{construction}[Graded and filtered $\Spec$]\label{filteredspec}
    Given a filtered commutative ring $\F^\star R$ we can associate to it the graded Rees algebra
    $$\mathrm{Rees}(\F^\star R)=\bigoplus_{m\in\bZ} \F^m R\cdot t^{m},$$
    where $t$ has weight $1$,
    whose spectrum $\Spec\mathrm{Rees}(\F^\star R)$
    is a $\Gm$-equivariant affine scheme over $\bA^1$ (on the coordinate $t$).
    Let $\FSpec R=\FSpec\F^\star R $ denote the quotient $$\left(\Spec\mathrm{Rees}(\F^\star
    R)\right)/\Gm,$$ viewed as a stack over $\bA^1/\Gm$.
    We have a commutative diagram in which we can identify the fibers of $\FSpec\F^\star R$ over
    $0,1\in\bA^1/\Gm$:
    $$\xymatrix{
        \Spec\bigoplus_{m\in\bZ}\gr^mR\ar[r]\ar[d]&\GrSpec\gr^\star R\ar[r]\ar[d]&\FSpec\F^\star R\ar[d]&\Spec R\ar[d]\ar[l]\\
        \Spec\bZ\ar[r]&\B\Gm\ar[r]^0&\bA^1/\Gm&\Spec\bZ\ar[l]_1,
    }$$
    where $\GrSpec\gr^\star R$ denotes the quotient stack
    $\left(\Spec\bigoplus_{m\in\bZ}\gr^mR\right)/\Gm$. Note that these constructions are examples of
    relative $\Spec$ for $\F^\star R$ and $\gr^\star R$ viewed as commutative algebra objects in
    $\D(\bA^1/\Gm)$ and $\D(\B\Gm)$, respectively.
\end{construction}

As we are interested in functors on $p$-nilpotent commutative rings, we will in fact use the
following $p$-adic formal version of the construction above. See Appendix~\ref{app} for more details
on $p$-adic formal stacks, especially Warning~\ref{warn:formalaffineline} which clarifies that
$\Ahat=\Spf\bZ[t]\we\colim\Spf\bZ[t]/p^n$, not $\colim\Spf\bZ_p[t]/t^n$.

\begin{construction}[Graded and filtered $\Spf$]
    For a filtered commutative ring $\F^\star R$, we can restrict the functors above to
    $p$-nilpotent commutative rings to obtain a commutative diagram of pullback squares
    $$\xymatrix{
        \Spf\bigoplus_{m\in\bZ}\gr^mR\ar[r]\ar[d]&\GrSpf\gr^\star R\ar[r]\ar[d]&\FSpf\F^\star
        R\ar[d]&\Spf R\ar[d]\ar[l]\\
        \Spf\bZ_p\ar[r]&\B\Gmhat\ar[r]^0&\widehat{\bA}^1/\Gmhat&\Spf\bZ_p\ar[l]_1
    }$$
    of formal stacks.
\end{construction}

\begin{remark}[Functor of points for graded and filtered $\Spf$]\label{rem:fop}
    Given a filtered commutative ring $\F^\star R$, the filtered formal spectrum $\FSpf R$ has the
    following universal property as a stack over $\Ahat/\Gmhat$. Given a point $\Spf
    S\rightarrow\Ahat/\Gmhat$, described by the choice of a rank $1$ projective module $L$ over $\Spf S$ and a
    section $s\colon L\rightarrow S$, the pullback
    $$\xymatrix{
        P\ar[r]\ar[d]&\FSpf R\ar[d]\\
        \Spf S\ar[r]^{(L,s)}&\Ahat/\Gmhat.
    }$$
    giving the fiber over $(L,s)$ is the space of maps of graded commutative rings $\Rees(\F^\star R)\rightarrow S[L^{\pm 1}]$ taking $t^{-1} = 1 \cdot t^{-1} \in \F^{-1}R \cdot  t^{-1} 
    \subseteq \Rees(\F^\star R)$
    to the element $s^{-1}$ of $L^{\otimes -1}$ corresponding to the dual of $s$,
    where $S[L^{\pm 1}]=\bigoplus_{m\in\bZ}L^{\otimes m}$ is the graded commutative ring
    with the natural multiplication of sections.

    Similarly, if $R^\star$ is a graded commutative ring, then $\Gr\Spf R$ is described as a stack
    over $\B\Gmhat$ by saying that the fiber of $\GrSpf R$ over a point $\Spf S\rightarrow\B\Gmhat$
    corresponding to a rank $1$ projective module $P$ is the space of maps
    $R^\star\rightarrow S[L^{\pm 1}]$ of graded commutative rings.
\end{remark}

Now, we describe the Cartier--Witt stack controlling filtered prismatic cohomology.

\begin{construction}[$\delta$-stacks]
    Say that $\Xscr$ is
    a formal $\delta$-stack if $\Xscr\we\colim_{i\in I}\Spf A_i$ for some $I$-indexed diagram
    which is opposite to an $I^\op$-indexed diagram of $\delta$-rings. We view such a presentation
    as part of the structure of $\Xscr$ as a $\delta$-stack.
\end{construction}

More sophisticated notions of $\delta$-stack are possible, but this suffices for our purposes.

\begin{lemma}
    Let $\F^\star A$ be a filtered $\delta$-ring. Let $\FSpf\F^\star A$, or $\FSpf A$ for short,
    denote the formal stack defined above. Then, $\FSpf A$ is naturally a $\delta$-stack and the
    canonical map
    $\FSpf A\rightarrow\widehat{\bA}^1/\Gmhat$ is a map of $\delta$-stacks.
\end{lemma}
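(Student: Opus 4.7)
The plan is to exhibit the $\delta$-stack structure on $\FSpf A$ by putting a $\delta$-ring structure on the Rees algebra $\Rees(\F^\star A)=\bigoplus_{m\in\bZ}\F^m A\cdot t^m$ that is compatible with the $\Gm$-action coming from the grading. First I would view $\Rees(\F^\star A)$ as a subring of $A[t^{\pm 1}]$ and equip the latter with the unique $\delta$-ring structure extending the one on $A$ with $\delta(t)=0$. By the product formula one then has $\delta(t^m)=0$ for all $m\in\bZ$ (in particular $\delta(t^{-1})=0$ follows from $\delta(1)=\delta(t\cdot t^{-1})$ and the invertibility of $t$). The claim is that $\Rees(\F^\star A)\subseteq A[t^{\pm 1}]$ is closed under $\delta$: for a homogeneous element $at^m$ with $a\in\F^m A$ we have
\[
\delta(at^m)=a^p\delta(t^m)+t^{pm}\delta(a)+p\,\delta(a)\delta(t^m)=t^{pm}\delta(a),
\]
and by the defining property $\delta(\F^m A)\subseteq\F^{pm}A$ of a filtered $\delta$-ring this lies in $\F^{pm}A\cdot t^{pm}\subseteq\Rees(\F^\star A)$. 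Closure under $\delta$ applied to sums and products then follows from the standard $\delta$-ring formulas, so $\Rees(\F^\star A)$ is a $\delta$-subring of $A[t^{\pm 1}]$.

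Next I would verify compatibility of this $\delta$-structure with the $\bZ$-grading, i.e.\ with the $\Gm$-action on $\Spec\Rees(\F^\star A)$. Endowing $\bZ[\lambda^{\pm 1}]$ with its canonical $\delta$-structure $\delta(\lambda)=0$ (so that $\delta(\lambda^m)=0$), the coaction
\[
c\colon\Rees(\F^\star A)\to\bZ[\lambda^{\pm 1}]\tensor\Rees(\F^\star A),\qquad at^m\mapsto\lambda^m\tensor at^m,
\]
is a $\delta$-ring map because both $c(\delta(at^m))$ and $\delta(c(at^m))$ evaluate to $\lambda^{pm}\tensor t^{pm}\delta(a)$ by a direct application of the product formula. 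Derived $p$-adic completion preserves the $\delta$-structure by~\cite[Lem.~2.18]{prisms}, so $\Spf\Rees(\F^\star A)_p^\wedge$ is a $\delta$-formal scheme carrying a $\delta$-action of $\Gmhat$. Consequently the quotient stack $\FSpf A=[\Spf\Rees(\F^\star A)_p^\wedge/\Gmhat]$ inherits a canonical $\delta$-stack structure.

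Finally, to see that the structural map $\FSpf A\to\Ahat/\Gmhat$ is a $\delta$-stack map, note that $\Ahat/\Gmhat$ is itself a $\delta$-stack: $\bZ[t]$ has its canonical $\delta$-ring structure with $\delta(t)=0$, and the standard $\Gm$-action ($t\mapsto\lambda\tensor t$) is a $\delta$-action by the same kind of calculation as in the previous paragraph. The structural map is encoded by the tautological degree-$(-1)$ element $t^{-1}\in\F^{-1}A\cdot t^{-1}\subseteq\Rees(\F^\star A)$ (using $\F^m A=A$ for $m\leq 0$), equivalently by a $\Gm$-equivariant map of graded rings linking $\bZ[t]$ to $\Rees(\F^\star A)$. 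Since $\delta$ vanishes on $t$ and $t^{-1}$ in both source and target, this classifying map is a $\delta$-map, and hence so is the induced map of quotient stacks. The real content is the one-line verification $\delta(at^m)=t^{pm}\delta(a)\in\F^{pm}A\cdot t^{pm}$, which is precisely the filtered $\delta$-ring axiom repackaged as closure of the Rees subring under $\delta$; the $\Gm$-equivariance and the identification of the map to $\Ahat/\Gmhat$ are then formal.
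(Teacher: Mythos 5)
Your proof is correct and follows essentially the same path as the paper's, hinging on the same key formula $\delta(a t^m) = \delta(a)\,t^{pm} \in \F^{pm}A\cdot t^{pm}$, the same comultiplication computation, and the same appeal to~\cite[Lem.~2.18]{prisms} for the $p$-completion. The one genuine (if small) difference is that you realize the $\delta$-structure on $\Rees(\F^\star A)$ by exhibiting it as a $\delta$-subring of the ambient Laurent ring $A[t^{\pm 1}]$ (with $\delta(t)=0$) and verifying closure, whereas the paper defines the $\delta$-operation on the Rees algebra directly by the formula and observes that the corresponding Frobenius lift is well-defined. Your subring-closure route is marginally cleaner on the foundational side, since you inherit the $\delta$-ring axioms from $A[t^{\pm 1}]$ for free (in particular the cross terms in $\delta(x+y)$ land in the Rees subring automatically by multiplicativity of the filtration), rather than having to certify them by hand for the direct definition; this is a nice observation but the mathematical content is the same.
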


\begin{proof}
    By definition, $\FSpf A=(\Spf\bigoplus \F^m A\cdot t^{m})/\Gmhat$, so it can be computed as the
    colimit of of the simplicial object $(\Spf\bigoplus\F^mA\cdot t^{m})\times\Gmhat^{\times\bullet}$
    corresponding to the one-sided bar construction. The filtered $\delta$-ring structure on $A$
    gives $\Spf\bigoplus\F^mA\cdot t^{m}$ the structure of a formal $\delta$-scheme. Specifically,
    as $\delta(t)=0$,
    the $\delta$-operation on $\bigoplus\F^mA\cdot t^{m}$ sends
    $x\cdot t^{m}\in\F^mA\cdot t^{m}$ to $\delta(x)\cdot t^{pm}\in\F^{pm}A\cdot t^{pm}$. As $\varphi(x)=x^p+p\delta(x)$
    is a lift of Frobenius on $A$, it follows that the formula $\varphi(x\cdot
    t^{m})=(x^p+p\delta(x))\cdot t^{pm}$ defines a lift of Frobenius on
    the direct sum. Now, by~\cite[Lems.~2.17 and 2.18]{prisms}, there is an induced $\delta$-ring structure on the
    $p$-completion. Finally, the comultiplication $\nabla\colon\oplus\F^mA\cdot
    t^{m}\rightarrow\oplus\F^mA\cdot t^{m}\otimes_{\bZ}\bZ[t^{\pm 1}]$
    encoding the $\Gmhat$-action
    is defined for $x\in\F^m A$ by $\nabla(x\cdot t^{m})=(x\cdot t^{m})\otimes t^{m}$.
    The comultiplication is thus a map of $\delta$-rings as $$\delta(\nabla(x\cdot t^m))=\delta((x\cdot t^m)\otimes
    t^m)=\delta(x\cdot t^m)\otimes t^{pm}+(x^p\cdot t^{pm})\otimes \delta(t^m)+p\delta(x\cdot
    t^m)\otimes\delta(t^m)=\delta(x\cdot t^m)\otimes t^{pm}=\nabla(\delta(x\cdot t^m))$$ and remains such upon $p$-completion.   
    It follows that the simplicial object from the start of the proof is a simplicial object in
    formal affine $\delta$-schemes.
\end{proof}

\begin{construction}[Filtered prismatization]
    Let $\F^\star A\rightarrow\F^\star R$ be a filtered $\delta$-pair. The filtered prismatization
    $\F\WCart_{\F^\star R/\F^\star A}$, or $\F\WCart_{R/A}$ for short when the filtrations are clear
    from context, is the formal stack defined as the pullback
    $$\xymatrix{
        \F\WCart_{\F^\star R/\F^\star A}\ar[r]\ar[d]&\WCart_{\FSpf R}\ar[d]\\
        \FSpf A\times\WCart\ar[r]&\WCart_{\FSpf A},
    }$$
    where the bottom map is defined thanks to the naturality
    of~\cite[Const.~3.11]{bhatt-lurie-prism}; see also Construction~\ref{const:probe}. 
\end{construction}

\begin{construction}[Graded prismatization]
    Given a graded $\delta$-ring $A^\star$ and a graded $A^\star$-algebra $R^\star$,
    define $\Gr\WCart_{R^\star/A^\star}$ as the analogous pullback where $\GrSpf A$ replaces the filtered formal
    spectrum.
\end{construction}

\begin{remark}
    Filtered and graded prismatization are examples are prismatization relative to $\delta$-stacks.
    We leave the details to the reader.
\end{remark}

\begin{remark}[Functor of points of filtered prizmatizations]
    Using Remark~\ref{rem:fop}, we can describe $\F\WCart_{R/A}$ as a stack over $\Ahat/\Gmhat$
    along the compositions $\F\WCart_{R/A}\rightarrow\FSpf A\times\WCart\rightarrow\FSpf
    A\rightarrow \Ahat/\Gmhat$. If $p$ is nilpotent in $S$, an $S$-point of $\FSpf A\times\WCart$
    corresponds to a collection $$(L,s,f\colon\Rees(\F^\star A)\rightarrow S[L^{\pm 1}],\alpha\colon
    I\rightarrow W(S)),$$ where $\alpha$ is a Cartier--Witt divisor and $(L,s,f)$ describes an $S$
    point of $\FSpf A$. On the other hand, an $S$-point of $\WCart_{\FSpf A}$ corresponds to a
    Cartier--Witt divisor $(\alpha\colon I\rightarrow W(S))$ together with a $W(S)/I$-point of
    $\FSpf A$, and similarly for $\WCart_{\FSpf R}$. The bottom natural transformation $\FSpf
    A\times\WCart\rightarrow\WCart_{\FSpf A}$ sends the collection $(L,s,f,\alpha)$ to the
    Cartier--Witt divisor $\alpha\colon I\rightarrow W(S)$ together with the Teichm\"uller lift
    $[L]$ of $L$ and $[s]$ of $s$ and the composition $$\Rees(\F^\star A)\rightarrow W(S)[[L]^{\pm
    1}]\rightarrow W(S)/I[[L]^{\pm 1}]$$
    obtained from adjunction using the $\delta$-stack structure of $\FSpf A$. (For any
    $\delta$-stack $\Xscr$ and any commutative ring $R$ there is a natural map
    $\Xscr(R)\rightarrow\Xscr(W(R))$. As $p$ is not generally nilpotent in $W(R)$, we view
    $\Xscr(W(R))$ as $\lim(\Xscr(W_n(R)))$ for example. Applied to $\Ahat/\Gmhat$, this yields the
    Teichm\"uller lifts of line bundles and sections.)
    Thus, an $S$-point of $\F\WCart_{R/A}$ consists of $(L,s,f,\alpha)$ as above together with a
    Cartier--Witt divisor $\beta\colon J\rightarrow W(S)$, a rank $1$ projective module $M$ over
    $W(S)/I$, a section $u\colon M\rightarrow W(S)/I$, and a map $\Rees(\F^\star R)\rightarrow
    W(S)/I[M^{\pm 1}]$ sending $t^{-1}$ to $u^{-1}$ together with a fixed equivalence between $\alpha$
    with $\beta$, a fixed equivalence between $[L]$ and $M$ over $W(S)/I$ compatible with the
    equivalence between $\alpha$ and $\beta$, and a fixed equivalence between $\Rees(\F^\star A)\rightarrow W(S)/I[[L]^{\pm 1}]$
    and $\Rees(\F^\star A)\rightarrow\Rees(\F^\star R)\rightarrow W(S)/J[M^{\pm 1}]$ compatible with
    the other identifications.
\end{remark}

\begin{lemma}\label{lem:filteredcrystals}
    There are natural equivalences
    $$\D(\widehat{\bA}^1/\Gmhat\times\WCart)\we\F\D(\WCart)\quad\text{and}\quad\D(\B\Gmhat\times\WCart)\we\Gr\D(\WCart).$$
\end{lemma}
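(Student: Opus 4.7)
The plan is to apply a relative form of the Rees equivalence over $\WCart$. Recall that in the absolute setting one has standard equivalences $\D(\bA^1/\Gm) \we \F\D(\bZ)$ and $\D(\B\Gm) \we \Gr\D(\bZ)$, given by the Rees construction, which identify quasi-coherent sheaves on these classifying stacks with filtered and graded complexes of abelian groups. Restricting to $p$-nilpotent test rings so as to pass to the formal analogues, this yields $\D(\widehat{\bA}^1/\Gmhat) \we \F\D(\bZ_p)$ and $\D(\B\Gmhat) \we \Gr\D(\bZ_p)$, where the Rees algebra $\Rees(\bZ[t])\we \bZ[t]$ is considered with the $p$-adic formal topology compatible with $\widehat{\bA}^1 = \Spf\bZ[t]$ of Warning~\ref{warn:formalaffineline}.

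Next, one promotes this to the relative setting. We have pullback diagrams of formal stacks
\[
\xymatrix{
\widehat{\bA}^1/\Gmhat\times\WCart\ar[r]\ar[d]&\WCart\ar[d]\\
\widehat{\bA}^1/\Gmhat\ar[r]&\Spf\bZ_p
}
\qquad\text{and}\qquad
\xymatrix{
\B\Gmhat\times\WCart\ar[r]\ar[d]&\WCart\ar[d]\\
\B\Gmhat\ar[r]&\Spf\bZ_p.
}
\]
The strategy is to show that $\D(-)$ turns these into tensor products over $\D(\bZ_p)$, giving
\[
\D(\widehat{\bA}^1/\Gmhat\times\WCart)\we\D(\widehat{\bA}^1/\Gmhat)\tensor_{\D(\bZ_p)}\D(\WCart)\we\F\D(\bZ_p)\tensor_{\D(\bZ_p)}\D(\WCart),
\]
and analogously for $\B\Gmhat$. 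Since the right hand side is by definition $\F\D(\WCart)$ (respectively $\Gr\D(\WCart)$), this yields the lemma.

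The main obstacle is the base change identification of $\D(-)$ applied to the above pullback squares with the tensor product of derived $\infty$-categories. I would prove this by using the description of $\WCart$ as the colimit of $\SSpf B$ over the sifted category of bounded transversal prisms, so that $\D(\WCart) \we \lim_B \D(B)^\wedge_{(p,I)}$ by Definition~\ref{def_crystal}. The product has the analogous colimit presentation since pullbacks commute with colimits of stacks, giving $\D(\widehat{\bA}^1/\Gmhat\times\WCart)\we\lim_B\D(\widehat{\bA}^1/\Gmhat\times\SSpf B)$ by the quasi-coherent descent discussed in Appendix~\ref{app}. One then reduces to the affine base change statement $\D(\widehat{\bA}^1/\Gmhat\times\SSpf B)\we\F\D(B)^\wedge_{(p,I)}$, which is just the Rees equivalence applied relative to the bounded prism $(B,I)$ with its $(p,I)$-adic completion, and the compatible assertion for $\B\Gmhat$. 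Taking limits over transversal $B$ on both sides then yields the desired equivalences; the lemma for the graded case proceeds identically with $\B\Gmhat$ in place of $\widehat{\bA}^1/\Gmhat$.
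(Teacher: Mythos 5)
Your proposal is correct and, as far as one can tell from the paper's terse reference ``See Appendix~\ref{app_1},'' matches the route the appendix sets up: the paper proves $\D(\B\Gmhat)\we\Gr\D(\bZ)_p^\wedge$ and $\D(\Ahat/\Gmhat)\we\Fun(\bZ^\op,\D(\bZ)_p^\wedge)$ by expressing the formal stacks as colimits of their mod-$p^n$ truncations and applying the classical Rees equivalence level-wise, and the lemma is then the relative extension of that. The explicit step you supply---writing $\WCart=\colim_B\SSpf B$ over transversal prisms, passing $\D(-)$ to the corresponding limit $\lim_B\D(B)^\wedge_{(p,I)}$, establishing the affine case $\D(\Ahat/\Gmhat\times\SSpf B)\we\F\D(B)^\wedge_{(p,I)}$, and then commuting $\Fun(\bZ^\op,-)$ (resp. $\prod_\bZ$) past the limit over $B$---is exactly the content that the paper leaves implicit, and it is the cleaner way to go: it sidesteps the need for a general ``$\D$ of a pullback is the tensor product'' assertion, which you introduce as a strategic framing early in the proposal but then wisely do not rely on. Two minor points worth keeping in mind: (1) the affine base case is not just the classical Rees equivalence for $\Spec B$ but its $(p,I)$-completed version, established by the same $\colim\Spf B/(p^m,I^n)$ argument used in the appendix's examples; (2) the passage of $\Fun(\bZ^\op,-)$ and $\prod_\bZ$ past the limit over transversal prisms is legitimate because these are themselves limit constructions on the target, so no dualizability hypothesis on $\D(\WCart)$ is needed here.
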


\begin{proof}
    See Appendix~\ref{app_1}.
\end{proof}

\begin{construction}[Filtered and graded prismatic crystals]
    Define the filtered prismatic crystal $\F^\star\Hscr_\Prism(R/A)$ to be the (derived)
    pushforward of the structure sheaf of $\F\WCart_{R/A}$ along the maps
    $\F\WCart_{R/A}\rightarrow\FSpf A\times\WCart\rightarrow\widehat{\bA}^1/\Gmhat\times\WCart$.
    Define the graded prismatic crystal $\gr^\star\Hscr_\Prism(R/A)$ in the analogous way as a sheaf
    on $\B\Gmhat\times\WCart$.
\end{construction}

\begin{warning}
    As in Proposition~\ref{prop:push}, we view this definition of the filtered prismatic crystals as correct
    only under filtered quasisyntomicity conditions.
\end{warning}

\begin{definition}[Filtered and graded prismatic cohomology]\label{def:filtered_cohomology}
    If $\F^\star A\rightarrow\F^\star R$ is a filtered $\delta$-pair, let
    $\F^\star\Prism_{R/A}=\R\Gamma(\F\WCart_{R/A},\Oscr_{\F\WCart_{R/A}})$, the filtered prismatic cohomology of
    $\F^\star R$ relative to $\F^\star A$. Similarly, if $A^\star\rightarrow R^\star$ is a
    graded $\delta$-pair, let
    $\gr^\star\Prism_{R/A}=\R\Gamma(\Gr\WCart_{R^\star/A^\star},\Oscr_{\Gr\WCart_{R^\star/A^\star}})$.
\end{definition}

\begin{remark}
    By Lemma~\ref{lem:filteredcrystals}, we can view $\F^\star\Hscr_\Prism(R/A)$ as a filtered object
    in prismatic crystals, and similarly for the graded prismatic crystal. By pushing forward along
    $\Ahat/\Gmhat\times\WCart\rightarrow\Ahat/\Gmhat$, we can view $\F^\star\Prism_{R/A}$ as
    being a filtered object of $\D(\bZ)_p^\wedge$.
\end{remark}

\begin{proposition}\label{prop:adjointable}
    The commutative diagram
    $$\resizebox{1\textwidth}{!}{
        \xymatrix{\ar @{} [drrr] |{(A)}
            \WCart_{\widehat{\bigoplus\gr^mR}/\widehat{\bigoplus\gr^mA}}\ar[r]\ar[d]&\Gr\WCart_{R/A}\ar[d]^{\gr
            q}\ar[r]&\F\WCart_{R/A}\ar[d]^{\F q}&\WCart_{\Rees(R)/\Rees(A)}\ar[l]\ar[d]&\WCart_{R/A}\ar[d]\ar[l]\\
            \WCart\ar[r]&\B\Gmhat\times\WCart\ar[r]^0&\widehat{\bA}^1/\Gmhat\times\WCart&\Ahat\times\WCart\ar[l]&\WCart\ar[l]_1
        }}
    $$
    consists of pullback squares which satisfy base change for homologically bounded above quasi-coherent cohomology, where
    $\widehat{\bigoplus\gr^mR}$ and $\widehat{\bigoplus\gr^mA}$ denote the $p$-completed direct sums.
\end{proposition}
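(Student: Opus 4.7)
The plan is to verify the five pullback claims square by square and then address base change. The key ingredients are Lemma~\ref{lem:square}, which says that $(X/A)\mapsto\WCart_{X/A}$ preserves Tor-independent limits of $\delta$-diagrams, together with elementary identifications of the various fibers of $\FSpf A\to\Ahat/\Gmhat$. Strictness of the filtration on $\F^\star A$ and on $\F^\star R$ (which is automatic for $\F^\star A$ by the definition of a filtered $\delta$-ring, and can be arranged for $\F^\star R$) means that $\Rees(\F^\star A)=\bigoplus_m\F^mA\cdot t^m$ and $\Rees(\F^\star R)$ are $t$-torsion free, so that derived and underived pullbacks at the closed points $t=0$ and $t=1$ coincide.

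First I would record the fiber identifications: pulling back $\FSpf A\to\Ahat/\Gmhat$ along the zero section $0\colon\B\Gmhat\hookrightarrow\Ahat/\Gmhat$ yields $\GrSpf\gr^\star A$, along $1\colon\Spec\bZ\hookrightarrow\Ahat/\Gmhat$ yields $\Spf A$, along the $\Gmhat$-torsor $\Ahat\to\Ahat/\Gmhat$ yields $\Spf\Rees(\F^\star A)$, and along the further cover $\Spec\bZ\to\B\Gmhat$ of the fiber at $0$ yields $\Spf\bigoplus\gr^mA$. These are all direct consequences of the Rees construction. The corresponding statements for $\F^\star R$ are identical.

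Next, for each square I would apply Lemma~\ref{lem:square} to the defining pullback
$$\F\WCart_{R/A}=(\FSpf A\times\WCart)\times_{\WCart_{\FSpf A}}\WCart_{\FSpf R},$$
after first pulling back the three corner diagrams in $\delta$-stacks along the relevant map to $\Ahat/\Gmhat\times\WCart$. Strictness provides the Tor-independence needed to invoke Lemma~\ref{lem:square}. For instance, at $t=0$ the pullback of the defining triple is $(\GrSpf\gr^\star A\times\WCart)\to\WCart_{\GrSpf\gr^\star A}\leftarrow\WCart_{\GrSpf\gr^\star R}$, whose fiber product is $\Gr\WCart_{R/A}$ by Definition~\ref{def:filtered_cohomology}. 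The other three squares are handled identically using the fiber identifications above, with the leftmost one obtained by composing two pullback squares along the tower $\Spec\bZ\to\B\Gmhat\to\Ahat/\Gmhat$.

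Finally, for base change on bounded-above quasi-coherent cohomology, I would invoke Corollary~\ref{cor:1}. The bottom maps of the four rightmost squares are either faithfully flat (the torsor quotient $\Ahat\to\Ahat/\Gmhat$ and the atlas $\Spec\bZ\to\B\Gmhat$) or closed immersions cut out by a non-zero divisor ($t$ for the section at $0$ and $t-1$ for the section at $1$); thanks to strictness, this non-zero divisor remains so on $\Rees(\F^\star R)$ and $\Rees(\F^\star A)$, which is the precise hypothesis needed to apply Corollary~\ref{cor:1}. The main obstacle I expect is the bookkeeping around the two closed-immersion pullbacks at $t=0$ and $t=1$, since these are not flat on $\Ahat/\Gmhat$; however, the strictness assumption and the bounded-above hypothesis on quasi-coherent cohomology are designed exactly to handle this, making the argument go through uniformly for all five columns.
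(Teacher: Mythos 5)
Your decomposition of the pullback claims is essentially correct and matches the spirit of the paper's argument: the paper also expands the square (A) into a larger diagram using the defining pullback $\F\WCart_{R/A}=(\FSpf A\times\WCart)\times_{\WCart_{\FSpf A}}\WCart_{\FSpf R}$, the fact that $\WCart_{(-)}$ commutes with limits, and the fiber identifications of $\FSpf A$ over $\Ahat/\Gmhat$ that you record (in fact, the paper only proves square (A) and remarks the other cases are analogous, whereas you propose to do all five; this is a minor stylistic difference).

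The base change argument, however, has a genuine gap. You invoke Corollary~\ref{cor:1} for all four rightmost squares, but Corollary~\ref{cor:1} concerns pullback of $\WCart_{X/A}\to\WCart$ along a point $\rho_{B^0}\colon\SSpf B^0\to\WCart$ associated to a transversal prism, and its hypotheses have nothing to do with "closed immersions cut out by a non-zero divisor." The relevant tool is Corollary~\ref{cor:2}, which handles base change of $\WCart_{R/A}\to\Spf A\times\WCart$ along $\delta$-ring maps $A\to B$ of bounded $p$-complete Tor-amplitude. Even that corollary does not apply directly, because the bottom row of your squares lives over $\FSpf A\times\WCart$, $\B\Gmhat\times\WCart$, or $\Ahat/\Gmhat\times\WCart$ rather than over an affine $\Spf$. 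One must first pull back everything along the \v{C}ech nerve of the formally smooth cover $\Ahat\times\WCart\to\Ahat/\Gmhat\times\WCart$, reducing to cosimplicial squares whose bases are $\Spf(\Rees(A))\times\Gmhat^\bullet\times\WCart$ and $\Spf(\bigoplus\gr^iA)\times\Gmhat^\bullet\times\WCart$; in each cosimplicial degree Corollary~\ref{cor:2} applies (here your observation that strictness makes $t$ a non-zero divisor on $\Rees(\F^\star A)$ is exactly what guarantees the required bounded Tor-amplitude), and then one argues as in the proof of Corollary~\ref{cor:2} to assemble base change for the colimit. Without this \v{C}ech reduction and the correct citation, the last paragraph of your proof does not go through.

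Separately, your citation of Lemma~\ref{lem:square} for the pullback claims is an imprecise fit: that lemma is about $\WCart_{X/A}$ preserving Tor-independent limits of diagrams of pairs $(X_i\to\Spf A_i)$, whereas what is actually used is (i) the definition of $\F\WCart_{R/A}$, $\Gr\WCart_{R/A}$, etc.\ as pullbacks, (ii) that $\WCart_{(-)}$ commutes with limits of (formal) stacks, and (iii) pasting/cancellation for pullback squares. The conclusion is the same, but a reader following your citation would have to do extra work to see how Lemma~\ref{lem:square} is being applied.
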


\begin{proof}
    For simplicity, we will prove the result for square (A); the rest of the proof uses the
    same ideas. Square (A) fits into a commutative diagram
    \begin{equation}\label{eq:3x3}
        \begin{gathered}
            \xymatrix{
                \Gr\WCart_{R/A}\ar[r]\ar[d]&\F\WCart_{R/A}\ar[r]\ar[d]&\WCart_{\FSpf R}\ar[d]\\
                \GrSpf A\times\WCart\ar[r]\ar[d]&\FSpf A\times\WCart\ar[r]\ar[d]&\WCart_{\FSpf A}\ar[d]\\
                \B\Gmhat\times\WCart\ar[r]&\widehat{\bA}^1/\Gmhat\times\WCart\ar[r]&\WCart_{\widehat{\bA}^1/\Gmhat}.
            }
        \end{gathered}
    \end{equation}
    The top outer square fits into another commutative diagram
    $$\xymatrix{
        \Gr\WCart_{R/A}\ar[r]\ar[d]&\WCart_{\GrSpf R}\ar[r]\ar[d]&\WCart_{\FSpf R}\ar[d]\\
        \GrSpf A\times\WCart\ar[r]&\WCart_{\GrSpf A}\ar[r]&\WCart_{\FSpf A}.
    }$$
    In this diagram, the left square is a pullback by definition and the right square is a pullback
    as $\WCart_{(-)}$ commutes with limits. It follows that the outer square is a pullback which
    proves that the top outer square of~\eqref{eq:3x3} is a pullback. As the top right square
    of~\eqref{eq:3x3} is a pullback by definition, it follows that the top left square
    of~\eqref{eq:3x3} is a pullback, as desired (see for
    example~\cite[Lem.~4.4.2.1]{htt}). But, the bottom left square of~\eqref{eq:3x3} is a pullback
    square as it is the pullback of a pullback square to $\WCart$. It follows that square (A) is a
    pullback square as it is the outer left square in~\eqref{eq:3x3}.

    To prove base change for bounded above quasi-coherent cohomology for square (A) in the
    proposition, it is enough to prove it for the top-left and bottom-left squares
    of~\eqref{eq:3x3}. The bottom-left square is left to the reader: it is geometric (in the sense
    that all of the maps have affine formal scheme fibers) and standard
    arguments work. For the top-left square, we can pull back everything along the \v{C}ech complex
    of $\Ahat\times\WCart\rightarrow\Ahat/\Gmhat\times\WCart$. This results in a cosimplicial
    commutative diagram
    \begin{equation*}
        \begin{gathered}
            \xymatrix{
                \WCart_{\gr R^\bullet/\gr A^\bullet}\ar[r]\ar[d]&\WCart_{R^\bullet/A^\bullet}\ar[d]\\
                \Spf(\bigoplus\gr^iA)\times\Gmhat^\bullet\times\WCart\ar[r]\ar[d]&\Spf(\Rees(A))\times\Gmhat^\bullet\times\WCart\ar[d]\\
                \Gmhat^\bullet\times\WCart\ar[r]&\widehat{\bA}^1\times\Gmhat^\bullet\times\WCart,
            }
        \end{gathered}
    \end{equation*}
    of pullback squares,
    where $A^\bullet$ and $R^\bullet$ are the global sections of $\Spf(\Rees(A))\times\Gm^\bullet$ and $\Spf
    (\Rees(R))\times\Gm^\bullet$, respectively, while $\gr A^\bullet$ and $\gr R^\bullet$ are
    shorthands for
    the global sections of $\Spf(\bigoplus\gr^iA)\times\Gm^\bullet$ and $\Spf(\bigoplus\gr^i
    R)\times\Gm^\bullet$, respectively.
    Again, the bottom square satisfies base change for quasi-coherent cohomology, which uses that
    the closed inclusions
    $\Gmhat^\bullet\hookrightarrow\Ahat\times\Gmhat^\bullet$ have finite flat dimension.
    The top square satisfies base change for bounded above quasi-coherent cohomology by
    Corollary~\ref{cor:2} in each cosimplicial degree. Arguing as in the proof of that corollary,
    one deduces base change for bounded above quasi-coherent cohomology for the colimit diagram. 
\end{proof}

\begin{definition}
    Say that a strict filtered $\delta$-pair $\F^\star A\rightarrow\F^\star R$ is filtered relatively
    quasisyntomic if $\mathrm{Rees}(\F^\star A)_p^\wedge\rightarrow
    \mathrm{\Rees}(\F^\star R)_p^\wedge$ is relatively quasisyntomic.
\end{definition}

\begin{remark}
    A strict $\delta$-pair $\F^\star A\rightarrow\F^\star R$ is filtered relatively
    quasisyntomic if and only if
    $A\rightarrow R$ and 
    $\left(\bigoplus\gr^iA\right)_p^\wedge\rightarrow\left(\bigoplus\gr^iR\right)_p^\wedge$ are
    relatively quasisyntomic. Indeed, the $p$-complete Tor-amplitude of the cotangent complex
    $\L_{\mathrm{Rees}(\F^\star R)_p^\wedge/\mathrm{Rees}(\F^\star A)_p^\wedge}$ can be checked on
    underlying and associated graded pieces.
\end{remark}

\begin{corollary}\label{cor:underlying}
    For a relatively quasisyntomic filtered $\delta$-pair $\F^\star A\rightarrow \F^\star R$,
    \begin{enumerate}
        \item[{\em (i)}] $\R\Gamma(\WCart,\bigoplus_{m\in\bZ}\gr^m\Prism_{\gr^\star
            R/\gr^\star A})\we\Prism_{\widehat{\bigoplus_{m\in\bZ}\gr^mR}/\widehat{\bigoplus_{m\in\bZ}\gr^mA}}$,
        \item[{\em (ii)}] $\gr^\star\Prism_{\F^\star R/\F^\star A}\we\gr^\star\Prism_{\gr^\star
            R/\gr^\star A}$, and
        \item[{\em (iii)}] $\Rees(\F^\star\Prism_{\F^\star R/\F^\star A})_\HT^\wedge\we\Prism_{\Rees(\F^\star
            R)/\Rees(\F^\star A)}$, and
        \item[{\em (iv)}] $\F^{-\infty}\Prism_{\F^\star R/\F^\star A}\we\Prism_{R/A}$,
    \end{enumerate}
    where the direct sum in (i) is taken in $\D(\WCart)$ and hence implicitly completed along $p$ and the
    Hodge--Tate locus and the Rees algebra in (iii) is completed at the Hodge--Tate tower.
    In particular, the associated graded pieces of the filtration $\F^\star\Prism_{R/A}$ depend only
    on the graded $\delta$-pair $\gr^\star A\rightarrow\gr^\star R$.
\end{corollary}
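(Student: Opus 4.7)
The plan is to derive each of the four statements directly from Proposition~\ref{prop:adjointable}, which provides a sequence of pullback squares
$$\WCart\to\B\Gmhat\times\WCart\to\Ahat/\Gmhat\times\WCart\leftarrow\Ahat\times\WCart\leftarrow\WCart,$$
each satisfying base change for bounded-above quasi-coherent cohomology. Each statement (i)--(iv) amounts to pulling back the filtered prismatic crystal $\F q_*\Oscr_{\F\WCart_{R/A}}$, which is $\F^\star\Hscr_\Prism(R/A)$ viewed via Lemma~\ref{lem:filteredcrystals}, along one of these maps, identifying the fiber-wise structure sheaf pushforward with an honest prismatic crystal via Proposition~\ref{prop:push}, and then taking $\R\Gamma(\WCart,-)$.

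Concretely, for (iv), pullback along $1\colon \WCart\hookrightarrow\Ahat/\Gmhat\times\WCart$ computes $\F^{-\infty}$, the underlying object of the filtration, and base change combined with Proposition~\ref{prop:push} identifies the result with $\Hscr_\Prism(R/A)$; the latter invokes the relatively quasisyntomic hypothesis to guarantee that $\L_{R/A}$ has $p$-complete Tor-amplitude in $[0,1]$. For (iii), pullback along $\Ahat\times\WCart\to\Ahat/\Gmhat\times\WCart$ yields the Rees construction, and base change identifies the result with $\Hscr_\Prism(\Rees(\F^\star R)/\Rees(\F^\star A))$; the Hodge--Tate completion on the left appears because quasi-coherent sheaves on $\Ahat\times\WCart$ are automatically derived complete along $\WCart$. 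For (ii), pullback along $0\colon \B\Gmhat\times\WCart\to\Ahat/\Gmhat\times\WCart$ produces the associated graded, identified by base change with $\gr^\star\Hscr_\Prism(\gr^\star R/\gr^\star A)$. For (i), iterate: further pull back along $\WCart\to\B\Gmhat\times\WCart$ to collapse the grading into its underlying direct sum (completed along $p$ and the Hodge--Tate ideal), and base change yields $\Hscr_\Prism(\widehat{\bigoplus\gr^mR}/\widehat{\bigoplus\gr^mA})$.

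The main technical point is verifying that Proposition~\ref{prop:push} applies at each fiber, which requires the relevant $\delta$-pair to have cotangent complex with $p$-complete Tor-amplitude in $[0,1]$. By definition, $\F^\star A\to\F^\star R$ being filtered relatively quasisyntomic means precisely that this holds for $\Rees(\F^\star A)_p^\wedge\to\Rees(\F^\star R)_p^\wedge$, handling (iii) immediately; the remark after that definition also yields the hypothesis needed for (iv) and (i), while the case of (ii) follows from base change applied to (i) via the Tor-independent map $\B\Gmhat\leftarrow\mathrm{pt}$. The final sentence of the corollary is then immediate from (ii), since the right-hand side only depends on the graded $\delta$-pair $\gr^\star A\to\gr^\star R$ by construction.
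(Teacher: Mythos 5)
Your proof follows the same route as the paper, which cites exactly Proposition~\ref{prop:adjointable} and Proposition~\ref{prop:push}; your unpacking of which square gives which part of the corollary is correct. The one imprecise spot is your explanation of the Hodge--Tate completion in (iii): it is not that $\D(\Ahat\times\WCart)$ is ``automatically derived complete along $\WCart$,'' but rather, as the remark immediately following the corollary explains, that the square involving the vertical arrows $\R\Gamma(\WCart,-)$ fails to satisfy base change because the $p$-completed Rees construction (an infinite direct sum) does not commute with taking global sections over $\WCart$, so one must complete along the Hodge--Tate tower to restore the comparison.
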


\begin{proof}
    This follows from Proposition~\ref{prop:adjointable} and Proposition~\ref{prop:push}.
    Specifically, (i) follows base change for the left-most square in the statement of
    Proposition~\ref{prop:adjointable}; part (ii) is the square marked (A);
    part (iii) is base change for the second square from the right; part (iv) is base change for the
    right-most square.
\end{proof}

\begin{warning}
    The filtration $\F^\star\Prism_{R/A}$ is not typically complete, even when $\F^\star
    A\rightarrow\F^\star R$ is a complete prismatic $\delta$-pair. This occurs for example in the
    case of the prismatic envelope $\bZ_p\llbracket z\rrbracket\{\tfrac{z^n}{E(z)}\}$ with respect
    to a Breuil--Kisin prism $(\bZ_p\llbracket z\rrbracket,(E(z)))$ for an Eisenstein polynomial
    $E(z)$, which is studied in~\cite[Prop.~3.33]{akn-kzpn}.
\end{warning}

\begin{remark}
    The right square in
    $$\xymatrix{
        \WCart\ar[r]^1\ar[d]&\Ahat\times\WCart\ar[r]\ar[d]&\Ahat/\Gmhat\times\WCart\ar[d]\\
        \Spf\bZ_p\ar[r]^1&\Ahat\ar[r]&\Ahat/\Gmhat
    }$$ does not satisfy base change for quasi-coherent cohomology.
    The issue is that pullback along the bottom horizontal arrow is the $p$-completed Rees
    constsruction, which need not be complete with respect to the filtration induced by the
    Hodge--Tate locus. However, the outer square does satisfy base change for quasi-coherent
    cohomology on those filtered crystals $\F^\star\Hscr$ which eventually stabilize at equivalences
    $\cdots\rightarrow\F^m\Hscr\we\F^{m-1}\Hscr\we\F^{m-2}\Hscr\we\cdots$.
    This explains the Hodge--Tate completion necessary in (iii) of Corollary~\ref{cor:underlying} while there is
    no completion necessary in (iv).
\end{remark}

\begin{proposition}[Filtered and graded Hodge--Tate comparison]\label{prop:filteredhodgetate}
    Suppose that $\F^\star A\rightarrow\F^\star R$ is a prismatic filtered $\delta$-pair, such that 
    $\Rees\F^\star \bar A \to \Rees\F^\star R$ is $p$-adically smooth for some choice of prism structure on $\F^\star A$ yielding the quotient $\F^\star \bar A$ as in Remark \ref{rem_filteredbar}.
    Then, $\F\WCart_{R/A}^\HT\rightarrow\FSpf R$ is naturally
    a $T_{\F^\star R/\F^\star A}\{1\}^\#$-gerbe. Similarly in the graded case.
\end{proposition}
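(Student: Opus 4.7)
The plan is to reduce both the filtered and the graded statements to the unfiltered Hodge--Tate comparison for relative Cartier--Witt stacks, which is \cite[Thm.~5.12]{bhatt-lurie-prism} (or more precisely its extension from~\cite{bhatt-lurie-prism} allowing $A$ to be only a bounded prism). The key geometric inputs are the $\Gmhat$-torsors
\[
\Ahat\rightarrow\Ahat/\Gmhat\quad\text{and}\quad\Spf\bZ_p\rightarrow\B\Gmhat,
\]
together with the pullback squares
\[
\WCart_{\Rees(R)/\Rees(A)}\rightarrow\F\WCart_{R/A}\quad\text{and}\quad\WCart_{\widehat{\bigoplus\gr^mR}/\widehat{\bigoplus\gr^mA}}\rightarrow\Gr\WCart_{R/A}
\]
from Proposition~\ref{prop:adjointable}. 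Under the formal smoothness assumption, the Rees and graded algebras are again formally smooth (as $p$-completed graded commutative rings), and their tangent sheaves recover the flat filtration and grading on $T_{\F^\star R/\F^\star A}\{1\}$ and $T_{R^\star/A^\star}\{1\}$.

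For the graded case, I would first observe that $\Gr\WCart_{R^\star/A^\star}\rightarrow\B\Gmhat\times\WCart$ is, after pullback to $\Spf\bZ_p\times\WCart$, the relative Cartier--Witt stack $\WCart_{\widehat{\bigoplus\gr^mR}/\widehat{\bigoplus\gr^mA}}$; over the Hodge--Tate locus, this is a $T_{\widehat{\bigoplus\gr^mR}/\widehat{\bigoplus\gr^mA}}\{1\}^\#$-gerbe over $\Spf\widehat{\bigoplus\gr^mR}$ by the usual Hodge--Tate comparison. The construction is natural in the $\Gmhat$-action (as both sides are obtained by applying colimit-preserving functors to $\Gmhat$-equivariant data, and the identification of the tangent complex is canonical), so descent along the $\Gmhat$-torsor $\Spf\bZ_p\rightarrow\B\Gmhat$ gives that $\Gr\WCart_{R^\star/A^\star}^\HT\rightarrow\GrSpf R^\star$ is a $T_{R^\star/A^\star}\{1\}^\#$-gerbe, as claimed. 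The filtered case is identical, using the $\Gmhat$-torsor $\Ahat\rightarrow\Ahat/\Gmhat$ and noting that $\Rees(\F^\star R)$ is formally smooth over $\Rees(\F^\star A)$ with tangent sheaf computing $\Rees(T_{\F^\star R/\F^\star A})$.

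The main technical obstacle will be verifying the $\Gmhat$-equivariance of the Hodge--Tate gerbe structure, including the identification of the band. Concretely, one must check that the equivalence $T_{R^{\mathrm{Rees}}/A^{\mathrm{Rees}}}\{1\}^\#\we\Rees(T_{\F^\star R/\F^\star A}\{1\}^\#)$ (where $A^{\mathrm{Rees}}=\Rees(\F^\star A)$ etc.) is compatible with the natural $\Gmhat$-action and that the induced gerbe on $\FSpf R$ obtained by $\Gmhat$-descent coincides with the one predicted by the filtered tangent sheaf. This is ultimately a naturality check for the Hodge--Tate comparison in~\cite{bhatt-lurie-prism}, which commutes with base change along maps of prisms and hence with the $\Gmhat$-action induced from the comultiplication on $\Rees(\F^\star A)$.
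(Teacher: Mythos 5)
Your proof takes essentially the same route as the paper's: reduce the filtered (resp.\ graded) statement to the unfiltered Hodge--Tate comparison of~\cite{bhatt-lurie-prism} by descent along the $\Gmhat$-torsor (equivalently, by pulling back along the \v{C}ech complex of $\Spf\Rees(\overline{A})\rightarrow\FSpf\overline{A}$, using the pullback squares of Proposition~\ref{prop:adjointable}). The paper simply records this as ``immediate,'' while you have spelled out the $\Gmhat$-equivariance check and the identification of the band; that elaboration is correct and in the right spirit, but the core argument is the same.
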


\begin{proof}
    This follows immediately from~\cite[Prop.~5.12]{bhatt-lurie-prism} by pulling back over the \v{C}ech
    complex $\Spf \Rees{\overline{A}}\times\Gmhat^\bullet$ of $\Spf\Rees{\overline{A}}\rightarrow\FSpf\overline{A}$.
    The graded case is similar.
\end{proof}

In order to gain understanding of filtered prismatic cohomology, especially in the case of prismatic filtered
$\delta$-pairs, we need to verify that a specific filtered prismatic envelope construction does
produce a filtered prism.

\begin{construction}[Filtered Koszul complexes]\label{const:filteredkoszul}
    Recall that if $J=(x_1,\ldots,x_c)$ is an ideal in $A$, then we will say that the sequence $(x_1,\ldots,x_c)$ is Koszul-regular
    if the Koszul
    complex $\Kos(x_1,\ldots,x_c)$ is a resolution of $A/J$ (so has no higher homology). If
    $\F^\star A$ is a strictly filtered commutative ring and each $x_j$ has weight $w(j)$, meaning that
    $x_j\in\F^{w(j)}A$ but $x_j\notin\F^{w(j)+1}$, then one can make the Koszul complex
    $\Kos(x_1,\ldots,x_c)$ into a complex of filtered $\F^\star A$-modules, which we will call the
    filtered Koszul complex $\F^\star\Kos(x_1,\ldots,x_c)$. For example, if $J=(x_1,x_2)$, then the
    filtered Koszul complex of the sequence $(x_1,x_2)$ will be
    $$\cdots\rightarrow 0\rightarrow\F^{\star-w(1)-w(2)}
    A\xrightarrow{\begin{pmatrix}x_2\\-x_1\end{pmatrix}}\F^{\star-w(1)} A\oplus\F^{\star-w(2)}
    A\xrightarrow{\begin{pmatrix}x_1&x_2\end{pmatrix}}\F^\star A\rightarrow 0\rightarrow\cdots.$$
    We say that $(x_1,\ldots,x_c)$ is a filtered Koszul-regular sequence if the filtered Koszul complex
    $\F^{\star}\Kos(x_1,\ldots,x_c)$ has vanishing positive homology and if $\H_0(\F^{\star}\Kos(x_1,\ldots,x_c))$
    is a strict filtration, necessarily on $A/J$. This implies that $(x_1,\ldots,x_c)$ is a regular
    sequence and much more. Equivalently, $\F^{\star}\Kos(x_1,\ldots,x_c)$ has vanishing positive homology
    (in the abelian category of (non-strict) filtered complexes)
    {\em and} the associated graded complex $\gr^\star\Kos(x_1,\ldots,x_c)$ also has vanishing
    positive homology. We let $\F^\star A/J=\H_0(\F^\star\Kos(x_1,\ldots,x_c))$ and $\F^\star
    J=\ker(\F^\star A\rightarrow\F^\star A/J)$ when $(x_1,\ldots,x_c)$ is a filtered Koszul-regular
    sequence generating $J$. In this case, $\F^\star A/\F^\star J\iso\F^\star A/J$; moreover the
    filtration on $A/J$ is the image filtration obtained by letting $\F^n A/J=\im(\F^nA\rightarrow
    A/J)$.
\end{construction}

\begin{remark}
    There is a natural extension of the definition above to infinite sequences of elements;
    we will use this extension below.
\end{remark}

We will be applying this to the case where $\F^\star A$ is a filtered prism and
$J=(d,x_1,\ldots,x_c)$. In this case, note that $d$ has weight $0$.

\begin{example}[Breuil--Kisin quotients]
    For example if $k$ is a perfect field of characteristic $p$ and $E(z_0)\in A^s=W(k)\llbracket
    z_0,\ldots,z_s\rrbracket$ is an Eisenstein polynomial corresponding to a uniformizer
    $\varpi\in\Oscr_K=W(k)\llbracket z_0\rrbracket/E(z_0)$, then we can view $A^s$ has having the
    adic filtration with respect to the ideal $(z_0,\ldots,z_s)$.
    The ideal $J=(E(z_0),z_0^n,z_1-z_0,\ldots,z_s-z_0)$ is filtered Koszul. To see this,
    use the decomposition
    $$\F^\star\Kos(E(z_0),z_0^n,z_1-z_0,\ldots,z_s-z_0)\we\F^\star\Kos(E(z_0))\otimes_{\F^\star
    A}\F^\star\Kos(z_0^n)\otimes_{\F^\star A}\F^\star\Kos(z_1-z_0)\otimes_{\F^\star
    A}\cdots\otimes_{\F^\star A}\F^\star\Kos(z_s-z_0)$$ to realize
    the filtered Koszul complex as quasi-isomorphic to
    $\varpi^\star\Oscr_K/\varpi^n$, which is
    the strict filtration on the quotient $\Oscr_K/\varpi^n$ having weight $m\geq 0$ term the ideal
    $(\varpi^m)\subseteq\Oscr_K/\varpi^n$.
\end{example}

\begin{example}[The stack of a filtered prism]\label{ex:stackoffilteredprism}
    Let $(\F^\star A,I)$ be a filtered prism. We compute the stack $\F\WCart_{\overline{A}/A}$.
    Recall that the unfiltered stack $\WCart_{\overline{A}/A}$ is equivalent to $\SSpf A$.
    We show that the filtered stack is equivalent to $\FSSpf A=\SSpf(\Rees(A)_{(p,I)}^\wedge)/\Gmhat$.
    For this, we can form the \v{C}ech complex of $\Spf(\Rees(A))\times\WCart\rightarrow\FSpf
    A\times\WCart$ and pull back $\FWCart_{\overline{A}/A}\rightarrow\FSpf A\times\WCart$ to the \v{C}ech
    complex.
    This results in a commutative diagram
    $$\xymatrix{
        \WCart_{\Rees(\overline{A})^\bullet/\Rees(A)^\bullet}\ar[d]\ar[r]&\FWCart_{\overline{A}/A}\ar[d]\\
        \Spf(\Rees(A))\times\Gmhat^{\bullet}\times\WCart\ar[r]&\FSpf A\times\WCart
    }$$
    where the geometric realization of the left vertical arrow is the right vertical arrow and where
    $\Rees(\overline{A})^\bullet$ and $\Rees(A)^\bullet$ are the global sections of
    $\Spf(\Rees(\overline{A}))\times\Gmhat^\bullet$ and $\Spf(\Rees(A))\times\Gmhat^\bullet$, respectively.
    Now, the fiber of $\Rees(A)\rightarrow\Rees(\overline{A})$ is $\Rees(I)$ by the filtered prism
    condition. Zariski locally on $\Spf(\Rees(A))$, $\Rees(I)$ is generated by a distinguished
    element $d$ and hence $\Rees(I)\we\Rees(A)$, i.e., the $\delta$-pair
    $\Rees(A)\rightarrow\Rees(\overline{A})$ determines a prism after $\Rees(I)$-adic completion
    (which is the same as $I$-adic completion using that $\Rees(A)$ is an $A$-algebra). The same is
    true of $\Rees(A)^\bullet$ in each cosimplicial degree. Thus,
    there is an identification of simplicial objects
    $$\WCart_{\Rees(\overline{A})^\bullet/\Rees(A)^\bullet}\we\SSpf(\Rees(A)_{(p,I)}^{\bullet,\wedge})$$
    by Lemma~\ref{lem:Iinsensitive}; by definition, the right-hand side has geometric realization
    $\FSSpf A$, as desired.
\end{example}

\begin{construction}[Filtered prismatic envelopes]\label{const:filteredenvelope}
    Let $(\F^\star A,(d))$ be a filtered prism and let $J=(d,x_1,\ldots,x_c)$ be a filtered
    Koszul-regular ideal, so that the image sequence $x_1,\ldots,x_c$ defines a filtered
    Koszul-regular ideal in $\F^\star\overline{A}$. Let $\F^\star
    A\{\tfrac{x_1}{d},\ldots,\tfrac{x_c}{d}\}_{(p,d)}^\wedge$ denote the $(p,d)$-completed filtered
    (derived) tensor
    product $$\left(\F^\star A\{a_1,\ldots,a_c\}_\delta\otimes_{\F^\star
    A\{r_1,\ldots,r_c\}_\delta}\F^\star A\right)_{(p,d)}^\wedge.$$
    Here $\F^\star A\{r_1,\ldots,r_c\}_\delta$ is the free filtered $\delta$-ring over $\F^\star A$
    on weight $w_u$ generators $r_u$ and similarly for $\F^\star A\{a_1,\ldots,a_c\}$. The left map
    is the filtered $\delta$-ring map sending $r_u$ to $da_u-x_u$; the right map sends $r_u$ to
    zero. Let $\F^\star R=\F^\star\overline{A}/(x_1,\ldots,x_c)$. We will show in the next
    proposition that the filtered prismatic envelope computes the filtered prismatic cohomology of
    $R$ relative to $A$.
\end{construction}

\begin{proposition}\label{prop:filteredenvelope}
    If $(\F^\star A,(d))$ is an oriented filtered prism and $J=(d,x_1,\ldots,x_c)\subseteq A$ is an
    ideal generated by a filtered Koszul-regular sequence $(d,x_1,\ldots,x_c)$, then
    \begin{enumerate}
        \item[{\em (a)}] $\F^\star A\{\tfrac{x_1}{d},\ldots,\tfrac{x_c}{d}\}_{(p,d)}^\wedge$ is a filtered prism,
        \item[{\em (b)}] $\F^\star A\{\tfrac{x_1}{d},\ldots,\tfrac{x_c}{d}\}_{(p,d)}^\wedge$ defines
            a final object of the filtered relative prismatic site $(\F^\star R/\F^\star A)_\Prism$, and
        \item[{\em (c)}] the natural maps $\FSSpf
            A\{\tfrac{x_1}{d},\ldots,\tfrac{x_c}{d}\}_{(p,d)}^\wedge\rightarrow\FWCart_{R/A}$ and
            $\F^\star\Prism_{R/A}\rightarrow\F^\star\Prism_{R/A}^\site\we\F^\star
            A\{\tfrac{x_1}{d},\ldots,\tfrac{x_c}{d}\}_{(p,d)}^\wedge$ are
            equivalences.
    \end{enumerate}
\end{proposition}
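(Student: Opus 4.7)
The plan is to prove all three parts together by reducing to the corresponding unfiltered statements via the Rees construction, in parallel with Example~\ref{ex:stackoffilteredprism}. The key observation is that $\Rees(\F^\star A) = \bigoplus_m \F^m A\cdot t^m$ is naturally a graded $\delta$-ring (with $\delta(xt^m) = \delta(x)t^{pm}$, as already exploited in the proof that $\FSpf A$ is a $\delta$-stack), and the filtered prismatic envelope corresponds under $\Rees$ to an ordinary prismatic envelope of this graded $\delta$-ring, to which we can apply~\cite[Prop.~3.13]{prisms}.

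For part (a), the filtered Koszul-regular sequence $(d, x_1, \ldots, x_c)$, with $x_u$ of weight $w_u$, corresponds in $\Rees(\F^\star A)$ to the sequence $(d, \tilde{x}_1, \ldots, \tilde{x}_c)$ with $\tilde{x}_u = x_u\cdot t^{w_u}$. Since $\Rees$ is exact and converts filtered Koszul complexes into their ordinary Koszul counterparts on the associated graded ring, the filtered Koszul-regularity hypothesis (Construction~\ref{const:filteredkoszul}) is equivalent to the ordinary Koszul-regularity of $(d, \tilde{x}_1, \ldots, \tilde{x}_c)$ in $\Rees(\F^\star A)$ together with its image in $\Rees(\F^\star\overline{A})$. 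Then~\cite[Prop.~3.13]{prisms}, applied to the oriented prism $(\Rees(\F^\star A)_{(p,d)}^\wedge, (d))$, produces a prism $\Rees(\F^\star A)\{\tilde{x}_u/d\}_{(p,d)}^\wedge$ which is $(p,d)$-completely flat over $\Rees(\F^\star A)_{(p,d)}^\wedge$. Descending along $\Gmhat$ recovers $\F^\star A\{x_u/d\}_{(p,d)}^\wedge$, and the flatness directly yields strictness together with the required torsion-freeness and completeness of the graded pieces defining a filtered prism.

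For part (b), by the pushout definition of $\F^\star E = \F^\star A\{x_u/d\}_{(p,d)}^\wedge$, a map of filtered $\delta$-rings $\F^\star E \to \F^\star B$ over $\F^\star A$ to a filtered prism $(\F^\star B, I_B)$ in the site corresponds to specifying elements $\tilde{a}_u \in \F^{w_u}B$ with $d\tilde{a}_u = x_u$ in $B$. Commutativity with $\F^\star R \to \F^\star\overline{B}$ forces the images of $d$ and $x_u$ to lie in $I_B$, and rigidity of prisms combined with the fact that $\delta(d)$ is a unit implies that the image of $d$ generates $I_B$ and is a non-zerodivisor. This gives unique existence of each $\tilde{a}_u$; the strictness of the filtration on $\F^\star B$ then forces $\tilde{a}_u \in \F^{w_u}B$, since $d \in \F^0 B$ and $x_u \in \F^{w_u}B$.

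For part (c), both equivalences follow from (b). Part (b) directly shows that $\F^\star E$ is a final object of $(\F^\star R/\F^\star A)_\Prism$, whence $\F^\star\Prism_{R/A}^\s \we \F^\star E$. For the stack comparison, applying Example~\ref{ex:stackoffilteredprism} to the filtered prism $\F^\star E$ gives $\FSSpf(\F^\star E) \we \F\WCart_{\overline{E}/E}$, and the canonical map of pairs $(\F^\star A, \F^\star R) \to (\F^\star E, \F^\star\overline{E})$ induces a map $\FSSpf(\F^\star E) \to \F\WCart_{R/A}$ which is an equivalence because, pointwise on $p$-nilpotent rings, the data classified by $\F\WCart_{R/A}$ factors uniquely through $\F^\star E$ by the same universal property as in (b). Taking global sections yields $\F^\star\Prism_{R/A} \we \F^\star E$, completing the comparison. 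The main obstacle will be verifying part (b) carefully: checking that prismatic rigidity adapts to the filtered setting so that the image of $d$ generates $I_B$, and that the filtration weights of the $\tilde{a}_u$ are pinned down correctly by the strictness of the target filtration.
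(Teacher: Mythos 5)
Your overall strategy — reduce everything to the unfiltered prismatic envelope via the Rees construction — is the right starting point, and for part~(a) it is indeed what the paper does. But there are three genuine gaps in the execution.

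\textbf{Part (a).} The critical step is missing. You write that applying~\cite[Prop.~3.13]{prisms} to $\Rees(\F^\star A)^\wedge_{(p,d)}$ produces a flat prismatic envelope and that ``descending along $\Gmhat$ recovers $\F^\star A\{x_u/d\}^\wedge_{(p,d)}$, and the flatness directly yields strictness.'' This is where the argument breaks: $\Rees(\F^\star A\{x_u/d\}^\wedge_{(p,d)})$ is an \emph{infinite} direct sum of $(p,d)$-complete modules and is therefore \emph{not itself} $(p,d)$-complete, while the Bhatt--Scholze envelope you cite controls only its $(p,d)$-completion. So flatness of the completed object tells you nothing directly about the uncompleted Rees algebra, and ``descending along $\Gmhat$'' does not close this gap. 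The paper's proof contains a decompletion argument precisely to handle this: each graded piece $\F^m E$ is $(p,d)$-complete and connective, the completion of the coproduct is discrete, and since completion preserves direct summands this forces each $\F^m E$ (and hence the whole Rees algebra) to be discrete. Without some version of this argument, part (a) is not established.

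\textbf{Part (b).} The claim that ``the strictness of the filtration on $\F^\star B$ then forces $\tilde a_u \in \F^{w_u}B$, since $d \in \F^0 B$ and $x_u \in \F^{w_u}B$'' gives the wrong mechanism. Strictness says only that $\F^m B \hookrightarrow B$; it says nothing about dividing out $d$. What actually forces $\tilde a_u$ into weight $w_u$ is the $I_B$-torsion-freeness of the associated graded $\gr^\star B$ (part of the \emph{filtered prism} condition) combined with rigidity (the image of $d$ generates $I_B$): if $\tilde a_u$ had nonzero image in $\gr^m B$ for some $m < w_u$, then $d\cdot\tilde a_u = x_u$ would vanish in $\gr^m B$, contradicting $d$-torsion-freeness of $\gr^m B$. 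The conclusion is correct, but the justification as written is not, and a reader following it would not see why the statement is true.

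\textbf{Part (c).} The site-theoretic half is fine: finality of $\F^\star E$ gives $\F^\star \Prism^\s_{R/A} \we \F^\star E$. But the stack-theoretic half is a substantial assertion you are treating as immediate: ``pointwise on $p$-nilpotent rings, the data classified by $\FWCart_{R/A}$ factors uniquely through $\F^\star E$ by the same universal property as in (b).'' In the unfiltered case, the analogous statement ($\WCart_{R/A} \we \SSpf\Prism_{R/A}$ when the cotangent complex has Tor-amplitude $[1,1]$) is a theorem of Bhatt--Lurie~\cite[Cor.~7.18]{bhatt-lurie-prism}, not a formal consequence of a universal property, and a filtered analogue would need to be proved. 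The paper sidesteps this entirely with a different and much more economical argument: it shows $\F^\star\Prism_{R/A}$ is itself a filtered prism and hence an object of the site, obtains a section $g$ of the map $f\colon\F^\star\Prism_{R/A}\to \F^\star E$, uses~\cite[Lem.~4.3.10]{bhatt-lurie-apc} to deduce that $g\circ f$ is the identity on $\F^0$, and then uses strictness of the filtration to upgrade the identity to all of $\F^\star$. Either you import the filtered classicality result with proof, or you use this idempotent trick; as written the argument is incomplete.
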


\begin{proof}
    We give the case when $c=1$ and write $x$ for $x_1$ and $w$ for $w(1)$. The general case is
    obtained by taking suitable $(p,d)$-completed tensor products. The Rees algebra
    $\Rees(\F^\star A\{\tfrac{x}{d}\}_{(p,d)}^\wedge)$ has $(p,d)$-completion the prismatic envelope
    $(\Rees(\F^\star A))\{\tfrac{x}{d}\}_{(p,d)}^\wedge$ by symmetric monoidality of the Rees construction.
    By our assumption on $x$, $(\Rees(\F^\star A))\{\tfrac{x}{d}\}_{(p,d)}^\wedge$ is
    discrete~\cite[Prop.~3.13]{prisms}.
    Thus, the Rees algebra $\Rees(\F^\star A\{\tfrac{x}{d}\}_{(p,d)}^\wedge)$ is a connective $A$-module whose
    $(p,d)$-completion is discrete. Moreover, it is a countable coproduct of $(p,d)$-complete
    $A$-modules by the countable generation of free $\delta$-rings; see~\cite[Prop.~3.8]{akn-kzpn}. This implies that $\Rees(\F^\star A\{\tfrac{x}{d}\}_{(p,d)}^\wedge)$ is already
    discrete, which shows that the filtration on the filtered prismatic envelope is filtered
    discrete. Moreover, the $(p,d)$-completion of
    $\Rees(\F^\star A\{\tfrac{x}{d}\}_{(p,d)}^\wedge)/t^{-1}$ is equivalent to
    $\Rees(\F^\star A\{\tfrac{x}{d}\}_{(p,d)}^\wedge)_{(p,d)}^\wedge/t^{-1}$. The latter is seen to
    be equivalent to the prismatic envelope $(\bigoplus_{m\in\bZ}\gr^mA)\{\tfrac{x}{d}\}_{(p,d)}^\wedge$ by symmetric
    monoidality of taking the cofiber of $t^{-1}$ (the associated graded). This is again discrete by
    the result for prismatic envelopes in Bhatt--Scholze~\cite[Prop.~3.13]{prisms}. So, the same decompletion argument shows
    that $\Rees(\F^\star A\{\tfrac{x}{d}\}_{(p,d)}^\wedge)/t^{-1}$, which is again a direct sum of
    connective $(p,d)$-complete $A$-module spectra, is discrete. This implies that the filtration on
    the filtered prismatic envelope $\F^\star A\{\tfrac{x}{d}\}_{(p,d)}^\wedge$ is strict. Hence,
    the filtered prismatic envelope is a filtered $\delta$-ring. To show it is a filtered prism, we
    have to show strictness of the induced filtration on the Hodge--Tate locus, which is to say that we want to
    show that $\Rees(\F^\star \overline{A\{\tfrac{x}{d}\}}_p^\wedge)$ and
    $\Rees(\F^\star \overline{A\{\tfrac{x}{d}\}}_p^\wedge)/t^{-1}$ are discrete. After
    $p$-completing $\Rees(\F^\star \overline{A\{\tfrac{x}{d}\}}_p^\wedge)$, we obtain an object
    equivalent to
    $\Rees(\F^\star A)\{\tfrac{x}{d}\}_p^\wedge/d$, which is discrete by the $d$-torsion freeness of
    the prismatic envelope of~\cite{prisms}. Similarly, $p$-completed and mod $t$ we obtain an
    object equivalent to $(\bigoplus_{m\in\bZ}\gr^m A)\{\tfrac{x}{d}\}_p^\wedge/d$ (using the graded
    regularity of the element $x$), which is
    discrete again. Thus, arguing as above and decompleting, we obtain discreteness, as desired.
    This proves (a).

    By part (a), the filtered prismatic envelope defines an object of the filtered prismatic site.
    Given an object $\F^\star B$ of the filtered prismatic site, there is a unique induced map
    $A\{\tfrac{x}{d}\}_{(p,d)}^\wedge\rightarrow B$ of $\delta$-rings. For this to be a map of
    filtered prisms is a property since the filtrations on filtered $\delta$-rings is assumed to be
    strict. To see that this induced map is indeed filtered it is enough
    to note that the induced map $A\{a\}_\delta\rightarrow B$ respects the filtrations by
    construction and hence the
    factorization through the $(p,d)$-completed quotient $A\{\tfrac{x}{d}\}_{(p,d)}^\wedge$ is filtered. This proves
    part (b).

    For part (c), we claim that $\F^\star\Prism_{R/A}$ is a filtered prism, which follows from the
    filtered Hodge--Tate comparison theorem. Thus, it defines an object of the filtered prismatic
    site and hence there is a map $\F^\star
    A\{\tfrac{x}{d}\}_{(p,d)}^\wedge\rightarrow\F^\star\Prism_{R/A}$. The composite
    $\F^\star\Prism_{R/A}\xrightarrow{f}
    A\{\tfrac{x}{d}\}_{(p,d)}^\wedge\xrightarrow{g}\F^\star\Prism_{R/A}$ is an idempotent map of
    filtered $\delta$-rings. By~\cite[Lem.~4.3.10]{bhatt-lurie-apc}, $f$ and $g$ are isomorphisms on
    $\F^0$ and hence the composite $g\circ f$ is the identity on $\F^0$. The only way to make the
    identity map filtered is by letting it act as the identity on each filtered piece, so $g\circ f$
    is the identity on each $\F^m\Prism_{R/A}$. It follows from strictness of the filtrations
    that $f$ and $g$ are isomorphisms on each $\F^m$,
    proving the second statement of part (c). The first statement is proved by a Rees-algebra
    argument.
\end{proof}

\begin{construction}[Filtered syntomic cohomology]\label{const:filtered_syntomic}
    Given a bounded $\delta$-pair $(A,R)$, the prismatic package $\Prismpackage_{R/A}$ includes, for
    each integer $i$, the following information:
    \begin{enumerate}
        \item[(a)] the Nygaard-filtered, Nygaard-complete, Frobenius-twisted,
            Breuil--Kisin cohomology $\N^{\geq\star}\Prismhat^{(1)}_{R/A}\{i\}$, which is a complete
            filtered $A$-module;
        \item[(b)] an $A$-linear map $\can\colon\N^{\geq
            i}\Prismhat^{(1)}_{R/A}\{i\}\rightarrow\Prismhat^{(1)}_{R/A}\{i\}$;
        \item[(c)] a $\varphi_A$-semilinear map $c\varphi\colon\N^{\geq
            i}\Prismhat_{R/A}^{(1)}\{i\}\rightarrow\Prismhat^{(1)}_{R/A}$,
    \end{enumerate}
    from which one builds
    \begin{enumerate}
        \item[(d)] relative syntomic cohomology complexes $$\bZ_p(i)(R/A)=\fib\left(\N^{\geq
            i}\Prismhat_{R/A}^{(1)}\{i\}\xrightarrow{\can-c\varphi}\Prismhat^{(1)}_{R/A}\{i\}\right).$$
    \end{enumerate}
    This data is functorial in maps of bounded $\delta$-pairs.

    Now, suppose that $(\F^\star A,\F^\star R)$ is a filtered $\delta$-pair. We can use descent
    along the formally smooth morphism $\Spf\Rees(\F^\star A)\rightarrow\FSpf A$ and the naturality of the constructions above to
    construct an $\infty$-category $\Cscr_{\FSpf A}$ by right Kan extension (thanks to
    Remark~\ref{rem_module}(iii)) and
    a filtered prismatic package $\Prismpackage_{\F^\star R/\F^\star A}\in\Cscr_{\FSpf
    A}=(\Prism^{[\star]}_{\F^\star R/\F^\star A}\{\star\},\N^{\geq\star}\Prismhat_{\F^\star R/\F^\star
    A}^{(1)}\{\star\},c,\varphi)$ using
    Theorem~\ref{thm:intro}(6, 7). To see that the underlying prismatic part
    $\Prism^{[\star]}_{\F^\star R/\F^\star A}\{\star\}$ agrees with
    $\F^\star\Prism^{[\star]}_{R/A}\{\star\}$ of Definition~\ref{def:filtered_cohomology}, one uses
    that $\Spf\Rees(\F^\star A)\times\WCart\rightarrow\FSpf A\times\WCart$ is a formally smooth
    cover together with base change (after Hodge--Tate completion) for relative prismatic cohomology
    (Theorem~\ref{thm:intro}(7)).
    The filtered prismatic package includes, for each integer $i$,
    \begin{enumerate}
        \item[(a)]
            $\F^{\geq\star}\N^{\geq\star}\Prismhat^{(1)}_{R/A}\{i\}=\N^{\geq\star}\Prismhat^{(1)}_{\F^\star
            R/\F^\star A}\{i\}$, which is a
            filtered $\F^\star A$-module equipped with a second filtration (the Nygaard filtration)
            for which it is complete;
        \item[(b)] a filtered $\F^{\geq\star} A$-linear map $\can\colon\F^{\geq\star}\N^{\geq
            i}\Prismhat^{(1)}_{R/A}\{i\}\rightarrow\F^{\geq\star}\Prismhat^{(1)}_{R/A}\{i\}$;
        \item[(c)] a $p$-filtered $\varphi_{\F^\star A}$-semilinear map $c\varphi\colon\F^{\geq\star}\N^{\geq
            i}\Prismhat_{R/A}^{(1)}\{i\}\rightarrow\F^{\geq\star}\Prismhat^{(1)}_{R/A}$.
    \end{enumerate}
    Here, a map of filtered objects $\F^\star M\rightarrow\F^\star
    N$ is $p$-filtered if it arises from a map $F^*\F^\star M\rightarrow\F^\star N$ where $F$
    is the Frobenius endomorphism on $\bA^1/\Gm$. From this, we can define, for $i\geq 0$,
    \begin{enumerate}
        \item[(d)] filtered relative syntomic cohomology complexes
            $$\F^{\geq\star}\bZ_p(i)(\F^\star R/\F^\star A)=\fib\left(\F^{\geq\star}\N^{\geq
            i}\Prismhat_{R/A}^{(1)}\{i\}\xrightarrow{\can-c\varphi}\F^{\geq\star}\Prismhat^{(1)}_{R/A}\{i\}\right)$$
            for $i\in\bZ$.
    \end{enumerate}
    In (d), we use that a $p$-filtered map of decreasing $\bN$-indexed
    filtration forgets to a filtered map since $\F^{pi}M$ maps canonically to $\F^iM$ for
    $i\geq 0$.
    As above, we often write $\F^\star\bZ_p(i)(R/\A)$ for $\F^\star\bZ_p(i)(\F^\star R/\F^\star A)$
    when it seems no confusion can arise.
\end{construction}

\begin{variant}
    If $(A^\star,R^\star)$ is a graded $\delta$-pair, then there is a graded version of the
    construction above, producing a prismatic package
    $\Prismpackage_{R^\star/A^\star}\in\Cscr_{\GrSpf A^\star}$. However, because the Frobenius
    map $c\varphi$ is $p$-graded in the sense that it takes weight $i$ into weight $pi$, the
    induced syntomic cohomology
    $$\bZ_p(i)(R^\star/A^\star)=\fib\left(\bigoplus_{\star\in\bZ}\gr^\star\N^{\geq
    i}\Prismhat_{R/A}^{(1)}\{i\}\xrightarrow{\can-c\varphi}\bigoplus_{\star\in\bZ}\gr^\star\Prismhat^{(1)}_{R/A}\{i\}\right)$$
    is filtered, not graded. The filtration is given by
    $$\F^m\bZ_p(i)(R^\star/A^\star)=\fib\left(\bigoplus_{\star\geq m}\gr^\star\N^{\geq
    i}\Prismhat_{R/A}^{(1)}\{i\}\xrightarrow{\can-c\varphi}\bigoplus_{\star\geq
    m}\gr^\star\Prismhat^{(1)}_{R/A}\{i\}\right).$$
\end{variant}

\begin{proposition}
    Let $(\F^\star A,\F^\star R)$ be a relatively quasisyntomic filtered $\delta$-pair. The pullback maps
    $1^*\colon\Cscr_{\FSpf A}\rightarrow\Cscr_{\Spf A}$ and $0^*\colon\Cscr_{\FSpf
    A}\rightarrow\Cscr_{\GrSpf\gr^\star A}$ induce natural equivalences
    \begin{enumerate}
        \item[{\em (a)}] $1^*\Prismpackage_{\F^\star R/\F^\star A}\we\Prismpackage_{R/A}$ and
        \item[{\em (b)}] $0^*\Prismpackage_{\F^\star R/\F^\star A}\we\Prismpackage_{\gr^\star
            R/\gr^\star A}$.
    \end{enumerate}
\end{proposition}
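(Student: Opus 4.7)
The strategy is to reduce the proposition to known base-change statements for the Hodge--Tate tower by unpacking the definition of $\Prismpackage_{\F^\star R/\F^\star A}$ from Construction~\ref{const:filtered_syntomic}, where it is obtained by right Kan extension along the formally smooth cover $\pi\colon\Spf\Rees(\F^\star A)\rightarrow\FSpf A$ from the ordinary prismatic package of the associated cosimplicial $\delta$-pair (using Remark~\ref{rem_module}(iii)). The graded counterpart $\Prismpackage_{\gr^\star R/\gr^\star A}$ is defined analogously from the cover $\Spf\bigoplus_m\gr^mA\rightarrow\GrSpf\gr^\star A$.

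First, I would compute the two base changes of $\pi$. From the fiber square decomposition of $\FSpf A$ over $\Ahat/\Gmhat$ recalled earlier in this section, one has $\Spf\Rees(\F^\star A)\times_{\FSpf A}\Spf A\we\Spf A$ and $\Spf\Rees(\F^\star A)\times_{\FSpf A}\GrSpf\gr^\star A\we\Spf\bigoplus_m\gr^mA$. Thus $1^*$ trivializes the Kan extension, while $0^*$ converts it into the Kan extension used to define $\Prismpackage_{\gr^\star R/\gr^\star A}$, yielding tautological identifications provided each ingredient of the package commutes with the relevant pullback.

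Second, I would verify this compatibility component by component. For the Hodge--Tate tower $\Prism^{[\star]}_{-/-}\{\star\}$, this is Corollary~\ref{cor:underlying}(ii),(iv), combined with its Breuil--Kisin twisted variants obtained via Proposition~\ref{prop:adjointable} and the projection formula. For $\N^{\geq\star}\Prismhat^{(1)}_{-/-}\{\star\}$, I would apply $0^*$ and $1^*$ to the pullback square of Definition~\ref{Nygaard}; the required base change for each corner follows from Proposition~\ref{prop:adjointable} (for the underlying crystal term) and standard base change for $p$-complete derived de Rham cohomology. Compatibility of $c$ and $\varphi$ is automatic from their naturality as morphisms in $\Cscr_{\FSpf A}$.

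The main obstacle is that Nygaard completion does not automatically commute with base change. The cleanest resolution is to observe that in Construction~\ref{const:filtered_syntomic} the Nygaard-completed terms of the filtered package are themselves produced through the descent procedure, so the completion happens after the pullback is already performed on the cosimplicial cover, making compatibility formal from Remark~\ref{rem_module}(iii). If additional control is needed, one can further reduce via Propositions~\ref{lem:unfold_base} and~\ref{lem:unfold_ring} to the relatively quasiregular semiperfectoid case, in which both sides are discrete by Theorem~\ref{thm_BS} and the comparison becomes explicit.
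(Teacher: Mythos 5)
Your overall strategy matches the paper's: the proposition is a consequence of Theorem~\ref{thm:intro}(7) applied through the descent construction of the filtered package, and that is indeed what the paper's one-line proof invokes. But your first step contains a computation error that undermines the way you phrase the $1$-pullback.

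You claim $\Spf\Rees(\F^\star A)\times_{\FSpf A}\Spf A\we\Spf A$, and conclude that "$1^*$ trivializes the Kan extension." This identification is incorrect. The map $\Spf\Rees(\F^\star A)\to\FSpf A$ is the quotient $\Gmhat$-torsor, pulled back from $\Ahat\to\Ahat/\Gmhat$, and the $1$-point $\Spf\bZ_p\to\Ahat/\Gmhat$ classifies the trivial $\Gmhat$-torsor; its pullback along $\Ahat\to\Ahat/\Gmhat$ is $\Gmhat$, not a point. So in fact $\Spf\Rees(\F^\star A)\times_{\FSpf A}\Spf A\we\Spf A\times\Gmhat$ (concretely, inverting the Rees parameter turns $\Rees(\F^\star A)$ into $A[t^{\pm 1}]$). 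Consequently the pulled-back \v{C}ech nerve is $\Spf A\times\Gmhat^{\bullet+1}$, a nontrivial cover of $\Spf A$, and $1^*$ does not trivialize the Kan extension. To finish (a) you still have to identify the resulting totalization with $\Prismpackage_{R/A}$: in each cosimplicial degree, base change in $A$ (Theorem~\ref{thm:intro}(7)) identifies the term with the base change of $\Prismpackage_{R/A}$ along $A\to A\tensorhat\bZ_p[\Gmhat^{\bullet+1}]$, and the sheaf property of $A\mapsto\Cscr_A$ (Remark~\ref{rem_module}(iii)) gives the desired limit. This is a short argument, but your proof skips it because of the wrong fiber computation. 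Your analysis of the $0$-fiber is correct.

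Beyond that, the component-by-component verification of the Hodge--Tate tower and the separate handling of the Nygaard filtration via the pullback square is more elaborate than necessary. Since the filtered package is constructed by descent from the ordinary prismatic package using precisely Theorem~\ref{thm:intro}(7) — which already includes the Nygaard-completed term and the maps $c$ and $\varphi$ — both base changes $1^*$ and $0^*$ are formal from the construction. You arrive at this observation in your last paragraph, but it is effectively the entire content of the proof and should be its opening move rather than a fallback.
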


\begin{proof}
    This follows from base change for the prismatic package, Theorem~\ref{thm:intro}(7).
\end{proof}

\begin{corollary}\label{cor:underlying_syntomic}
    For each $i\in\bZ$ and all relatively quasisyntomic filtered $\delta$-pairs $A\rightarrow R$, there are natural
    equivalences
    \begin{enumerate}
        \item[{\em (i)}] $\gr^\star\bZ_p(i)(\F^\star R/\F^\star A)\we\gr^\star\bZ_p(i)(\gr^\star R/\gr^\star A)$ and
        \item[{\em (ii)}] $\F^0\bZ_p(i)(\F^\star R/\F^\star A)\we\bZ_p(i)(\F^0R/\F^0A)$.
    \end{enumerate}
\end{corollary}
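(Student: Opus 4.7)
The plan is to deduce both (i) and (ii) directly from the preceding Proposition by tracking what the pullbacks $1^*\colon\Cscr_{\FSpf A}\rightarrow\Cscr_{\Spf A}$ and $0^*\colon\Cscr_{\FSpf A}\rightarrow\Cscr_{\GrSpf\gr^\star A}$ do to the defining fiber sequence of filtered syntomic cohomology in Construction~\ref{const:filtered_syntomic}. Both functors are pullbacks along morphisms of $\delta$-stacks over $\widehat{\bA}^1/\Gmhat$, and in particular they preserve fibers and take the symbols $\can$ and $c\varphi$ of the prismatic package to the corresponding maps for the pulled-back package. Hence they commute with the formation of the syntomic complex from the prismatic package.

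For (ii), I would apply $1^*$ to the fiber sequence
\[
\F^{\geq\star}\bZ_p(i)(\F^\star R/\F^\star A)\rightarrow\F^{\geq\star}\N^{\geq i}\Prismhat^{(1)}_{R/A}\{i\}\xrightarrow{\can-c\varphi}\F^{\geq\star}\Prismhat^{(1)}_{R/A}\{i\}.
\]
Under the convention that our filtrations are constant at $\F^0$ for $\star\leq 0$, the pullback along $1\colon\Spf\bZ_p\rightarrow\widehat{\bA}^1/\Gmhat$ sends a filtered object to its value at $\F^0$. By the preceding Proposition, this pullback of the filtered prismatic package is the unfiltered prismatic package of $(\F^0A,\F^0R)$, so the sequence above becomes the defining fiber sequence for $\bZ_p(i)(\F^0R/\F^0A)$.

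For (i), I would apply $0^*$, which corresponds to passing to associated gradeds. By the preceding Proposition, $0^*$ identifies the filtered prismatic package with the graded prismatic package of $(\gr^\star A,\gr^\star R)$. The crucial point is that the filtered Frobenius $c\varphi$ is $p$-filtered, that is, it arises from a map $F^*\F^{\geq\star}M\rightarrow\F^{\geq\star}N$; upon taking $\gr^\star$, this becomes a map of graded objects sending weight $m$ to weight $pm$. Consequently, on $\gr^m$ the map $\can-c\varphi$ reduces to $\can$ for $m>0$ (the Frobenius lands in a strictly higher weight that is killed) and to the honest difference $\can-c\varphi$ for $m=0$. A parallel inspection of the filtration $\F^m\bZ_p(i)(R^\star/A^\star)=\fib(\bigoplus_{\star\geq m}\gr^\star\N^{\geq i}\Prismhat^{(1)}\{i\}\xrightarrow{\can-c\varphi}\bigoplus_{\star\geq m}\gr^\star\Prismhat^{(1)}\{i\})$ from the graded variant shows that $\bigoplus_{\star\geq m}/\bigoplus_{\star\geq m+1}\we\gr^m$ receives exactly the same induced map, for precisely the same reason that $c\varphi$ shifts weight by $p$. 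Taking $\gr^m$ on both sides then yields the claimed equivalence.

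The main obstacle I expect is bookkeeping: one has to set up $\Cscr_{\FSpf A}$ carefully enough that the notions of $p$-filtered map and its behavior under $0^*$ and $1^*$ are transparent, and to verify that the fiber defining syntomic cohomology is formed compatibly across the base change functors (all of which is essentially built into the pullback definition of $\Cscr_{-}$ from Remark~\ref{rem_module}(iii)). Once this setup is in place, both (i) and (ii) are formal consequences of the preceding Proposition together with the exactness of $0^*$ and $1^*$.
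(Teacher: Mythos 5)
Your proof is correct and follows the intended route: the paper leaves Corollary~\ref{cor:underlying_syntomic} without an explicit proof precisely because it is a formal consequence of the preceding Proposition (base change of the prismatic package along $0$ and $1$), and you have correctly filled in the detail that a careless reader might miss — namely that the induced map on associated graded pieces is not just formally pulled back but requires the observation that $c\varphi$, being $p$-filtered (resp.\ $p$-graded), contributes to $\gr^m$ only when $m=0$, on \emph{both} sides of the comparison. This makes the identification of the graded fibers honest rather than tautological.

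One small piece of hygiene worth keeping in mind when writing it up: part (ii) invokes $1^*$, i.e.\ the colimit $\F^{-\infty}$, and you are right that the paper's convention (nonnegative decreasing filtrations, with $\F^0\to\F^{-m}$ an equivalence for $m\geq 0$) is what identifies $\F^{-\infty}$ with $\F^0$. The footnote attached to Construction~\ref{const:filtered_syntomic}(d) confirms that the Nygaard and syntomic filtrations produced there are $\bN$-indexed, so the identification is available. You flagged this correctly. With that caveat noted, both (i) and (ii) go through exactly as you argue.
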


\appendix
\section{Some formal stack theory}\label{app}

We give some background on quasi-coherent sheaf theory on $p$-adic formal stacks in~\ref{app_1} culminating in the
establishment of some results on base change for quasi-coherent cohomology in Section~\ref{app_2}.
We claim no originality for this material, but do not know of a suitable reference.

\subsection{Quasi-coherent sheaves on formal stacks}\label{app_1}

Our goal in this section is to compare several possible definitions of a quasi-coherent sheaf on
$\Spf R$. While in the body of this paper, all constructions are by default derived unless specified
otherwise, given an abelian group $M$, we will need to work both with the derived reduction modulo
$p$ of $M$, which we will write as
$M\doubleslash p$, and the non-derived version $M/p=\H_0(M\doubleslash p)$.

As in the rest of this paper, we work with formal prestacks, i.e., presheaves of spaces (aka anima) on
the category of spectra of (discrete) $p$-nilpotent rings. Formal stacks are sheaves of anima or
spectra with respect to the faithfully flat topology on spectra of $p$-nilpotent rings.

\begin{remark}
    Our formal stacks are formal analogues of higher stacks as opposed to derived stacks
    (see~\cite{toen-global} for an overview of the distinction and further references). Quasi-lci conditions
    guarantee that the Cartier--Witt theory developed here using formal higher stacks agrees with the
    derived analogue, as in~\cite{bhatt-lurie-prism}.
\end{remark}

\begin{example}[Formal spectra]
    Given a commutative ring $R$, $\Spf R$ denotes the formal stack given by the restriction of
    $\Spec R$ to a functor on spectra of $p$-nilpotent commutative rings. If $R$ has bounded
    $p$-power torsion, then the natural map $\Spf R_p^\wedge\rightarrow\Spf R$ is an equivalence of
    formal stacks.
\end{example}

\begin{warning}\label{warn:formalaffineline}
    We will write $\Ahat$ for $\Spf\bZ[t]$ and $\Gmhat$ for $\Spf\bZ[t^{\pm 1}]$. Note that this
    diverges from common notation in the literature, where for example $\Ahat$ denotes the
    prestack on all affine schemes, 
    which to any $\Spec R$ assigns the non-unital ring of nilpotent elements in $R$. By contrast,
    for us, formal will always mean formal with respect to the $p$-adic topology.
\end{warning}

\begin{definition}[Quasicoherent sheaves on formal stacks]
    Given any formal (pre)stack $\Xscr$, one defines $\D(\Xscr)=\lim_{\Spf R\rightarrow\Xscr}\D(R)$
    where the limit ranges over all $p$-nilpotent commutative rings $R$ and all $\Spf R$ points of
    $\Xscr$. In all cases studied in this paper, the limit above is computed as the limit over a small
    category and so we will not get into size considerations here.
\end{definition}

For an object $\Fscr\in\D(\Xscr)$ we will refer to the `value' on $x\colon \Spf R \to \Xscr$ as the pullback and write it as $x^*\Fscr \in \D(R)$.

\begin{example}
    If $p$ is nilpotent in $R$, then $\D(\Spf R)\we\D(R)$.
\end{example}

\begin{definition}
    Let $\Xscr$ be a formal prestack and let $\Fscr\in\D(\Xscr)$ be a quasi-coherent sheaf.
    \begin{enumerate}
        \item[(i)] We say $\Fscr$ is perfect if for every $p$-nilpotent commutative ring
            $R$ and every point $x\colon\Spf R\rightarrow\Xscr$ the pullback $x^*\Fscr\in\D(R)$ is
            perfect. Write $\Perf(\Xscr)\subseteq\D(\Xscr)$ for the full subcategory of
            perfect objects.
        \item[(ii)] We say $\Fscr$ has $p$-complete Tor-amplitude in $[a,b]$ for
            $a,b\in\bZ\sqcup\{\pm\infty\}$ if for every $x\colon\Spf R\rightarrow\Fscr$ the
            pullback $x^*\Fscr$ belongs to $\D(R)_{[a,b]}$, i.e., $\H_i(x^*\Fscr)=0$ for
            $i\notin[a,b]$.
    \end{enumerate}
\end{definition}

Now, we study $p$-complete $\bZ$-linear stable presentable $\infty$-categories.

\begin{definition}[$p$-completion]
    Given a $\D(\bZ)$-module $\Cscr$ in $\Pr^\L$, the $p$-completion of $\Cscr$ is
    $\Cscr_p^\wedge=\Cscr\otimes_{\D(\bZ)}\D(\bZ)_p^\wedge$.
\end{definition}

\begin{remark}
    The object $\D(\bZ)_p^\wedge$ is idempotent in the $\infty$-category of $\D(\bZ)$-modules in
    $\Pr^\L$. Indeed, consider the localization sequence
    $$\D(\bZ[1/p])\rightarrow\D(\bZ)\rightarrow\D(\bZ)_p^\wedge$$ of $\D(\bZ)$-modules.
    Tensoring with $\D(\bZ)_p^\wedge$ results in another localization sequence, for example
    by~\cite[Cor.~3.5]{agh}. However, $\D(\bZ)_{p}^\wedge$ is compactly generated by $\bF_p$ so that
    $\D(\bZ)_p^\wedge\we\D(\End_\bZ(\bF_p))$ and hence
    $\D(\bZ[1/p])\otimes_{\D(\bZ)}\D(\bZ)_p^\wedge\we\D(\bZ[1/p]\otimes_\bZ\End_\bZ(\bF_p))\we 0$.
    Thus, $\D(\bZ)_p^\wedge\rightarrow\D(\bZ)_p^\wedge\otimes_{\D(\bZ)}\D(\bZ)_p^\wedge$ is an equivalence.
    It follows that $\Mod_{\D(\bZ)_p^\wedge}(\Pr^\L)$ is a full subcategory of
    $\Mod_{\D(\bZ)}(\Pr^\L)$, that the inclusion has a left adjoint, and that it is given by
    $p$-completion in the sense above.
    
    Note also that we could have worked over the sphere spectrum as opposed to $\bZ$ here, but we will not need this generality.
\end{remark}

\begin{definition}[$p$-completeness]
    A $\D(\bZ)$-module $\Cscr$ in $\Pr^\L$ is $p$-complete if the natural map
    $\Cscr\rightarrow\Cscr_p^\wedge$ is an equivalence. By the remark above, this property is
    equivalent to saying that $\Cscr$ admits the structure of a $\D(\bZ)_p^\wedge$-module in
    $\Pr^\L$. Moreover, by idempotentness of $\D(\bZ)_p^\wedge$, the $p$-completion of any $\Cscr$
    in $\Mod_{\D(\bZ)}(\Pr^\L)$ is $p$-complete.
\end{definition}

Our philosophy is that $p$-adic formal geometry is algebraic geometry done in $\D(\bZ)_p^\wedge$-modules in
$\Pr^\L$. Recall from~\cite[Sec.~23.1.2]{sag} that a compactly assembled presentable $\infty$-category $\Cscr$ is one which is a retract,
in $\Pr^\L$, of a compactly generated $\infty$-category. This is equivalent to the dualizability of
$\Cscr$ in $\Pr^\L_\st$. If $\Cscr$ is a $\D(\bZ)$-module in $\Pr^\L_\st$, then it is dualizable as
a $\D(\bZ)$-module if and only if it is dualizable in $\Pr^\L_\st$ by~\cite[Thm.~D.7.0.7]{sag}.
Moreover, a $\D(\bZ)$-linear version of~\cite[Prop.~D.7.3.1]{sag} implies that dualizability
relative as a $\D(\bZ)$-module in $\Pr^\L_\st$ is equivalent to being a retract, in
$\D(\bZ)$-modules in $\Pr^\L_\st$, of a compactly generated $\D(\bZ)$-linear presentable $\infty$-category.

\begin{lemma}\label{lem:pcompletion}
    For any compactly assembled $\D(\bZ)$-module $\Cscr$ in $\Pr^\L$, the natural map
    $\Cscr\otimes_{\D(\bZ)}\D(\bZ)_p^\wedge\rightarrow\lim_n\Cscr\otimes_{\D(\bZ)}\D(\bZ/p^n)$ is an equivalence.
\end{lemma}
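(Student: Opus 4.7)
The plan is to reduce the claim to a Postnikov-style description of $\D(\bZ)_p^\wedge$ and then use dualizability to commute the tensor product with that limit. First, I would establish a natural equivalence
$$\D(\bZ)_p^\wedge \simeq \lim_n \D(\bZ/p^n)$$
in $\Mod_{\D(\bZ)}(\Pr^\L)$, where the limit is taken along the derived base change functors $- \otimes^\L_{\bZ/p^{n+1}} \bZ/p^n\colon \D(\bZ/p^{n+1}) \to \D(\bZ/p^n)$. Passing to right adjoints, this limit in $\Pr^\L$ is computed as the limit in $\Cat_\infty$ along the restriction functors $\D(\bZ/p^n) \to \D(\bZ/p^{n+1})$; the equivalence sends $M \in \D(\bZ)_p^\wedge$ to the compatible system $(M \otimes^\L_\bZ \bZ/p^n)_n$, with inverse $(M_n)_n \mapsto \lim_n M_n$. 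Since the forgetful functor $\Mod_{\D(\bZ)}(\Pr^\L) \to \Pr^\L$ preserves limits, it suffices to verify this at the level of underlying $\infty$-categories.

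Next, I would invoke dualizability of $\Cscr$: by the discussion preceding the lemma, being compactly assembled is equivalent to being dualizable as an object of $\Mod_{\D(\bZ)}(\Pr^\L)$. For any dualizable object, tensoring is simultaneously both a left and a right adjoint (to tensoring with the dual), so $\Cscr \otimes_{\D(\bZ)} -$ preserves all limits. Combining these two ingredients gives the chain of equivalences
$$\Cscr \otimes_{\D(\bZ)} \D(\bZ)_p^\wedge \simeq \Cscr \otimes_{\D(\bZ)} \lim_n \D(\bZ/p^n) \simeq \lim_n \Cscr \otimes_{\D(\bZ)} \D(\bZ/p^n),$$
which is the desired statement.

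The main obstacle I expect is the first step: one must set up the description of $\D(\bZ)_p^\wedge$ as a limit with the correct variance and verify that the $\D(\bZ)$-linear structure is respected, rather than being an equivalence only at the level of underlying $\infty$-categories. Once this is in place, the rest is a formal consequence of dualizability.
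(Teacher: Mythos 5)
Your proposal follows essentially the same route as the paper: identify $\D(\bZ)_p^\wedge$ with the limit $\lim_n\D(\bZ/p^n)$ and then use dualizability of a compactly assembled $\D(\bZ)$-module to commute $\Cscr\otimes_{\D(\bZ)}-$ with that limit. The one caveat is that the paper delegates the first identification to Proposition~\ref{prop:completeomni} (whose proof is not immediate --- it involves the totalization trick of~\cite{MNN17} and a descendability argument to handle $p$-torsion); your informal sketch of the ``inverse'' $(M_n)_n\mapsto\lim_n M_n$, and in particular the ``passing to right adjoints'' phrasing, is not by itself a proof of that step (the limit in $\Pr^\L$ is computed in $\Cat_\infty$ along the base-change functors themselves, so the role of the restriction functors needs more care), so you should cite or supply that argument rather than treat it as a formality.
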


\begin{proof}
    The case of $\D(\bZ)_p^\wedge\we\lim_n\D(\bZ/p^n)$ follows from
    Proposition~\ref{prop:completeomni} below.
    Now being dualizable relative to $\D(\bZ)$ implies that the functor $\Cscr\otimes_{\D(\bZ)} - $ has a left adjoint given by tensoring with the dual $\Cscr^\vee$. Thus it preserves limits.
\end{proof}

\begin{proposition}\label{prop:completeomni}
    For a commutative ring $R$ with bounded $p$-power torsion, there are natural equivalences $$\D(\Spf
    R)\we\lim_n\D(R\doubleslash p^n)\we\D(R)_{\text{$R\doubleslash p$-cpl}}\we\D(R)^\wedge_p
    \we\Mod_R(\D(\bZ)_p^\wedge),$$
    where $\D(R)_{\text{$R\doubleslash p$-cpl}}$ is the $\infty$-category of $R\doubleslash
    p$-local objects in $\D(R)$.
\end{proposition}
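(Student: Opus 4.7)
My plan is to establish the four equivalences one at a time, working from right to left, with the stack-theoretic identification as the most delicate step.

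Paragraph 1. The rightmost equivalence $\Mod_R(\D(\bZ)_p^\wedge) \we \D(R)_p^\wedge$ is formal: for any $\D(\bZ)$-linear presentable stable $\infty$-category $\Cscr$, there is a canonical equivalence $\Mod_R(\Cscr) \we \D(R) \otimes_{\D(\bZ)} \Cscr$ (cf.~\cite[Thm.~4.8.4.6]{ha}); specializing to $\Cscr = \D(\bZ)_p^\wedge$ gives $\D(R)_p^\wedge$ by definition. The equivalence $\D(R)_p^\wedge \we \D(R)_{\text{$R\doubleslash p$-cpl}}$ is the standard observation that both full subcategories are the right orthogonal to the localizing subcategory generated by $\bZ[1/p]$, respectively $R[1/p]$, and these agree because the underlying $\bZ$-module structure of $R[1/p]$ is $\bZ[1/p]$-local.

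Paragraph 2. For $\D(R)_p^\wedge \we \lim_n \D(R\doubleslash p^n)$, I start from the well-known identification $\D(\bZ)_p^\wedge \we \lim_n \D(\bZ/p^n)$, which can be seen by noting that both are the universal $\D(\bZ)$-algebra in $\Pr^\L$ that kills $\bZ[1/p]$. Since $\D(R)$ is compactly generated, hence compactly assembled, hence dualizable as a $\D(\bZ)$-module in $\Pr^\L$, the functor $\D(R) \otimes_{\D(\bZ)} -$ commutes with this limit (this is precisely the input of Lemma~A.6). Combining with the standard equivalence $\D(R) \otimes_{\D(\bZ)} \D(\bZ/p^n) \we \D(R \otimes_\bZ^\L \bZ/p^n) = \D(R\doubleslash p^n)$ yields the result.

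Paragraph 3. The remaining identification $\D(\Spf R) \we \lim_n \D(R\doubleslash p^n)$ is the crux. By definition $\Spf R(S) = \colim_m \Hom(R/p^m, S)$ for $p$-nilpotent $S$, so $\Spf R \we \colim_m \Spec(R/p^m)$ in formal prestacks, whence
\[
    \D(\Spf R) \we \lim_m \D(R/p^m)
\]
with transition maps given by classical base change. It remains to identify this with $\lim_n \D(R\doubleslash p^n)$; this is the step that uses the bounded $p$-power torsion hypothesis. The plan is to show both limits are computed by $M \mapsto \{M \otimes_R^\L R/p^m\}$, respectively $M \mapsto \{M \otimes_R^\L R\doubleslash p^n\}$, as functors from $\D(R)_p^\wedge$, and then invoke Paragraph~2 for the latter.

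The main obstacle is controlling the difference between $R/p^m$ and $R\doubleslash p^m$. If $k$ bounds the $p$-power torsion of $R$, then for every $m$ the truncation $R\doubleslash p^m \to R/p^m$ has fiber $R[p^m][1] = R[p^k][1]$, a module killed by $p^k$. Consequently, for a derived $p$-complete $M$, the fiber of the comparison $M \otimes_R^\L R\doubleslash p^m \to M \otimes_R^\L R/p^m$ is $p^k$-torsion for every $m$, and hence the induced map of towers is a pro-equivalence in $\D(R)_p^\wedge$. Passing to limits of the associated module $\infty$-categories, this pro-equivalence yields an equivalence $\lim_m \D(R/p^m) \we \lim_n \D(R\doubleslash p^n)$, completing the chain of identifications.
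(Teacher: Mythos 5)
Your Paragraphs 1 and 2 are fine, and genuinely diverge from the paper's proof. For the equivalence $\D(R)_{\text{$R\doubleslash p$-cpl}}\we\D(R)_p^\wedge$ you argue via a common right orthogonal, whereas the paper passes through the complete/torsion duality and the compact generation of $\D(R)_{\text{$p$-tors}}$ by $R\doubleslash p$. For $\D(R)_p^\wedge\we\lim_n\D(R\doubleslash p^n)$ you start from the $R=\bZ$ case and tensor using dualizability of $\D(R)$, which is essentially the content of Lemma~\ref{lem:pcompletion}; the paper instead proves it directly via MNN-style descent along $R\to R\doubleslash p$ and the identification of finite totalizations $\Tot^n(S^\bullet)\we R\doubleslash p^{n+1}$. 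Both routes work; yours is cleaner modulo the universal-property argument you wave at for the $R=\bZ$ base case, which is itself most naturally proved by the descent argument.

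Paragraph 3 has a genuine gap, and it is exactly where the bounded-torsion hypothesis does real work. You correctly reduce to showing $\lim_m\D(R/p^m)\we\lim_n\D(R\doubleslash p^n)$, and you correctly observe that the fibers $R[p^m][1]$ of the ring maps $R\doubleslash p^m\to R/p^m$ stabilize and form a pro-zero tower (the transition maps on $\Tor_1$ are multiplication by $p$, so after $k$ steps they vanish). But a pro-zero fiber of a map of towers of animated rings does \emph{not} by itself imply that the induced functor on limits of module categories is an equivalence. The usual ``pro-equivalence passes to the limit'' device requires a cofinal interleaving of the two towers, and here that is unavailable: one only has ring maps $R\doubleslash p^n\to R/p^n$, never $R/p^n\to R\doubleslash p^m$, because a classical quotient cannot map back to a derived one. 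So there is no way to refine both towers into a common one and the pro-zero observation about module-level fibers does not lift to the categorical statement. What the paper actually does at this point is substantially more delicate: it shows that $R\doubleslash p$ determines a compact generator of $\lim_m\D(R/p^m)$ (conservativity of $\{M_m\}\mapsto M_1$ via descendability of $R/p^2\to R/p$, together with an explicit computation of the mapping spectrum out of $R\doubleslash p$ using the Mittag--Leffler condition and boundedness of $p$-power torsion), and then that the endomorphism ring spectra on both sides agree. Your sketch elides precisely this argument, and without it the last equivalence is not established.
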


\begin{proof}
    The fourth equivalence holds more generally: if $\Cscr$ is any presentable $\D(\bZ)$-linear
    stable $\infty$-category, then $\D(R)\otimes_{\D(\bZ)}\Cscr\we\Mod_R(\Cscr)$. This follows from
    the definition of the tensor product on $\Pr^\L$; see~\cite[Prop.~4.8.1.17]{ha}.
    The third equivalence follows from the equivalence between complete and torsion objects:
    $\D(R)_{\text{$R\doubleslash p$-cpl}}\we\D(R)_{\text{$p$-tors}}$, where the torsion category is the full subcategory
    of $R$-module spectra $M$ where each $x\in\pi_*M$ is annihilated by some power of $p$ (depending
    on $x$); see~\cite[Thm.~3.9]{MNN17} or~\cite[Thm.~3.3.5]{hovey-palmieri-strickland}.
    On the other hand, $\D(R)_{\text{$p$-tors}}$ is compactly generated by $R\doubleslash p$, for example
    by~\cite[Prop.~3.7]{MNN17}. It follows that
    $$\D(R)_{\text{$R\doubleslash
    p$-cpl}}\we\D(R)_{\text{$p$-tors}}\we\D(R\otimes_{\bZ}\End_{\bZ}(\bF_p))\we\D(R)\otimes_{\D(\bZ)}\D(\End_\bZ(\bF_p))\we\D(R)\otimes_{\D(\bZ)}\D(\bZ)_p^\wedge,$$
    functorially in $R$.

    Now, we prove the second and first equivalences.
    If $R$ is an animated commutative ring, then $\D(R)_{\text{$R\doubleslash p$-cpl}}\we\lim\D(R\doubleslash
    p^n)$. This is closely related to~\cite[Prop.~2.21]{MNN17}, which implies that if $S^0=R\doubleslash p$
    and if $S^\bullet$ is the descent complex associated to $R\rightarrow S^0$, then
    $\D(R)_{\text{$R\doubleslash p$-cpl}}\we\lim\Tot(\D(S^\bullet))$. One can rewrite the limit as
    $\lim_n\Tot^n(\D(S^\bullet))$, the limit of the finite
    totalizations. As each $\Tot^n$ is a finite limit, the natural map
    $\D(\Tot^n(S^\bullet))\rightarrow\Tot^n(\D(S^\bullet))$ is fully faithful and the map
    $\D(R)\rightarrow\Tot^n(\D(S^\bullet))$ canonically factors through the subcategory
    $\D(\Tot^n(S^\bullet))$ generated by the unit.  One has
    $\Tot^n(S^\bullet)\we S\doubleslash p^{n+1}$ for all $n\geq 0$ by~\cite[Prop.~2.14]{MNN17} and hence there are functors
    $$\D(R)_{\text{$R\doubleslash p$-cpl}}\rightarrow\lim_n\D(R\doubleslash p^{n+1})\rightarrow\lim_n{\Tot}^n(\D(S^\bullet)),$$
    where the composition is an equivalence and the right-hand map is fully faithful.
    It follows that the left-hand map is fully faithful and essentially surjective, i.e., an
    equivalence.

    Note that $\D(\Spf R)\we\lim_n\D(R/p^n)$ as every $R\rightarrow S$ where $p$ is nilpotent in $S$
    factors through some $R/p^n$.
    To complete the proof of the lemma, we now show that if $R$ is a commutative ring with bounded $p$-torsion,
    the natural map $\lim_n\D(R\doubleslash p^n)\rightarrow\lim_n\D(R/p^n)$ is an equivalence.
    As $R\doubleslash p$ is a compact generator of $\D(R)_p^\wedge$ and hence gives a generator of
    $\lim_n\D(R\doubleslash p^n)$ by the equivalence above, it suffices to check that it determines a
    compact generator of $\lim_n\D(R/p^n)$ and that the induced map on endomorphism ring spectra is
    an equivalence. First, we check that it is a compact generator. Suppose that $\{M_n\}$ is a tower of $\{R/p^n\}$-modules
    defining a non-zero object of the limit. We claim first that $M_1$ is non-zero. To see this, it is
    enough to assume inductively that $M_2$ is non-zero. Then, $M_1\we M_2\otimes_{R/p^2}R/p$.
    But, $R/p^2\rightarrow R/p$ is descendable by~\cite[Prop.~3.35]{mathew-galois} and hence
    $\D(R/p^2)\rightarrow\D(R/p)$ is conservative by~\cite[Prop.~3.22]{mathew-galois}.
    As $\lim\D(R/p^n)\xrightarrow{\{M_n\}\mapsto M_1}\D(R/p)$ commutes with all colimits, it follows
    that this functor reflects colimits. Now, given a tower $\{M_n\}$ as above,
    \begin{align*}
        \Map_{\lim_n\D(R/p^n)}(\{R/p^n\otimes_R R\doubleslash p\},\{M_n\})&\we\lim_n\Map_{R/p^n}(R/p^n\otimes_R R\doubleslash p,M_n)\\
        &\we\lim_n(M_n\otimes_R R\doubleslash p)[-1]\\
        &\we\lim_n(M_n\otimes_{R/p^n}R/p^n\otimes_R R\doubleslash
    p)[-1].
    \end{align*}
    Each $R/p^n\otimes_R R\doubleslash p$ fits into a canonical fiber sequence $$R/p^n[p][1]\rightarrow
    R/p^n\otimes_R R\doubleslash p\rightarrow R/p$$ where the outer terms are canonically $R/p$-modules.
    We see that each $M_n\otimes_{R/p^n}R/p^n\otimes_R R\doubleslash p$ fits into a fiber sequence
    $$M_1\otimes_{R/p}R/p^n[p][1]\rightarrow M_n\otimes_{R/p^n}R/p^n\otimes_R R\doubleslash p\rightarrow
    M_1.$$
    As $R$ has bounded $p$-power torsion, the limit of the left-hand terms vanishes because it is
    pro-zero. The limit of the right-hand terms is $M_1$. Thus, the mapping spectrum above is
    naturally equivalent to $M_1[-1]$. By what we have already said, the functor $\{M_n\}\mapsto
    M_1$ is conservative and reflects filtered colimits, which completes the proof that $R\doubleslash p$
    maps to a compact generator in $\lim_n\D(R/p^n)$. The proof that the endomorphism ring spectra
    agree is an exercise in the Mittag--Leffler condition using boundedness of $p$-power torsion; it
    is left to the reader.
\end{proof}

\begin{remark}
    This is one point where the derived approach is more natural. If one works with formal spectra
    of $p$-nilpotent animated commutative rings in the context of derived formal stacks, then for any animated commutative ring $R$ there
    is an equivalence $\D(\Spf R)\we\D(R)_p^\wedge$.
\end{remark}

Here are two examples of derived categories of formal stacks which can also be identified as
$p$-completions.

\begin{example}
    Recall that $\D(\B\Gm)\we\Gr\D(\bZ)$, the stable $\infty$-category of graded $\bZ$-module
    spectra. As $\B\Gmhat\we\colim_n\B\Gm\times_{\Spf\bZ_p}\Spf\bZ/p^n$, we have
    \begin{align*}
        \D(\B\Gmhat)&\we\lim_n\D(\B\Gm\times_{\Spf\bZ_p}\Spf\bZ/p^n)\\
        &\we\lim_n\prod_{\bZ}\D(\bZ/p^n)\\
        &\we\prod_{\bZ}\D(\bZ)_p^\wedge.
    \end{align*}
    Thus, we see that $\D(\B\Gmhat)$ is the $\infty$-category of graded $p$-complete $\bZ$-module
    spectra. It is also the $p$-completion of $\D(\B\Gm)\we\Gr\D(\bZ)$ as
    $$\left(\prod_{i\in\bZ}\D(\bZ)\right)\otimes_{\D(\bZ)}\D(\bZ)_p^\wedge\we\prod_{i\in\bZ}\D(\bZ)_p^\wedge,$$
    for example, using the fact that infinite coproducts and products agree in $\Pr^\L$ as can
    be proved using~\cite[Thm.~5.5.3.18]{htt}.
\end{example}

\begin{example}
    Similarly, $\D(\bA^1/\Gm)\we\F\D(\bZ)\we\Fun(\bZ^\op,\D(\bZ))$, the stable $\infty$-category of
    filtrations in $\bZ$-module spectra. The $p$-completion is equivalent to
    $\D(\Ahat/\Gmhat)\we\Fun(\bZ^\op,\D(\bZ)_p^\wedge)$, the $\infty$-category of filtrations in $p$-complete
    $\bZ$-module spectra.
\end{example}

\begin{remark}
    Spectral sequences can behave in strange ways in the $p$-complete world. For example,
    consider the filtration $p^\star\bQ_p$ on $\bQ_p$ where $p^n\bQ_p\subseteq\bQ_p$ consists of the
    elements of $p$-adic valuation at least $n$. As a filtration in $\D(\bZ_p)$, this is
    complete and exhaustive, i.e., $\colim_{n\rightarrow\infty}p^{-n}\bQ_p=\bQ_p$. However, as a
    filtration in $\D(\bZ_p)_p^\wedge$, valid because each $p^n\bQ_p\iso\bZ_p$, this filtration is
    complete but has colimit $0$, the $p$-completion of $\bQ_p$. Thus, this is a complete
    exhaustive filtration on $0$ where the associated graded pieces $\gr^i$ are all equivalent to
    $\bF_p$. The associated spectral sequence degenerates at the $\E_1$-page, but the
    $\E_\infty=\E_1$-page is very far from the associated graded of the induced filtration on the homotopy groups of the
    abutment.
\end{remark}

\begin{example}
    Let $B\leftarrow A\rightarrow C$ be a $p$-completely Tor-independent diagram of bounded commutative rings with pushout $B\otimes_A C$. Then
    $\D(B\otimes_AC)\we\D(B)\otimes_{\D(A)}\D(C)$. It follows that $$\D(\Spf
    (B\otimes_AC))\we\lim_n\D(B/p^n\otimes_{A/p^n}C/p^n)\we\lim_n\D(B/p^n)\otimes_{\D(A/p^n)}\D(C/p^n)\we\D(B)_p^\wedge\otimes_{\D(A)_p^\wedge}\D(C)_p^\wedge.$$
    Moreover, this is equivalent to $\D(B\otimes_AC)\otimes_{\D(\bZ)}\D(\bZ)_p^\wedge$.
\end{example}

Now, we briefly discuss the standard $t$-structure on quasi-coherent sheaves on formal stacks.

\begin{definition}
    Let $\Xscr$ be a formal stack. An object $\Fscr\in\D(\Xscr)$ is connective if
    $x^*\Fscr\in\D(R)_{\geq 0}$ for all $p$-nilpotent rings $R$ and all $R$-points $x\colon\Spf
    R\rightarrow\Xscr$. The full subcategory $\D(\Xscr)_{\geq 0}\subseteq\D(\Xscr)$ defines the
    connective part of a $t$-structure on $\D(\Xscr)$.
\end{definition}

\begin{example}
    Suppose that $R$ is a bounded $p$-complete commutative ring and consider $\D(\Spf R)_{\geq
    0}\subseteq\D(R)_p^\wedge$. Recall that $\D(\Spf R)\we\Mod_R(\D(\Spf\bZ_p))$. We claim that
    $\Fscr\in\D(\Spf R)$ is connective if and only if the underlying object of $\D(\Spf\bZ_p)$ is
    connective, which happens if and only if $\H_i(\Fscr)=0$ for $i<0$. In other words, pushforward
    along $\Spf R\rightarrow\Spf\bZ_p$ is $t$-exact. If $\H_i(\Fscr)=0$ for $i<0$, then for every
    $R\rightarrow S$ where $p$ is nilpotent in $S$, one has that $\Fscr\otimes_RS$ is connective as
    connective objects are closed under tensor products. On the other hand,
    $\Fscr\we\lim_n(\Fscr\otimes_R R\doubleslash p^n)$ and each term of $\Fscr\otimes_R
    R\doubleslash p^n$ is connective
    as it fits into an exact sequence $\Fscr\otimes_R
    R/p^n\otimes_{R/p^n}R[p^n][1]\rightarrow\Fscr\otimes_R R\doubleslash p^n\rightarrow\Fscr\otimes_R R/p^n$
    and the outer terms are connective by hypothesis. Moreover, the fiber of $\Fscr\otimes_R
    R\doubleslash p^n\rightarrow\Fscr\otimes_R R\doubleslash p^{n-1}$ is naturally equivalent to
    $\Fscr\otimes_R R\doubleslash p$ and
    is in particular connective. It follows that in the tower $\{\Fscr\otimes_R R\doubleslash p^n\}_n$ the
    induced maps on $\H_0$ are surjective and so the Mittag--Leffler condition applies and the limit
    is connective. Thus, the $t$-structure on $\D(\Spf R)$ is the
    standard $t$-structure; the coconnnective objects are precisely those derived $p$-complete
    $R$-module spectra which are coconnective. The heart of $\D(\Spf R)$ is the abelian category of
    derived $p$-complete $R$-modules.
\end{example}

\begin{warning}
    The coconnective objects in $\D(\Spf R)$ do not necessarily have the property that
    their base change to $\Spf S$ is coconnective for $S$ an $R$-algebra in which $p$ is nilpotent.
\end{warning}

\subsection{Base change for prismatic crystals}\label{app_2}

Now, we give a discussion of base change and the projection formula for prismatic cohomology of
general prismatic crystals.

\begin{definition}[Base change for quasi-coherent cohomology]
    Suppose that
    \begin{equation}\label{eq:basechange}
        \xymatrix{
            \Xscr'\ar[r]^{g'}\ar[d]^{f'}&\Xscr\ar[d]^f\\
            \Yscr'\ar[r]^g&\Yscr
        }
    \end{equation}
    is a commutative square of formal stacks. We say this square satisfies base change for
    quasi-coherent cohomology if the commutative diagram
    $$\xymatrix{
        \D(\Yscr)\ar[r]^{f^*}\ar[d]^{g^*}&\D(\Xscr)\ar[d]^{g'^*}\\
        \D(\Yscr')\ar[r]^{f'^*}&\D(\Xscr')
    }$$ of left adjoint functors is right adjointable in the sense of~\cite[Def.~4.7.4.13]{ha}.
    Using the equivalence $\alpha\colon f'^*\circ g^*\we g'^*\circ f^*$ witnessing the commutativity
    of these squares,
    this means that the natural transformation of functors $g^*\circ f_*\rightarrow f'_*\circ
    f'^*\circ g^*\circ
    f_*\we f'_*\circ g'^*\circ f^*\circ f_*\rightarrow f'_*\circ
    g'^*\colon\D(\Xscr)\rightarrow\D(\Yscr')$ is an equivalence.
\end{definition}

\begin{warning}
    Right adjointability is not a symmetric notion, and hence neither is the notion of a square
    satisfying base change for quasi-coherent cohomology. In other words, if it holds for a square
    as above, it might or might not hold for the transposed square.
\end{warning}

\begin{notation}
    Let $\D(\Xscr)^+\subseteq\D(\Yscr)$ be the subcategory of homologically bounded above objects.
\end{notation}

\begin{variant}
    \begin{enumerate}
        \item[(i)]
            We say that~\eqref{eq:basechange} satisfies base change for {\em bounded above} quasi-coherent
            cohomology if $g^*$ and $g'^*$ preserve bounded above objects
            and if the natural transformation $g^*\circ f_*\rightarrow f'_*\circ g'^*$ is an
            equivalence when evaluated on any object of $\D(\Xscr)^+$.
        \item[(ii)]
            We say that~\eqref{eq:basechange} satisfies base change for $\Oscr$-cohomology if the natural transformation $g^*\circ f_*\rightarrow f'_*\circ g'^*$ is an
            equivalence when evaluated on $\Oscr_\Xscr$.
    \end{enumerate}
\end{variant}

There is also the projection formula. See~\cite[Tag~08EU]{stacks} for the classical case of schemes.

\begin{definition}[Projection formula]
    A morphism $f\colon\Xscr\rightarrow\Yscr$ of formal stacks satisfies the projection formula
    (for quasi-coherent cohomology) if
    the induced symmetric monoidal functor $f^*\colon\D(\Yscr)\rightarrow\D(\Xscr)$ does, i.e., if
    for every $\Fscr\in\D(\Xscr)$ and $\Gscr\in\D(\Yscr)$, the natural map
    $f_*(\Fscr)\otimes\Gscr\rightarrow f_*(\Fscr\otimes f^*(\Gscr))$ is an equivalence.
\end{definition}

\begin{variant}
    We say that $f$ satisfies the projection formula for bounded above quasi-coherent cohomology with respect to
    a class of objects $\Ascr\subseteq\D(\Yscr)$ if $f_*(\Fscr)\otimes\Gscr\rightarrow f_*(\Fscr\otimes
    f^*(\Gscr))$ is an equivalence for $\Fscr\in\D(\Xscr)^+$ and $\Gscr\in\Ascr$.
    When $\Gscr=\Oscr_\Yscr$, the projection formula morphism is always an equivalence. 
    Thus, $f$ satisfies the projection formula for quasi-coherent cohomology with
    respect to objects in the thick subcategory generated by the unit $\Oscr_\Yscr$ in $\D(\Yscr)$.
\end{variant}

\begin{lemma}[Colimits and the projection formula]\label{lem:colimitprojection}
    Suppose that $f\colon\Xscr\rightarrow\Yscr$ is a colimit in formal stacks of morphisms
    $f_i\colon\Xscr_i\rightarrow\Yscr_i$. If each $f_i$ satisfies the projection formula and if each
    of the induced
    squares
    $$\xymatrix{
        \Xscr_i\ar[r]^{a_i}\ar[d]^{f_i}&\Xscr\ar[d]^f\\
        \Yscr_i\ar[r]^{b_i}&\Yscr
    }$$ satisfies base change for quasi-coherent cohomology, then $f$ satisfies the projection
    formula.
\end{lemma}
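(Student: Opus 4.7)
The plan is to exploit the fact that the assignment $\Xscr\mapsto\D(\Xscr)$ takes colimits of formal (pre)stacks to limits of $\infty$-categories, so that $\D(\Yscr)\we\lim_i\D(\Yscr_i)$ via the pullbacks $b_i^*$, and similarly $\D(\Xscr)\we\lim_i\D(\Xscr_i)$ via the $a_i^*$. Consequently the family $\{b_i^*\}_i$ is jointly conservative on $\D(\Yscr)$, and to check that the projection formula map
\[
\pi_{\Fscr,\Gscr}\colon f_*(\Fscr)\otimes\Gscr\longrightarrow f_*(\Fscr\otimes f^*\Gscr)
\]
is an equivalence in $\D(\Yscr)$, it suffices to verify that $b_i^*(\pi_{\Fscr,\Gscr})$ is an equivalence for every $i$.

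Next, I would compute each $b_i^*(\pi_{\Fscr,\Gscr})$ by combining base change with the projection formula for $f_i$. On the source side, since $b_i^*$ is symmetric monoidal, $b_i^*(f_*\Fscr\otimes\Gscr)\we b_i^*f_*\Fscr\otimes b_i^*\Gscr$, and the base change hypothesis gives a canonical equivalence $b_i^*f_*\Fscr\we f_{i,*}a_i^*\Fscr$. On the target side, base change together with the equivalence $a_i^*f^*\we f_i^*b_i^*$ coming from commutativity of the square gives
\[
b_i^*f_*(\Fscr\otimes f^*\Gscr)\we f_{i,*}a_i^*(\Fscr\otimes f^*\Gscr)\we f_{i,*}(a_i^*\Fscr\otimes f_i^*b_i^*\Gscr).
\]
Under these identifications the map $b_i^*(\pi_{\Fscr,\Gscr})$ becomes the projection formula map
\[
f_{i,*}(a_i^*\Fscr)\otimes b_i^*\Gscr\longrightarrow f_{i,*}(a_i^*\Fscr\otimes f_i^*b_i^*\Gscr),
\]
which is an equivalence by the hypothesis that each $f_i$ satisfies the projection formula. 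This finishes the verification.

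The main obstacle I foresee is not the conceptual content but rather the bookkeeping needed to identify $b_i^*(\pi_{\Fscr,\Gscr})$ with the projection formula morphism $\pi^i_{a_i^*\Fscr,\,b_i^*\Gscr}$ of $f_i$ in a way that is natural in $i$; this requires tracking the Beck--Chevalley transformations $b_i^*f_*\to f_{i,*}a_i^*$ and checking their compatibility with symmetric monoidal structure and with the counits and units entering the definition of the projection formula morphism. Once that coherence is in place, the argument reduces cleanly to the two hypotheses of the lemma, and no further analysis of the colimit is needed.
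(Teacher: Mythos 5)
Your proposal is correct and follows essentially the same route as the paper's proof: reduce via $\D(\Yscr)\we\lim_i\D(\Yscr_i)$ to checking on each $b_i^*$, then use symmetric monoidality of pullback and the base-change hypothesis to rewrite $b_i^*(\pi_{\Fscr,\Gscr})$ as the projection-formula map for $f_i$. The coherence/naturality issue you flag at the end is real but is the same one the paper elides, so there is no gap relative to the paper's own level of detail.
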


\begin{proof}
    Fix $\Fscr\in\D(\Xscr)$ and $\Gscr\in\D(\Yscr)$. As $\D(\Yscr)\we\lim_i\D(\Yscr_i)$, to prove
    that $f_*(\Fscr)\otimes\Gscr\rightarrow f_*(\Fscr\otimes f^*(\Gscr))$ is an equivalence, it is
    enough to prove that for each $i$ the natural map $b_i^*(f_*(\Fscr)\otimes\Gscr)\rightarrow
    b_i^*f_*(\Fscr\otimes f^*(\Gscr))$ is an equivalence. Using symmetric monoidality of the
    pullback functors as well as base change for quasi-coherent cohomology, this map can be
    rewritten as the natural map $f_{i*}(a_i^*(\Fscr))\otimes b_i^*(\Gscr)\rightarrow f_{i*}a_i^*(\Fscr\otimes
    f^*(\Gscr))\we f_{i*}(a_i^*(\Fscr)\otimes f_i^*(b_i^*(\Gscr)))$, which is an equivalence by the
    projection formula for $f_i$ applied to $a_i^*(\Fscr)\in\D(\Xscr_i)$ and $b_i^*(\Gscr)\in\D(\Yscr_i)$.
\end{proof}

\begin{variant}[Colimits and bounded above projection formula]\label{var:colimitprojectionbounded}
    If in the situation of Lemma~\ref{lem:colimitprojection} each $f_i$ satisfies the projection
    formula for bounded above quasi-coherent cohomology with respect to classes
    $\Ascr\in\D(\Yscr_i)$, if $\Ascr\subseteq\D(\Yscr)$ is a class of objects which is sent
    under $b_i^*$ into $\Ascr_i$, and if $a_i^*$ preserves bounded above objects, then $f$ satisfies
    base change for bounded above quasi-coherent cohomology with respect to $\Ascr$.
\end{variant}

\begin{lemma}[Colimits and base change]\label{lem:colimitbasechange}
    Suppose that a square $\square$ as in~\eqref{eq:basechange} is realized as a colimit in formal stacks
    of commutative squares
    $$\square_i=\begin{gathered}\xymatrix{
            \Xscr_i'\ar[r]^{g'_i}\ar[d]^{f'_i}&\Xscr_i\ar[d]^{f_i}\\
            \Yscr_i'\ar[r]^{g_i}&\Yscr_i
    }\end{gathered}$$
    over some indexing simplicial set $I$.
    For each edge $j\rightarrow i$ in $I$, let
    $$\square_{i,j}=\begin{gathered}\xymatrix{
            \Xscr_j\ar[r]\ar[d]^{f_j}&\Xscr_i\ar[d]^{f_i}\\
            \Yscr_j\ar[r]&\Yscr_i
    }\end{gathered}\quad\text{and}\quad
    \square_{i,j}'=\begin{gathered}\xymatrix{
            \Xscr_j'\ar[r]\ar[d]^{f_j'}&\Xscr_i'\ar[d]^{f_i'}\\
            \Yscr_j'\ar[r]&\Yscr_i'.
    }\end{gathered}
    $$
    If $\square_i$, $\square_{i,j}$, and $\square_{i,j}'$ satisfy base change for cohomology for
    each edge $j\rightarrow i$ in $I$, then so does
    $\colim_{i\in I}\square_i\we\square$.
\end{lemma}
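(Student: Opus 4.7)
My plan is to test the base-change equivalence pointwise after restriction to each $\Yscr'_i$, using $\D(\Yscr')\we\lim_i\D(\Yscr'_i)$, and then chain together a sequence of equivalences on each restriction. Concretely, for each $\Fscr\in\D(\Xscr)$ and each $i\in I$, I would show
$$b_i'^*g^*f_*\Fscr\we g_i^*b_i^*f_*\Fscr\we g_i^*f_{i*}a_i^*\Fscr\we f'_{i*}g_i'^*a_i^*\Fscr\we f'_{i*}a_i'^*g'^*\Fscr\we b_i'^*f'_*g'^*\Fscr,$$
where the first and fourth equivalences are pure functoriality of pullback on the defining cube, the middle one is base change for $\square_i$, and the second and fifth are base change for the ``structure squares'' $\Xscr_i\to\Xscr$ over $\Yscr_i\to\Yscr$ and its primed analogue.

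The crux of the argument is therefore the preliminary step that derives these structure-square base changes from the hypothesis on the transition squares $\square_{i,j}$ and $\square_{i,j}'$. Given $\Fscr\in\D(\Xscr)$, I claim the family $\{f_{k*}a_k^*\Fscr\}_{k\in I}$ promotes to a coherent object of $\lim_k\D(\Yscr_k)\we\D(\Yscr)$: for each edge $\phi\colon j\to i$, the required equivalence $\phi^*f_{i*}a_i^*\Fscr\we f_{j*}a_j^*\Fscr$ is exactly the Beck--Chevalley transformation associated to $\square_{i,j}$, which is invertible by assumption, and the higher coherences can be assembled from the same data. A Yoneda computation then identifies the corresponding object of $\D(\Yscr)$ with $f_*\Fscr$: for any $\Gscr\in\D(\Yscr)$,
$$\Map(\Gscr,f_*\Fscr)\we\Map(f^*\Gscr,\Fscr)\we\lim_k\Map(f_k^*b_k^*\Gscr,a_k^*\Fscr)\we\lim_k\Map(b_k^*\Gscr,f_{k*}a_k^*\Fscr),$$
which is the same as mapping $\Gscr$ into the system $\{f_{k*}a_k^*\Fscr\}$. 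Pulling back to the index $i$ yields $b_i^*f_*\Fscr\we f_{i*}a_i^*\Fscr$, i.e., base change for the unprimed structure square; the primed case is identical, using $\square_{i,j}'$.

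The main obstacle will be making the coherence of the system $\{f_{k*}a_k^*\Fscr\}_k$ precise at the $\infty$-categorical level, i.e., specifying the entire cone of structure maps in $\lim_k\D(\Yscr_k)$ rather than just pointwise data. The cleanest route is the right-adjointability criterion of~\cite[Def.~4.7.4.13]{ha}: the diagram $k\mapsto(f_k^*\colon\D(\Yscr_k)\rightleftarrows\D(\Xscr_k):f_{k*})$ is termwise right-adjointable over every edge thanks to the $\square_{i,j}$ hypothesis, which propagates to the limit and produces $f_*$ as the honest limit of the $f_{k*}$ along the transition maps. With the structure-square base changes established, the six-term chain above is manifestly natural and assembles into the desired inverse to the Beck--Chevalley transformation for $\square$.
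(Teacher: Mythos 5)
Your proof is correct and rests on the same essential ingredient as the paper's argument, namely Lurie's result that a limit of left adjoints in $\Pr^\L$ whose transition squares are right-adjointable can again be taken in $\Fun^{\mathrm{RAd}}(\Delta^1,\Pr^\L)$, with the right adjoint computed termwise; both proofs reduce to that fact. The difference is organizational. The paper packages the three hypotheses into a single natural transformation $\D(f^*)\to\D(f'^*)$ of functors $I^\op\to\Fun^{\mathrm{RAd}}(\Delta^1,\Pr^\L)$ and applies the limit result once. You instead isolate a preliminary step, base change for the ``structure squares'' $\Xscr_i\to\Xscr$ over $\Yscr_i\to\Yscr$ and their primed analogues, extracted from the $\square_{i,j}$ and $\square_{i,j}'$ hypotheses, and then obtain base change for $\square$ by a six-term chain that restricts along each $b_i'$ and feeds the $\square_i$ hypothesis into the middle link. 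Your preliminary step is precisely what the paper records as Variant~\ref{var:colimitbasechange} (and again, as a consequence, in the remark following Lemma~\ref{lem:colimitbasechange}), so the two arguments are closely parallel; if anything your decomposition makes the role of each hypothesis slightly more transparent and matches how the bounded-above case is handled later. The one caveat, which you correctly flag: to conclude that the six-term composite agrees with the restricted Beck--Chevalley transformation for $\square$, you need the \emph{coherent} cartesian section $\{f_{k*}a_k^*\Fscr\}_k$ in $\lim_k\D(\Yscr_k)$, not just the edgewise identifications produced by the Beck--Chevalley equivalences for $\square_{i,j}$. Running that step through the right-adjointability formalism, as you propose, supplies exactly this coherence, at which point the chain is automatically compatible with the canonical maps and the conclusion follows.
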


\begin{proof}
    The lemma follows from~\cite[Cor.~4.7.4.18(2)]{ha}, to which we refer for more details.
    At the level of quasi-coherent
    sheaves, applying $\D(-)$ turns these colimit diagrams into limit diagrams in $\Pr^\L$, i.e.,
    $\D(\square)\we\lim\D(\square_i)$. As $i$ varies, the pullback functors
    $\D(\Yscr_i)\rightarrow\D(\Xscr_i)$ give a functor $\D(f^*)\colon I^\op\rightarrow\Fun(\Delta^1,\Pr^\L)$.
    By hypothesis, this functor factors to give a functor
    $\D(f^*)\colon I^\op\rightarrow\Fun^{\mathrm{RAd}}(\Delta^1,\Pr^\L)$,
    where $\Fun^{\mathrm{RAd}}(\Delta^1,\Pr^\L)$ is the (non-full) subcategory of
    $\Fun(\Delta^1,\Pr^\L)$ on all left adjoint functors $\Cscr\rightarrow\Dscr$ is $\Pr^\L$ and
    where the morphisms are natural transformations
    $(\Cscr\rightarrow\Dscr)\rightarrow(\Escr\rightarrow\Fscr)$ in $\Pr^\L$ corresponding to right
    adjointable commutative squares
    $$\xymatrix{
        \Cscr\ar[r]\ar[d]&\Dscr\ar[d]\\
        \Escr\ar[r]&\Fscr.
    }$$
    The same result for the maps induced by $\Xscr_i'\rightarrow\Yscr_i'$ gives another functor
    $\D(f'^*)\colon
    I^\op\rightarrow\Fun^{\mathrm{RAd}}(\Delta^1,\Pr^\L)$. Moreover, the condition on the squares
    $\square_i$ implies that pulling back along the maps $f_i$ and $f_i'$ gives a natural
    transformation $\D(f^*)\rightarrow\D(f'^*)$ of functors
    $I^\op\rightarrow\Fun^{\mathrm{RAd}}(\Delta^1,\Pr^\L)$. Taking limits and
    applying~\cite[Cor.~4.7.4.18(2)]{ha}, the result follows.
\end{proof}

\begin{remark}
    The proof of Lemma~\ref{lem:colimitbasechange} also shows that the commutative squares
    $$\begin{gathered}\xymatrix{
        \Xscr_i\ar[r]\ar[d]&\Xscr\ar[d]\\
        \Yscr_i\ar[r]&\Yscr
    }\end{gathered}\quad\text{and}\quad\begin{gathered}\xymatrix{
            \Xscr'_i\ar[r]\ar[d]&\Xscr'\ar[d]\\
            \Yscr'_i\ar[r]&\Yscr'
    }\end{gathered}$$
    satisfy base change for quasi-coherent cohomology for all $i\in I$ as well.
\end{remark}

\begin{remark}[Colimits are universal]
    Recall the fact that colimits are universal in an $\infty$-topos, meaning that they are
    stable under pullback. In particular, if $\colim_{i\in I}\Yscr_i\we\Yscr$ as formal
    stacks and if
    $\Xscr\rightarrow\Yscr$ is a map of formal stacks, then $\colim_{i\in
    I}\Yscr_i\times_\Yscr\Xscr\rightarrow\Xscr$ is an equivalence. See~\cite[Sec.~6.1]{htt}.
\end{remark}

\begin{variant}\label{var:colimitbasechange}
    Suppose that $\{\Xscr_i\xrightarrow{f_i}\Yscr_i\}_{i\in I}$ is an $I$-indexed diagram of morphisms of
    formal stacks. If each square
    $$\square_{i,j}=\begin{gathered}\xymatrix{
            \Xscr_j\ar[r]^{a_{ij}'}\ar[d]^{f_j}&\Xscr_i\ar[d]^{f_i}\\
            \Yscr_j\ar[r]^{a_{ij}}&\Yscr_i
    }\end{gathered}$$
    satisfies base change for quasi-coherent cohomology, then so does each square
    $$\square_i=\begin{gathered}\xymatrix{
            \Xscr_i\ar[r]^{g_i'}\ar[d]^{f_i}&\Xscr\ar[d]^f\\
        \Yscr_i\ar[r]^{g_i}&\Yscr
    }\end{gathered}$$
    where $(\Xscr\rightarrow\Yscr)\we\colim_I(\Xscr_i\rightarrow\Yscr_i)$. The proof is similar to
    that of Lemma~\ref{lem:colimitbasechange}. However, we will need a version which holds under
    weakened hypotheses in the bounded above case. Specifically, assume that each $a_{ij}^*$ and
    $a_{ij}'^*$ preserve bounded above objects and that each square $\square_{ij}$ satisfies base
    change for bounded above quasi-coherent cohomology. Then, so does each square $\square_i$.
    To prove this, consider an object $\Fscr\in\D(\Xscr)^+$ and note that $\Fscr\we\lim
    g'_{i*}g'^*_i\Fscr$. Let $\Fscr_i=g'^*_i\Fscr$. So, we can view $\Fscr$ as the compatible family
    of objects $\Fscr_i\in\D(\Xscr_i)^+$, meaning that there are compatible equivalences $\Fscr_j\we
    a'^*_{ij}\Fscr_i$ for all arrows $j\rightarrow i$ in $I$. Applying $f_*$ one finds that
    $f_*\Fscr\we\lim_I f_*g'_{i*}\Fscr_i\we\lim_I g_{i*}f_{i*}\Fscr_i$. We want to know that
    $g_i^*f_*\Fscr\we f_{i*}g'^*_i\Fscr\we f_{i*}\Fscr_i$. However, there are equivalences $a_{ij}^*f_{i*}\Fscr_i\we
    f_{j*}a'^*_{ij}\Fscr_i\we f_{j*}\Fscr_j$, compatible in $I$, by our assumption on base change for bounded above
    cohomology and the preservation of bounded above objects by $a_{ij}^*$ and $a'^*_{ij}$.
    Thus, the objects $f_{i*}\Fscr_i$ (together with the given equivalences) determine an object of
    $\lim\D(\Yscr_i)\we\D(\Yscr)$ (or a cartesian section of an appropriate fibration corresponding
    to $i\mapsto\D(\Yscr_i)$). This object is $f_*\Fscr$ and the proof is complete.
\end{variant}

\begin{remark}
    Let $\D(\Xscr_\star)\rightarrow I^\op$ be the cartesian fibration corresponding to
    $i\mapsto\D(\Xscr_i)$ together with the pullback maps. The limit $\D(\Xscr)$ is the
    $\infty$-category of cartesian sections of the fibration. There is by hypothesis a natural
    transformation $\D(\Yscr_\star)\rightarrow\D(\Xscr_\star)$ of cartesian fibrations over $I^\op$
    and the pullback functor $f^*$ takes cartesian sections to cartesian sections. The functor
    $\D(\Xscr_\star)\rightarrow\D(\Yscr_*)$, corresponding to the right adjoints $f_*$, need not
    take cartesian fibrations to cartesian fibrations. However, this is exactly what is guaranteed
    by our assumption on the squares $\square_{i,j}$. 
\end{remark}

\begin{lemma}\label{lem:formalbasechange}
    Let $B\leftarrow A\rightarrow C$ be a diagram of bounded $p$-complete commutative rings. If the
    diagram is $p$-completely Tor-independent (for example if $B$ or $C$ is $p$-completely flat over
    $A$), then
    $$\xymatrix{
        \Spf(B\tensorhat_AC)\ar[r]\ar[d]&\Spf C\ar[d]\\
        \Spf B\ar[r]&\Spf A
    }$$
    satisfies base change for quasi-coherent cohomology. In addition, $\Spf C\rightarrow\Spf A$
    satisfies the projection formula.
\end{lemma}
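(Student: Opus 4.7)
The plan is to reduce everything to a statement about derived $p$-completed tensor products via the identifications from Proposition~\ref{prop:completeomni} and the example immediately following it. Under the hypothesis that $A$, $B$, $C$ are bounded $p$-complete, we have $\D(\Spf A)\we \D(A)_p^\wedge$, $\D(\Spf B)\we \D(B)_p^\wedge$, and $\D(\Spf C)\we \D(C)_p^\wedge$. The $p$-complete Tor-independence hypothesis ensures that $B\tensorhat_A C$ is discrete, and the example following Proposition~\ref{prop:completeomni} then yields $\D(\Spf(B\tensorhat_A C))\we \D(B)_p^\wedge\otimes_{\D(A)_p^\wedge}\D(C)_p^\wedge$ as a pushout of $\D(\bZ)_p^\wedge$-modules in $\Pr^\L$.

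Next I would verify that under these identifications the pullback along $\Spf B\to \Spf A$ corresponds to $M\mapsto M\tensorhat_A B$, with right adjoint the forgetful functor. This is essentially tautological from the definition $\D(\Spf A)=\lim_{\Spf R\to\Spf A}\D(R)$: an object of $\D(A)_p^\wedge$ corresponds to the compatible system of classical base changes to $p$-nilpotent quotients, and composing with the map $\Spf B\to\Spf A$ computes the $p$-completed tensor product. The same identification applies to the other three edges of the square, where the top arrow gives extension of scalars along $C\to B\tensorhat_A C$.

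With these translations, the base change map $g^*f_*\Fscr\to f'_*g'^*\Fscr$ evaluated on $M\in \D(C)_p^\wedge$ becomes the associativity map
\[
M\tensorhat_A B \;\longrightarrow\; M\tensorhat_C (C\tensorhat_A B),
\]
which is an equivalence of $p$-completed tensor products (equivalently, it is the unit in the pushout description of $\D(\Spf(B\tensorhat_A C))$). Similarly, the projection formula morphism for $f\colon\Spf C\to\Spf A$ applied to $M\in\D(C)_p^\wedge$ and $N\in\D(A)_p^\wedge$ becomes the associativity equivalence $M\tensorhat_A N\we M\tensorhat_C(C\tensorhat_A N)$, which requires no Tor-independence hypothesis. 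There is no genuine obstacle here; the only point requiring care is checking that the categorical pushout description from the preceding example really does identify the pullback functors with $p$-completed base change, but this is immediate from the construction of $\D(\Spf(-))$ as a limit indexed by $p$-nilpotent points.
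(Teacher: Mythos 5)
Your argument is correct, but it is not the route the paper takes in the displayed proof. The paper's proof writes the square as the colimit over $n$ of the mod $p^n$ squares
\[
\xymatrix@C=10pt{
\Spf(B/p^n\otimes_{A/p^n}C/p^n)\ar[r]\ar[d]&\Spf(C/p^n)\ar[d]\\
\Spf(B/p^n)\ar[r]&\Spf(A/p^n),
}
\]
invokes the classical affine base change and projection formula for each level, and then passes to the colimit via Lemmas~\ref{lem:colimitprojection} and~\ref{lem:colimitbasechange}. Your approach is exactly the direct argument that the authors record in the Remark immediately following the lemma: identify $\D(\Spf R)$ with $\D(R)_p^\wedge$ via Proposition~\ref{prop:completeomni}, and observe that for $M\in\D(C)_p^\wedge$ the base change map becomes $M\tensorhat_A B\to M\tensorhat_C(C\tensorhat_A B)$, an associativity equivalence of completed tensor products. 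The Remark makes clear the authors know this shortcut but deliberately chose the colimit route to warm up the technique they reuse in the harder cases (Proposition~\ref{prop:basicsquare}, Corollaries~\ref{cor:1} and~\ref{cor:2}), where a direct identification is unavailable. So your proof is valid, more elementary in this instance, and essentially matches the paper's Remark rather than its proof.

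One small imprecision worth flagging: the parenthetical ``equivalently, it is the unit in the pushout description'' is misleading. The identification $\D(\Spf(B\tensorhat_A C))\we \D(B)_p^\wedge\otimes_{\D(A)_p^\wedge}\D(C)_p^\wedge$ as a pushout of presentable $\infty$-categories does not, by itself, entail the right-adjointability needed for base change; it is the concrete computation of $g^*f_*M$ and $f'_*g'^*M$ as completed tensor products (and the resulting associativity) that does the work. Also, the role Tor-independence plays is precisely to make $B\tensorhat_A C$ discrete so that $\Spf(B\tensorhat_A C)$ is a meaningful corner of the square and the example after Proposition~\ref{prop:completeomni} applies; the associativity equivalence itself needs no flatness. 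You correctly observe that the projection formula requires no Tor-independence at all.
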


\begin{proof}
    The assumptions imply that this square is the colimit of the squares
    $$\xymatrix{
        \Spf(B/p^n\otimes_{A/p^n}C/p^n)\ar[r]\ar[d]&\Spf(C/p^n)\ar[d]\\
        \Spf(B/p^n)\ar[r]&\Spf(A/p^n)
    }$$
    and each of these squares satisfies the projection formula and base change for quasi-coherent
    cohomology by a standard argument; see for example~\cite[Tags~08EU, 08IB]{stacks}. The statement now
    follows from Lemmas~\ref{lem:colimitprojection}
    and~\ref{lem:colimitbasechange}.
\end{proof}

\begin{remark}
    This simple case of base change can be proved directly. Consider the commutative square
    $$\xymatrix{\D(A)_p^\wedge\ar[r]\ar[d]&\D(C)_p^\wedge\ar[d]\\
    \D(B)_p^\wedge\ar[r]&\D(B\tensorhat_AC)_p^\wedge.
    }$$
    For a derived $p$-complete $C$-module spectrum $M$, the base change transformation is a morphism
    $B\tensorhat_A M\rightarrow (B\tensorhat_A C)\tensorhat_C M$, which is an equivalence.
    However, the lemma gives an illustration of how we will use colimits to produce new cases of
    base change.
\end{remark}

\begin{lemma}
    Let $B\leftarrow A\rightarrow C$ be a diagram of bounded prisms. If the
    diagram is $(p,I)$-completely Tor-independent (for example if $B$ or $C$ is $(p,I)$-completely flat over
    $A$), then the pullback square
    $$\xymatrix{
        \SSpf(B\tensorhat_AC)\ar[r]\ar[d]&\SSpf C\ar[d]\\
        \SSpf B\ar[r]&\SSpf A
    }$$
    satisfies base change for quasi-coherent cohomology. In addition, $\SSpf C\rightarrow\SSpf A$
    satisfies the projection formula.
\end{lemma}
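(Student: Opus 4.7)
The plan is to mimic the proof of Lemma~\ref{lem:formalbasechange}, replacing $\Spf$ with $\SSpf$ and $p$-adic completions with $(p,I)$-adic completions. First I would write $\SSpf A = \colim_{m,n} \Spec A/(p^m, I^n)$, and similarly for $B$, $C$, and $B\tensorhat_A C$ (where for the latter we use $J = IB = IC = I(B\tensorhat_A C)$, all abusively denoted $I$). The $(p,I)$-Tor-independence hypothesis then gives a natural equivalence
\[
(B\tensorhat_A C)/(p^m, I^n) \we B/(p^m, I^n) \otimes_{A/(p^m, I^n)} C/(p^m, I^n),
\]
so the square of formal stacks in question is realized as the colimit over $(m,n)$ of the honest pullback squares of affine schemes
\[
    \xymatrix{
        \Spec(B/(p^m, I^n) \otimes_{A/(p^m, I^n)} C/(p^m, I^n)) \ar[r]\ar[d] & \Spec(C/(p^m, I^n))\ar[d]\\
        \Spec(B/(p^m, I^n)) \ar[r] & \Spec(A/(p^m, I^n)).
    }
\]

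Next I would invoke the classical facts that each such pullback square of affine schemes satisfies base change for quasi-coherent cohomology and that the right-hand vertical map satisfies the projection formula (see~\cite[Tags~08EU, 08IB]{stacks}). To pass this through the colimit, I would apply Lemma~\ref{lem:colimitbasechange} to conclude base change for quasi-coherent cohomology of the colimit square, and Lemma~\ref{lem:colimitprojection} to conclude that $\SSpf C \to \SSpf A$ satisfies the projection formula. Applying these colimit-preservation lemmas requires verifying that the auxiliary squares $\square_{i,j}$ and $\square'_{i,j}$ for transitions $(m',n') \to (m,n)$ also satisfy base change; but these are again pullback squares of affine schemes associated to surjections of rings, and hence base change holds at each level by the classical statement.

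I do not expect any serious obstacle here: the argument is parallel to Lemma~\ref{lem:formalbasechange}, with the only bookkeeping issue being the need to work with the two-variable indexing set $(m,n) \in \bN \times \bN$ and to verify that $(p,I)$-complete Tor-independence (which by the assumption $B$ or $C$ being $(p,I)$-completely flat gives the classical equivalence above after reduction mod $(p^m, I^n)$) is enough to identify the fiber product in the colimit with $\Spec$ of the truncated tensor product. Once the colimit presentation is in place, Lemmas~\ref{lem:colimitprojection} and~\ref{lem:colimitbasechange} do all of the real work.
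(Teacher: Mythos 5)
Your proof is correct and takes essentially the same approach as the paper's, which is the one-line instruction to "follow the proof of Lemma~\ref{lem:formalbasechange} but with $\Spf(-/I^n)$." The only difference is that you unroll both truncation directions at once into a bi-indexed colimit over $(m,n)$ landing in honest affine schemes, whereas the paper's implicit intent is to take the colimit over $n$ of the squares $\Spf(-/I^n)$ of $p$-adic formal schemes and invoke Lemma~\ref{lem:formalbasechange} at each level; both unwindings are correct and rest on the same identification of the truncated fiber products and the same appeals to Lemmas~\ref{lem:colimitprojection} and~\ref{lem:colimitbasechange}.
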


\begin{proof}
    Follow the proof of Lemma~\ref{lem:formalbasechange} but with $\Spf(-/I^n)$.
\end{proof}

In order to prove more general cases of base change for prismatic crystals, we need a base
case provided by the next proposition. Let $\Tscr_A\subseteq\D(\SSpf A)$ be the full subcategory of
those $(p,I)$-complete $M$ such that $M/(p,I)$ is equivalent to a filtered colimit of uniformly
bounded above perfect $A/(p,I)$-modules.

\begin{proposition}\label{prop:basicsquare}
    Suppose that $A$ is a bounded prism and that $X$ is a quasicompact separated
    formal scheme
    over $\Spf\overline{A}$. If $X$ is bounded and $\L_{X/\overline{A}}$ has $p$-complete
    Tor-amplitude in $[0,1]$, then for every map of prisms $A\rightarrow B$ of bounded
    $(p,I)$-complete Tor-amplitude,
    the pullback square
    $$\xymatrix{
        \WCart_{X_{\overline{B}}/B}\ar[r]\ar[d]&\WCart_{X/A}\ar[d]\\
        \SSpf B\ar[r]&\SSpf A
    }$$
    satisfies base change for bounded above quasi-coherent cohomology. Moreover,
    $\WCart_{X/A}\rightarrow\SSpf A$ satisfies the projection formula for
    $\Fscr\in\D(\WCart_{X/A})^+$ and $\Gscr\in\Tscr_A$.
\end{proposition}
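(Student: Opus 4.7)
The plan is to reduce in two stages until $\WCart_{X/A}$ becomes affine, at which point both assertions follow from the $(p,I)$-adic analogue of Lemma~\ref{lem:formalbasechange} applied to the relevant $\SSpf$'s.

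First I would apply Zariski descent on $X$. Since $X$ is quasicompact separated, it admits a finite affine open cover $\{U_\alpha\}$ whose \v{C}ech nerve consists of affines, and the functor $W\mapsto\WCart_W$ preserves open immersions (visible at the level of $S$-points), so this induces a Zariski cover of $\WCart_{X/A}$. At each level of the \v{C}ech nerve the pairwise base-change squares satisfy base change for bounded above quasi-coherent cohomology and the projection formula by Lemma~\ref{lem:formalbasechange}, so Lemma~\ref{lem:colimitprojection} together with Variant~\ref{var:colimitbasechange} let me assume $X=\Spf R$ with $R$ a bounded $p$-complete $\overline{A}$-algebra and $\L_{R/\overline{A}}$ of $p$-complete Tor-amplitude in $[0,1]$.

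Second, I would reduce further to the case in which $R$ is quasiregular semiperfectoid over $\overline{A}$ by quasisyntomic descent on $R$. Choose a $p$-completely faithfully flat cover $R\to R^0$ with $R^0$ relatively qrsp over $A$, and form the \v{C}ech nerve $R^\bullet$; by Corollary~\ref{cor:surjective}, $\WCart_{R^0/A}\to\WCart_{R/A}$ is surjective in the flat topology, and $\WCart_{R/A}$ is the geometric realization of $\WCart_{R^\bullet/A}$. Each $R^n$ is quasisyntomic over $\overline{A}$ by cotangent-complex transitivity, and after iterating the covering at each level I may arrange that every term is relatively qrsp over $\overline{A}$. Another application of Variant~\ref{var:colimitbasechange} and Lemma~\ref{lem:colimitprojection} then reduces both claims to the case where $R$ is itself qrsp over $\overline{A}$.

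In the qrsp case, Bhatt--Lurie's classicality theorem~\cite[Cor.~7.18]{bhatt-lurie-prism} identifies $\WCart_{R/A}\we\SSpf\Prism_{R/A}$ as an affine formal spectrum. The hypothesis that $B$ has bounded $(p,I)$-complete Tor-amplitude over $A$ together with the base-change formula~\cite[Rem.~4.1.5]{bhatt-lurie-apc} gives $\Prism_{R/A}\tensorhat_AB\we\Prism_{R\tensorhat_{\overline{A}}\overline{B}/B}$, so the square collapses to
\[
\xymatrix{
\SSpf\left(\Prism_{R/A}\tensorhat_AB\right) \ar[r]\ar[d] & \SSpf\Prism_{R/A} \ar[d]\\
\SSpf B \ar[r] & \SSpf A
}
\]
with Tor-independent horizontal maps. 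The $(p,I)$-adic version of Lemma~\ref{lem:formalbasechange} then delivers base change for bounded-above quasi-coherent cohomology; the projection formula for this affine map is standard for perfect $\Gscr$ (the unit case extends through finite (co)limits and retracts) and extends to $\Ind^+(\Perf(\SSpf A))$ by writing $\Gscr=\colim_i\Gscr_i$ with uniformly bounded $(p,I)$-Tor-amplitude, using that in the affine case the pushforward commutes with all filtered colimits.

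The main obstacle I expect is the second reduction: guaranteeing that iterating qrsp covers along the \v{C}ech nerve of a cover $R\to R^0$ yields a diagram whose terms all satisfy the hypothesis of Bhatt--Lurie's classicality theorem. I would handle this by the cotangent-complex transitivity argument used in the proof of Lemma~\ref{lem:classicality}, adapted to the setting where only the base commutative ring $R$, and not the prism $A$, is being varied.
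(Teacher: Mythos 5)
Your two-step outline — reduce from a general qc separated $X$ to $X=\Spf R$ affine, then further to $R$ relatively qrsp over $\overline{A}$ where $\WCart_{R/A}\we\SSpf\Prism_{R/A}$ is the formal spectrum of a discrete ring — is the right endgame, and your use of~\cite[Cor.~7.18]{bhatt-lurie-prism} plus the prism-level base-change formula is exactly how the affine qrsp case is dispatched. However, both reduction steps have a real gap as written, and the second one is where the paper's argument takes a substantially different shape.

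For the first step, you cite Lemma~\ref{lem:colimitprojection} and Variant~\ref{var:colimitbasechange}, but neither gives what you need. Lemma~\ref{lem:colimitbasechange} requires the constituent squares $\square_i$ (each open piece against the same fixed base $\SSpf A\leftarrow\SSpf B$) to \emph{already} satisfy base change, which is exactly what you are trying to prove, so applying it is circular. Variant~\ref{var:colimitbasechange} concludes base change for the \emph{inclusion} squares $\Xscr_i\to\Xscr$ over $\Yscr_i\to\Yscr$, not for the square $\Xscr'\to\Xscr$ over the fixed $\Yscr'\to\Yscr$ you care about. What is actually needed here is a special-purpose lemma for \emph{finite} colimits over a fixed base — the paper proves exactly such a statement (Lemma~\ref{lem:finitecolimitbasechange}) inside the body of this proof, using that pullback along $g$ commutes with finite limits, and then inducts on the number of affines via the pushout decomposition $X=Y\cup_{Y\cap Z}Z$. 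This lemma is not a priori available to you, so you would have to identify and prove it.

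The second step is where the proposal genuinely diverges from, and falls short of, the paper's argument. You cannot pass from base change for each $\square_n$ (the qrsp \v{C}ech pieces) to base change for the square over the fixed base $\SSpf B\to\SSpf A$ by the lemmas you cite: the \v{C}ech nerve is an infinite cosimplicial diagram, so $f_*\Fscr=\Tot_n f^n_*\Fscr^n$ is a totalization and $g^*$ is a left adjoint, hence does not commute with totalizations in general. The finite-colimit trick is unavailable, Lemma~\ref{lem:colimitbasechange} is again circular, and Variant~\ref{var:colimitbasechange} again only gives the inclusion squares. A convergence argument using the uniform coconnectivity of $f^n_*\Fscr^n$ together with the bounded $(p,I)$-complete Tor-amplitude of $B$ could in principle be made to work, but you have not given it, and it is not what the cited lemmas assert. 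The paper instead does \emph{not} reduce base change to the qrsp case. It keeps working with $\WCart_{R/A}\we\SSpf\Prism_{R/A}$ for general quasisyntomic $R$ (via~\cite[Thm.~7.17]{bhatt-lurie-prism}, where $\Prism_{R/A}$ need not be discrete — see the Warning in the paper's proof that $\D(\SSpf\Prism_{R/A})\not\we\D(\Prism_{R/A})_{(p,I)}^\wedge$), uses the qrsp \v{C}ech nerve only to establish three structural facts in a separate Lemma~\ref{lem:random} (projection formula for $g'$ via Variant~\ref{var:colimitbasechange} and Lemma~\ref{lem:colimitprojection}, projection formula for $f$ against $\Ind^+(\Perf)$ via the totalization description, and base change for $\Oscr$-cohomology for the transposed square), and then deduces base change for the given square by applying the conservative functor $g_*$ and manipulating via the projection formulas. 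So your projection-formula reasoning is close to the mark, but you are missing the extra ingredient — the $\Oscr$-cohomology base change of the transposed square together with conservativity of $g_*$ — that converts the projection formulas into the desired base-change equivalence without ever reducing to a qrsp piece.
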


\begin{proof}
    We have the following lemma, which is not a special case of
    Lemma~\ref{lem:colimitbasechange}.

    \begin{lemma}\label{lem:finitecolimitbasechange}
        Suppose that
        $$\xymatrix{
            \Xscr_i'\ar[r]^{g_i'}\ar[d]^{f_i'}&\Xscr_i\ar[d]^{f_i}\\
            \Yscr'\ar[r]^g&\Yscr
        }$$
        is a finite diagram of commutative squares each satisfying base change for (bounded above)
        quasi-coherent cohomology. Then, the colimit diagram
        $$\xymatrix{
            \Xscr'\ar[r]^{g'}\ar[d]^{f'}&\Xscr\ar[d]^f\\
            \Yscr'\ar[r]^g&\Yscr
        }$$ satisfies base change for (bounded above) quasi-coherent cohomology.
        Moreover, if each $f_i$ satisfies the projection formula, then so does $f$.
    \end{lemma}

    \begin{proof}
        Write $h_i\colon\Xscr_i\rightarrow\Xscr$. Fix $\Fscr\in\D(\Xscr)$ which by definition is
        equivalent to $\lim h_{i*}h_i^*\Fscr$. Let $\Fscr_i=h_i^*\Fscr$. Thus,
        \begin{align*}
            g^*f_*(\Fscr)&\we g^*f_*\lim h_{i*}\Fscr_i\\
            &\we g^*\lim f_*h_{i*}\Fscr_i\\
            &\we \lim g^*f_* h_{i*}\Fscr_i\\
            &\we \lim g^*f_{i*}\Fscr_i\\
            &\we \lim f'_{i*}g_i'^*\Fscr_i\\
            &\we \lim f_{i*}'h_i^*g'^*\Fscr\\
            &\we f_*'g'^*\Fscr,
        \end{align*}
        where the third equivalence is because $g^*$ commutes with finite limits and the fifth is
        by the assumption of base change. The proof of the claim about the projection formula is
        similar using that left adjoint functors between stable $\infty$-categories commute with
        finite limits.
    \end{proof}

    Now, one reduces the proof of the proposition using Lemma~\ref{lem:finitecolimitbasechange} to
    the case where $X=\Spf R$ by induction on the number of affines needed to cover $X$.
    Indeed, assume the proposition is true for formal schemes which can be covered by at most $n$
    affines, and assume that $X$ can be covered by $n+1$ affines $\{\Spf R_i\}_{i=1}^{n+1}$.
    Then, let $Y=\cup_{i=1}^{n}\Spf R_i$ and $Z=\Spf R_{n+1}$. The intersection $W=Y\cap Z$ satisfies
    the hypotheses of the lemma, and is covered by $n$ affines. Moreover,
    $$\xymatrix{
        \WCart_{W/A}\ar[r]\ar[d]&\WCart_{Z/A}\ar[d]\\
        \WCart_{Y/A}\ar[r]&\WCart_{X/A}
    }$$ is a pushout in formal stacks and similarly for the base change to $B$. Now, apply the
    lemma.

    Now, assume that $X=\Spf R$ and let $R\rightarrow R^0$ be a quasisyntomic cover where $R^0$ is
    relatively quasiregular semiperfectoid over $\overline{A}$. Let $R^\bullet$ be the resulting
    \v{C}ech complex. By~\cite[Thm.~7.17]{bhatt-lurie-prism}, there are natural equivalences
    $$\WCart_{R\tensorhat_{\overline{A}}\overline{B}/B}\we\SSpf\Prism_{R\tensorhat_{\overline{A}}\overline{B}/B}\quad\text{and}\quad\WCart_{R/A}\we\SSpf\Prism_{R/A},$$
    where $\SSpf\Prism_{R/A}$ is the functor $\Map_{\DAlg_A}(\Prism_{R/A},-)$ restricted to
    $(p,I)$-nilpotent $A$-algebras and similarly for
    $\SSpf\Prism_{R\tensorhat_{\overline{A}}\overline{B}/B}$.
    Throughout, we use that $\Prism_{R/A}\we\Tot\Prism_{R^\bullet/A}$ and that
    $|\SSpf\Prism_{R^\bullet/A}|\we|\SSpf\Prism_{R/A}|$ and similarly for the base change to
    $B$. The first equivalence is by descent for prismatic cohomology; the second is because
    $\WCart_{R^0/A}\rightarrow\WCart_{R/A}$ is a surjection of flat sheaves
    (by~\cite[Lem.~6.3]{bhatt-lurie-prism}) and by compatibility of $\WCart_{-/A}$
    with pullbacks (which follows from~\cite[Rem.~3.5]{bhatt-lurie-prism}).

    \begin{warning}
        Note that despite the apparent affine behavior of $\WCart_{R/A}$, it is not typically the
        case that the global sections functor
        $\D(\SSpf\Prism_{R/A})\rightarrow\D(\Prism_{R/A})_{p,I}^\wedge$ is an equivalence. For example $\WCart = \WCart_{\bZ_p/\bZ_p}$ itself is not affine.
        This complicates the
        proof of base change. However, when $\Prism_{R/A}$ is discrete, taking global sections
        is an equivalence.
    \end{warning}

    Thus, the pullback square of the statement of the proposition is equivalent in the affine case to a
    pullback square
    $$\xymatrix{
        \SSpf\Prism_{R\tensorhat_{\overline{A}}\overline{B}/B}\ar[r]^{g'}\ar[d]^{f'}&\SSpf\Prism_{R/A}\ar[d]^f\\
        \SSpf B\ar[r]^g&\SSpf A.
    }$$
    We prove a lemma giving some properties of this square.

    \begin{lemma}\label{lem:random}
        \begin{enumerate}
            \item[{\em (a)}] The map $g'$ satisfies the projection formula.
            \item[{\em (b)}] The map $f$ satisfies the projection formula for bounded above objects
                with respect to $\Tscr_A$.
            \item[{\em (c)}] The natural map $f^*g_* B\rightarrow g'_*f'^*B$ is an
                equivalence. That is, the {\em transposed square} satisfies base change for
                $\Oscr$-cohomology.
        \end{enumerate}
    \end{lemma}

    \begin{proof}
        For parts (a) and (c) we use the commutative diagram
        $$\xymatrix{
            \SSpf\Prism_{R^\bullet\tensorhat_{\overline{A}}\overline{B}/B}\ar[r]\ar[d]^{g'_\bullet}&\SSpf\Prism_{R\tensorhat_{\overline{A}}\overline{B}/B}\ar[d]^{g'}\ar[r]&\SSpf
            B\ar[d]^g\\
            \SSpf\Prism_{R^\bullet/A}\ar[r]&\SSpf_{R/A}\ar[r]&\SSpf A.
        }$$
        The left vertical arrow represents a simplicial diagram in morphisms of stacks satisfying
        the projection formula. The induced pullback functors on quasi-coherent sheaves gives a
        cosimplicial diagram in $\Fun^{\mathrm{RAd}}(\Delta^1,\Pr^\L)$ with limit the pullback map
        on quasi-coherent sheaves associated to the middle vertical arrow. It follows from
        Variant~\ref{var:colimitbasechange}
        that the left-hand square satisfies base change for quasi-coherent sheaves in each
        simplicial degree. Thus, by Lemma~\ref{lem:colimitprojection}, the map $g'$ satisfies the projection
        formula. As the exterior square satisfies base change for quasi-coherent cohomology, so does
        the right-hand square by taking a limit in $\Fun^{\mathrm{RAd}}(\Delta^1,\Pr^\L)$.

        For part (b), suppose that $\Fscr\in\D(\SSpf\Prism_{R/A})$ and $\Gscr\in\D(\SSpf A)$ are bounded
        above and assume that $\Gscr/(p,I)$ can be written as a filtered colimit of uniformly
        bounded above perfect complexes of $A/(p,I)$-modules. The natural map
        $$f_*(\Fscr)\tensorhat_A\Gscr\rightarrow f_*(\Fscr\tensor_{\Oscr_{\SSpf\Prism_{R/A}}}
        f^*(\Gscr))$$ can be written as
        \begin{equation}\label{eq:tot}
            \Tot(\Fscr^\bullet)\tensorhat_A\Gscr\rightarrow\Tot(\Fscr^\bullet\tensorhat_A\Gscr),
        \end{equation}
        where $\Fscr^\bullet$ denotes the pullback of $\Fscr$ to $\SSpf\Prism_{R^\bullet/A}$.
        Since $\Gscr\mapsto\Gscr/(p,I)$ preserves all limits, we can test whether~\eqref{eq:tot} is
        an equivalence after modding out by $(p,I)$. But, $$\Tot(\Fscr^\bullet)\otimes_A
        A/(p,I)\otimes_{A/(p,I)}\Gscr'\rightarrow\Tot(\Fscr^\bullet\otimes_A
        A/(p,I)\otimes_{A/(p,I)}\Gscr')$$ is an equivalence for any perfect complex $\Gscr'$ over
        $A/(p,I)$. As $\Gscr/(p,I)$ is a filtered colimit of perfect complexes which is uniformly
        bounded above, and as uniformly bounded above filtered colimits commute with totalizations,
        we see that~\eqref{eq:tot} holds for $\Gscr\in\Tscr_A$.
    \end{proof}

    Returning to the proof of the proposition in the case when $X=\Spf R$ is affine, consider the
    natural map $g^*f_*\Fscr\rightarrow f'_*g'^*\Fscr$ for $\Fscr\in\D(\SSpf\Prism_{R/A})$ bounded
    above. As $g_*$ is conservative, to test if this map is an equivalence, we can apply $g_*$ to
    obtain $g_*g^*f_*\Fscr\rightarrow g_*f'_*g'^*\Fscr$, which can be rewritten as a map
    $$f_*\Fscr\tensorhat_AB\rightarrow f_*g'_*g'^*\Fscr\we
    f_*(g'_*\Oscr_{\SSpf\Prism_{R\tensorhat_{\overline{A}}\overline{B}/B}}\otimes_{\Oscr_{\SSpf\Prism_{R/A}}}\Fscr)\we
    f_*(f^*g_*B\otimes_{\Oscr_{\SSpf\Prism_{R/A}}}\Fscr)\we g_*B\tensorhat_A f_*\Fscr,$$
    where the first equivalence is by part (a) of Lemma~\ref{lem:random}, the second by part (c), and
    third by part (b), which uses that $B\in\Tscr_A$ by assumption. One checks that the map is indeed the natural equivalence between the left
    and right-hand sides, which completes the proof. Finally, the claim about the projection formula
    is Lemma~\ref{lem:random}(b).
\end{proof}

Let $B^0$ be a transversal prism and let $\rho\colon\SSpf B^0\rightarrow\WCart$ be the prismatic structure
map. Let $\Tscr_{\WCart}\subseteq\D(\WCart)$ be the full subcategory of those $\Gscr$ such that
$\rho^*\Gscr\in\Tscr_{B^0}$. Using~\cite[Prop.~2.4.9]{bhatt-lurie-apc}, one can show that $\Tscr_{\WCart}$ is
independent of the transversal prism $B^0$.

\begin{corollary}\label{cor:1}
    Suppose that $A$ is a $\delta$-ring and $X$ is a quasisyntomic $p$-adic formal scheme over $\Spf
    A$. If $X$ is quasicompact and separated and $\L_{X/A}$ has $p$-complete Tor-amplitude in $[0,1]$, then for every transversal prism
    $B^0$ the pullback square
    $$\xymatrix{
        \WCart_{X_{\overline{B^0}}/A\tensorhat B^0}\ar[r]\ar[d]&\WCart_{X/A}\ar[d]\\
        \SSpf B^0\ar[r]&\WCart
    }$$ satisfies base change for bounded above quasi-coherent cohomology. Moreover,
    $\WCart_{X/A}\rightarrow\WCart$ satisfies the bounded above projection formula with respect to
    $\Tscr_{\WCart}$.
\end{corollary}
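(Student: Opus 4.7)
The strategy is to present $\WCart$ as a colimit of affines $\SSpf B$ for transversal prisms $B$, reducing Corollary \ref{cor:1} to Proposition \ref{prop:basicsquare} via the colimit-base-change and colimit-projection-formula principles of Variants \ref{var:colimitbasechange} and \ref{var:colimitprojectionbounded}. First I would write $\WCart \we \colim_B \SSpf B$ as in Remark \ref{rem:cohomology_as_limit}, where $B$ ranges over the category of transversal prisms. Universality of colimits in formal stacks together with Lemma \ref{lem:square} and Lemma \ref{lem:Iinsensitive} then lifts this to a compatible colimit presentation
\[
(\WCart_{X/A} \to \WCart) \we \colim_B\bigl(\WCart_{X_{\overline{B}}/(A\tensorhat B)} \to \SSpf B\bigr),
\]
where $A\tensorhat B$ denotes the $(p,I)$-adic completion of $A \otimes B$; since $A$ is bounded and $B$ is transversal, $A \tensorhat B$ is a bounded prism, so each fiber $\WCart_{X_{\overline{B}}/(A\tensorhat B)}$ is a relative Cartier--Witt stack in the sense of Proposition \ref{prop:comparison}.

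For each morphism $B \to B'$ in the diagram, Proposition \ref{prop:basicsquare} applied with base prism $A\tensorhat B$, formal scheme $X_{\overline{B}}$ over $\Spf(A\tensorhat\overline{B}) \simeq \Spf\overline{A\tensorhat B}$, and map of prisms $A\tensorhat B \to A\tensorhat B'$ shows that the compatibility square
\[
\xymatrix{
\WCart_{X_{\overline{B'}}/(A\tensorhat B')} \ar[r] \ar[d] & \WCart_{X_{\overline{B}}/(A\tensorhat B)} \ar[d] \\
\SSpf B' \ar[r] & \SSpf B
}
\]
satisfies base change for bounded above quasi-coherent cohomology and the projection formula on $\Ind^+(\Perf(\SSpf B))$. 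The Tor-amplitude hypothesis on $\L_{X/A}$ transfers to $\L_{X_{\overline{B}}/\overline{A\tensorhat B}}$ by $(p,I)$-completed base change, while $X_{\overline{B}}$ remains quasicompact, separated, and bounded. Invoking Variants \ref{var:colimitbasechange} and \ref{var:colimitprojectionbounded} for the fixed $B^0$ then yields both assertions of the corollary, using that pullback along $\rho_B\colon\SSpf B \to \WCart$ preserves bounded above objects and carries $\Ind^+(\Perf(\WCart))$ into $\Ind^+(\Perf(\SSpf B))$ by symmetric monoidality together with preservation of perfect objects and of bounded above filtered colimits.

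The main obstacle is ensuring the hypothesis of Proposition \ref{prop:basicsquare} that each transition map of prisms $A\tensorhat B \to A\tensorhat B'$ be of bounded $(p,I)$-complete Tor-amplitude, which can fail for arbitrary morphisms of transversal prisms. I would address this by replacing the indexing diagram by a cofinal subdiagram whose morphisms are $(p,I)$-completely flat maps of transversal prisms, using that every transversal prism admits an orientable flat cover; this subdiagram still computes $\WCart$ as a colimit and has transition maps to which Proposition \ref{prop:basicsquare} directly applies.
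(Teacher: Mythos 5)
Your overall strategy---reduce to Proposition~\ref{prop:basicsquare} by presenting $(\WCart_{X/A}\to\WCart)$ as a colimit of affine pieces and then invoking Variants~\ref{var:colimitbasechange} and~\ref{var:colimitprojectionbounded}---is the same as the paper's. You also correctly isolate the obstacle: Proposition~\ref{prop:basicsquare} requires the structure map of prisms to have bounded $(p,I)$-complete Tor-amplitude, and transition maps $A\tensorhat B \to A\tensorhat B'$ in the diagram of all transversal prisms need not satisfy this.

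The genuine gap is in your proposed fix. You claim there is a cofinal subdiagram of the category of transversal prisms whose transition maps are all $(p,I)$-completely flat, citing that each transversal prism admits an orientable flat cover. But an object-by-object flat cover does not produce a sub\emph{diagram} with flat transition maps: cofinality is a property of the inclusion of a subcategory, and you would need to show that a coherent choice of covers can be assembled into a diagram over a cofinal index category in which \emph{all morphisms} are flat. This is not automatic, is not established by the cited fact, and the paper does not do it. The correct resolution, and the one the paper uses, is to abandon the diagram of all transversal prisms and instead fix $B^0$ and take the \v{C}ech complex $\SSpf B^\bullet \to \WCart$ of $\rho_{B^0}\colon\SSpf B^0 \to \WCart$. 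In the \v{C}ech nerve the face and degeneracy maps $\SSpf B^m \to \SSpf B^n$ are all pulled back from the cover map, hence $(p,I)$-completely flat (in particular of bounded Tor-amplitude), so Proposition~\ref{prop:basicsquare} applies in every simplicial degree without any ad hoc cofinality argument. Pulling back $\WCart_{X/A}\to\WCart$ along this simplicial object and applying Variant~\ref{var:colimitbasechange} then completes the proof. So the \v{C}ech presentation is not an optional convenience---it is precisely what makes the Tor-amplitude hypothesis hold uniformly, and you should replace your cofinality step with it.
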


\begin{proof}
    Let $\SSpf B^\bullet\rightarrow\WCart$ be the \v{C}ech complex of $\SSpf B^0\rightarrow\WCart$.
    Pulling back $\WCart_{X/A}\rightarrow\WCart$ along $\SSpf B^\bullet\rightarrow\WCart$, we obtain
    a commutative diagram
    $$\xymatrix{
        \WCart_{X_{\overline{B^\bullet}}/A\tensorhat B^\bullet}\ar[r]\ar[d]&\WCart_{X/A}\ar[d]\\
        \SSpf B^\bullet\ar[r]&\WCart
    }$$
    where the vertical arrow on the left defines a simplicial object in morphisms of formal stacks.
    For each $[m]\rightarrow[n]$ in $\Delta^\op$ the pullback square
    $$\xymatrix{
        \WCart_{X_{\overline{B^m}}/A\tensorhat B^m}\ar[r]\ar[d]&\WCart_{X_{\overline{B^n}}/A\tensorhat B^n}\ar[d]\\
        \SSpf B^m\ar[r]&\SSpf B^n
    }$$
    satisfies base change for bounded above quasi-coherent sheaves by
    Proposition~\ref{prop:basicsquare} and the right-hand morphism satisfies the projection formula
    for bounded above objects. The desired base change and projection formula result by
    Variant~\ref{var:colimitbasechange}.
\end{proof}

Fix a transversal prism $C^0$ and
let $\Tscr_{\Spf A\times\WCart}\subseteq\D(\Spf A\times\WCart)$
be the full subcategory of objects $\Gscr$ whose pullback to $\Spf A\times\SSpf C^0$ is in
$\Tscr_{(A\otimes C^0)_{(p,I)}^\wedge}$.

\begin{corollary}\label{cor:2}
    Let $A\rightarrow R$ be a relatively quasisyntomic $\delta$-pair and let $A\rightarrow B$ be a
    map of $\delta$-rings which has $p$-complete bounded Tor-amplitude.
    The pullback square
    $$\xymatrix{
        \WCart_{R\tensorhat_AB/B}\ar[r]^{g'}\ar[d]^{f'}&\WCart_{R/A}\ar[d]^f\\
        \Spf B\times\WCart\ar[r]^g&\Spf A\times\WCart
    }$$
    satisfies base change for quasi-coherent cohomology and $f$ satisfies the projection formula for
    bounded above quasi-coherent cohomology with respect to $\Tscr_{\Spf A\times\WCart}$.
\end{corollary}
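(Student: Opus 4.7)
The plan is to reduce Corollary~\ref{cor:2} to Proposition~\ref{prop:basicsquare} by localizing on $\WCart$. First I would choose a covering $\SSpf C^0\to\WCart$ by a bounded transversal prism and form the \v{C}ech nerve $\SSpf C^\bullet\to\WCart$. Pulling back the given square along each $\Spf A\times\SSpf C^n\to\Spf A\times\WCart$ produces a cosimplicial diagram of squares. Using a combination of Lemma~\ref{lem:colimitbasechange} and Variant~\ref{var:colimitbasechange} (for the base change statement) and Variant~\ref{var:colimitprojectionbounded} (for the projection formula), together with the fact that the face maps of $\SSpf C^\bullet$ satisfy base change by Tor-independence between transversal prisms, the statement for the original square would reduce to the corresponding statement in each cosimplicial degree.

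In cosimplicial degree $n$, the identifications used in the proof of Proposition~\ref{prop:push} (which rely on the universal property of Witt vectors, on Lemma~\ref{lem:square}, and on $A$ being bounded, the latter being guaranteed by the relative quasisyntomicity hypothesis of Definition~\ref{def:relqsyn}) recognize the pulled-back square as
$$\xymatrix{
\WCart_{(R\tensorhat_A B)\tensorhat\overline{C^n}/B\tensorhat C^n}\ar[r]\ar[d] & \WCart_{R\tensorhat\overline{C^n}/A\tensorhat C^n}\ar[d]\\
\SSpf(B\tensorhat C^n)\ar[r] & \SSpf(A\tensorhat C^n).
}$$
The bounded prism $A\tensorhat C^n$ and the formal scheme $X_n=\Spf(R\tensorhat\overline{C^n})$ over $\overline{A\tensorhat C^n}$ satisfy the hypotheses of Proposition~\ref{prop:basicsquare}: $X_n$ is quasicompact and separated, and $\L_{X_n/\overline{A\tensorhat C^n}}\we\L_{R/A}\tensorhat_A\overline{A\tensorhat C^n}$ has $(p,I)$-complete Tor-amplitude in $[0,1]$ by the relative quasisyntomicity of $(A,R)$. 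The prism map $A\tensorhat C^n\to B\tensorhat C^n$ inherits bounded $(p,I)$-complete Tor-amplitude from $A\to B$. Proposition~\ref{prop:basicsquare} then provides base change for bounded above quasi-coherent cohomology as well as the projection formula for $\Ind^+(\Perf(\SSpf(A\tensorhat C^n)))$-objects, which pull back to and account for the projection formula statement in the original corollary.

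The final step---upgrading from bounded above to arbitrary quasi-coherent cohomology in the base change statement---is where the main obstacle lies, and it is exactly where the extra hypothesis that $A\to B$ have bounded $p$-complete Tor-amplitude (as opposed to merely being a map of prisms as in Proposition~\ref{prop:basicsquare}) pays off. Since $g\colon\Spf B\times\WCart\to\Spf A\times\WCart$ has bounded Tor-amplitude, its pullback functor $g^*$ sends Postnikov towers to uniformly bounded-below towers and hence commutes with sequential limits of such towers up to a shift. Writing any $\Fscr\in\D(\WCart_{R/A})$ as $\Fscr\we\lim_n\tau^{\geq -n}\Fscr$ and using that $f_*$ commutes with all limits, the bounded above base change already established for each $\tau^{\geq -n}\Fscr$ assembles into base change for $\Fscr$; the necessary Tor-amplitude bookkeeping is straightforward but must be carried out carefully.
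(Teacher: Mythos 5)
Your steps 1--4 follow the paper's own argument essentially verbatim: choose a transversal prism cover $\SSpf C^0 \to \WCart$, pull the square back along $\Spf A \times \SSpf C^\bullet$, identify the cosimplicial squares via Proposition~\ref{prop:push} and Lemma~\ref{lem:square}, apply Proposition~\ref{prop:basicsquare} in each degree, and assemble using Variant~\ref{var:colimitbasechange} and Variant~\ref{var:colimitprojectionbounded}. That part is fine and matches what the paper does.

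Your step~5 contains a genuine gap, and it also stems from a misreading of Proposition~\ref{prop:basicsquare}. You say the bounded $p$-complete Tor-amplitude hypothesis on $A\to B$ is an extra hypothesis in Corollary~\ref{cor:2} ``as opposed to merely being a map of prisms as in Proposition~\ref{prop:basicsquare}.'' In fact Proposition~\ref{prop:basicsquare} already requires the map of prisms to have bounded $(p,I)$-complete Tor-amplitude; the hypothesis is the same in both statements and so buys you nothing extra. Consistently with this, the paper's own proof of Corollary~\ref{cor:2} only establishes base change for \emph{bounded above} objects $\Fscr$: the invocation of Variant~\ref{var:colimitbasechange} uses only the weakened, bounded-above form of that variant (since the cosimplicial squares $\square_{ij}$ coming from Proposition~\ref{prop:basicsquare} only enjoy bounded-above base change), and the way Corollary~\ref{cor:2} is subsequently cited in the proof of Proposition~\ref{prop:adjointable} is explicitly ``base change for bounded above quasi-coherent cohomology.'' So the phrase ``base change for quasi-coherent cohomology'' in the statement of Corollary~\ref{cor:2} appears to be a slip, and there is no need (nor, apparently, any intent) to upgrade to arbitrary $\Fscr$.

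As for the Postnikov argument itself: it does not work. Writing $\Fscr\we\lim_n\tau^{\geq -n}\Fscr$ and applying $f_*$ produces the tower $\{f_*\tau^{\geq -n}\Fscr\}_n$, and the claim that $g^*$ commutes with this limit requires that tower to be pro-constant in each degree after applying $g^*$. That in turn forces $f_*$ to be of finite cohomological dimension, so that the successive fibers $f_*(\H_{n+1}(\Fscr)[n+1])$ stay in a window around degree $n+1$. But for $f\colon\WCart_{R/A}\to\Spf A\times\WCart$ this fails: already $f_*\Oscr_{\WCart_{R/A}}\we\Hscr_\Prism(R/A)$, and the Hodge--Tate crystal $\Hscr_\Prismbar(R/A)$ has conjugate graded pieces $\widehat{\L\Omega}^u_{R/A}\otimes\Oscr_{\WCart^\HT}\{-u\}[-u]$ for all $u\geq 0$, which contribute in arbitrarily low homological degrees. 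So $\{f_*\tau^{\geq -n}\Fscr\}_n$ is not pro-constant, $g^*$ need not commute with the limit even though $g$ has bounded Tor-amplitude, and the proposed upgrade breaks down. The correct conclusion of the corollary is base change for bounded above quasi-coherent cohomology, which your steps 1--4 already deliver.
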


\begin{proof}
    Fix a transversal prism $C^0$ and consider the \v{C}ech complex $\SSpf C^\bullet\rightarrow\WCart$
    of $\rho_{C^0}\colon\SSpf C^0\rightarrow\WCart$.
    Pulling back along $\Spf A\times\SSpf C^\bullet\rightarrow\Spf A\times\WCart$ for a
    transversal prism point $\rho_C\colon\SSpf C\rightarrow\WCart$, the resulting cosimplicial pullback square is equivalent to
    $$\xymatrix{
        \WCart_{R\tensorhat_AB\tensorhat\overline{C^\bullet}/B\tensorhat
        C^\bullet}\ar[r]^{g'^\bullet}\ar[d]^{f'^\bullet}&\WCart_{R\tensorhat\overline{C^\bullet}/A\tensorhat
        C^\bullet}\ar[d]^{f^\bullet}\\
        \SSpf(B\tensorhat C^\bullet)\ar[r]^{g^\bullet}&\SSpf(A\tensorhat C^\bullet).
    }$$
    Each of these squares satisfies the hypothesis of Proposition~\ref{prop:basicsquare} by our assumption of
    the $p$-complete bounded Tor-amplitude of $B$ as an $A$-module and hence it satisfies base
    change for bounded above quasi-coherent cohomology. We would like to now apply
    Lemma~\ref{lem:colimitbasechange}, but we need a variant which applies in the bounded above
    setting, along the lines of Variant~\ref{var:colimitbasechange}.
    Fix
    \begin{align*}
        a^\bullet&\colon\WCart_{R\tensorhat\overline{C}^\bullet/A\tensorhat C}\rightarrow\WCart_{R/A},\\
        b^\bullet&\colon\SSpf(A\tensorhat C^\bullet)\rightarrow\Spf A\times\WCart,\\
        a'^\bullet&\colon\WCart_{R\tensorhat_AB\tensorhat\overline{C}^\bullet/B\tensorhat C}\rightarrow\WCart_{R\tensorhat_A B/B},\\
        b'^\bullet&\colon\SSpf(B\tensorhat C^\bullet)\rightarrow\Spf B\times\WCart,
    \end{align*}
    Consider $\Fscr\in\D(\WCart_{R/A})$, which is equivalent to $\lim_{[i]\in\Delta}a^i_*
    a^{i*}\Fscr$. Let $\Fscr_i=a^{i*}\Fscr$. By Variant~\ref{var:colimitbasechange},
    $b^{i*}f_*\Fscr\we f_{i*}\Fscr_i$ for all $i$, naturally in $i$. Thus, the pullback
    $g^*f_*\Fscr$ is determined by the compatible family $g^{\bullet*}f_{\bullet*}\Fscr_\bullet$ (as an
    appropriate cartesian section corresponding to an object of $\D(\Spf
    B\times\WCart)\we\lim\SSpf(B\tensorhat C)$. However, by base change for bounded above
    quasi-coherent cohomology in each (co)simplicial degree, $g^{\bullet*}f_{\bullet*}\Fscr_\bullet\we
    f'_{\bullet*}g'^{\bullet*}\Fscr_\bullet$, which is checked in the same way to yield $f'_*
    g'^*\Fscr$ upon passage to limits using Variant~\ref{var:colimitbasechange} again.
\end{proof}

\small
\bibliographystyle{amsplain}
\bibliography{kzpn}

\medskip
\noindent
\textsc{Department of Mathematics, Northwestern University}\\
{\ttfamily antieau@northwestern.edu}

\medskip
\noindent
\textsc{FB Mathematik und Informatik, Universit\"at M\"unster}\\
{\ttfamily krauseac@uni-muenster.de}

\medskip
\noindent
\textsc{FB Mathematik und Informatik, Universit\"at M\"unster}\\
{\ttfamily nikolaus@uni-muenster.de}

\end{document}